\newtheorem{thm}{Theorem}
\newtheorem{lem}[thm]{Lemma}
\newtheorem{defi}[thm]{Definition}
\newtheorem{prop}[thm]{Proposition}
\newtheorem{rk}[thm]{Remark}
\newtheorem{nota}[thm]{Notation}
\newtheorem{ass}[thm]{Assumption}
\newcommand{\Sd}{{\mathbb{S}_{d-1}}}
\newcommand{\rr}{{\mathbb{R}}}
\newcommand{\rd}{{\rr^d}}
\newcommand{\nn}{{\mathbb{N}}}
\newcommand{\cI}{{\mathcal{I}}}
\newcommand{\cZ}{{\mathcal{Z}}}
\newcommand{\cF}{{\mathcal{F}}}
\newcommand{\cG}{{\mathcal{G}}}
\newcommand{\cL}{{\mathcal{L}}}
\newcommand{\cP}{{\mathcal{P}}}
\newcommand{\cR}{{\mathcal{R}}}
\newcommand{\cV}{{\mathcal{V}}}
\newcommand{\cS}{{\mathcal{S}}}
\newcommand{\cH}{{\mathcal{H}}}
\newcommand{\cW}{{\mathcal{W}}}
\newcommand{\cE}{{\mathcal{E}}}
\newcommand{\cC}{{\mathcal{C}}}
\newcommand{\cD}{{\mathcal{D}}}
\newcommand{\cB}{{\mathcal{B}}}
\newcommand{\cU}{{\mathcal{U}}}
\newcommand{\chL}{\hat{\mathcal{L}}}
\newcommand{\hTheta}{{\hat \Theta}}
\newcommand{\hB}{{\hat B}}
\newcommand{\hZ}{{\hat Z}}
\newcommand{\tB}{{\tilde B}}
\newcommand{\tZ}{{\tilde Z}}
\newcommand{\tX}{{\tilde X}}
\newcommand{\e}{\epsilon}
\newcommand{\vip}{\vskip.2cm}
\newcommand{\indiq}{{\bf 1}}
\newcommand{\sm}{{s-}}
\newcommand{\E}{\mathbb{E}}
\newcommand{\PR}{\mathbb{P}}
\newcommand{\intot}{\int_0^t }
\newcommand{\intrd}{\int_\rd}
\newcommand{\dd}{{\rm d}}
\newcommand{\sg}{{\rm sg}}
\newcommand{\bB}{\bar B}
\newcommand{\bZ}{\bar Z}
\newcommand{\chZ}{\check Z}
\newcommand{\bM}{\mathbf M}
\newcommand{\bN}{\mathbf N}
\newcommand{\tbN}{\tilde{\bN}}
\newcommand{\ddiv}{{\rm div}}
\renewcommand{\mapsto}{\to}
\begin{document}

\title[Multi-dimensional critical kinetic Fokker-Planck equations]
{Anomalous diffusion for multi-dimensional critical Kinetic Fokker-Planck equations}

\author{Nicolas Fournier and Camille Tardif}

\address{Sorbonne Universit\'e - LPSM, Campus Pierre et Marie Curie, 
Case courrier 158, 4 place Jussieu, 75252 PARIS CEDEX 05,
{\tt nicolas.fournier@sorbonne-universite.fr, camille.tardif@sorbonne-universite.fr}.}

\thanks{This research was supported 
by the French  ANR-17- CE40-0030 EFI}

\subjclass[2010]{60J60, 35Q84, 60F05}

\keywords{Kinetic diffusion process, Kinetic Fokker-Planck equation, heavy-tailed equilibrium, anomalous 
diffusion phenomena, Bessel processes, stable processes, local times, central limit theorem, homogenization.}

\begin{abstract}
We consider a particle moving in $d\geq 2$ dimensions, its 
velocity being a reversible diffusion process,
with identity   diffusion coefficient, of which the invariant measure behaves, roughly, like
$(1+|v|)^{-\beta}$ as $|v|\to \infty$, for some constant $\beta>0$.
We prove that for large times, after a suitable rescaling, 
the position process resembles a Brownian motion if $\beta\geq 4+d$,
a stable process if $\beta\in [d,4+d)$ and an integrated multi-dimensional generalization
of a Bessel process if $\beta\in (d-2,d)$.
The critical cases $\beta=d$, $\beta=1+d$  and $\beta=4+d$ require special rescalings.
\end{abstract}

\maketitle

\section{Introduction and results}
\setcounter{equation}{0}

\subsection{Motivation and references}

Describing the motion of a particle with complex dynamics, after space-time rescaling, by a simple diffusion,
is a natural and classical subject. See for example Langevin \cite{l}, Larsen-Keller \cite{lk}, 
Bensoussans-Lions-Papanicolaou \cite{blp} and Bodineau-Gallagher-Saint-Raymond \cite{bgsr}.
Particles undergoing anomalous diffusion are often observed in physics, and many mathematical works
show how to modify some Boltzmann-like linear equations to asymptotically get some fractional diffusion limit
(i.e. a radially symmetric L\'evy stable jumping position process). See 
Mischler-Mouhot-Mellet \cite{mmm}, Jara-Komorowski-Olla \cite{jko}, 
Mellet \cite{m}, Ben Abdallah-Mellet-Puel \cite{bmp,bmp2}, etc.

\vip

The kinetic Fokker-Planck equation is also of constant use in physics, because it is
rather simpler than the Boltzmann equation:
assume that the density $f_t(x,v)$ of particles with position $x \in \rd$ and velocity $v\in \rd$
at time $t\geq 0$ solves
\begin{equation}\label{fpe}
\partial_t f_t(x,v) + v \cdot\nabla_xf_t(x,v)= \frac 12 \big(\Delta_{v}f_t(x,v) 
+ \beta {\rm div}_v[ F(v) f_t(x,v)]\big),
\end{equation}
for some force field $F:\rd\mapsto\rd$ and some constant $\beta>0$ that will be useful later.
We then try to understand the behavior of the {\it density} $\rho_t(x)=\int_{\rd} f_t(x,v) \dd v$
for large times.

\vip

The {\it trajectory} corresponding to \eqref{fpe}
is the following stochastic kinetic model:
\begin{equation}\label{eds}
V_t = v_0 + B_t - \frac{\beta}{2} \intot F(V_s) \dd s\quad \hbox{and} \quad X_t=x_0+\intot V_s \dd s.
\end{equation}
Here $(B_t)_{t\geq 0}$ is a $d$-dimensional Brownian motion.
More precisely, for $(V_t,X_t)_{t\geq 0}$ (with values in $\rd\times\rd$)
solving \eqref{eds}, the family of time-marginals $f_t=\cL(X_t,V_t)$ solves \eqref{fpe} in the sense
of distributions.

\vip

It is well-known that if $F$ is sufficiently confining, then the velocity process
$(V_t)_{t\geq 0}$ is close to equilibrium, its invariant distribution has a fast decay,
and after rescaling, the position process $(X_t)_{t\geq 0}$ resembles a Brownian motion in large time.
In other words, $(\rho_t)_{t\geq 0}$ is close to the solution to the heat equation. 

\vip

If on the contrary $F$ is not sufficiently confining, e.g. if $F\equiv 0$, then $(X_t)_{t\geq 0}$
cannot be reduced to an autonomous Markov process in large times. In other words,
$(\rho_t)_{t\geq 0}$ does not solve an autonomous time-homogeneous PDE.

\vip

The only way to hope for some anomalous diffusion limit, for a Fokker-Planck toy model like \eqref{fpe},
is to choose the force in such a way that the invariant measure of the velocity process has a fat tail.
One realizes that one has to choose $F$ behaving like $F(v)\sim 1/|v|$ as $|v|\to \infty$,
and the most natural choice is $F(v)=v/(1+|v|^2)$.
Now the asymptotic behavior of the model may depend on the value of $\beta>0$,
since the invariant distribution of the velocity process
is given by $(1+|v|^2)^{-\beta/2}$, up to some normalization constant.

\vip

The Fokker-Planck model \eqref{fpe}, with the force $F(v)=v/(1+|v|^2)$, is the object of the papers by
Nasreddine-Puel \cite{np} ($d\geq 1$ and $\beta>4+d$, diffusive regime), Cattiaux-Nasreddine-Puel \cite{cnp}
($d\geq 1$ and $\beta=4+d$, critical diffusive regime) and Lebeau-Puel \cite{lp}
($d=1$ and $\beta \in (1,5)\setminus\{2,3,4\}$). In this last paper, the authors show
that after time/space rescaling, the density $(\rho_t)_{t\geq 0}$ is close to the solution to the fractional
heat equation with index $\alpha/2$, where $\alpha=(\beta+1)/3$.
In other words, $(X_t)_{t\geq 0}$ resembles a symmetric $\alpha$-stable process.
This work relies on a spectral approach and involves many explicit computations.

\vip

Using an alternative probabilistic approach, we studied the one-dimensional case $d=1$
in \cite{ft}, treating all the cases $\beta\in (0,\infty)$ in a rather concise way.
We allowed for a more general (symmetric) force field $F$.

\vip

Physicists observed that atoms subjected to Sisyphus cooling
diffuse anomalously, see 
Castin-Dalibard-Cohen-Tannoudji \cite{cdct}, Sagi-Brook-Almog-Davidson \cite{sbad} and 
Marksteiner-Ellinger-Zoller \cite{mez}. 
A theoretical study has been proposed by Barkai-Aghion-Kessler \cite{bak}. 
They precisely model the motion of atoms by
\eqref{fpe} with the force $F(v)=v/(1+v^2)$ induced by the laser field, simplifying very slightly
the model derived in \cite{cdct}.
They predict, in dimension $d=1$ and with a quite high level of rigor,
the results of \cite[Theorem 1]{ft}, excluding the critical cases, with the following terminology:
normal diffusion when $\beta>5$, L\'evy diffusion when $\beta \in (1,5)$ and 
{\it Obukhov-Richardson phase} when $\beta \in (0,1)$. This last case is treated in a rather
confused way in \cite{bak}, mainly because no tractable explicit computation can be handled, since 
the limit process is an integrated symmetric Bessel process.

\vip

In \cite{kb}, Kessler-Barkai mention other fields of applications of this model, such as
single particle models for long-range interacting systems (Bouchet-Dauxois \cite{bd}), 
condensation describing a charged particle in the vicinity of a charged polymer (Manning, \cite{m}), and
motion of nanoparticles in an appropriately constructed force field (Cohen, \cite{co}).
We refer to \cite{np,cnp,lp} and especially \cite{kb,bak} 
for many other references and motivations.

\vip

The goal of the present paper is to study what happens in higher dimension.
We also allow for some non-radially symmetric force, to understand more deeply what happens,
in particular in the stable regime. To our knowledge, the results are completely new.
The proofs are technically much more involved than in dimension $1$.

\subsection{Main results}

In the whole paper, we assume that the initial condition  $(v_0,x_0) \in \rd\times\rd$ is deterministic
and, for simplicity, that $v_0 \neq 0$.
We also assume that the force is of the following form.

\begin{ass}\label{as}
There is a potential $U$ of the form $U(v)=\Gamma(|v|)\gamma(v/|v|)$, for some
$\gamma : \Sd \mapsto (0,\infty)$ of class $C^\infty$ and some $\Gamma:\rr_+\mapsto (0,\infty)$
of class $C^\infty$ satisfying $\Gamma(r) \sim r$ as $r\to\infty$, such that
for any $v\in \rr^d \setminus \{0\}$, $F(v)=\nabla[\log U(v)]=[U(v)]^{-1}\nabla U(v)$.
\end{ass}

Observe that $F$ is
of class $C^\infty$ on $\rr^d \setminus\{0\}$.
We will check the following well-posedness result.

\begin{prop}\label{eui}
Under Assumption \ref{as}, \eqref{eds} has a pathwise unique solution $(V_t,X_t)_{t\geq 0}$, 
which is furthermore
$(\rr^d\setminus\{0\})\times \rr^d$-valued.
\end{prop}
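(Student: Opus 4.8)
The plan is to establish pathwise uniqueness and non-explosion separately, and to rule out the velocity process ever hitting the origin.

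First I would address local well-posedness away from the singularity. Under Assumption~\ref{as}, the force $F=\nabla[\log U]$ is $C^\infty$, hence locally Lipschitz, on $\rr^d\setminus\{0\}$. Therefore, for any initial condition with $v_0\neq 0$, the stochastic differential equation \eqref{eds} has a pathwise unique strong solution up to the stopping time $\tau=\inf\{t\geq 0: V_t=0\}\wedge\lim_{n}\inf\{t\geq 0:|V_t|\geq n\}$, i.e.\ up to the first time the velocity either reaches $0$ or explodes. The position component $X_t=x_0+\int_0^t V_s\,\dd s$ is then automatically well-defined and $C^1$ on $[0,\tau)$, and causes no additional difficulty since it is obtained by integration; all the work is on $(V_t)_{t\geq0}$.

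Next I would rule out explosion of the velocity in finite time. The natural tool is a Lyapunov function. Since $\Gamma(r)\sim r$ and $\gamma$ is bounded above and below on the compact sphere $\Sd$, the potential satisfies $U(v)\asymp |v|$ for large $|v|$, and a direct computation gives $F(v)\cdot v = \nabla[\log U(v)]\cdot v \to 1$ as $|v|\to\infty$ (using $\Gamma(r)\sim r$, so $r\Gamma'(r)/\Gamma(r)\to 1$), while $\ddiv F$ stays bounded at infinity. Applying It\^o's formula to $\phi(V_t)$ with, say, $\phi(v)=\log(1+|v|^2)$ or $\phi(v)=|v|^2$ and using $-\tfrac\beta2 F(v)\cdot\nabla\phi(v)\leq C\phi(v)+C$ together with the bounded second-order term, one gets $\E[\phi(V_{t\wedge\tau_n})]\leq C(1+t)e^{Ct}$ by Gr\"onwall, which forces $\sup_{s\leq t}|V_s|<\infty$ a.s.\ on $\{\tau>t\}$ and shows the explosion time is infinite; hence $\tau=\inf\{t\geq0:V_t=0\}$.

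The main obstacle is showing that $(V_t)_{t\geq0}$ never reaches the origin, i.e.\ $\tau=\infty$ a.s.\ (equivalently $\PR(\tau<\infty)=0$). Near $v=0$ the force behaves like $F(v)\sim \nabla[\log U(v)]$ with $U(v)=\Gamma(|v|)\gamma(v/|v|)$ and $\Gamma(0)>0$, so writing $r=|v|$ the radial part of $F$ is $\Gamma'(|v|)/\Gamma(|v|)\cdot (v/|v|)$, which is \emph{bounded} near $0$ (since $\Gamma$ is $C^\infty$ and positive at $0$), while the angular part $[\gamma]^{-1}\nabla_{\!\mathbb S}\gamma$ scaled by $1/|v|$ is the gradient of a smooth function on the sphere divided by $|v|$. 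The key point is that the radial process $R_t=|V_t|$ satisfies, by It\^o's formula, an SDE of the form $\dd R_t = \dd\beta_t + \big(\tfrac{d-1}{2R_t} + b(V_t)\big)\dd t$ with $\beta$ a one-dimensional Brownian motion and $b$ bounded near the origin; the dangerous $1/|v|$ terms coming from the Laplacian of $|v|$ and from the angular force are, respectively, the genuine Bessel drift $\tfrac{d-1}{2R_t}$ (which \emph{pushes away} from $0$) and a bounded contribution (the angular force is tangent to spheres, so it does not affect $R_t$ at leading order). Since $d\geq2$, the drift coefficient $\tfrac{d-1}{2r}$ dominates a $d$-dimensional-Bessel-type comparison, and a standard comparison argument (Feller's test, or comparison with a Bessel process of dimension $\geq 2$ perturbed by a bounded drift, via e.g.\ McKean's argument using $\phi(r)=\log r$ or $\phi(r)=-\log r$ as a supermartingale) shows $R_t>0$ for all $t$ a.s. I would carry this out by finding a function $g:(0,\infty)\to\rr$ with $g(0+)=+\infty$ such that $g(R_t)+Ct$ is a local supermartingale up to $\tau\wedge\tau_n$, which prevents $R$ from reaching $0$. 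Combining non-explosion with $\tau=\infty$ yields a global pathwise unique solution taking values in $(\rr^d\setminus\{0\})\times\rr^d$.
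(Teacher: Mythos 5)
Your overall architecture (local strong uniqueness away from the origin, then non-explosion, then inaccessibility of $v=0$) is legitimate, and your treatment of the origin is essentially correct and close in spirit to the paper's: the radial part $R_t=|V_t|$ solves the autonomous one-dimensional SDE \eqref{eqr}, whose drift near $0$ is the Bessel term $\frac{d-1}{2r}$ plus the bounded term $-\frac\beta2\Gamma'(r)/\Gamma(r)$ (bounded because $\Gamma$ is smooth and positive at $0$), and for $d\ge2$ the associated scale function diverges to $-\infty$ at $0+$, so $0$ is inaccessible. This is exactly the role played by the function $h$ of Notation \ref{notaf} in the paper.

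The genuine gap is in your non-explosion step. You assert that $F(v)\cdot v=r\Gamma'(r)/\Gamma(r)\to1$ ``using $\Gamma(r)\sim r$''. Assumption \ref{as} controls $\Gamma$ but says nothing pointwise about $\Gamma'$: for instance $\Gamma(r)=r+\sin(e^r)$ for large $r$ (suitably modified near $0$) satisfies $\Gamma(r)\sim r$ while $\Gamma'(r)=1+e^r\cos(e^r)$ is unbounded in both signs; then $-\frac\beta2F(v)\cdot\nabla\phi(v)$ is not dominated by $C\phi(v)+C$ for either of your choices of $\phi$, and the Gr\"onwall argument collapses. (That Assumption \ref{as} genuinely allows such $\Gamma$ is confirmed by the extra hypotheses on $\Gamma'$ that the paper must add in Theorem \ref{mr}-(b) and (d).) The repair is to treat explosion exactly as you treat the origin, through the one-dimensional radial SDE rather than a Lyapunov function on $\rd$: the scale function $h(r)=(\beta+2-d)\int_{r_0}^ru^{1-d}[\Gamma(u)]^\beta\dd u$ satisfies $h(\infty)=+\infty$ as soon as $\beta>d-2$ (and $h(0+)=-\infty$ for $d\ge2$), so $R$ is recurrent in $(0,\infty)$, hence conservative, with no pointwise control on $\Gamma'$ required. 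This is the paper's route: Lemma \ref{exr} constructs a global $(0,\infty)$-valued weak solution of \eqref{eqr} as $h^{-1}$ of a time-changed Brownian motion, Lemma \ref{rep} assembles it with the spherical process into a global weak solution of \eqref{eds}, and global strong well-posedness then follows from weak existence combined with local pathwise uniqueness.
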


\begin{rk}\label{inv}
Assume that $\beta>d$. As we will see, the velocity process
$(V_t)_{t\geq 0}$ has a unique invariant probability measure 
given by $\mu_\beta(\dd v)=c_\beta [U(v)]^{-\beta}\dd v$, for 
$c_\beta = [\intrd[U(v)]^{-\beta}\dd v]^{-1}$.
\end{rk}

As already mentioned, the main example we have in mind is 
$\Gamma(r)=\sqrt{1+r^2}$ and $\gamma\equiv 1$, whence $U(v)=\sqrt{1+|v|^2}$ and $F(v)=v/(1+|v|^2)$.
We also allow for some non radially symmetric potentials to understand more deeply what may happen.

\vip

In the whole paper, we denote by $\cS_d^+$ the set of symmetric positive-definite $d\times d$ matrices.
We also denote by $\varsigma$ the uniform probability measure on $\Sd$.

\vip

For $((Z^\e_{t})_{t\geq 0})_{\e\geq 0}$ a family of $\rr^d$-valued processes, we
write $(Z^\e_{t})_{t\geq 0}\stackrel{f.d.}\longrightarrow (Z_t^0)_{t\geq 0}$ if for all 
finite subset $S\subset [0,\infty)$ the vector $(Z^\e_{t})_{t\in S}$ goes in 
law to $(Z_t^0)_{t\in S}$ as $\e\to 0$; and we write
$(Z^\e_{t})_{t\geq 0}\stackrel{d}\longrightarrow (Z^0_t)_{t\geq 0}$ if the convergence in law 
holds in the usual sense of continuous processes. Here is our main result.

\begin{thm}\label{mr}
Fix $\beta>0$, suppose Assumption \ref{as} and consider the solution $(V_t,X_t)_{t\geq 0}$ to \eqref{eds}.
We set $a_\beta=[\int_\Sd [\gamma(\theta)]^{-\beta}\varsigma(\dd \theta)]^{-1}>0$, as well as 
$M_\beta=a_\beta\int_\Sd \theta [\gamma(\theta)]^{-\beta}\varsigma(\dd \theta)\in\rd$ and, if 
$\beta>1+d$, $m_\beta=\intrd v \mu_\beta(\dd v) \in \rd$.

\vip

(a) If $\beta>4+d$,
there is $\Sigma \in \cS_d^+$ such that 
$$(\e^{1/2} [X_{t/\e}- m_\beta t /\e] )_{t\geq 0} \stackrel{f.d.} \longrightarrow (\Sigma B_t)_{t\geq 0},$$ 
where $(B_t)_{t\geq 0}$ is a $d$-dimensional Brownian motion.

\vip 

(b) If $\beta=4+d$ and if $\int_1^\infty r^{-1}|r \Gamma'(r)/\Gamma(r)-1|^2\dd r<\infty$,
then 
$$(\e^{1/2}|\log \e|^{-1/2} [X_{t/\e}- m_\beta t /\e] )_{t\geq 0} \stackrel{f.d.}\longrightarrow (\Sigma B_t)_{t\geq 0},$$ 
for some $\Sigma \in \cS_d^+$, where $(B_t)_{t\geq 0}$ is a $d$-dimensional Brownian motion.

\vip

(c) If $\beta \in (1+d,4+d)$, set $\alpha=(\beta+2-d)/3$. Then 
$$(\e^{1/\alpha} [X_{t/\e}-m_\beta t/\e] )_{t\geq 0}\stackrel{f.d.}
\longrightarrow (S_t)_{t\geq 0},$$ 
where $(S_t)_{t\geq 0}$ is a non-trivial $\alpha$-stable L\'evy process.

\vip

(d) If $\beta = 1+d$ and if $\int_1^\infty r^{-1}|r/\Gamma(r)-1|\dd r<\infty$
there exists a constant
$c>0$ such that 
$$(\e[X_{t/\e}- c M_\beta |\log \e| t/\e ])_{t\geq 0}\stackrel{f.d.} \longrightarrow (S_t)_{t\geq 0},$$ 
where $(S_t)_{t\geq 0}$ is a non-trivial $1$-stable L\'evy process.

\vip

(e) If $\beta \in (d,1+d)$, set $\alpha=(\beta+2-d)/3$. Then 
$$(\e^{1/\alpha} X_{t/\e} )_{t\geq 0}\stackrel{f.d.}
\longrightarrow (S_t)_{t\geq 0},$$ 
where $(S_t)_{t\geq 0}$ is a non-trivial $\alpha$-stable L\'evy process.

\vip

(f) If $\beta=d$, then 
$$(|\e \log \e|^{3/2} X_{t/\e} )_{t\geq 0}\stackrel{f.d.} \longrightarrow (S_t)_{t\geq 0},$$ 
where $(S_t)_{t\geq 0}$ is a non-trivial $2/3$-stable L\'evy process.

\vip

(g) If $\beta\in (d-2,d)$, 
$$(\e^{3/2} X_{t/\e})_{t\geq 0}\stackrel{d}\longrightarrow \Big(\int_0^t \cV_s \dd s\Big)_{t\geq 0},$$ 
where $(\cV_t)_{t\geq 0}$ is a $\rr^d$-valued continuous process (see Definition \ref{ddd}) of which the norm
$(|\cV_t|)_{t\geq 0}$ is a Bessel process with dimension
$d-\beta$ issued from $0$.
\end{thm}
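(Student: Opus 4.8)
Since $X_t=x_0+\intot V_s\,\dd s$, the whole theorem is about the large-time behaviour of an additive functional of the velocity diffusion $V$, whose generator $\cL=\frac12\Delta_v-\frac\beta2 F\cdot\nabla_v=\frac12 U^\beta\,\ddiv(U^{-\beta}\nabla\,\cdot\,)$ is reversible with respect to $U^{-\beta}\,\dd v$ --- a finite measure iff $\beta>d$, which already splits the analysis. Two reductions are central. First, the radial part $R_t=|V_t|$ solves $\dd R_t=\big(\frac{d-1}{2R_t}-\frac\beta2\frac{\Gamma'(R_t)}{\Gamma(R_t)}\big)\dd t+\dd\tilde B_t$, whose drift is $\frac{d-1-\beta}{2r}+o(r^{-1})$ near infinity, so at large scales $R$ behaves like a Bessel process of dimension $d-\beta$. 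Second, the angular part $\Theta_t=V_t/|V_t|$ is, after the time change $\intot R_s^{-2}\dd s$, a diffusion on $\Sd$ reversible for $\gamma(\theta)^{-\beta}\varsigma(\dd\theta)$, with normalised barycentre $M_\beta$. A single excursion of $R$ that reaches a high level $r$ lasts a time $\asymp r^2$, moves at speed $\asymp r$, hence contributes a displacement of magnitude $\asymp r^3$; the probability that an excursion away from a fixed reference level reaches level $\ge r$ is, by the Bessel($d-\beta$) scale function $s(x)\asymp x^{\beta+2-d}$, of order $r^{-(\beta+2-d)}$. Quantifying how these excursions pile up over a time window $[0,t/\e]$ yields all three regimes.

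\textbf{Diffusive regimes (a)--(b).} For $\beta\ge4+d$, $V$ is positive recurrent with invariant law $\mu_\beta$ and $v\mapsto v-m_\beta$ has $\mu_\beta$-mean zero. Solve the componentwise Poisson equation $\cL\phi=-(v-m_\beta)$; using the radial structure one checks that $\phi$ and $\nabla\phi$ lie in $L^2(\mu_\beta)$ exactly when $\beta>4+d$. Itô's formula gives
$$X_t-x_0-m_\beta t=\phi(v_0)-\phi(V_t)+\intot\nabla\phi(V_s)\,\dd B_s,$$
so after $t\mapsto t/\e$ the boundary terms are negligible and the martingale, whose bracket converges by the ergodic theorem to $t\,\Sigma^2$ with $\Sigma^2=\intrd\nabla\phi\,(\nabla\phi)^{\top}\,\mu_\beta(\dd v)$, obeys the martingale central limit theorem; positive-definiteness of $\Sigma^2$ comes from irreducibility of $V$. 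In case (b), $\beta=4+d$, the corrector just fails to be square-integrable: a truncated corrector cutting velocities at level $r$ has bracket of order $\log r$, and optimising the cutoff $r$ as a function of $\e$ yields the normalisation $\e^{1/2}|\log\e|^{-1/2}$; the hypothesis $\int_1^\infty r^{-1}|r\Gamma'(r)/\Gamma(r)-1|^2\dd r<\infty$ is precisely what renders summable the error due to $\Gamma$ not being exactly linear.

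\textbf{Stable regimes (c)--(f).} For $d\le\beta<4+d$ the corrector leaves $L^2(\mu_\beta)$ and one instead establishes a stable limit theorem for $\intot(V_s-c_\beta(T))\,\dd s$, $T=t/\e$, with $c_\beta(T)=m_\beta$ if $\beta>1+d$, $c_\beta(T)\sim cM_\beta|\log T|$ if $\beta=1+d$, and $c_\beta(T)=0$ if $\beta<1+d$. The plan is to cut $[0,T]$ into the successive excursions of $R$ away from a fixed level and to show the displacements they contribute form, asymptotically, an i.i.d.\ array: there are $\asymp T$ such excursions (a logarithmically corrected count when $\beta=d$, where $V$ is null recurrent), the displacement $D$ of one excursion has, by the scalings above, a regularly varying tail $\PR(|D|>x)\asymp x^{-\alpha}$ with $\alpha=(\beta+2-d)/3<2$, and the direction of a long excursion is governed by $a_\beta\gamma(\theta)^{-\beta}\varsigma(\dd\theta)$. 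A renewal/occupation-time argument then turns the normalised sum of the $D$'s into the non-trivial $\alpha$-stable Lévy process $(S_t)$; the anisotropy of $F$ survives only in the spectral measure of $S$, and $M_\beta$ enters only as the logarithmic centering rate at $\beta=1+d$. The two boundary cases are the delicate ones: at $\beta=1+d$ ($\alpha=1$) the mean of $D$ is logarithmically divergent, forcing truncation and the $|\log\e|$ drift; at $\beta=d$ ($\alpha=2/3$) null recurrence of $V$ adds an extra logarithm to the excursion count, producing the normalisation $|\e\log\e|^{3/2}$. The convergence is stated only finite-dimensionally because the limit has jumps.

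\textbf{Bessel regime (g), and the main obstacle.} For $d-2<\beta<d$, $V$ is null recurrent with infinite invariant measure whose mass lies at infinity, so after rescaling the process spends asymptotically all its time where the Bessel approximation is exact; the near-origin region, where $F$ may be singular, is shown to be negligible at scale $\e$. Set $\cV^\e_t=\e^{1/2}V_{t/\e}$. The generator of the radial part $\e^{1/2}R_{\cdot/\e}$ converges (its drift is $\frac{d-1-\beta+o(1)}{2\rho}$, the $o(1)$ now because $R\to\infty$) to that of a Bessel process of dimension $d-\beta$ issued from $0$ --- the index-$1/2$ self-similarity of Bessel processes is exactly what makes the $\e^{1/2}$ normalisation consistent --- and the angular part converges to the associated time-changed spherical diffusion; combined with moment bounds and tightness in $C([0,\infty),\rd)$ this gives $\cV^\e\stackrel{d}\longrightarrow\cV$, the process of Definition \ref{ddd}. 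Since $\e^{3/2}X_{t/\e}=\e^{3/2}x_0+\intot\cV^\e_s\,\dd s$ and integration is continuous on $C([0,\infty),\rd)$, the functional convergence follows. The principal difficulty of the paper is the stable regime (c)--(f): in the multi-dimensional, non-radially-symmetric setting one must control \emph{jointly} the magnitude (radial excursion), the duration, and the direction (coupled spherical motion, with its drift $M_\beta$) of a high-velocity excursion, establish the precise regular variation of the resulting displacement, and show that excursions decorrelate fast enough for a stable limit theorem --- with every polynomial margin lost at the critical exponents $\beta\in\{d,1+d,4+d\}$, where all estimates must be pushed to logarithmic precision.
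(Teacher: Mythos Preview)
Your high-level heuristics are sound, but in the stable regime (c)--(f) your proposed route differs substantially from the paper's and has a genuine gap. You propose to cut $[0,T]$ into excursions of $R$ away from a fixed level and to argue that the resulting displacements are ``asymptotically i.i.d.''; the paper instead transforms $R$ into a time-changed Brownian motion $W$ via the scale function $h$ and speed measure, so that $A^\e_t\to L^0_t$ (Brownian local time) and one can invoke the fully developed It\^o excursion theory of $W$ rather than build excursion theory for $R$. The gap in your sketch is the decorrelation mechanism: consecutive $R$-excursions are \emph{not} approximately independent through the angular part unless you use that the clock $H_t=\intot R_s^{-2}\dd s$ diverges at every zero of $R$ (equivalently, $T^\e$ diverges at every zero of $W$). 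This is precisely what allows the paper (Lemma~\ref{dec}) to replace the single spherical process $\hTheta$ by an i.i.d.\ family of stationary eternal spherical processes, one per Brownian excursion. Without this observation your ``renewal/occupation-time argument'' cannot be made rigorous, and in the infinite-variation range $\alpha\in[1,2)$ the displacement of a single excursion is not even integrable, so one also needs the compensated Poisson representation (Lemma~\ref{jj2}) rather than a bare sum of $D$'s.

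Two smaller divergences from the paper are worth noting. In (b) the paper does \emph{not} use a truncated corrector: since the Poisson equation is not tractable at $\beta=4+d$, it instead applies It\^o's formula to $R_t^3\Psi(\Theta_t)$ for a suitable eigenfunction-type $\Psi$ on $\Sd$ (Lemma~\ref{Psi}), extracting a martingale whose bracket is then shown to grow like $|\log\e|$ by a local-time computation (Lemma~\ref{dc3}); the remainder $Y_t$ is handled by a \emph{second} application of Lemma~\ref{lmlm}, which is where the hypothesis on $\Gamma'/\Gamma$ enters. In (g) a generator/tightness argument would struggle to identify the limit, since $(\cV_t)$ is not a standard diffusion but is defined excursion-by-excursion (Definition~\ref{ddd}); the paper instead proves uniform convergence $R^\e\to\cR$ directly from the scale-function representation (Lemma~\ref{rays}) and then couples the spherical pieces on each Bessel excursion using, once again, the divergence of the angular clock at the zeros of $\cR$.
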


The strong regularity of $U$ is only used to apply as simply as possible some classical PDE results.

\begin{rk}
(i) In the diffusive regimes (a) and (b), the matrix $\Sigma$ depends only on $U$ and $\beta$,
see Remarks \ref{rkd}-(i) and \ref{rkdc}-(i).
The additional condition when $\beta=4+d$ 
more or less imposes that $\Gamma'(r)\to 1$
as $r\to \infty$ and that this convergence does not occur too slowly. This is slightly restrictive, but
found no way to get rid of this assumption.

\vip

(ii) In cases (c), (d), (e) and (f), the L\'evy measure of the $\alpha$-stable process $(S_t)_{t\geq 0}$ 
only depends on $U$ and $\beta$: a complicated formula involving It\^o's excursion measure
can be found in Proposition \ref{tpfini}-(i). The additional condition 
when $\beta= 1+d$ requires that $r^{-1}\Gamma(r)$ does not converge too slowly to $1$ as $r\to \infty$
and is very weak. The constant $c>0$ in point (d) is explicit, see Remark \ref{rsc}.

\vip

(iii) In point (g), the law of $(\cV_t)_{t\geq 0}$ depends only on $\gamma$ and on $\beta$.

\vip

(iv) Actually, point (g) should extend to any value of $\beta \in (-\infty, d)$, with a rather simple proof,
the definition of the limit process $(\cV_t)_{t\geq 0}$ being less involved:
see Definition \ref{ddd} and observe that for $\beta\leq d-2$, the set of zeros of 
a Bessel process with dimension $d-\beta$ issued from $0$ is trivial.
We chose not to include this rather uninteresting case because the paper is already technical enough.
\end{rk}

For the main model we have in mind, Theorem \ref{mr} applies and its statement 
considerably simplifies. See Remarks \ref{rkd}-(ii) and \ref{rkdc}-(ii)
and Proposition \ref{tpfini}-(ii).

\begin{rk} Assume that
$\Gamma(r)=\sqrt{1+r^2}$ and $\gamma\equiv 1$, whence $F(v)=v/(1+|v|^2)$.

\vip

(a) If $\beta > 4+d$, then 
$(\e^{1/2} X_{t/\e})_{t\geq 0} \stackrel{f.d.} \longrightarrow (q B_t)_{t\geq 0},$
where $(B_t)_{t\geq 0}$ is a $d$-dimensional Brownian motion, for some explicit $q>0$.

\vip

(b) If $\beta=4+d$, then 
$(\e^{1/2}|\log \e|^{-1/2}X_{t/\e})_{t\geq 0}  \stackrel{f.d.} \longrightarrow (q B_t)_{t\geq 0},$
where $(B_t)_{t\geq 0}$ is a $d$-dimensional Brownian motion, for some explicit $q>0$.

\vip

(c)-(d)-(e) If $\beta\in (d,4+d)$, then 
$(\e^{1/\alpha}X_{t/\e})_{t\geq 0}  \stackrel{f.d.} \longrightarrow (S_t)_{t\geq 0},$
where $(S_t)_{t\geq 0}$ is a radially symmetric $\alpha$-stable process with non-explicit
multiplicative constant and where $\alpha=(\beta+2-d)/3$.

\vip

(f) If $\beta=d$, then
$(|\e\log\e|^{3/2}X_{t/\e})_{t\geq 0}  \stackrel{f.d.} \longrightarrow (S_t)_{t\geq 0},$
where $(S_t)_{t\geq 0}$ is a radially symmetric $2/3$-stable process with non-explicit
multiplicative constant.

\vip

(g) If $\beta\in (d-2,d)$, 
$(\e^{3/2} X_{t/\e})_{t\geq 0}\stackrel{d}\longrightarrow (\int_0^t \cV_s \dd s)_{t\geq 0},$
with $(\cV_t)_{t\geq 0}$ introduced in Definition \ref{ddd}.
\end{rk}

\subsection{Comments}

Pardoux-Veretennikov \cite{pv} studied in great generality the diffusive case, allowing for
some much more general SDEs with non-constant diffusion coefficient and general drift coefficient. 
Their results are sufficiently sharp to include the diffusive case $\beta>4+d$ when
$F(v)=v/(1+|v|^2)$. Hence the diffusive case (a)
is rather classical.

\vip

We studied the one-dimensional case $d=1$ with an even potential $U$
in \cite{ft}.
Many technical difficulties appear in higher dimension. In the diffusive and critical diffusive regime,
the main difficulty is that we cannot solve explicitly the Poisson equation
$\cL \phi(v)=v$ (with $\cL$ the generator of $(V_t)_{t\geq 0}$), while this is feasible
in dimension $1$. Observe that such a problem would disappear if dealing only with the
force $F(v)=v/(1+|v|^2)$.

\vip

We use a spherical decomposition $V_t=R_t\Theta_t$ of the velocity process.
This is of course very natural in this context, and we do not see how to proceed in another way.
However, since in some sense, after rescaling, the radius process $(R_t)_{t\geq 0}$ resembles a Bessel
process with dimension $d-\beta \in (-\infty,2)$, which hits $0$, spherical coordinates are 
rather difficult to deal with, the process $\Theta_t$ moving very fast each time $R_t$ touches $0$.

\vip

In dimension $1$, the most interesting {\it stable} regime is derived as follows.
We write $(V_t)_{t\geq 0}$ as a function of a time-changed Brownian motion $(W_t)_{t\geq 0}$,
using the classical {\it speed measures} and {\it scale functions} of one-dimensional SDEs and
express $\e^{1/\alpha} X_{t/\e}$ accordingly.
Passing to the limit as $\e\to 0$, we find the expression of the 
(symmetric) stable process in terms of the Brownian motion $(W_t)_{t\geq 0}$ and of its inverse local time at $0$
discovered by Biane-Yor \cite{by}, see also It\^o-McKean \cite[p 226]{imk} and Jeulin-Yor \cite{jy}.
In higher dimension, the situation is much more complicated, and we found no simpler way 
than writing our limiting stable processes using some {\it excursion Poisson point processes}.

\vip

Let us emphasize that our proofs are qualitative. 
On the contrary, even in dimension $1$,
the informal proofs of Barkai-Aghion-Kessler \cite{bak} rely on very explicit computations and
explicit solutions to O.D.E.s in terms of modified Bessel functions, and
Lebeau-Puel \cite{lp} also use rather explicit computations.

\subsection{Plan of the paper}

To start with, we explain informally in Section \ref{inf}
our proof of Theorem \ref{mr} in the most interesting case, that is
when $F(v)=v/(1+|v|^2)$ and when $\beta \in (d,4+d)$.
\vip

In Section \ref{notadebut}, we introduce some notation of constant use in the paper.
\vip

In Section \ref{Srep}, we write the velocity process $(V_t)_{t\geq 0}$
as $(R_t\Theta_t)_{t\geq 0}$, the radius process 
$(R_t)_{t\geq 0}$ solving an autonomous SDE, and the process $(\Theta_t)_{t\geq 0}$ being $\Sd$-valued.
We also write down a representation of the radius as a function of a time-changed Brownian motion,
using the classical theory of speed measures and scale functions  of one-dimensional SDEs.
\vip

We designed the other sections to be as independent as possible.
\vip

Sections \ref{Ss}, \ref{Sb},  \ref{Sd} and \ref{Scd} treat respectively the
stable regime (cases (c)-(d)-(e)-(f)), integrated Bessel regime (case (g)),
diffusive regime (case (a)) and critical diffusive regime (case (b)).
\vip

Finally, an appendix lies at the end of the paper and contains 
some more or less classical results about ergodicity of diffusion processes, about It\^o's excursion measure,
about Bessel processes, about convergence of inverse functions and, finally, a few technical estimates.

\section{Informal proof in the stable regime with a symmetric force}\label{inf}

We assume in this section that $F(v)=v/(1+|v|^2)$ and that  $\beta \in (d,4+d)$
and explain informally how to prove Theorem \ref{mr}-(c)-(d)-(e). We also assume, for example, that
$x_0=0$ and that $v_0=\theta_0\in\Sd$.

\vip

\underline{\it Step 1.} Writing the velocity process in spherical coordinates, we find that
$V_t=R_t\hTheta_{H_t}$, where
\begin{equation}\label{i1}
R_t=1+\tB_t+ \intot \Big(\frac{d-1}{2R_s}- \frac{\beta R_s}{1+R_s^2} \Big)\dd s,
\end{equation}
for some one-dimensional Brownian motion $(\tB_t)_{t\geq 0}$, independent of a spherical
$\Sd$-valued Brownian motion $(\hTheta_t)_{t\geq 0}$ starting from $\theta_0$, and where $H_t=\int_0^t R_s^{-2}\dd s$.

\vip

\underline{\it Step 2.} Using the classical {\it speed measure} and {\it scale function}, we may write
the radius process $(R_t)_{t\geq 0}$ as a space and time changed Brownian motion: set
$h(r)=(\beta+2-d)\int_{1}^r u^{1-d}[1+u^2]^{\beta/2} \dd u$, which is
an increasing bijection from $(0,\infty)$ into $\rr$.
We denote by $h^{-1}:\rr \mapsto (0,\infty)$ its inverse function and by
$\sigma(w)=h'(h^{-1}(w))$ from $\rr$ to $(0,\infty)$. For $(W_t)_{t\geq 0}$ a
one-dimensional Brownian motion, consider the continuous increasing process
$A_t=\intot [\sigma(W_s)]^{-2} \dd s$ and its inverse $(\rho_t)_{t\geq 0}$. 
One can classically check that $R_t=h^{-1}(W_{\rho_t})$ is a (weak) solution to \eqref{i1}, so that
we can write the position process as 
$$
X_t=\int_0^t h^{-1}(W_{\rho_s}) \hTheta_{H_s}\dd s = \int_0^{\rho_t}
\frac{h^{-1}(W_u)}{[\sigma(W_u)]^2}\hTheta_{H_{A_u}} \dd u.
$$
We used the change of variables $\rho_s=u$, i.e. $s=A_u$, whence $\dd s = [\sigma(W_u)]^{-2} \dd u$.
We next observe that $T_t=H_{A_t}=\int_0^{A_t}[h^{-1}(W_{\rho_s})]^{-2}\dd s=\int_0^t [\psi(W_u)]^{-2}\dd u$,
where we have set $\psi(w)=h^{-1}(w)\sigma(w)$. Finally,
$$
X_{t/\e}= \int_0^{\rho_{t/\e}} \frac{h^{-1}(W_u)}{[\sigma(W_u)]^2}\hTheta_{T_u} \dd u.
$$

\vip

\underline{\it Step 3.} To study the large time behavior of the position process, it is more convenient
to start from a fixed Brownian motion $(W_t)_{t\geq 0}$ and to use Step 2 with the Brownian
motion $(W^\e_t=(c\e)^{-1}W_{(c \e)^2 t})_{t\geq 0}$,
for some constant $c>0$ to be chosen later.
After a few computations, we find that
$$
X_{t/\e} =\int_0^{\rho^\e_{t}} \frac{h^{-1}(W_s/(c\e))\hTheta_{T^\e_s}}{(c \e)^2[ \sigma(W_s/(c\e))]^2}\dd s,\quad
\hbox{where}\quad T_t^\e=\int_0^t \frac{\dd u}{[c\e \psi(W_u/c\e)]^{2}},\quad
A^\e_t=\intot \frac{\dd u}{c^2\e [\sigma(W_u/(c\e))]^{2}},
$$
and where $(\rho^\e_t)_{t\geq 0}$ is the inverse of $(A^\e_t)_{t\geq 0}$.

\vip

\underline{\it Step 4.} If choosing $c=\int_\rr [\sigma(x)]^{-2}\dd x$, it holds that for all $t\geq0$,
$\lim_{\e \to 0} A^\e_t=L^0_t$ a.s., where $(L^0_t)_{t\geq 0}$ is the local time of $(W_t)_{t\geq 0}$:
by the occupation times formula, see Revuz-Yor \cite[Corollary 1.6 p 224]{ry},
$$
A^\e_t=\int_\rr \frac{L^x_t \dd x}{c^2\e [\sigma(x/(c\e))]^{2}}=\int_\rr \frac{L^{c \e y}_t \dd y}{c [\sigma(y)]^{2}}
\longrightarrow \int_\rr \frac{\dd y}{c [\sigma(y)]^{2}} L^0_t =L^0_t.
$$
As a consequence, $\rho^\e_t$ tends to $\tau_t$, the inverse of $L^0_t$.

\vip

\underline{\it Step 5.}
Studying the function $h$ near $0$ and $\infty$, and then $h^{-1}$, $\sigma$ 
and $\psi$ near $-\infty$ and $\infty$, we
find that, with $\alpha=(\beta+1-d)/3$ (see Lemma \ref{fcts}-(ix) and (v)),

\vip

$\bullet$ $\lim_{\e\to 0} \e^{1/\alpha} (c\e)^{-2}h^{-1}(w/(c\e)) [\sigma(w/(c\e))]^{-2}=
c' w^{1/\alpha-2}\indiq_{\{w>0\}}$,

\vip

$\bullet$ $\lim_{\e\to 0}[c\e \psi(w/c\e)]^{-2} = c'' w^{-2} \indiq_{\{w>0\}}+ \varphi(w)\indiq_{\{w\leq 0\}}$,

\vip
\noindent for some constants $c',c''>0$ and some unimportant function $\varphi\geq 0$.
Here appears the scaling $\e^{1/\alpha}$.

\vip

Passing to the limit informally in the expression of Step 3, we find that
\begin{gather*}
\e^{1/\alpha} X_{t/\e} \longrightarrow S_t= c' \int_0^{\tau_t} W_s^{1/\alpha-2} \indiq_{\{W_s>0\}} \hTheta_{U_s}\dd s,\\
\hbox{where}\quad  U_t=c'' \intot W_u^{-2}\indiq_{\{W_u>0\}} \dd u + \intot \varphi(W_u)\indiq_{\{W_u\leq 0\}} \dd u.
\end{gather*}
Unfortunately, this expression does not make sense, because $U_t=\infty$ for all $t>0$, since the
Brownian motion is (almost) $1/2$-H\"older continuous and since it hits $0$.
But in some sense, $U_t-U_s$ is well-defined if $W_u>0$ for all $u\in (s,t)$.
And in some sense, the processes $(\hTheta_{U_s})_{s\in [a,b]}$ and $(\hTheta_{U_s})_{s\in [a',b']}$ are independent
if $W_u>0$ on $[a,b]\cup[a',b']$ and if there exists $t\in (b,a')$ such that $W_t=0$, since then
$U_{a'}-U_b=\infty$, so that the spherical Brownian motion $\hTheta$, at time $U_{a'}$, has completely
forgotten the values it has taken during $[U_a,U_b]$.

\vip

Since $(\tau_t)_{t\geq 0}$ is the inverse local time of $(W_t)_{t\geq 0}$, it holds that $\tau_t$ is a
stopping-time and that $W_{\tau_{t-}}=W_{\tau_t}=0$ for each $t\geq 0$.
Hence by the strong Markov property, for any reasonable
function $f:\rr\mapsto \rr^d$, the process $Z_t=\int_0^{\tau_t} f(W_s)\dd s$ is L\'evy,
and its jumps are given by $\Delta Z_t= \int_{\tau_{t-}}^{\tau_t} f(W_s)\dd s$, for $t\in J
=\{s\geq 0 : \Delta\tau_s>0\}$.

\vip

The presence of $\hTheta_{U_s}$ in the expression of $(S_t)_{t\geq 0}$ does not affect its L\'evy character,
because $(\hTheta_t)_{t\geq 0}$ is independent of $(W_t)_{t\geq 0}$ and because in some sense, 
the family $\{(\hTheta_{U_u})_{u\in [\tau_{s-},\tau_s]} : s \in J\}$
is independent. Hence $(S_t)_{t\geq 0}$ is L\'evy and its jumps are given by
$$
\Delta S_t = c' \int_{\tau_{t-}}^{\tau_t} W_s^{1/\alpha-2} \indiq_{\{W_s>0\}} \hTheta^t_{[c''\int_{(\tau_t+\tau_{t-})/2}^{s} W_u^{-2}\dd u]}
\dd s, \qquad t\in J,
$$
for some i.i.d. family $\{(\hTheta^t_{u})_{u\in \rr} : t \in J\}$ of eternal spherical Brownian motions.
Informally, for each $t\in J$, we have set $\hTheta^t_{u}=\hTheta_{U_{(\tau_t+\tau_{t-})/2}+u}$ for all $u\in\rr$.
The choice of $(\tau_t+\tau_{t-})/2$ for the time origin of the eternal spherical Brownian motion $\hTheta^t$
is arbitrary, any time in $(\tau_{t-},\tau_t)$ would be suitable.
Observe that the clock $c''\int_{(\tau_t+\tau_{t-})/2}^{s} W_u^{-2}\dd u$ is well-defined for all $s\in (\tau_{t-},\tau_t)$
because $W_u$ is continuous and does not vanish on $u\in (\tau_{t-},\tau_t)$.
This clock tends to $\infty$ as $u\to \tau_t$, and to $-\infty$ as $u \to \tau_{t-}$.

\vip

It only remains to verify that the L\'evy measure $q$ of $(S_t)_{t\geq 0}$ is radially symmetric, which is more
or less obvious by symmetry of the law of the eternal spherical Brownian motion; and enjoys the scaling property
that $q(A_a)=a^\alpha q(A)$ for all $A\in \cB(\rd\setminus\{0\})$ and all $a>0$, where
$A_a=\{x \in \rd : ax \in A\}$. This property is inherited from the scaling property of the Brownian
motion (this uses that the clock in the spherical Brownian motion
is precisely proportional to $c''\int_{(\tau_t+\tau_{t-})/2}^{s} W_u^{-2}\dd u$).

\vip

To write all this properly, we have to use It\^o's excursion theory.

\vip

Let us also mention one last difficulty: when $\alpha\geq 1$, the integral
$\int_0^{t} W_s^{1/\alpha-2} \indiq_{\{W_s>0\}} \dd s$ is a.s. infinite for all $t>0$.
Hence to study $S_t$, one really has to use the symmetries of the spherical Brownian motion and that
the clock driving it explodes each time $W$ hits $0$.

\section{Notation}\label{notadebut}
In the whole paper, we suppose Assumption \ref{as}. 
We summarize here a few notation of constant use.

\vip

Recall that $\cS_d^+$ is the set of symmetric positive-definite $d\times d$ matrices.

\vip

We write the initial velocity as $v_0=r_0\theta_0$, with $r_0>0$ and $\theta_0\in\Sd$.

\vip

For $u \in \rd \setminus\{0\}$, let $\pi_{u^\perp}=(I_d - \frac{u u^*}{|u|^2})$ be
the $d\times d$-matrix of the orthogonal projection on $u^\perp$.

\vip

For $\Psi:\rd\mapsto\rd$, let $\nabla^*\Psi=(\nabla \Psi_1 \; \cdots \nabla \Psi_d)^*$.

\vip 

Recall that $a_\beta=[\int_\Sd [\gamma(\theta)]^{-\beta}\varsigma(\dd \theta)]^{-1}>0$, where
$\varsigma$ is the uniform probability measure on $\Sd$. We introduce the 
probability measure $\nu_\beta(\dd \theta)=a_\beta  [\gamma(\theta)]^{-\beta}\varsigma(\dd \theta)$ on $\Sd$.
It holds that $M_\beta=\int_{\Sd} \theta \nu_\beta(\dd \theta)\in \rd$.

\vip

If $\beta>d$, we set $b_{\beta}=[\int_0^\infty [\Gamma(r)]^{-\beta}r^{d-1}\dd r]^{-1}$ 
and introduce the probability measure $\nu'_\beta(\dd r)=b_\beta[\Gamma(r)]^{-\beta}r^{d-1}\dd r$ on $(0,\infty)$.
It has a finite mean $m_\beta'=\int_0^\infty r \nu'_\beta(\dd r)>0$ if $\beta>1+d$.

\vip

Still when $\beta>d$, we recall that $c_\beta = [\intrd[U(v)]^{-\beta}\dd v]^{-1}$ and that
$\mu_\beta(\dd v)=c_\beta [U(v)]^{-\beta}\dd v$ on $\rd$. It holds that $c_\beta=a_\beta b_\beta$ and 
for any measurable
$\varphi:\rd\mapsto\rr_+$, we have 
$$
\intrd \varphi(v)\mu_\beta(\dd v) = \int_0^\infty \int_{\Sd} \varphi(r\theta) \nu_\beta(\dd \theta) \nu_\beta'(\dd r).
$$
In particular, we have $m_\beta = M_\beta m'_\beta$ for all $\beta>1+d$.

\vip

In the whole paper, we implicitly extend all the functions on $\Sd$ to $\rd\setminus\{0\}$ as follows:
for $\psi:\Sd\mapsto \rr$ and $v\in \rd\setminus\{0\}$, we set $\psi(v)=\psi(v/|v|)$.

\vip

We endow $\Sd$ with its natural Riemannian metric, denote by $T\Sd$ its tangent bundle and by 
$\nabla_S$, $\ddiv_S$ and $\Delta_S$ the associated gradient, divergence and Laplace
operators. With the above convention, for a function $\psi:\Sd\mapsto \rr$ and 
a vector field $\Psi:\Sd\mapsto T\Sd$, it holds that, for $\theta \in \Sd\subset\rd\setminus\{0\}$,
$$
\nabla_S \psi(\theta)=\nabla \psi(\theta), \quad  
\ddiv_S \Psi(\theta)=\ddiv \Psi(\theta) 
\quad \hbox{and}\quad \Delta_S \psi(\theta)= \Delta \psi(\theta).
$$

\section{Representation of the solution}\label{Srep}

Here we show that \eqref{eds} is well-posed and explain how to build a solution (in law)
from some independent radial and spherical processes, in a way that will allow us 
to study the large time behavior of the position process by coupling.

\begin{lem}\label{eusph}
Consider a $d$-dimensional Brownian motion $(\hB_t)_{t\geq 0}$. The following equation, of which the unknown
$(\hTheta_t)_{t\geq 0}$ is $\rd\setminus\{0\}$-valued,
\begin{equation}\label{eqt}
\hTheta_t=\theta_0 + \intot \pi_{\hTheta_s^\perp} \dd \hB_s 
- \frac{d-1}2\intot \frac{\hTheta_s}{|\hTheta_s|^2}
\dd s - \frac \beta 2 \intot  \pi_{\hTheta_s^\perp} 
\frac{\nabla\gamma(\hTheta_s)}{\gamma(\hTheta_s)}\dd s
\end{equation}
has a unique strong solution, which is furthermore $\Sd$-valued.
\end{lem}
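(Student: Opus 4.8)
The plan is to reduce \eqref{eqt} to a well-posedness problem that can be handled by classical SDE theory, and then to identify the constraint manifold $\Sd$ by an It\^o-calculus argument. First I would observe that the coefficients $\theta \mapsto \pi_{\theta^\perp}$, $\theta\mapsto \theta/|\theta|^2$ and $\theta \mapsto \pi_{\theta^\perp}\nabla\gamma(\theta)/\gamma(\theta)$ are all of class $C^\infty$ on $\rd\setminus\{0\}$ (here I use Assumption \ref{as}, which gives $\gamma\in C^\infty$ and $\gamma>0$, together with the convention that functions on $\Sd$ are extended $0$-homogeneously, so that $\nabla\gamma$ is $(-1)$-homogeneous and smooth off the origin). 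Hence the coefficients are locally Lipschitz on $\rd\setminus\{0\}$, and by the standard existence-uniqueness theorem there is a pathwise unique strong solution $(\hTheta_t)_{t\in[0,\zeta)}$ defined up to the explosion/exit time $\zeta=\inf\{t\ge 0 : \hTheta_t \notin \rd\setminus\{0\}\}$, i.e. up to the first time the solution hits $0$ or runs to infinity.

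Next I would show that $|\hTheta_t|^2$ is constant equal to $1$ on $[0,\zeta)$, which simultaneously prevents the solution from reaching $0$. Apply It\^o's formula to $N_t=|\hTheta_t|^2$. The martingale part involves $2\hTheta_s^* \pi_{\hTheta_s^\perp}\dd\hB_s$; since $\pi_{\hTheta_s^\perp}$ is the orthogonal projection onto $\hTheta_s^\perp$, we have $\hTheta_s^*\pi_{\hTheta_s^\perp}=0$, so this term vanishes. The finite-variation part collects: the two drift contributions $2\hTheta_s^*\big(-\frac{d-1}{2}\hTheta_s/|\hTheta_s|^2 - \frac\beta2 \pi_{\hTheta_s^\perp}\nabla\gamma(\hTheta_s)/\gamma(\hTheta_s)\big)\dd s$, and the quadratic-variation (It\^o correction) term $\mathrm{tr}\big(\pi_{\hTheta_s^\perp}\pi_{\hTheta_s^\perp}^*\big)\dd s$. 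Again $\hTheta_s^*\pi_{\hTheta_s^\perp}=0$ kills the $\nabla\gamma$ contribution, while $\hTheta_s^*\hTheta_s/|\hTheta_s|^2=1$ gives $-(d-1)\dd s$ from the first drift, and $\mathrm{tr}(\pi_{\hTheta_s^\perp})=d-1$ from the correction (since $\pi_{\hTheta_s^\perp}$ is a projection of rank $d-1$ and is symmetric idempotent). These cancel, so $\dd N_t=0$, and since $N_0=|\theta_0|^2=1$ we get $|\hTheta_t|\equiv 1$ on $[0,\zeta)$. In particular $\hTheta_t$ stays on the compact set $\Sd$, bounded away from $0$.

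It remains to rule out $\zeta<\infty$: since the solution is confined to the compact manifold $\Sd$ on which the coefficients (and their derivatives) are bounded, no explosion in finite time can occur and the solution cannot approach $0$; hence $\zeta=+\infty$ and the solution is global and $\Sd$-valued. A clean way to phrase this is: let $\tau_n=\inf\{t: \hTheta_t\notin \{1/n<|x|<n\}\}$; uniqueness gives a solution on $[0,\tau_n)$ for each $n$, the computation above shows $|\hTheta_t|=1$ so in fact $\hTheta_t$ never leaves $\{1/2<|x|<2\}$, whence $\tau_n=\infty$ for $n\ge 2$ and $\zeta=\sup_n\tau_n=\infty$. Pathwise uniqueness on $\rd\setminus\{0\}$ then upgrades, by the confinement just proved, to pathwise uniqueness among $\Sd$-valued (indeed among $\rd\setminus\{0\}$-valued) solutions, giving the statement.

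The only mildly delicate point — the ``main obstacle'' — is purely bookkeeping: making sure the extension convention for $\gamma$ and $\nabla\gamma$ is used consistently so that the coefficients really are smooth off the origin, and that the projection identities $\hTheta_s^*\pi_{\hTheta_s^\perp}=0$ and $\mathrm{tr}(\pi_{\hTheta_s^\perp})=d-1$ are invoked correctly in the It\^o expansion; once the norm is seen to be conserved, everything else is routine. No genuinely hard analysis is required here.
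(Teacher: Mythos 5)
Your proof is correct and follows essentially the same route as the paper: local strong existence and pathwise uniqueness off the origin from the smoothness of the coefficients, then It\^o's formula applied to $|\hTheta_t|^2$, using $\hTheta_s^*\pi_{\hTheta_s^\perp}=0$ and $\mathrm{tr}(\pi_{\hTheta_s^\perp})=d-1$ to see that the norm is conserved, which confines the solution to $\Sd$ and rules out explosion or hitting $0$. Nothing to add.
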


Recall that we have extended $\gamma$ to $\rd\setminus\{0\}$ by setting
$\gamma(v)=\gamma(v/|v|)$.

\begin{proof}
The coefficients of this equation being of class $C^1$ on $\rr^d\setminus\{0\}$, there classically
exists a unique maximal strong solution (defined until it reaches $0$ or explodes to infinity), 
and we only have to check that this solution a.s. remains in
$\Sd$ for all times. But a classical computation using the It\^o formula shows that a.s., 
$|\hTheta_t|^2 = |\theta_0|^2=1$ for all $t\geq 0$. This uses the fact that for $\phi(\theta)=|\theta|^2$
defined on $\rd$,
we have $\nabla\phi(\theta)=2 \theta$, so that $(\nabla\phi(\theta))^* \pi_{\theta^\perp}=0$
and we have $\partial_{ij} \phi(\theta)=2 \delta_{ij}$, from which 
$\frac12 \sum_{i,j=1}^d \partial_{ij}\phi(\theta) (\pi_{\theta^\perp})_{ij} 
- \frac{d-1}2 \nabla\phi(\theta)\cdot |\theta|^{-2}\theta =0$.
\end{proof}

The SDE \eqref{eqr} below has a unique strong solution: it has a unique local
strong solution (until it reaches $0$ or $\infty$) because its coefficients are $C^1$ on $(0,\infty)$ 
and we will see in Lemma \ref{exr} that
one can build a $(0,\infty)$-valued global weak solution, so that the unique strong solution is global.

\begin{lem}\label{rep}
For two independent Brownian motions $(\tB_t)_{t\geq 0}$ (in dimension $1$) and $(\hB_t)_{t\geq 0}$ (in dimension $d$),
consider the $\Sd$-valued process $(\hTheta_t)_{t\geq 0}$ solution to \eqref{eqt}
and the $(0,\infty)$-valued process $(R_t)_{t\geq 0}$ solution to 
\begin{equation}\label{eqr}
R_t=r_0 + \tB_t + \frac{d-1}2\intot \frac{\dd s}{R_s} 
- \frac{\beta}{2}\intot \frac{\Gamma'(R_s)}{\Gamma(R_s)}\dd s.
\end{equation}
Setting $H_t=\intot R_s^{-2}\dd s$, $V_t=R_t \hTheta_{H_t}$ and $X_t=x_0+\intot V_s\dd s$, 
the $(\rd\setminus\{0\})\times \rd$-valued process 
$(V_t,X_t)_{t\geq 0}$ is a weak solution to \eqref{eds}.
\end{lem}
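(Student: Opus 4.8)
The goal is to check that $(V_t,X_t)_{t\ge0}=(R_t\hTheta_{H_t},x_0+\int_0^tV_s\,\dd s)_{t\ge0}$ is a weak solution of \eqref{eds}; i.e.\ that there exists a $d$-dimensional Brownian motion $(B_t)_{t\ge0}$ on the underlying space such that $V_t=v_0+B_t-\frac\beta2\int_0^tF(V_s)\,\dd s$. Since $X_t$ is defined as $x_0+\int_0^tV_s\,\dd s$ by construction, the only thing to verify is the SDE for $(V_t)$, and then the well-posedness claim of Proposition~\ref{eui} follows by combining this existence-in-law statement with the pathwise uniqueness that will be proved elsewhere (Yamada--Watanabe). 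So the whole content is an It\^o-formula computation in spherical coordinates, run in reverse.

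\emph{Step 1: set up the time change.} First I would record the regularity needed: by Lemma~\ref{eusph} the process $(\hTheta_t)$ is $\Sd$-valued, in particular bounded away from $0$, so $H_t=\int_0^tR_s^{-2}\,\dd s$ is well-defined, continuous, strictly increasing (as $R$ is $(0,\infty)$-valued and finite), and $C^1$ with $H_t'=R_t^{-2}$; also $H_t<\infty$ for every $t$ and $H_t\to\infty$ as $t\to\infty$ a.s.\ (the latter is not strictly needed). The key classical fact is that if $(\hTheta_u)_{u\ge0}$ solves \eqref{eqt} driven by a BM $(\hB_u)$ independent of $(\tB,R)$, then the time-changed process $\Theta_t:=\hTheta_{H_t}$ solves the same SDE with $\dd u$ replaced by $\dd H_t=R_t^{-2}\,\dd t$ and $\dd\hB_u$ replaced by $\dd\hat\beta_t$, where $\hat\beta_t=\int_0^t R_s\,\dd\hB_{H_s}$... more precisely $(\int_0^{H_t}R_{H^{-1}(u)}\cdots)$; concretely, $M_t:=\int_0^{H_t}\pi_{\hTheta_u^\perp}\,\dd\hB_u$ is a continuous local martingale with $\langle M\rangle_t=\int_0^{H_t}\pi_{\hTheta_u^\perp}\,\dd u$, and after the substitution this has density $R_t^{-2}\pi_{\Theta_t^\perp}$ in $\dd t$. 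I would phrase this as: there is a $d$-dim BM $(\hB'_t)$, measurable w.r.t.\ the enlarged filtration and independent of $(\tB,R)$, with
\begin{equation}\label{pf1}
\Theta_t=\theta_0+\intot \frac{1}{R_s}\pi_{\Theta_s^\perp}\,\dd \hB'_s-\frac{d-1}{2}\intot\frac{\Theta_s}{R_s^2|\Theta_s|^2}\,\dd s-\frac\beta2\intot\frac{1}{R_s^2}\pi_{\Theta_s^\perp}\frac{\nabla\gamma(\Theta_s)}{\gamma(\Theta_s)}\,\dd s,
\end{equation}
using $|\Theta_s|=1$ to simplify where convenient. (The rigorous justification of the time-changed BM is a Dambis--Dubins--Schwarz / L\'evy-characterization argument applied coordinatewise, together with independence; this is where most care is needed.)

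\emph{Step 2: differentiate $V_t=R_t\Theta_t$.} Apply It\^o's formula to the product: $\dd V_t=\Theta_t\,\dd R_t+R_t\,\dd\Theta_t+\dd\langle R,\Theta\rangle_t$. Since $(R_t)$ is driven by $(\tB_t)$ and $(\Theta_t)$ by $(\hB'_t)$, and these are independent, the bracket $\langle R,\Theta\rangle$ vanishes. Plugging in \eqref{eqr} and \eqref{pf1}:
\begin{align*}
\dd V_t&=\Theta_t\,\dd\tB_t+R_t\frac{1}{R_t}\pi_{\Theta_t^\perp}\,\dd\hB'_t\\
&\quad+\Theta_t\Big(\frac{d-1}{2R_t}-\frac\beta2\frac{\Gamma'(R_t)}{\Gamma(R_t)}\Big)\dd t-\frac{d-1}{2R_t}\Theta_t\,\dd t-\frac\beta2\frac{1}{R_t}\pi_{\Theta_t^\perp}\frac{\nabla\gamma(\Theta_t)}{\gamma(\Theta_t)}\,\dd t.
\end{align*}
The two $\frac{d-1}{2R_t}\Theta_t$ terms cancel. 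The martingale part is $\Theta_t\,\dd\tB_t+\pi_{\Theta_t^\perp}\,\dd\hB'_t$; since $\Theta_t\Theta_t^*+\pi_{\Theta_t^\perp}=I_d$ and $\tB,\hB'$ are independent, the quadratic variation of this $\rd$-valued local martingale is exactly $I_d\,\dd t$, so by L\'evy's characterization $B_t:=\int_0^t\Theta_s\,\dd\tB_s+\int_0^t\pi_{\Theta_s^\perp}\,\dd\hB'_s$ is a $d$-dimensional Brownian motion. For the drift, I use Assumption~\ref{as}: $U(v)=\Gamma(|v|)\gamma(v/|v|)$, hence $F(v)=\nabla\log U(v)=\frac{\Gamma'(|v|)}{\Gamma(|v|)}\frac{v}{|v|}+\frac{1}{|v|}\pi_{v^\perp}\frac{\nabla\gamma(v)}{\gamma(v)}$ — this decomposition of $F$ into radial plus tangential parts (standard from the chain rule and the fact that $\nabla_S\gamma$ is tangential) is exactly what appears above with $v=V_t$, $|v|=R_t$, $v/|v|=\Theta_t$. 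So the drift equals $-\frac\beta2 F(V_t)\,\dd t$, giving $\dd V_t=\dd B_t-\frac\beta2F(V_t)\,\dd t$, i.e.\ \eqref{eds}. Finally $V_0=R_0\Theta_0=r_0\theta_0=v_0$.

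\emph{Main obstacle.} The routine-but-delicate point is Step~1: making precise that solving \eqref{eqt} along the clock $H$ produces a genuine solution of \eqref{pf1} driven by a Brownian motion that is (jointly with the original $(\tB,R)$) a Brownian motion and still independent of $(\tB,R)$. One has to be careful that $H$ is a time change adapted to the filtration generated by $(\tB,R)$ alone (it depends only on $R$), so it is a valid change of time for the independent martingale driven by $\hB$; the cleanest route is to enlarge the probability space, define $\hB'_t$ via a Dambis--Dubins--Schwarz representation of each coordinate of $\int_0^{H_t}\pi_{\hTheta^\perp}\,\dd\hB$, use independence to check the cross-brackets vanish and the covariation is $R_t^{-2}\pi_{\Theta_t^\perp}\,\dd t$, then invoke L\'evy. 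Everything else — the product It\^o formula, the cancellation of the $\frac{d-1}{2R}$ terms, the radial/tangential split of $F$ — is a direct computation. Once the weak solution is built, feeding it together with the pathwise uniqueness into the Yamada--Watanabe theorem yields Proposition~\ref{eui}, and Remark~\ref{inv}'s invariant measure can be read off from the generator $\cL=\frac12\Delta-\frac\beta2 F\cdot\nabla=\frac12 U^\beta\,\ddiv(U^{-\beta}\nabla\cdot)$, which is symmetric in $L^2(\mu_\beta)$.
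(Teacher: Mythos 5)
Your proposal is correct and follows essentially the same route as the paper: time-change the spherical SDE \eqref{eqt} by the clock $H$, apply It\^o's product formula to $V_t=R_t\Theta_t$, identify the martingale part as a Brownian motion via $\Theta_t\Theta_t^*+\pi_{\Theta_t^\perp}=I_d$ and L\'evy's characterization, and match the drift with $-\tfrac\beta2F(V_t)$ through the radial/tangential decomposition of $F=\nabla\log U$. The point you flag as the main obstacle (producing, after the time change, a $d$-dimensional Brownian motion independent of $\tB$) is exactly what the paper spells out by constructing the filtration $\cG_t=\cH_{H_t}$ and checking the brackets, so your sketch via time-changed martingales and vanishing cross-brackets is the intended argument.
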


\begin{proof}
For each $t\geq 0$, $\nu_t=\inf\{s>0 : H_s>t\}$ is a $(\tilde \cF_s)_{s\geq 0}$-stopping time, where
$\tilde \cF_s=\sigma(\tB_u : u\leq s)$, so that we can set $\cH_t=\tilde \cF_{\nu_t} \lor \sigma(\hB_s : s\leq t)$.
Now for each $t\geq 0$, $H_t=\inf\{s>0 : \nu_s>t\}$ 
is a $(\cH_s)_{s\geq 0}$-stopping time and we can define the filtration
$\cG_t=\cH_{H_t}$. One classically checks that

\vip

$(a)$ $(\tB_t)_{t\geq 0}$ is a $(\cG_t)_{t\geq 0}$-Brownian motion, because $(\tB_{\nu_t})_{t\geq 0}$ is a 
$(\cH_t)_{t\geq 0}$-martingale, so that $(\tB_t=\tB_{\nu_{H_t}})_{t\geq 0}$ is a $(\cH_{H_t}=\cG_t)_{t\geq 0}$-martingale,
and we have $\langle \tB\rangle_t=t$ because  $(\tB_t)_{t\geq 0}$ is a Brownian motion;

\vip

(b) $\bB_t=\int_0^{H_t} R_{\nu_s}\dd \hB_s$ is a $(\cG_t)_{t\geq 0}$-Brownian motion with dimension $d$, 
because $(\bB_{\nu_t})_{t\geq 0}$ is a 
$(\cH_t)_{t\geq 0}$-martingale, so that $(\bB_t)_{t\geq 0}$ is a $(\cG_t)_{t\geq 0}$-martingale, and because
$\langle \bB \rangle_t=I_d \int_0^{H_t} R_{\nu_s}^2\dd s = I_d t$;

\vip

(c) these two Brownian motions are independent
because for all $i=1,\dots,d$, $\langle \tB,\bB^i\rangle \equiv 0$;

\vip

(d) for all continuous $(\cH_t)_{t\geq 0}$-adapted $(S_t)_{t\geq 0}$, 
we have $\int_0^{H_t} S_s \dd \hB_s = \intot R_s^{-1} S_{H_s} \dd \bB_s$.
Indeed, it suffices that for all $(\cG_t)_{t\geq 0}$-martingale $(M_t)_{t\geq 0}$, 
$\langle \int_0^{H_\cdot} S_s \dd \hB_s,M\rangle_t= \intot R_s^{-1}S_{H_s} \dd \langle \bB,M\rangle_s$.
But $(N_t=M_{\nu_t})_{t\geq 0}$ is
a $(\cH_t)_{t\geq 0}$-martingale, and we have $\langle \int_0^{H_\cdot} S_s \dd \hB_s,M\rangle_t
=\langle \int_0^{H_\cdot} S_s \dd \hB_s, \int_0^{H_\cdot}\dd N_s\rangle_t
=\int_0^{H_t} S_s \dd \langle \hB,N \rangle_s=\int_0^t S_{H_u} \dd(\langle \hB,N \rangle_{H_u})
=\int_0^t S_{H_u} R_{u}^{-1} \dd  \langle \bB,M\rangle_u$, because
$R_u \dd(\langle \hB,N \rangle_{H_u}) = \dd  \langle \bB,M\rangle_u$,
since
$\langle \bB,M\rangle_t=\langle \int_0^{H_\cdot} R_{\nu_s}\dd \hB_s, \int_0^{H_\cdot}\dd N_s\rangle_t
=\int_0^{H_t} R_{\nu_s} \dd \langle \hB,N \rangle_{s}
=\int_0^t R_u\dd (\langle \hB,N \rangle_{H_u})$.

\vip

We observe that $\Theta_t=\hTheta_{H_t}$ is $(\cG_t)_{t\geq 0}$-adapted
and, recalling \eqref{eqt} and that $|\hTheta_t|=1$,
\begin{align}\label{thetapc}
\Theta_t=&\theta_0 + \intot R_s^{-1}\pi_{\Theta_s^\perp}\dd \bB_s 
- \frac{d-1}2\intot R_s^{-2}\Theta_s \dd s - \frac \beta 2 \intot  R_s^{-2}\pi_{\Theta_s^\perp}
\frac{\nabla\gamma(\Theta_s)}{\gamma(\Theta_s)}\dd s.
\end{align}
Applying the It\^o formula, we find, setting $V_t=R_t\Theta_t$ as in the statement,
\begin{align*}
V_t =& v_0 + \intot \Theta_s \dd \tB_s + \intot \pi_{\Theta_s^\perp} \dd \bB_s
+ \intot \Big(\frac{d-1} {2R_s} - \frac{\beta}{2}\frac{\Gamma'(R_s)}{\Gamma(R_s)} \Big)\Theta_s \dd s \\
&- \intot \Big(\frac{d-1}{2R_s}\Theta_s + \frac{\beta}2\pi_{\Theta_s^\perp}
\frac{\nabla\gamma(\Theta_s)}{R_s\gamma(\Theta_s)}\Big) \dd s\\
=& v_0 + B_t  - \frac{\beta}{2}\intot \Big(\frac{\Gamma'(R_s)}{\Gamma(R_s)}\Theta_s
+ \pi_{\Theta_s^\perp}
\frac{\nabla\gamma(\Theta_s)}{R_s\gamma(\Theta_s)} \Big) \dd s,
\end{align*}
where we have set $B_t=\intot \Theta_s \dd \tB_s + \intot \pi_{\Theta_s^\perp} \dd \bB_s$.
This is a $\rd$-valued $(\cG_t)_{t\geq 0}$-martingale with quadratic variation matrix  $\intot [\Theta_s\Theta_s^*+
\pi_{\Theta_s^\perp}]\dd s =I_d t$ and thus a Brownian motion.
It only remains to verify that, for $v=r\theta$ with $r>0$ and $\theta \in \Sd$, one has
\begin{equation}\label{fff}
F(v)=[\Gamma(r)]^{-1}\Gamma'(r) \theta+ [r\gamma(\theta)]^{-1}\pi_{\theta^\perp}\nabla\gamma(\theta),
\end{equation}
which follows from the fact that $F=\nabla [\log U]$ with 
$U(v)=\Gamma(|v|)\gamma(v/{|v|})$.
\end{proof}

We next build the radial process using classical tools, namely speed measures and scale functions, see
Revuz-Yor \cite[Chapter VII, Paragraph 3]{ry}. 

\begin{nota}\label{notaf} 
Fix $\beta>d-2$. We introduce $h(r)=(\beta+2-d)\int_{r_0}^r u^{1-d}[\Gamma(u)]^\beta \dd u$, which is
an increasing bijection from $(0,\infty)$ into $\rr$.
We denote by $h^{-1}:\rr \mapsto (0,\infty)$ its inverse function, for which $h^{-1}(0)=r_0$.
We also introduce $\sigma(w)=h'(h^{-1}(w))$ from $\rr$ to $(0,\infty)$ and $\psi(w)=[\sigma(w) h^{-1}(w)]^{2}$
from $\rr$ to $(0,\infty)$
\end{nota}

In the following statement, we introduce a parameter $\e\in(0,1)$, which may seem artificial 
at this stage, but this will be crucial to work by coupling.

\begin{lem}\label{exr} 
Fix $\beta>d-2$ and consider a Brownian motion $(W_t)_{t\geq 0}$. For $\e\in (0,1)$ and $a_\e>0$, 
introduce $A^\e_t=\e a_\e^{-2}\intot [\sigma(W_s/a_\e)]^{-2}\dd s$ and its inverse $\rho^\e_t$.
Set $R^\e_t = \sqrt \e h^{-1}(W_{\rho^\e_t}/a_\e)$.
For each $\e\in(0,1)$, the process $(S^\e_t=\e^{-1/2}R^\e_{\e t})_{t\geq 0}$
is $(0,\infty)$-valued and is a weak solution to \eqref{eqr}.
\end{lem}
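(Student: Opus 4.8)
The plan is to verify that $(S^\e_t)_{t\geq 0}$ is a weak solution to \eqref{eqr} by working out the time change explicitly and applying It\^o's formula. First I would record the basic scaling: since the statement asks us to check a single SDE, it suffices by the Brownian scaling property to treat $\e=1$ and $a_\e=1$ (replacing $W$ by the Brownian motion $(\e^{-1/2} a_\e W_{\e a_\e^{-2} t})$ turns the general case into this one), so the real content is that $R_t = h^{-1}(W_{\rho_t})$ solves \eqref{eqr}, where $A_t=\intot[\sigma(W_s)]^{-2}\dd s$ and $\rho$ is its inverse. I would first observe that $h^{-1}$ is a $C^2$ increasing bijection $\rr\to(0,\infty)$ (from Notation~\ref{notaf}, since $h'(r)=(\beta+2-d)r^{1-d}[\Gamma(r)]^\beta>0$ is $C^1$), with $(h^{-1})'(w)=1/h'(h^{-1}(w))=1/\sigma(w)$ and $(h^{-1})''(w)=-h''(h^{-1}(w))/[\sigma(w)]^3$.

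Next I would apply It\^o's formula to $w\mapsto h^{-1}(w)$ along the Brownian motion $W$, getting
\[
h^{-1}(W_t)=r_0+\intot \frac{\dd W_s}{\sigma(W_s)} - \frac12\intot \frac{h''(h^{-1}(W_s))}{[\sigma(W_s)]^3}\dd s .
\]
Then I would perform the time substitution $t\mapsto\rho_t$. Because $A$ is a strictly increasing $C^1$ clock with $A'_t=[\sigma(W_t)]^{-2}$, its inverse $\rho$ satisfies $\rho'_t=[\sigma(W_{\rho_t})]^2$, and the stochastic integral $\int_0^{\rho_t}\sigma(W_s)^{-1}\dd W_s$ is a continuous martingale with quadratic variation $\int_0^{\rho_t}\sigma(W_s)^{-2}\dd s = A_{\rho_t}=t$, hence equals $\tB_t$ for some Brownian motion $\tB$ (Dambis–Dubins–Schwarz). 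The finite-variation term becomes, after the change of variables $s=\rho_u$, $\dd s=[\sigma(W_{\rho_u})]^2\dd u$,
\[
-\frac12\intot \frac{h''(h^{-1}(W_{\rho_u}))}{[\sigma(W_{\rho_u})]^3}[\sigma(W_{\rho_u})]^2\dd u
= -\frac12\intot \frac{h''(R_u)}{\sigma(W_{\rho_u})}\dd u
= -\frac12\intot \frac{h''(R_u)}{h'(R_u)}\dd u .
\]

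The final step is the elementary computation that $-\frac12 h''(r)/h'(r) = \frac{d-1}{2r} - \frac{\beta}{2}\frac{\Gamma'(r)}{\Gamma(r)}$: from $\log h'(r) = \log(\beta+2-d) + (1-d)\log r + \beta\log\Gamma(r)$ one gets $h''(r)/h'(r) = (1-d)/r + \beta\Gamma'(r)/\Gamma(r)$, which is exactly the negative of the drift coefficient in \eqref{eqr} after multiplying by $-\tfrac12$. Assembling the pieces shows $R_t = r_0 + \tB_t + \intot(\tfrac{d-1}{2R_s}-\tfrac{\beta}{2}\tfrac{\Gamma'(R_s)}{\Gamma(R_s)})\dd s$, i.e.\ $(R_t)_{t\geq 0}$ is a weak solution to \eqref{eqr}; that it is $(0,\infty)$-valued is immediate since $h^{-1}$ takes values in $(0,\infty)$. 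I expect the main (though still mild) obstacle to be bookkeeping the scaling reduction cleanly — making sure the prefactors $\sqrt\e$ and $a_\e$ in $R^\e_t=\sqrt\e\,h^{-1}(W_{\rho^\e_t}/a_\e)$ and in $S^\e_t=\e^{-1/2}R^\e_{\e t}$ combine so that the clock $A^\e$ at the rescaled Brownian motion reproduces exactly the clock $A$ above, and checking that the substitution identities $\rho'_t=[\sigma(W_{\rho_t})]^2$ and the DDS representation are legitimate given that $A$ is only known a priori to be continuous and strictly increasing (which follows since $\sigma$ is locally bounded and bounded below on compacts, and $A_\infty=\infty$ because $[\sigma(W_s)]^{-2}$ is not integrable over $[0,\infty)$ as $W$ is recurrent). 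Regularity of $\Gamma$ from Assumption~\ref{as} guarantees all the $C^1$/$C^2$ manipulations are justified.
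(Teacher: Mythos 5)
Your proof is correct and follows essentially the same route as the paper's: both realize the radius as a space--time-changed Brownian motion via the scale function $h$ and then use It\^o's formula together with the identity $h''/h'=(1-d)/r+\beta\Gamma'/\Gamma$ (the paper time-changes first, citing Revuz--Yor Proposition 1.13 p.~373, and then applies It\^o to $h^{-1}$, whereas you apply It\^o first and invoke Dambis--Dubins--Schwarz afterwards --- the same argument in the opposite order). One small bookkeeping slip: the Brownian motion that reduces the general case to $\e=a_\e=1$ is $\hat W_t=a_\e^{-1}W_{a_\e^2 t}$ (the dilation by $\e$ is already absorbed in passing from $R^\e$ to $S^\e_t=\e^{-1/2}R^\e_{\e t}$), not $\e^{-1/2}a_\e W_{\e a_\e^{-2}t}$; with that correction the reduction goes through exactly as you anticipate.
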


This can be rephrased as follows: $(R^\e_t)_{t\geq 0}$ has the same law as $(\sqrt\e R_{t/\e})_{t\geq 0}$,
with $(R_t)_{t\geq 0}$ solving \eqref{eqr}. Of course, $(\sqrt\e R_{t/\e})_{t\geq 0}$ is a natural object
when studying the large time behavior of $(R_t)_{t\geq 0}$.

\begin{proof}
First, $(S^\e_t)_{t\geq 0}$ is $(0,\infty)$-valued by definition.
Next, there classically exists a Brownian motion $(\bB_t)_{t\geq 0}$, 
see e.g. Revuz-Yor \cite[Proposition 1.13 p 373]{ry}, such that 
$Y_t^\e=W_{\rho^\e_t}$ solves $Y^\e_t=\e^{-1/2}a_\e \int_0^t\sigma(Y^\e_s/a_\e) \dd \bB_s$, whence 
$Z^\e_t=a_\e^{-1}Y^\e_t=\e^{-1/2}\intot\sigma(Z^\e_s)\dd \bB_s$. Thus 
$$
R^\e_t=\sqrt \e h^{-1}(Z^\e_t)=\sqrt \e h^{-1}(0)+ \intot (h^{-1})'(Z^\e_s)\sigma(Z^\e_s)\dd \bB_s  
+ \frac1{2\sqrt \e}\intot (h^{-1})''(Z^\e_s)\sigma^2(Z^\e_s)\dd s.
$$
But $h^{-1}(0)=r_0$, $(h^{-1})'(z)\sigma(z)=1$ and $(h^{-1})''(z)\sigma^2(z)=-\sigma'(z)=
-h''(h^{-1}(z))/h'(h^{-1}(z))
= \frac{d-1}{h^{-1}(z)} - \beta\frac{\Gamma'(h^{-1}(z))}{\Gamma(h^{-1}(z))}$ because
$h''(u)/h'(u)=[\log (u^{1-d}\Gamma^\beta(u))]'=(1-d)/u + \beta \Gamma'(u)/\Gamma(u)$. Hence
\begin{align*}
R^\e_t=&\sqrt \e r_0 + \bB_t + \frac {d-1}{2\sqrt \e} \intot \frac1{h^{-1}(Z^\e_s)} \dd s
- \frac{\beta}{2\sqrt\e} \intot \frac{\Gamma'(h^{-1}(Z^\e_s))}{\Gamma(h^{-1}(Z^\e_s))}\dd s\\
=& \sqrt \e r_0 + \bB_t + \frac {d-1}{2} \intot \frac{\dd s}{R^\e_s} 
- \frac{\beta}{2} \intot \frac{\Gamma'(R^\e_s/\sqrt\e)}{\sqrt\e \Gamma(R^\e_s/\sqrt\e)}\dd s.
\end{align*}
We conclude that $S^\e_t=\e^{-1/2}R^\e_{\e t}$ solves \eqref{eqr} with the Brownian motion
$\tB_t=\e^{-1/2}\bB_{\e t}$.
\end{proof}

Finally, we can give the

\begin{proof}[Proof of Proposition \ref{eui}]
The global weak existence of a $\rd\setminus\{0\}$-valued solution proved in Lemma \ref{rep},
together with the local strong existence and pathwise uniqueness (until the velocity process reaches
$0$ or explodes to infinity), which classically follows from the fact that the drift $F$ is of class
$C^1$ on $\rd\setminus\{0\}$, implies the global strong existence and pathwise uniqueness for \eqref{rep}.
\end{proof}

\section{The stable regime}\label{Ss}

Here we prove Theorem \ref{mr}-(c)-(d)-(e)-(f). 
We introduce some notation that will be used during the whole section.
We fix $\beta\in [d,4+d)$ and set $\alpha=(\beta+2-d)/3$.
We recall Notation \ref{notaf}. We fix $\e\in (0,1)$ and introduce 
$$
a_\e=\kappa \e \quad \hbox{if} \quad  \beta\in(d,4+d) \quad \hbox{and} \quad a_\e=\frac{\e|\log\e|}4
\quad \hbox{if} \quad \e=d,
$$
where $\kappa = \int_\rr [\sigma(w)]^{-2}\dd w<\infty$ when $\beta>d$, see Lemma \ref{fcts}-(i).
We consider a one-dimensional Brownian motion $(W_t)_{t\geq 0}$, set 
$A^\e_t=\e a_\e^{-2}\intot [\sigma(W_s/a_\e)]^{-2}\dd s$, introduce its inverse $\rho^\e_t$ and put
$R^\e_t = \sqrt \e h^{-1}(W_{\rho^\e_t}/a_\e)$. We know from Lemma \ref{exr} that
$S^\e_t=\e^{-1/2}R^\e_{\e t}= h^{-1}(W_{\rho^\e_{\e t}}/a_\e)$ 
solves \eqref{eqr}. We also consider the solution $(\hTheta_t)_{t\geq 0}$ of 
\eqref{eqt}, independent of $(W_t)_{t\geq 0}$.

\begin{lem}\label{ettac}
For each $\e\in (0,1)$, $(X_{t/\e}-x_0)_{t\geq 0}\stackrel{d}=(\tX^\e_t)_{t\geq 0}$, where
\begin{equation}\label{ettacez}
\tX^\e_t=\frac1{a_\e^2}\int_0^{\rho^\e_t} \frac{h^{-1}(W_u/a_\e)\hTheta_{T^\e_u}}{[\sigma(W_u/a_\e)]^2}\dd u
\quad \hbox{where} \quad T^\e_t = \frac{1}{a_\e^2}\int_0^{t} 
\frac{\dd s }{\psi(W_s/a_\e)}.
\end{equation}
Furthermore, for any $m\in \rd$, any $t\geq 0$, it holds that 
\begin{equation}\label{ettace}
\tX_{t/\e}-m t/\e = \frac1{a_\e^2}\int_0^{\rho^\e_t} \frac{h^{-1}(W_u/a_\e)\hTheta_{T^\e_u}-m}{[\sigma(W_u/a_\e)]^2}\dd u.
\end{equation}
\end{lem}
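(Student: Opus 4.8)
The plan is to combine the two representations established earlier in the paper: Lemma~\ref{rep}, which builds a weak solution of \eqref{eds} out of the independent radial process $(R_t)_{t\geq 0}$ and spherical process $(\hTheta_t)_{t\geq 0}$, and Lemma~\ref{exr}, which in turn represents the rescaled radial process as a space- and time-changed Brownian motion. First I would recall from Lemma~\ref{rep} that $X_t - x_0 = \int_0^t R_s \hTheta_{H_s}\dd s$ with $H_s = \int_0^s R_u^{-2}\dd u$; hence, for fixed $\e\in(0,1)$, the process $(X_{t/\e}-x_0)_{t\geq 0}$ equals in law $\big(\int_0^{t/\e} R_s\hTheta_{H_s}\dd s\big)_{t\geq 0}$. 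The point of the scaling parameter $\e$ in Lemma~\ref{exr} is precisely that, with $a_\e$ as chosen in this section, the process $(\sqrt\e R_{\cdot/\e})_{t\geq 0}$ has the same law as $(R^\e_t)_{t\geq 0}=(\sqrt\e\, h^{-1}(W_{\rho^\e_t}/a_\e))_{t\geq 0}$, the Brownian motion $(W_t)_{t\geq 0}$ being independent of $(\hTheta_t)_{t\geq 0}$.

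Next I would perform the change of variables exactly as in the informal Step 2 of Section~\ref{inf}, but keeping the $\e$-dependence. Writing $s=R^\e$-time in terms of the Brownian clock, one has $\dd s = \e a_\e^{-2}[\sigma(W_u/a_\e)]^{-2}\dd u$ under the substitution $\rho^\e_s \leftrightarrow u$ (i.e. $s=A^\e_u$), while $R^\e_s$ becomes $\sqrt\e\, h^{-1}(W_u/a_\e)$. A short computation shows that the time change $H$ appearing inside $\hTheta_{H_s}$ transforms into $T^\e_u = a_\e^{-2}\int_0^u [\psi(W_r/a_\e)]^{-1}\dd r$, using $\psi = (\sigma h^{-1})^2$ and the identity $\dd(H_{A^\e})_u = [h^{-1}(W_u/a_\e)]^{-2}\e a_\e^{-2}[\sigma(W_u/a_\e)]^{-2}\dd u = a_\e^{-2}[\psi(W_u/a_\e)]^{-1}\dd u$ (the factor $\e$ cancels against the $\e$ hidden in $R^\e = \sqrt\e\, h^{-1}$). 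Carrying out the substitution in $\int_0^{t/\e} R_s\hTheta_{H_s}\dd s$ and tracking the constants, the integral becomes $a_\e^{-2}\int_0^{\rho^\e_t} h^{-1}(W_u/a_\e)[\sigma(W_u/a_\e)]^{-2}\hTheta_{T^\e_u}\dd u$, which is \eqref{ettacez}. This gives the first assertion.

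For the second assertion \eqref{ettace}, I would simply subtract $mt/\e$ and rewrite $mt/\e$ as an integral against the same measure $a_\e^{-2}[\sigma(W_u/a_\e)]^{-2}\dd u\,du$ over $[0,\rho^\e_t]$: indeed, since $\rho^\e$ is the inverse of $A^\e_t = \e a_\e^{-2}\int_0^t[\sigma(W_s/a_\e)]^{-2}\dd s$, one has $A^\e_{\rho^\e_t} = t$, hence $a_\e^{-2}\int_0^{\rho^\e_t}[\sigma(W_u/a_\e)]^{-2}\dd u = t/\e$, and therefore $mt/\e = a_\e^{-2}\int_0^{\rho^\e_t} m\,[\sigma(W_u/a_\e)]^{-2}\dd u$. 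Subtracting yields \eqref{ettace}.

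The main obstacle is not any single estimate but the careful bookkeeping of the two nested time changes and the $\sqrt\e$ scaling: one must verify that $\rho^\e$, $A^\e$, $T^\e$ are honest inverses / finite processes (finiteness of $\kappa$ and the needed integrability of $\sigma^{-2}$ are already granted by Lemma~\ref{fcts}-(i) in the relevant regime $\beta\in[d,4+d)$), and that the substitution $s=A^\e_u$ is legitimate up to the (a.s. finite) time $\rho^\e_t$. One should also note that the identity \eqref{ettace} is purely formal-algebraic once \eqref{ettacez} is known — no convergence or limiting argument is involved here, so the only genuine work is establishing \eqref{ettacez}, and that is a matter of applying Lemma~\ref{rep}, Lemma~\ref{exr}, and the change-of-variables formula in the correct order.
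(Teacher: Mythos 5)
Your proposal is correct and follows essentially the same route as the paper: invoke Lemmas \ref{rep} and \ref{exr} to represent $(X_{t/\e}-x_0)$ via the independent pair $(W,\hTheta)$, perform the substitution $s=\e^{-1}A^\e_u$ to obtain \eqref{ettacez} together with the identification of the inner clock as $T^\e_u$, and deduce \eqref{ettace} from the identity $a_\e^{-2}\int_0^{\rho^\e_t}[\sigma(W_u/a_\e)]^{-2}\dd u=\e^{-1}A^\e_{\rho^\e_t}=t/\e$.
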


\begin{proof}
We know from Lemma \ref{rep} that, setting $H_t^\e=\intot [S_s^\e]^{-2} \dd s$,
$(S^\e_t \hTheta_{H^\e_t})_{t\geq 0}\stackrel{d}=(V_t)_{t\geq 0}$. Recalling that 
$X_t-x_0=\intot V_s \dd s$, we conclude that  $(X_{t/\e}-x_0)_{t\geq 0}\stackrel{d}=(\tX^\e_t)_{t\geq 0}$,
where $\tX^\e_t=\int_0^{t/\e} S^\e_s \hTheta_{H^\e_s}\dd s = 
\int_0^{t/\e} h^{-1}(W_{\rho^\e_{\e s}}/a_\e)\hTheta_{H^\e_s}\dd s$. 
Performing the substitution $u=\rho^\e_{\e s}$, i.e. $s=\e^{-1}A^\e_u$, 
whence $\dd s = a_\e^{-2}[\sigma(W_u/a_\e)]^{-2}\dd u$, we find
$$
\tX^\e_t=\frac{1}{a_\e^2}\int_0^{\rho^\e_t} \frac{h^{-1}(W_u/a_\e) \hTheta_{H^\e_{\e^{-1}A^\e_u}}}
{[\sigma(W_u/a_\e)]^2}\dd u.
$$
Using the same change of variables, one verifies  that
$$
H^\e_{\e^{-1}A^\e_t}=\int_0^{\e^{-1}A^\e_t}\frac{\dd s}{[h^{-1}(W_{\rho^\e_{\e s}}/a_\e)]^{2}} = \frac{1}{a_\e^2}
\int_0^t \frac{\dd u}{[\sigma(W_u/a_\e)]^2[h^{-1}(W_u/a_\e)]^2}=\frac{1}{a_\e^2}\int_0^{t} 
\frac{\dd u }{\psi(W_u/a_\e)}
$$
as desired. The last claim follows from the fact that
$a_\e^{-2}\int_0^{\rho^\e_t} [\sigma(W_u/a_\e)]^{-2}\dd u = \e^{-1} A^\e_{\rho^\e_t}= \e^{-1}t$.
\end{proof}

We first study the convergence of the time-change.

\begin{lem}\label{tl}
(i) For all $T>0$, a.s., $\sup_{[0,T]}|A^\e_t-L^0_t| \to 0$ as $\e\to 0$, where $(L^0_t)_{t\geq 0}$
is the local time at $0$ of $(W_t)_{t\geq 0}$.

\vip

(ii) For all $t\geq 0$, a.s., $\rho^\e_t \to \tau_t=\inf\{u\geq 0 : L^0_u>t\}$,
the generalized inverse of $(L^0_s)_{s\geq 0}$.
\end{lem}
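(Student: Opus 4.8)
The plan is to prove both statements via the occupation times formula, following exactly the informal computation in Step 4 of Section \ref{inf}, and then to transfer the convergence of $A^\e$ to a convergence of inverses $\rho^\e$.

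For (i), I would start by writing, using the occupation times formula (Revuz-Yor \cite[Corollary 1.6 p 224]{ry}) for the continuous additive functional $\int_0^t g(W_s)\dd s$,
$$
A^\e_t = \e a_\e^{-2}\int_0^t [\sigma(W_s/a_\e)]^{-2}\dd s = \e a_\e^{-2}\int_\rr [\sigma(x/a_\e)]^{-2} L^x_t \dd x = \e a_\e^{-1}\int_\rr [\sigma(y)]^{-2} L^{a_\e y}_t \dd y,
$$
after the substitution $x = a_\e y$. Now one must distinguish the two cases for $a_\e$. When $\beta\in(d,4+d)$, $a_\e=\kappa\e$ with $\kappa=\int_\rr[\sigma(w)]^{-2}\dd w<\infty$ by Lemma \ref{fcts}-(i), so $\e a_\e^{-1}=\kappa^{-1}$ and $A^\e_t = \kappa^{-1}\int_\rr [\sigma(y)]^{-2} L^{a_\e y}_t\dd y$. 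When $\beta=d$, $a_\e = \e|\log\e|/4$ and one checks (again via Lemma \ref{fcts}-(i), which should give the precise behavior of $\sigma$ at $\pm\infty$ in the critical case, presumably $[\sigma(w)]^{-2}$ behaving like a constant times $1/|w|$ so that $\int_{-N}^N[\sigma]^{-2}\sim c\log N$) that $\e a_\e^{-1}\int_{|y|\le a_\e^{-1}}[\sigma(y)]^{-2}\dd y \to 1$ and the tails are negligible; in both cases the total mass is normalized to $1$. Then I would fix $T>0$ and split the integral into $\{|y|\le \delta/a_\e\}$ and its complement. On the first region, $|L^{a_\e y}_t - L^0_t|\le \sup_{|x|\le\delta}|L^x_t-L^0_t| =: \omega(\delta)$, uniformly in $t\in[0,T]$, and $\omega(\delta)\to 0$ a.s. as $\delta\to 0$ by joint continuity of local times (Revuz-Yor). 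On the complement, since $L^x_t=0$ for $|x|>\sup_{[0,T]}|W_s|$, the contribution vanishes once $\delta/a_\e > \sup_{[0,T]}|W_s|$, i.e. for $\e$ small. Combining, $\sup_{[0,T]}|A^\e_t - L^0_t|\le \omega(\delta) + o(1)$, and letting $\e\to 0$ then $\delta\to 0$ gives (i).

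For (ii), I would use the general fact that uniform convergence of a sequence of continuous nondecreasing functions $A^\e \to L^0$ on compacts implies convergence of the (right-continuous generalized) inverses $\rho^\e_t \to \tau_t$ at every $t$ which is a continuity point of $\tau$, and in fact at every $t$ since $\tau$ is a.s. strictly increasing except on the (at most countable, deterministic-independent) set of jump times — more carefully, $\tau_t$ is the a.s. unique point where $L^0$ crosses level $t$ from the right for Lebesgue-a.e. $t$, and one handles the remaining $t$ by a left/right squeeze using $A^\e_{\tau_{t-}-\eta}\le t$ and $A^\e_{\tau_t+\eta}\ge t$ eventually, for any $\eta>0$; this is precisely the kind of inverse-function lemma the authors say they place in the appendix (``convergence of inverse functions''), so I would just cite it. Here I would also note that $\tau_t<\infty$ a.s. for all $t$ since $(L^0_s)_{s\ge 0}$ is a.s. unbounded.

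The main obstacle is the critical case $\beta=d$ in part (i): there the renormalizing constant $a_\e=\e|\log\e|/4$ is not simply $\e$ times the (now infinite) total mass $\int_\rr[\sigma]^{-2}$, and one must carefully control the logarithmically divergent tail of $[\sigma(y)]^{-2}$, showing that the mass $\e a_\e^{-1}\int_{|y|\le a_\e^{-1}}[\sigma(y)]^{-2}\dd y$ converges to $1$ and that the local time $L^{a_\e y}_t$, which is supported on $|a_\e y|\le \sup_{[0,T]}|W|$ i.e. $|y|\le a_\e^{-1}\sup|W|$, sees only the region where $L^{a_\e y}_t\approx L^0_t$ up to the small-$\e$ truncation. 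This requires the precise asymptotics of $\sigma$ at infinity from Lemma \ref{fcts} and a slightly more delicate splitting than in the $\beta>d$ case, but it is still routine once those asymptotics are in hand.
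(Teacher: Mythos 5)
Your overall route is the same as the paper's: the occupation times formula plus joint continuity of Brownian local times for (i), and the inverse-function lemma (Lemma \ref{tc}) together with $\PR(\tau_t\neq\tau_{t-})=0$ for each fixed $t$ for (ii). However, one step in (i) is wrong as stated. After the substitution $x=a_\e y$ the local time in the integrand is $L^{a_\e y}_t$, not $L^{y}_t$; hence on the complement $\{|y|>\delta/a_\e\}$, i.e. $\{|a_\e y|>\delta\}$, the local times do \emph{not} vanish for small $\e$: they vanish only for $|a_\e y|>\sup_{[0,T]}|W|$, and the annulus $\{\delta<|a_\e y|\le \sup_{[0,T]}|W|\}$ is nonempty for every $\e$ as soon as $\delta<\sup_{[0,T]}|W|$. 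The condition ``$\delta/a_\e>\sup_{[0,T]}|W_s|$'' is a condition on $y$, not on $x=a_\e y$, so it does not kill the local time there. The correct reason the far region is negligible is that its total \emph{weight} tends to $0$: for $\beta>d$, $\kappa^{-1}\int_{\{|y|>\delta/a_\e\}}[\sigma(y)]^{-2}\dd y\to 0$ by integrability of $\sigma^{-2}$ (Lemma \ref{fcts}-(i)); for $\beta=d$, the far contribution is at most $(\sup_{x,\,t\le T}L^x_t)\,\frac{\e}{a_\e}\int_{\{\delta/a_\e<|y|\le \sup|W|/a_\e\}}[\sigma(y)]^{-2}\dd y\le C\,\frac{\e}{a_\e}\,(1+\log(\sup|W|/\delta))\to 0$ since $\e/a_\e=4/|\log\e|$; this is essentially how the paper bounds its term $J^{\e,\delta}$, via the pointwise bound $[\sigma(w)]^{-2}\le C(1+|w|)^{-1}$ of Lemma \ref{fcts}-(vi). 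With that correction, your splitting together with the normalization $\e a_\e^{-1}\int_{\{|y|\le\delta/a_\e\}}[\sigma(y)]^{-2}\dd y\to 1$ (the paper's $r_{\e,\delta}\to1$, via Lemma \ref{fcts}-(vii)) yields (i).

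A minor point on (ii): the ``left/right squeeze'' you propose to handle the case where $t$ is a jump time of $\tau$ would not work (at such $t$, $\rho^\e_t$ need not converge to $\tau_t$ at all), but it is not needed: the statement is for each fixed $t$ almost surely, and $\PR(\tau_t\neq\tau_{t-})=0$ for fixed $t$, which is exactly what the paper invokes.
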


\begin{proof}
Point (ii) follows from point (i) by Lemma \ref{tc} and since $\PR(\tau_t\neq \tau_{t-})=0$.
Concerning point (i), we first assume that $\beta>d$.
Since $a_\e=\kappa \e$, 
the occupation times formula, see Revuz-Yor \cite[Corollary 1.6 p 224]{ry}, gives us
$$
A^{\e}_t=\frac{\e}{a_\e^2}\intot \frac{\dd s}{[\sigma(W_s/a_\e)]^{2}}=
\frac1{\kappa^2 \e}\int_\rr \frac{L^x_t \dd x}{\sigma^2(x/(\kappa \e))}=
\int_\rr \frac{L^{\kappa \e y }_t \dd y}{\kappa \sigma^2(y)},
$$
where $(L^x_t)_{t\geq 0}$ is the local time of $(W_t)_{t\geq 0}$ at $x$.
Recalling that $\kappa = \int_\rr [\sigma(w)]^{-2}\dd w$, which is finite by Lemma \ref{fcts}-(i),
we may write
$$
|A^\e_t - L^0_t| \leq 
\int_\rr \frac{|L^{\kappa \e y}_t-L^0_t| \dd y}{\kappa \sigma^2(y)},
$$
which a.s. tends uniformly (on $[0,T]$) to $0$ as $\e\to 0$ 
by dominated convergence, since $\sup_{[0,T]}|L^{\kappa \e y}_t-L^0_t|$ a.s. tends to $0$
for each fixed $y$ by \cite[Corollary 1.8 p 226]{ry} and since $\sup_{[0,T]\times \rr} L^{x}_t<\infty$
a.s.

\vip

We next treat the case where $\beta=d$, which is more complicated.
We recall that $a_\e=\e|\log \e|/4$.
By Lemma \ref{fcts}-(vi)-(vii), we know that $[\sigma(w)]^{-2} \leq C(1+|w|)^{-1}$ and that
\begin{equation}\label{eqlog2}
\int_{-x}^x \frac{\dd w}{[\sigma(w)]^2 } \stackrel{x\to \infty}\sim \frac{\log x}{4}.
\end{equation}
We fix $\delta>0$ and write $A^\e_t=J^{\e,\delta}_t+Q^{\e,\delta}_t$, where
$$
J^{\e,\delta}_t=\frac{\e}{a_\e^2}\int_0^t \frac{\indiq_{\{|W_s|> \delta\}}\dd s}
{[\sigma(W_s/a_\e)]^2} \quad\hbox{and}\quad
Q^{\e,\delta}_t=\frac{\e}{a_\e^2}\int_0^t \frac{\indiq_{\{|W_s|\leq \delta\}}\dd s}
{[\sigma(W_s/a_\e)]^2}.
$$
One checks that $\sup_{[0,T]}J^{\e,\delta}_t\leq C T \e /[a_\e^2(1+\delta/a_\e)]\leq C T \e/(\delta a_\e)$, 
which tends to $0$ as $\e\to 0$.
We next use the occupation times formula, see Revuz-Yor \cite[Corollary 1.6 p 224]{ry}, to write
\begin{align*}
Q^{\e,\delta}_t= \frac{\e}{a_\e^2}\int_{-\delta}^\delta \frac{L^x_t \dd x}
{[\sigma(x/a_\e)]^2} =\frac{\e}{a_\e^2}\int_{-\delta}^\delta \frac{\dd x}
{[\sigma(x/a_\e)]^2}L^0_t + \frac{\e}{a_\e^2}\int_{-\delta}^\delta \frac{(L^x_t-L^0_t) \dd x}
{[\sigma(x/a_\e)]^2}
=r_{\e,\delta} L_t^0+ R^{\e,\delta}_t,
\end{align*}
the last identity standing for a definition.
But a substitution and \eqref{eqlog2} allow us to write  
$$
r_{\e,\delta}=\frac{\e}{a_\e} \int_{-\delta/{a_\e}}^{\delta/{a_\e}} \frac{\dd y}
{[\sigma(y)]^2} \stackrel{\e\to 0}\sim
\frac{\e \log (\delta/a_\e)}{4 a_\e} \longrightarrow 1
$$
as $\e\to 0$. All this proves that a.s., for all $\delta>0$,
$\limsup_{\e\to 0}  \sup_{[0,T]}|A^\e_t - L^0_t|\leq \limsup_{\e\to 0}  \sup_{[0,T]}|R^{\e,\delta}_t|$.
But $|R^{\e,\delta}_t| \leq r_{\e,\delta} \times \sup_{[-\delta,\delta]}|L^x_t-L^0_t|$, whence
$\limsup_{\e\to 0}  \sup_{[0,T]}|A^\e_t - L^0_t|\leq \sup_{[0,T]\times[-\delta,\delta]}|L^x_t-L^0_t|$ a.s.,
and it suffices to let $\delta\to 0$, using Revuz-Yor \cite[Corollary 1.8 p 226]{ry}, to complete the proof.
\end{proof}

We next proceed to three first approximations: in the formula \eqref{ettace}, we show that
one may replace $\rho^\e_t$ by its limiting value $\tau_t$, that the negative values of
$W$ have a negligible influence, and that we may  introduce a cutoff that will allow us to neglect
the small jumps of the limiting stable process. All this is rather tedious in 
the infinite variation case $\alpha \in [1,2)$.
We recall that $m'_\beta>0$, $M_\beta\in\rd$ and $m_\beta=m'_\beta M_\beta$  were defined in 
Subsection \ref{notadebut}.

\begin{nota}\label{fap}
(i) If $\beta \in [d,1+d)$, we introduce, for $\delta\in(0,1]$ and $\e\in(0,1)$,
$$
Z^{\e,\delta}_t=a_\e^{1/\alpha-2}\int_0^{\tau_t} \frac{h^{-1}(W_u/a_\e)\hTheta_{T^\e_u}}{[\sigma(W_u/a_\e)]^2}
\indiq_{\{W_u>\delta \}}\dd u
\quad \hbox{and}\quad U^{\e,\delta}_t=a_\e^{1/\alpha}\tX^\e_t - Z^{\e,\delta}_t.
$$

(ii) If $\beta=1+d$, we put 
$$
\zeta_\e=\frac{\int_{-\infty}^1 h^{-1}(w/a_\e)[\sigma(w/a_\e)]^{-2}\dd w}{\int_{-\infty}^1 [\sigma(w/a_\e)]^{-2}\dd w}
=\frac{\int_{-\infty}^{1/a_\e} h^{-1}(w)[\sigma(w)]^{-2}\dd w}{\int_{-\infty}^{1/a_\e} [\sigma(w)]^{-2}\dd w}
$$
(so that $\kappa_{\e,1}$ defined below vanishes)
and we introduce, for $\delta\in(0,1]$ and $\e\in(0,1)$, 
\begin{gather*}
Z^{\e,\delta}_t=\frac 1 {a_\e}\int_0^{\tau_t} \frac{h^{-1}(W_u/a_\e)\hTheta_{T^\e_u}-\zeta_\e M_\beta}{[\sigma(W_u/a_\e)]^2}
\indiq_{\{W_u>\delta \}}\dd u, \quad
\kappa_{\e,\delta}=\frac 1 {a_\e}\int_{-\infty}^{\delta} \frac{h^{-1}(w/a_\e)- \zeta_\e}
{[\sigma(w/a_\e)]^2}\dd w,\\
\hbox{and}\quad U^{\e,\delta}_t=a_\e[\tX^\e_t- \zeta_\e M_\beta t/\e] - Z^{\e,\delta}_t- \kappa_{\e,\delta}M_\beta t.
\end{gather*}

(iii) If $\beta \in (1+d,4+d)$
we introduce, for $\delta\in(0,1]$ and $\e\in(0,1)$, 
\begin{gather*}
Z^{\e,\delta}_t=a_\e^{1/\alpha-2}\int_0^{\tau_t} \frac{h^{-1}(W_u/a_\e)\hTheta_{T^\e_u}-m_\beta}{[\sigma(W_u/a_\e)]^2}
\indiq_{\{W_u>\delta \}}\dd u, \quad
\kappa_{\e,\delta}=a_\e^{1/\alpha-2}\int_{-\infty}^{\delta} \frac{h^{-1}(w/a_\e)-m'_\beta}
{[\sigma(w/a_\e)]^2}\dd w,\\
\hbox{and}\quad U^{\e,\delta}_t=a_\e^{1/\alpha}[\tX^\e_t-m_\beta t/\e] - Z^{\e,\delta}_t- \kappa_{\e,\delta}M_\beta t.
\end{gather*}
\end{nota}

Observe that $\zeta_\e$ and $\kappa_{\delta,\e}$ are well defined by Lemma \ref{fcts}-(i)-(viii).

\begin{lem}\label{fapl}
For all $\beta \in [d,4+d)$, all $t\geq 0$, all $\eta>0$, 
$\lim_{\delta\to 0} \limsup_{\e \to 0}\PR[|U^{\e,\delta}_t|>\eta]=0$.
\end{lem}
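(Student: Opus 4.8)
The plan is to prove Lemma~\ref{fapl} separately in the three cases $\beta\in[d,1+d)$, $\beta=1+d$ and $\beta\in(1+d,4+d)$, but the common strategy is the same: $U^{\e,\delta}_t$ is a sum of ``error terms'' that fall into a few categories, and I will bound each category in $L^1$ or in probability. First recall from Lemma~\ref{ettac} that $a_\e^{1/\alpha}\tX^\e_t$ (resp.\ its recentered version) equals $a_\e^{1/\alpha-2}\int_0^{\rho^\e_t}h^{-1}(W_u/a_\e)\hTheta_{T^\e_u}[\sigma(W_u/a_\e)]^{-2}\dd u$. Subtracting $Z^{\e,\delta}_t$ (which has the integral over $[0,\tau_t]$ restricted to $\{W_u>\delta\}$) and the compensator $\kappa_{\e,\delta}M_\beta t$ leaves three contributions:
\begin{enumerate}[(A)]
\item the integral over the symmetric difference between $[0,\rho^\e_t]$ and $[0,\tau_t]$, controlled by Lemma~\ref{tl}-(ii) ($\rho^\e_t\to\tau_t$ a.s.);
\item the integral over $\{W_u\le 0\}$ (or more generally $\{W_u\le\delta\}$), where $h^{-1}$ stays bounded, so after recentering by the constant/$\zeta_\e$/$m'_\beta$ and compensating by $\kappa_{\e,\delta}M_\beta t$ this is a mean-zero object whose variance I will estimate using the occupation times formula and the asymptotics of $\sigma,h^{-1}$ near $-\infty$ from Lemma~\ref{fcts};
\item the integral over $\{0<W_u\le\delta\}$, the genuine ``small jumps'' term, which is where the $\lim_{\delta\to0}$ does the work.
\end{enumerate}

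For term (C), the core computation: on $\{0<W_u<\delta\}$ one has by Lemma~\ref{fcts}-(v),(ix) that $h^{-1}(w/a_\e)\sim c' (w/a_\e)^{1/\alpha}$-type behavior, so $a_\e^{1/\alpha-2}h^{-1}(w/a_\e)[\sigma(w/a_\e)]^{-2}$ is, after the change of variables $w=a_\e y$, comparable to an integrable-at-$0$ but heavy-at-$\infty$ density $c' y^{1/\alpha-2}\indiq_{\{y>0\}}$ truncated to $y<\delta/a_\e$. I will bound the second moment (when $\alpha<1$) or, when $\alpha\ge1$, bound the increments of the clock $T^\e_u$ and use the decorrelation coming from the spherical Brownian motion $\hTheta$ being run on the exploding clock $T^\e$ — exactly the mechanism described informally in Section~\ref{inf}. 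Concretely, split $\{0<W_u\le\delta\}$ into excursions of $W$ away from $0$; on each excursion $\hTheta_{T^\e_u}$ is, conditionally on $W$, a spherical Brownian motion run for a time that tends to $\infty$ as $\delta\to0$ on the relevant portion, so the contributions of distinct excursions are asymptotically uncorrelated and each has small mean (the mean over a full excursion vanishes by the $\hTheta$-average, up to the recentering already subtracted). This gives a bound of the form $\E[|C\text{-term}|^2\wedge|C\text{-term}|]\le C\,\omega(\delta)$ with $\omega(\delta)\to0$, uniformly in small $\e$, via Itô's excursion formula for $(W,L^0)$ (Revuz--Yor, Chapter XII) and the scaling of $h^{-1},\sigma$.

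The main obstacle, as the authors flag, is the infinite-variation regime $\alpha\in[1,2)$ (i.e.\ $\beta\in[2+d,4+d)$ within case (iii), and all of case (ii)): there $\int_0^t W_s^{1/\alpha-2}\indiq_{\{W_s>0\}}\dd s=\infty$, so one cannot bound term (C) by a crude $L^1$ estimate of the integrand — the integrability is recovered only through cancellation in the spherical direction. Handling this rigorously requires decomposing along excursions of $W$ and showing that within each excursion the clock $T^\e$ driving $\hTheta$ runs fast enough (blows up at the endpoints like $\int W_u^{-2}\dd u$) that the oscillation of $\hTheta$ produces the needed cancellation; I expect to need a maximal inequality for $\int\hTheta_{T^\e_u}g(W_u)\dd u$ over an excursion, obtained by conditioning on $W$, using that $(\hTheta_t)$ is a martingale plus bounded-variation drift with $|\hTheta|\equiv1$, and then a Burkholder--Davis--Gundy estimate in the time variable $T^\e$. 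The recentering constants $\zeta_\e$ and $\kappa_{\e,\delta}$ in Notation~\ref{fap} are precisely chosen (via Lemma~\ref{fcts}-(i),(viii)) so that the mean of the negative-side term (B) vanishes and the near-$0$ positive term (C) is centered; verifying that $\kappa_{\e,\delta}$ converges as $\e\to0$ and then vanishes as $\delta\to0$ is a routine but necessary computation using the asymptotics of $h^{-1}$ at $\pm\infty$. Once (A), (B), (C) are each shown to be $o_\PR(1)$ as $\e\to0$ then $\delta\to0$, a union bound over the three terms finishes the proof.
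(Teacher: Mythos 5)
Your high-level picture matches the paper's strategy in broad strokes: the three error sources you identify (the gap between $\rho^\e_t$ and $\tau_t$, the negative region, the small positive region), the occupation-times formula, and the decorrelation of $\hTheta$ through the exploding clock are exactly the ingredients used. But two of your specific claims would derail the proof. First, $\kappa_{\e,\delta}$ does \emph{not} vanish as $\delta\to 0$ when $\alpha\geq 1$: substituting $w=a_\e y$ and using $\kappa_{\e,\infty}=0$ gives $\kappa_{\e,\delta}=-a_\e^{1/\alpha-1}\int_{\delta/a_\e}^\infty (h^{-1}(y)-m'_\beta)[\sigma(y)]^{-2}\dd y\sim -c\,\delta^{1/\alpha-1}$, which diverges as $\delta\to0$ for $\alpha>1$ (and like $\tfrac19\log\delta$ for $\alpha=1$). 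This is not a routine verification gone slightly wrong; it is the compensator of the non-summable small jumps of the limiting stable process, and it cannot be made small. The paper never tries: it pairs $\kappa_{\e,\delta}$ with $L^0_t$, uses the occupation-times formula to rewrite the radial-mean contribution as $\int_{-\infty}^\delta(\cdots)(L^w_t-L^0_t)\dd w$, and kills it as $\delta\to0$ using the a.s.\ almost-$1/2$-H\"older continuity of $w\mapsto L^w_t$ (which is exactly why $1/\alpha-2>-3/2$ is needed). As written, your plan leaves a divergent drift unaccounted for.

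Second, your split by sign region does not isolate the term on which the cancellation mechanism acts. The paper splits the integrand on all of $\{W_u\leq\delta\}$ into an angular-fluctuation part $K^{\e,\delta}$ (proportional to $\hTheta_{T^\e_u}-M_\beta$) and a radial-mean part $I^{\e,\delta}$ (proportional to $h^{-1}(W_u/a_\e)-m'_\beta$); only after this factorization does ``the mean over an excursion vanish up to the recentering''. Moreover, for $K^{\e,\delta}$ the paper uses neither excursion decomposition nor a BDG bound on the martingale part of $\hTheta$. It computes $\E[(K^{\e,\delta}_t-K^{\e,\delta}_s)^2\,|\,\cW]$ as a double time integral, bounds the cross term by $C e^{-\lambda|T^\e_a-T^\e_b|}$ via the exponential ergodicity of the spherical process (Lemma \ref{ergo2}-(ii)), uses $|T^\e_a-T^\e_b|\geq c\,|\int_a^b(\e+|W_s|)^{-2}\dd s|$ from Lemma \ref{fcts}-(iv), and integrates out one variable with the elementary inequality \eqref{magic}; the decorrelation thereby converts the square of the non-integrable singularity $|W|^{1/\alpha-2}$ into the single, locally integrable $|W|^{2/\alpha-2}$. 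Your proposed maximal inequality is a genuinely different and unverified route, and since $|\hTheta|\equiv 1$ a crude BDG bound in the clock variable degenerates as the clock explodes; what is actually needed is the quantitative mixing bound above. Until the compensator issue and the second-moment bound on the angular term are carried out, the argument has a gap precisely in the infinite-variation regime that you yourself flag as the main obstacle.
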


\begin{proof}
\underline{\it Case (i):} $\beta \in [d,1+d)$, whence $\alpha \in [2/3,1)$. Recalling \eqref{ettacez}, we see that
$$
U^{\e,\delta}_t=a_\e^{1/\alpha-2}\int_0^{\tau_t} \frac{h^{-1}(W_u/a_\e)\hTheta_{T^\e_u}}{[\sigma(W_u/a_\e)]^2}
\indiq_{\{W_u\leq \delta \}}\dd u
+ a_\e^{1/\alpha-2} \int_{\tau_t}^{\rho^\e_t} 
\frac{h^{-1}(W_u/a_\e)\hTheta_{T^\e_u}}{[\sigma(W_u/a_\e)]^2}\dd u.
$$
Since $h^{-1}(w)[\sigma(w)]^{-2} \leq C (1+w)^{1/\alpha-2}\indiq_{\{w\geq 0\}}+C(1+|w|)^{-2}
\indiq_{\{w< 0\}}$ by Lemma \ref{fcts}-(viii), 
$$
a_\e^{1/\alpha-2}h^{-1}(w/a_\e)[\sigma(w/a_\e)]^{-2} 
\leq C w^{1/\alpha-2}\indiq_{\{w\geq 0\}}+C |w|^{1/\alpha-2}(1+|w|/a_\e)^{-1/\alpha}\indiq_{\{w< 0\}} \leq C|w|^{1/\alpha-2},
$$ 
and thus
\begin{align*}
|U^{\e,\delta}_t| \leq& C \int_0^{\tau_t} W_u^{1/\alpha-2}\indiq_{\{0\leq W_u\leq \delta \}}\dd u 
+C \int_0^{\tau_t} |W_u|^{1/\alpha-2}(1+|W_u|/a_\e)^{-1/\alpha}\indiq_{\{W_u<0\}}\dd u \\
&+ C \int_{\tau_t}^{\rho^\e_t}  |W_u|^{1/\alpha-2} \dd u.
\end{align*}
But $1/\alpha-2>-1$, so that the integral $\int_0^T |W_u|^{1/\alpha-2}\dd u$ is a.s. finite for all $T>0$
(because its expectation is finite). One concludes by dominated convergence, using 
that $\rho^\e_t\to \tau_t$ a.s. for each $t\geq 0$ fixed by Lemma \ref{tl}-(ii),
that a.s.,
$$
\lim_{\delta \to 0}\limsup_{\e \to 0}|U^{\e,\delta}_t| \leq C\lim_{\delta\to 0}
\int_0^{\tau_t} W_u^{1/\alpha-2}\indiq_{\{0\leq W_u\leq \delta \}}\dd u=0.
$$

\underline{\it Case (iii): $\beta \in (1+d,4+d)$.} This is much more complicated. 
Recalling \eqref{ettace}, we have
\begin{align*}
U^{\e,\delta}_t=& a_\e^{1/\alpha-2}\int_0^{\tau_t}
\frac{h^{-1}(W_u/a_\e)\hTheta_{T^\e_u}-m_\beta}{[\sigma(W_u/a_\e)]^2}
\indiq_{\{W_u\leq \delta \}}\dd u - \kappa_{\e,\delta}M_\beta t \\
&+ a_\e^{1/\alpha-2}\int_{\tau_t}^{\rho^\e_t} 
\frac{h^{-1}(W_u/a_\e)\hTheta_{T^\e_u}-m_\beta}{[\sigma(W_u/a_\e)]^2}\dd u\\
=& K^{\e,\delta}_{\tau_t} + M_\beta I^{\e,\delta}_{\tau_t} + [K^{\e,\infty}_{\rho^\e_t}-K^{\e,\infty}_{\tau_t}] + M_\beta 
[I^{\e,\infty}_{\rho^\e_t}-I^{\e,\infty}_{\tau_t}],
\end{align*}
where we have set (extending the definition of $\kappa_{\e,\delta}$ to all values of $\delta\in (0,\infty]$),
\begin{align*}
K^{\e,\delta}_t=&a_\e^{1/\alpha-2}\int_0^{t} \frac{h^{-1}(W_u/a_\e)[\hTheta_{T^\e_u}-M_\beta]}{[\sigma(W_u/a_\e)]^2}
\indiq_{\{W_u\leq \delta \}}\dd u ,\\
I^{\e,\delta}_t=&a_\e^{1/\alpha-2}\int_0^{t} \frac{h^{-1}(W_u/a_\e)-m'_\beta}{[\sigma(W_u/a_\e)]^2}
\indiq_{\{W_u\leq \delta \}}\dd u - \kappa_{\e,\delta} L^0_t.
\end{align*}
We used that $m_\beta=m_\beta'M_\beta$, that $L^0_{\tau_t}=t$ and that 
\begin{equation}\label{keiz}
\kappa_{\e,\infty}= a_\e^{1/\alpha-2}\int_{-\infty}^{\infty} \frac{h^{-1}(w/a_\e)-m'_\beta}{[\sigma(w/a_\e)]^{2}}\dd w =
a_\e^{1/\alpha-1}\int_{-\infty}^{\infty} \frac{h^{-1}(y)-m'_\beta}{[\sigma(y)]^{2}}\dd y =0
\end{equation}
by Lemma \ref{fcts}-(ii).

\vip

We first treat $I$.
By the occupation times formula, see 
Revuz-Yor \cite[Corollary 1.6 p 224]{ry}, and by definition of $\kappa_{\e,\delta}$,
$$
I^{\e,\delta}_t=a_\e^{1/\alpha-2}\int_{-\infty}^\delta \frac{h^{-1}(w/a_\e)-m'_\beta}{[\sigma(w/a_\e)]^2}
(L^w_t-L^0_t) \dd w.
$$
For each $\delta\in (0,\infty]$, 
each $T\geq 0$, we a.s. have $\lim_{\e\to 0} \sup_{[0,T]} |I^{\e,\delta}_t-I^{\delta}_t|=0$,
where we have set
$I^\delta_t=(\beta+2-d)^{-2}\int_0^\delta w^{1/\alpha-2}(L^w_t-L^0_t)\dd w$.
Indeed, this follows from dominated convergence, because 

\vip

\noindent $\bullet$ $a_\e^{1/\alpha-2} |h^{-1}(w/a_\e)-m_\beta'|[\sigma(w/a_\e)]^{-2}
\leq C |w|^{1/\alpha-2}$ by Lemma \ref{fcts}-(viii),

\vip

\noindent $\bullet$ $\lim_{\e\to 0} a_\e^{1/\alpha-2} [h^{-1}(w/a_\e)-m_\beta'][\sigma(w/a_\e)]^{-2}=
(\beta+2-d)^{-2}w^{1/\alpha-2}\indiq_{\{w\geq 0\}}$, see Lemma \ref{fcts}-(ix),

\vip

\noindent $\bullet$ a.s., 
$\int_\rr |w|^{1/\alpha-2} \sup_{[0,T]}|L^w_t-L^0_t|\dd w<\infty$, since $1/\alpha-2\in (-3/2,-1)$
and since $\sup_{[0,T]} |L^w_t-L^0_t|$ is a.s. bounded and almost $1/2$-Hold\"er continuous (as a function of
$w$), see \cite[Corollary 1.8 p 226]{ry}.

\vip

We conclude that $\lim_{\delta\to 0} \limsup_{\e\to 0} |I^{\e,\delta}_{\tau_t}|
=\lim_{\delta\to 0} |I^{\delta}_{\tau_t}|=0$ a.s. and, 
using that $\rho^\e_t\to \tau_t$ a.s. by Lemma \ref{tl}-(ii) (for each fixed $t\geq 0$)
and that $t\mapsto I^{\infty}_t$ is a.s.
continuous on $[0,\infty)$, that $\lim_{\delta\to 0} \limsup_{\e\to 0} |I^{\e,\infty}_{\rho^\e_t}-I^{\e,\infty}_{\tau_t} |=0$
a.s. All this proves that a.s.,
$$
\lim_{\delta\to 0} \limsup_{\e\to 0} [|I^{\e,\delta}_{\tau_t}| + |I^{\e,\infty}_{\rho^\e_t}-I^{\e,\infty}_{\tau_t} |]=0.
$$

We next treat $K$. We mention at once that all the computations below 
concerning $K$ are also valid when $\beta=1+d$, i.e. $\alpha=1$.
We introduce $\cW=\sigma(W_t,t\geq 0)$. Assume for a moment that there is $C>0$ such that
for any $\delta\in (0,\infty]$, any $\e\in(0,1)$, any $0\leq s \leq t$, a.s.,
\begin{equation}\label{obj}
\E[(K^{\e,\delta}_t-K^{\e,\delta}_s)^2|\cW]\leq C \int_s^t |W_u|^{2/\alpha-2}
\Big[\indiq_{\{0\leq W_u\leq \delta\}}+(1+|W_u|/\e)^{-1/\alpha}\Big] \dd u.
\end{equation}
Then, $\tau_t$ and $\rho^\e_t$ being $\cW$-measurable, we will deduce that
\begin{align*}
\E\big[ (K^{\e,\delta}_{\tau_t})^2+(K^{\e,\infty}_{\tau_t}-K^{\e,\infty}_{\rho^\e_t})^2 | \cW\big] \leq& 
C \int_0^{\tau_t} |W_u|^{2/\alpha-2} \Big[\indiq_{\{0\leq W_u\leq \delta\}}+(1+|W_u|/\e)^{-1/\alpha}\Big] \dd u \\
&+ C\Big|\int_{\rho^\e_t}^{\tau_t}|W_u|^{2/\alpha-2} \dd u \Big|.
\end{align*}
Since $\int_0^T |W_u|^{2/\alpha-2}\dd u <\infty$ a.s. for all $T>0$ because $2/\alpha-2>-1$ and since
$\rho^\e_t\to \tau_t$ a.s. (for $t\geq 0$ fixed) by Lemma \ref{tl}-(ii), conclude, by dominated convergence
that a.s.,
$$
\lim_{\delta\to 0} \limsup_{\e\to 0}
\E[(K^{\e,\delta}_{\tau_t})^2+(K^{\e,\infty}_{\tau_t}-K^{\e,\infty}_{\rho^\e_t})^2 |\cW]=0,
$$
from which the convergence
$\lim_{\delta\to 0} \limsup_{\e\to 0} |K^{\e,\delta}_{\tau_t}|+|K^{\e,\infty}_{\tau_t}-K^{\e,\infty}_{\rho^\e_t}|=0$ in probability 
follows.

\vip

We now check \eqref{obj}, starting from 
$$
(K^{\e,\delta}_t-K^{\e,\delta}_s)^2=a_\e^{2/\alpha-4}\int_s^t \int_s^t \frac{h^{-1}(W_a/a_\e)}{[\sigma(W_a/a_\e)]^2}
\frac{h^{-1}(W_b/a_\e)}{[\sigma(W_b/a_\e)]^2}
\indiq_{\{W_a\leq \delta\}}
\indiq_{\{W_b\leq \delta\}} (\hTheta_{T^\e_a}-M_\beta)(\hTheta_{T^\e_b}-M_\beta)\dd a \dd b.
$$
Since $(T^\e_t)_{t\geq 0}$ is $\cW$-measurable, since $(\hTheta_t)_{t\geq 0}$ is independent of $\cW$,
and since $M_\beta=\int_{\Sd} \theta \nu_\beta(\dd \theta)$, Lemma \ref{ergo2}-(ii) (and the Markov property) 
tells us that
there are $C>0$ and $\lambda>0$ such that
\begin{align*}
|\E([\hTheta_{T^\e_a}-M_\beta][\hTheta_{T^\e_b}-M_\beta]|\cW)|
\leq C \exp(-\lambda |T^\e_b-T^\e_a|).
\end{align*}
By Lemma \ref{fcts}-(viii) and since $a_\e=\kappa \e$, 
we have 
$$
a_\e^{1/\alpha-2}\frac{h^{-1}(w/a_\e)}{[\sigma(w/a_\e)]^2} \leq C (\e+|w|)^{1/\alpha-2}[\indiq_{\{w\geq 0\}}+
(1+|w|/\e)^{-1/\alpha}\indiq_{\{w<0\}}], 
$$
whence
\begin{align*}
\E[(K^{\e,\delta}_t-&K^{\e,\delta}_s)^2|\cW]
\leq  C \int_s^t \int_s^t (\e+|W_a|)^{1/\alpha-2}(\e+|W_b|)^{1/\alpha-2}\\
&\Big[\indiq_{\{0<W_a\leq \delta\}}+(1+|W_a|/\e)^{-1/\alpha}\Big]\Big[\indiq_{\{0\leq W_b\leq \delta\}}+(1+|W_b|/\e)^{-1/\alpha}\Big]
\exp(-\lambda |T^\e_a-T^\e_b|) \dd a \dd b.
\end{align*}
Next, we observe that, since $a_\e^2\psi(w/a_\e) \leq C(\e+|w|)^2$ by Lemma \ref{fcts}-(iv),
$$
\lambda |T^\e_a-T^\e_b|= \lambda \Big|\frac{1}{a_\e^2}\int_a^b
\frac{\dd s }{\psi(W_s/a_\e)}\dd s \Big|\geq c \Big|\int_a^b (\e+|W_s|)^{-2}\dd s \Big|
$$
for some $c>0$. Using that $(xy)^{1/\alpha}\leq x^{2/\alpha}+y^{2/\alpha}$ 
and a symmetry argument, we conclude that
\begin{align*}
\E[(K^{\e,\delta}_t-K^{\e,\delta}_s)^2|\cW]
\leq& C\int_s^t (\e+|W_b|)^{2/\alpha-2}\Big[\indiq_{\{0\leq W_b\leq \delta\}}+(1+|W_b|/\e)^{-1/\alpha}\Big]\\
&\hskip3cm \int_s^t  (\e+|W_a|)^{-2}
\exp\Big(  -   c\Big|\int_a^b (\e+|W_s|)^{-2}\dd s \Big|\Big)\dd a \dd b\\
\leq & C\int_s^t |W_b|^{2/\alpha-2}\Big[\indiq_{\{0\leq W_b\leq \delta\}}+(1+|W_b|/\e)^{-1/\alpha}\Big] \dd b
\end{align*}
as desired. 
We finally used that for all $b\in [0,t]$, all continuous function $\varphi:\rr_+\mapsto \rr_+$,
\begin{align}
\int_0^t \varphi(a)\exp\Big(-\Big|\int_a^b \varphi(s)\dd s\Big|\Big)\dd a \leq 2. \label{magic}
\end{align}

\vip

\underline{\it Case (ii): $\beta=1+d$.} Applying \eqref{ettace} with $m=\zeta_\e M_\beta$, we see that
\begin{align*}
U^{\e,\delta}_t=&\frac 1 {a_\e}\int_0^{\tau_t} \frac{h^{-1}(W_u/a_\e)\hTheta_{T^\e_u}-\zeta_\e M_\beta}
{[\sigma(W_u/a_\e)]^2} \indiq_{\{W_u\leq \delta \}}\dd u -\kappa_{\e,\delta}M_\beta t
+ \frac 1 {a_\e}\int_{\tau_t}^{\rho^\e_t} \frac{h^{-1}(W_u/a_\e)\hTheta_{T^\e_u}-\zeta_\e M_\beta}
{[\sigma(W_u/a_\e)]^2}\dd u\\
=& K^{\e,\delta}_{\tau_t} + [K^{\e,\infty}_{\rho^\e_t}-K^{\e,\infty}_{\tau_t}] + M_\beta 
I^{\e,\delta}_{\tau_t} + M_\beta [I^{\e,\infty}_{\rho^\e_t}-I^{\e,\infty}_{\tau_t}],
\end{align*}
where we have set, for $\delta\in (0,1)\cup\{\infty\}$, with the convention that $\kappa_{\e,\infty}=0$,
\begin{align*}
K^{\e,\delta}_t=&\frac1{a_\e}\int_0^{t} \frac{h^{-1}(W_u/a_\e)[\hTheta_{T^\e_u}-M_\beta]}{[\sigma(W_u/a_\e)]^2}
\indiq_{\{W_u\leq \delta \}}\dd u ,\\
I^{\e,\delta}_t=&\frac1{a_\e}\int_0^{t} \frac{h^{-1}(W_u/a_\e)-\zeta_\e}{[\sigma(W_u/a_\e)]^2}
\indiq_{\{W_u\leq \delta \}}\dd u - \kappa_{\e,\delta} L^0_t.
\end{align*}

Exactly as in Case (iii), 
$\lim_{\delta\to 0} \limsup_{\e\to 0}[|K^{\e,\delta}_{\tau_t}|+|K^{\e,\infty}_{\tau_t}-K^{\e,\infty}_{\rho^\e_t}|]=0$
in probability.

\vip

We also have, for any $\delta \in (0,1)\cup\{\infty\}$, by definition of $\kappa_{\e,\delta}$
(in particular since $\kappa_{\e,1}=\kappa_{\e,\infty}=0$),
$$
I^{\e,\delta}_t=\frac1{a_\e}\int_{-\infty}^\delta \frac{h^{-1}(w/a_\e)-\zeta_\e}{[\sigma(w/a_\e)]^2}
(L^w_t-L^0_t\indiq_{\{w\leq 1\}}) \dd w.
$$
As in Case (iii), it is sufficient to verify that for each $\delta\in (0,1)\cup\{\infty\}$,
each $T\geq 0$, we a.s. have $\lim_{\e\to 0} \sup_{[0,T]} |I^{\e,\delta}_t-I^{\delta}_t|=0$,
where we have set
$I^\delta_t=9^{-2}\int_0^\delta w^{-1}(L^w_t-L^0_t\indiq_{\{w\leq 1\}})\dd w$.
This, here again, follows from dominated convergence, because, recalling
that $a_\e = \kappa \e$,

\vip

\noindent $\bullet$ $a_\e^{-1} h^{-1}(w/a_\e)[\sigma(w/a_\e)]^{-2}
\leq C w^{-1}\indiq_{\{w\geq 0\}}+ C|w|^{-1}(1+|w|)^{-1 }\indiq_{\{w<0\}}$ by Lemma \ref{fcts}-(viii),

\vip 

\noindent $\bullet$ $\lim_{\e\to 0}  a_\e^{-1} h^{-1}(w/a_\e)/[\sigma(w/a_\e)]^2 = 
9^{-1}w^{-1}\indiq_{\{w\geq 0\}}$, see Lemma \ref{fcts}-(ix),

\vip

\noindent $\bullet$ $\zeta_\e \leq C \int_{-\infty}^{1/a_\e} h^{-1}(w)[\sigma(w)]^{-2}\dd w
\leq C(1+ |\log \e|)$ by Lemma \ref{fcts}-(viii),

\vip

\noindent $\bullet$ $a_\e^{-1} \zeta_\e [\sigma(w/a_\e)]^{-2}
\leq C \e^{-1}(1+|\log\e|)(1+|w|/\e)^{-4/3}\leq C \e^{1/3}(1+|\log \e|) |w|^{-4/3}$ by Lemma \ref{fcts}-(vi),

\vip

\noindent $\bullet$ the integral 
$$ \int_\rr [|w|^{-1}\indiq_{\{w>0\}}+|w|^{-1}(1+|w|)^{-1 }\indiq_{\{w<0\}}+|w|^{-4/3}] 
\sup_{[0,T]}|L^w_t-L^0_t\indiq_{\{w\leq 1\}}|\dd w
$$
is a.s. finite, 
since $\sup_{[0,T]} |L^w_t-L^0_t\indiq_{\{w\leq 1\}}|$ is a.s. bounded, vanishes for $w$ sufficiently large
(namely, for $w>\sup_{[0,T]} W_s$) and is a.s. almost $1/2$-Hold\"er 
continuous near $0$, see \cite[Corollary 1.8 p 226]{ry}.
\end{proof}

We need the excursion theory for the Brownian motion, see Revuz-Yor \cite[Chapter XII, Part 2]{ry}.
We introduce a few notation and briefly summarize what we will use.

\begin{nota}\label{notaex}
Recall that $(W_t)_{t\geq 0}$ is a Brownian motion, that $(L^0_t)_{t\geq 0}$ is its local time at $0$,
that $\tau_t=\inf\{u\geq 0 : L^0_u>t\}$ is its inverse. We introduce $J=\{s>0 : \tau_s>\tau_{s-}\}$
and, for $s \in J$, 
$$
e_s=\Big(W_{\tau_\sm+r}\indiq_{\{r \in [0,\tau_s-\tau_\sm]\}}  \Big)_{r\geq 0} \in \cE,
$$
where $\cE$ is the set of continuous functions $e$ from $\rr_+$ into $\rr$ such that $e(0)=0$,
such that 
$$
\ell(e)=\sup\{r>0 : e(r)\neq 0\}\in (0,\infty)
$$ 
and such that $e(r)$ does not vanish on $(0,\ell(e))$.
For $e\in \cE$, we denote by $x(e)=\sg(e(\ell(e)/2))\in \{-1,1\}$ and observe that 
$\sg(e(r))=x(e)$ for all $r\in (0,\ell(e))$.

\vip

We introduce $\bM=\sum_{s\in J} \delta_{(s,e_s)}$,
which is a Poisson measure on $[0,\infty)\times \cE$ with intensity measure
$\dd s \Xi(\dd e)$, where $\Xi$ is a $\sigma$-finite measure on $\cE$ known as
It\^o's measure and that can be decomposed as follows:
denoting by $\cE_1=\{e \in \cE : \ell(e)=1$ and $x(e)=1\}$ and by
$\Xi_1 \in \cP(\cE_1)$ the law of the normalized Brownian excursion, 
for all measurable $A\subset \cE$,
\begin{equation}\label{tictac}
\Xi(A)=\int_0^\infty \frac{\dd \ell}{\sqrt{2\pi \ell^3}} \int_{\{-1,1\}} 
\frac12(\delta_{-1}+\delta_1)(\dd x) \int_{\cE_1} \Xi_1(\dd e) \indiq_{\{(x\sqrt\ell e(r/\ell))_{r\geq 0} \in A\}}.
\end{equation}

It holds that $\tau_t=\intot \int_\cE \ell(e) \bM(\dd s,\dd e)$ and for
all $t\in J$, all $s\in [\tau_{t-},\tau_t]$, we have 
$
W_s=e_t(s-\tau_{t-}).
$
For any $\phi : \rr \mapsto \rr_+$, any $t\geq 0$, we have
\begin{equation}\label{superform}
\int_0^{\tau_t}\phi(W_u)\dd u = \sum_{s \in J \cap [0,t]} \int_{\tau_\sm}^{\tau_s} \phi(W_u)\dd u=
\intot \int_\cE \Big[\int_0^{\ell(e)} \phi(e(u)) \dd u\Big]
\bM(\dd s,\dd e).
\end{equation}
\end{nota}

We now rewrite the processes introduced in Notation \ref{fap} in terms of the excursion Poisson measure.
We recall that $\psi,h,\sigma$ were defined in Notation \ref{notaf}.

\begin{nota}\label{fgen}
Fix $\e\in(0,1)$ and $0 \leq \delta < A\leq \infty$.
For $e\in\cE$, and $\theta=(\theta_r)_{r \in \rr}$ in $\cH=C(\rr,\Sd)$, let
\begin{align*}
F_{\e,\delta,A}(e,\theta)=&a_\e^{1/\alpha-2} \int_0^{\ell(e)}
\frac{h^{-1}(e(u)/a_\e)\theta_{r_{\e,u}(e)}-m_{\beta,\e}}
{[\sigma(e(u)/a_\e)]^2} \indiq_{\{\delta< e(u)< A\}} \dd u,
\end{align*}
where $m_{\beta,\e}=0$ if $\beta\in [d,1+d)$, $m_{\beta,\e}=\zeta_\e M_\beta$ if $\beta =1+d$ 
and $m_{\beta,\e}=m_\beta$ if $\beta\in (1+d,4+d)$ and where, for $u\in (0,\ell(e))$,
$$
r_{\e,u}(e)=\frac1{a_\e^2}\int_{\ell(e)/2}^u \frac{\dd v}{\psi(e(v)/a_\e)}.
$$
\end{nota}

Observe that $F_{\e,\delta,A}(e,\theta)=0$ if $x(e)=-1$.
Also, we make start the clock $r_{\e,u}(e)$ from the middle $\ell(e)/2$ of the excursion because at the limit,
$a_\e^2\psi(x/a_\e)$ vanishes at $x=0$ sufficiently fast so that
both $a_\e^{-2}\int_{0+}[\psi(e(v)/a_\e)]^{-1}\dd v$ and 
$a_\e^{-2}\int^{\ell(e) -}[\psi(e(v)/a_\e)]^{-1}\dd v$ will tend to infinity
as $\e\to 0$.

\begin{rk}
For all $\e\in (0,1)$, all $\delta \in (0,1)$, all $t\geq 0$, we have
\begin{align}\label{ttt}
Z^{\e,\delta}_t =& \intot   \int_\cE
F_{\e,\delta,\infty}\Big( e, \Big(\hTheta_{[P^\e_\sm+r-r_{\e,0}(e)]\lor 0} 
\Big)_{r \in \rr} \Big) \bM(\dd s,\dd e),
\end{align}
where 
$$
P^\e_t = \intot  \int_\cE
\Big[\frac1{a_\e^2}\int_0^{\ell(e)}\frac{\dd u}{\psi(e(u)/a_\e)} \Big] \bM(\dd s,\dd e).
$$
\end{rk}

\begin{proof}
For any reasonable $\phi_1: \rr\times\rr\mapsto \rr$
and $\phi_2:\rr\mapsto \rr$, if setting $\nu_t=\intot \phi_2(W_s)\dd s$, we have
\begin{align*}
\int_0^{\tau_t} \phi_1(W_s,\nu_s)\dd s =&
\sum_{s\in J\cap [0,t]} \int_{\tau_\sm}^{\tau_s} \phi_1\Big(W_u,\nu_{\tau_\sm}+\int_{\tau_\sm}^u \phi_2(W_v)\dd v\Big)\dd u\\
=& \intot \int_\cE \Big[\int_0^{\ell(e)} \phi_1\Big(e(u),\nu_{\tau_\sm}+ 
\int_0^{u} \phi_2(e(v))\dd v \Big)\dd u\Big] \bM(\dd s,\dd e).
\end{align*}
With $\phi_1(w,\nu)=a_\e^{1/\alpha-2}[\sigma(w/a_\e)]^{-2}[h^{-1}(w/a_\e)\hTheta_\nu-m_{\beta,\e}]\indiq_{\{w>\delta\}}$
and $\phi_2(w)=a_\e^{-2}[\psi(w/a_\e)]^{-1}$, so that $T^\e_t=\intot \phi_2(W_s)\dd s$
and $P^\e_t=T^\e_{\tau_t}$ by \eqref{superform}, this gives
\begin{align*}
Z^{\e,\delta}_t=& \intot   \int_\cE \Big[a_\e^{1/\alpha-2}\int_0^{\ell(e)} 
\frac{h^{-1}(e(u)/a_\e)\hTheta_{P^\e_\sm+ a_\e^{-2}\int_0^{u} [\psi(e(v))]^{-2}\dd v} - m_{\beta,\e}}
{[\sigma(e(u)/a_\e)]^2} \indiq_{\{e(u)\geq \delta \}}\dd u\Big] \bM(\dd s,\dd e),
\end{align*}
from which the result follows because by definition of $F_{\e,\delta,\infty}$, we have
\begin{align*}
&F_{\e,\delta,\infty}\Big(e, \Big(\hTheta_{[P^\e_\sm+r-r_{\e,0}(e)]\lor 0} 
\Big)_{r \in \rr} \Big)\\
=& a_\e^{1/\alpha-2} \int_0^{\ell(e)}
\frac{h^{-1}(e(u)/a_\e)\hTheta_{[P^\e_\sm+r_{\e,u}(e)-r_{\e,0}(e)]\lor 0}-m_{\beta,\e}}
{[\sigma(e(u)/a_\e)]^2} \indiq_{\{e(u)\geq \delta\}} \dd u
\end{align*}
and because $P^\e_\sm$ is positive, as well as $r_{\e,u}(e)-r_{\e,0}(e)$ which equals
$$
\frac1{a_\e^2}\int_{\ell(e)/2}^u \frac{\dd v}{\psi(e(v)/a_\e)}+  
\frac1{a_\e^2}\int_0^{\ell(e)/2} \frac{\dd v}{\psi(e(v)/a_\e)}=
\frac1{a_\e^2}\int_0^u \frac{\dd v}{\psi(e(v)/a_\e)}
$$
as desired.
\end{proof}

We now get rid of the correlation in the spherical process.

\begin{lem}\label{dec}
Let $\bN$ be a Poisson measure on $[0,\infty)\times  \cE \times \cH$ 
with intensity measure
$$
\pi(\dd s,\dd e,\dd \theta)= \dd s \Xi(\dd e)
\Lambda(\dd \theta),
$$
for $\Xi \in \cP(\cE)$ the law of the normalized Brownian excursion
and $\Lambda \in \cP(\cH)$ the law of the stationary eternal
spherical process built in Lemma \ref{ergo2}. 
For $\e\in(0,1)$ and $\delta\in(0,1)$, we introduce the process
$$
\bZ^{\e,\delta}_t= \intot \int_\cE \int_{\cH}
F_{\e,\delta,\infty}(e,\theta) \bN(\dd s,\dd e,\dd\theta).
$$
For all $T>0$, all $\delta>0$, there exists a function $q_{T,\delta}:(0,1)\mapsto \rr_+$
such that $\lim_{\e\to 0} q_{T,\delta}(\e)=1$ and such that for any $\e\in(0,1)$, we can find a coupling
between $(Z^{\e,\delta}_t)_{t\in [0,T]}$ and $(\bZ^{\e,\delta}_t)_{t\in [0,T]}$ such that
$\PR[(Z^{\e,\delta}_t)_{t\in [0,T]}=(\bZ^{\e,\delta}_t)_{t\in [0,T]}] \geq q_{T,\delta}(\e)$.
\end{lem}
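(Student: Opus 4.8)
The plan is to couple $\bM$ with a Poisson measure $\tbN=\sum_{s\in J}\delta_{(s,e_s,\theta_s)}$ on $[0,\infty)\times\cE\times\cH$ of intensity $\dd s\,\Xi(\dd e)\Lambda(\dd\theta)$ (so $\tbN\stackrel d=\bN$), obtained by keeping the excursions $(s,e_s)$ of $\bM$ and attaching to each a mark $\theta_s$; the process built from $\tbN$ then has the law of $\bZ^{\e,\delta}$, so it suffices to construct, for each $\e$, marks of the correct joint law with agreement probability $\to1$. Since $F_{\e,\delta,\infty}(e,\cdot)\equiv0$ unless $\sup_{r\ge0}e(r)>\delta$, only the a.s.\ finitely many excursions $(s_1,e_{s_1}),\dots,(s_N,e_{s_N})$ of $\bM$ with $s_k\le T$ and $\sup_r e_{s_k}(r)>\delta$ contribute (finiteness because $\Xi$ assigns finite mass to excursions reaching level $\delta$); on all other excursions we attach fresh i.i.d.\ $\Lambda$-marks, which affects neither $Z^{\e,\delta}$ nor $\bZ^{\e,\delta}$. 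For a relevant $k$ set $u_-^{(k)}=\inf\{v:e_{s_k}(v)>\delta\}$ and $u_+^{(k)}=\sup\{v:e_{s_k}(v)>\delta\}$, so that $0<u_-^{(k)}\le u_+^{(k)}<\ell(e_{s_k})$. Reading off \eqref{ttt} and Notation~\ref{fgen}, and using that $r_{\e,u}(e)-r_{\e,0}(e)=a_\e^{-2}\int_0^u[\psi(e(v)/a_\e)]^{-1}\dd v\ge0$, the values of $\hTheta$ that enter the $k$-th term are $\hTheta_{P^\e_{s_k-}+r_{\e,u}(e_{s_k})-r_{\e,0}(e_{s_k})}$ for $u\in[u_-^{(k)},u_+^{(k)}]$, i.e.\ $\hTheta$ restricted to the window $I_k^\e=[\sigma_k^\e,\sigma_k^\e+\Delta_k^\e]$ with
\[
\sigma_k^\e=P^\e_{s_k-}+\frac{1}{a_\e^2}\int_0^{u_-^{(k)}}\frac{\dd v}{\psi(e_{s_k}(v)/a_\e)},
\qquad
\Delta_k^\e=\frac{1}{a_\e^2}\int_{u_-^{(k)}}^{u_+^{(k)}}\frac{\dd v}{\psi(e_{s_k}(v)/a_\e)}.
\]
The $I_k^\e$ are disjoint, and since the atom $(s_k,e_{s_k})$ alone contributes $\frac{1}{a_\e^2}\int_0^{\ell(e_{s_k})}\frac{\dd v}{\psi(e_{s_k}(v)/a_\e)}$ to $P^\e_{s_{k+1}-}-P^\e_{s_k-}$, the gap $g_k^\e:=\sigma_{k+1}^\e-\sigma_k^\e-\Delta_k^\e$ satisfies $g_k^\e\ge\frac{1}{a_\e^2}\int_{u_+^{(k)}}^{\ell(e_{s_k})}\frac{\dd v}{\psi(e_{s_k}(v)/a_\e)}$.

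The crucial observation is that $\sigma_1^\e\to\infty$ and $g_k^\e\to\infty$ a.s.\ as $\e\to0$, for each fixed $k$. Indeed, $a_\e^2\psi(w/a_\e)\le C(\e+|w|)^2$ by Lemma~\ref{fcts}-(iv) (recall $a_\e=\kappa\e$), whence $\sigma_1^\e\ge C^{-1}\int_0^{u_-^{(1)}}(\e+e_{s_1}(v))^{-2}\dd v$ and $g_k^\e\ge C^{-1}\int_{u_+^{(k)}}^{\ell(e_{s_k})}(\e+e_{s_k}(v))^{-2}\dd v$; both tend to $\infty$ by monotone convergence, since $e_{s_1}(v)\to0$ as $v\downarrow0$, $e_{s_k}(v)\to0$ as $v\uparrow\ell(e_{s_k})$, and $\int_{0+}e(v)^{-2}\dd v=\int^{\ell(e)-}e(v)^{-2}\dd v=\infty$ for a.e.\ excursion $e$ (cf.\ the Remark following Notation~\ref{fgen} and the Appendix). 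This divergence of the spherical clock is precisely what will make the pieces of $\hTheta$ seen through the windows asymptotically stationary and mutually independent.

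Next I would invoke exponential ergodicity of the spherical process, Lemma~\ref{ergo2}: there are $C,\lambda>0$ such that, from any starting point and for any $t\ge0$, one can couple $(\hTheta_{t+r})_{r\ge0}$ with a stationary eternal spherical process of law $\Lambda$ so that the two coincide on $[0,\infty)$ off an event of probability $\le Ce^{-\lambda t}$. Working conditionally on $\cW=\sigma(W_s,s\ge0)$ — recall $\hTheta\perp\cW$ while $\bM$, hence $\sigma_k^\e,\Delta_k^\e,g_k^\e$, is $\cW$-measurable — and using the strong Markov property of $\hTheta$ at the right endpoints $\sigma_k^\e+\Delta_k^\e$, I would build recursively independent eternal stationary processes $\vartheta^{(1)},\vartheta^{(2)},\dots$, each of law $\Lambda$ and independent of $\cW$, coupled to $\hTheta$ with $\hTheta_{\sigma_k^\e+r}=\vartheta^{(k)}_r$ for $r\in[0,\Delta_k^\e]$ and $k\le N$, off a $\cW$-conditional event of probability at most $\eta_\e:=Ce^{-\lambda\sigma_1^\e}+\sum_{k=1}^{N-1}Ce^{-\lambda g_k^\e}$: couple $I_1^\e$ using the idle stretch $\sigma_1^\e$ before it, restart $\hTheta$ at $\sigma_1^\e+\Delta_1^\e$, couple $I_2^\e$ using the idle stretch $g_1^\e$, and so on. Setting $\theta_{s_k}(\cdot):=\vartheta^{(k)}(\cdot-r_{\e,u_-^{(k)}}(e_{s_k}))$ (a $\cW$-measurable constant time shift, hence still of law $\Lambda$, independent of $\cW$, and i.i.d.\ in $k$ by construction) and matching \eqref{ttt} term by term, one checks that $Z^{\e,\delta}_t=\bZ^{\e,\delta}_t$ for all $t\in[0,T]$ off the bad event. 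Finally $\eta_\e\to0$ a.s.\ by the divergences above together with $N<\infty$, while $\eta_\e\le C(1+N)$ with $\E[1+N]<\infty$; hence $q_{T,\delta}(\e):=1-\E[\eta_\e\wedge1]\to1$ by dominated convergence, and the coupling just described realizes $\PR[(Z^{\e,\delta}_t)_{t\in[0,T]}=(\bZ^{\e,\delta}_t)_{t\in[0,T]}]\ge q_{T,\delta}(\e)$.

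The main obstacle, beyond the bookkeeping, is twofold: (i) establishing $\sigma_1^\e,g_k^\e\to\infty$, which rests on the endpoint behaviour of $\psi$ (Lemma~\ref{fcts}-(iv)) combined with the non-integrability of $e(v)^{-2}$ at the two ends of a Brownian excursion, and which is the very mechanism that stationarizes and decorrelates the spherical factors; and (ii) organizing the recursive Markov coupling so that the $\vartheta^{(k)}$ are genuinely i.i.d., independent of $\cW$, of law $\Lambda$, while the total error $\eta_\e$ is controlled uniformly over the random and a priori unbounded number $N$ of contributing excursions.
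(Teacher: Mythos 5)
Your proposal is correct and follows essentially the same route as the paper: restrict to the a.s.\ finitely many excursions exceeding $\delta$, show the spherical clock gaps between the relevant windows diverge as $\e\to0$ (via $a_\e^2\psi(w/a_\e)\leq C(\e+|w|)^2$ and the non-integrability of $e(v)^{-2}$ at the excursion endpoints, Lemma \ref{exc}-(i)), and then replace the corresponding stretches of $\hTheta$ by i.i.d.\ $\Lambda$-distributed marks with probability tending to $1$. The only cosmetic differences are that you anchor the windows at the first/last crossing of level $\delta$ rather than at the midpoint $\ell(e)/2$ with half-width $s_\delta(e)$, and that you re-derive the ergodic coupling inline where the paper simply invokes Lemma \ref{ergo2}-(iv).
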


Observe that the process $(\bZ^{\e,\delta}_t)_{t\geq 0}$ is L\'evy.

\begin{proof} The proof is tedious, but rather simple in its principle:
the main idea is that the clock of $\hTheta$ in \eqref{ttt} runs a very long way (asymptotically infinite 
when $\e\to 0$) between two excursions, so that we can apply Lemma \ref{ergo2}-(iv).

\vip

\underline{\it Step 1.} For all $\delta\in(0,1)$, all $e\in\cE$, 
there is $s_\delta(e)>0$ such that
for all $\e\in(0,1)$, all $\theta,\theta' \in \cH$, we have 
$F_{\e,\delta,\infty}(e,\theta)=F_{\e,\delta,\infty}(e,\theta')$  
as soon as $\theta_r=\theta'_r$ for
all $r\in[-s_\delta(e),s_\delta(e)]$.

\vip

We recall that $F_{\e,\delta,\infty}(e,\theta)=0$ if $x(e)=-1$, so that it suffices to treat the case
of positive excursions. We have $F_{\e,\delta,\infty}(e,\theta)=F_{\e,\delta,\infty}(e,\theta')$  
if $\theta_u=\theta'_u$ for all $u\in[-s_{\delta,\e}(e),s_{\delta,\e}(e)]$, where
$$
s_{\delta,\e}(e)=\max\{-r_{\e, \inf\{v>0 : e(v) > \delta\}\land(\ell(e)/2) }(e),r_{\e, \sup\{v>0 : e(v) > \delta\} \lor(\ell(e)/2) }(e)\}
$$
because then for all $u\in (0,\ell(e))$ such that
$\theta_{r_{\e,u}(e)}\neq \theta'_{r_{\e,u}(e)}$, we have 
$$
\hbox{either} \quad
r_{\e,u}(e)>r_{\e, \sup\{v>0 : e(v) > \delta\} \lor(\ell(e)/2) }(e) 
\quad \hbox{or} \quad 
r_{\e,u}(e)<r_{\e, \inf\{v>0 : e(v) > \delta\} \land(\ell(e)/2)}(e),
$$ 
whence in both cases $e(u)<\delta$, which makes vanish the indicator
function $\indiq_{\{e(u)\geq \delta\}}$.
Using now that $a_\e^{-2}[\psi(w/a_\e)]^{-1} \leq C w^{-2}$ for all $w>0$ by Lemma \ref{fcts}-(iv),
we realize that 
$$
s_{\delta,\e}(e)\leq 
C \int_{\inf\{v>0 : e(v) > \delta\}\land(\ell(e)/2)}^{\sup\{v>0 : e(v) > \delta\} \lor(\ell(e)/2)} \frac{\dd u}{[e(u)]^2}.
$$
Denoting by $s_\delta(e)$ this last quantity, which is finite because $e$ does not 
vanish during the interval $[\inf\{v>0 : e(v) > \delta\}\land(\ell(e)/2),
\sup\{v>0 : e(v) > \delta\} \lor(\ell(e)/2)]$, completes the step.

\vip

\underline{\it Step 2.} Since only a finite number of excursions exceed $\delta$ per unit of time
we may rewrite \eqref{ttt} as 
$$
Z^{\e,\delta}_t = \sum_{i=1}^{N^\delta_t}
F_{\e,\delta,\infty} \Big(e_i^\delta,(\hTheta_{[T_i^{\e,\delta}+r]\lor 0})_{r\geq 0} \Big),
$$
where $\cE_\delta=\{e\in \cE : \sup_{u\in[0,\ell(e)]} e(u) >\delta \}$,
$N_t^\delta=\bM([0,t]\times \cE_\delta)$, of which we denote by $(s_i^\delta)_{i\geq 1}$
the chronologically ordered instants of jump. For each $i \geq 1$, we have introduced by 
$e_i^\delta \in \cE_\delta$ the mark associated to $s_i^\delta$,
uniquely defined by the fact that $\bM(\{(s_i^\delta,e_i^\delta)\})=1$.
We also have set, for each $i\geq 1$,
$$
T_i^{\e,\delta}=P^\e_{s_i^\delta-}-r_{\e,0}(e_i^\delta).
$$

\underline{\it Step 3.} Here we show that for all $\delta\in(0,1)$, all $T>0$, a.s., 
$\min_{i=1,\dots,N^\delta_T} (T_i^{\e,\delta}-T_{i-1}^{\e,\delta} )\to \infty$ as $\e\to 0$.
It suffices to observe that, since $\psi(u)\leq C(1+|u|^2)$ by Lemma \ref{fcts}-(iv)
and since $P^\e_{s_i^\delta-}\geq T_{i-1}^{\e,\delta}$,
$$
T_i^{\e,\delta}-T_{i-1}^{\e,\delta} \geq -r_{\e,0}(e_i^\delta) =
\frac{1}{a_\e^2}\int_0^{\ell(e_i^\delta)/2}\frac{\dd v }{\psi[e_i^\delta(v)/a_\e]}
\geq c \int_0^{\ell(e_i^\delta)/2}\frac{\dd v }{a_\e^2 + [e_i^\delta(v)]^2}.
$$
By monotone convergence, we conclude that
$$
\liminf_{\e\to 0} (T_i^{\e,\delta}-T_{i-1}^{\e,\delta})
\geq c \int_0^{\ell(e_i^\delta)/2}\frac{\dd v }{[e_i^\delta(v)]^2}= \infty \quad \hbox{a.s.}
$$
by Lemma \ref{exc}-(i).

\vip

\underline{\it Step 4.} We work conditionally on $\bM$ and set 
$A_{T,\delta}= \sup_{i=1,\dots,N^\delta_T} s_\delta(e_i^\delta)$.
By Lemma \ref{ergo2}-(iv), we can find, for each $\e\in (0,1)$, an i.i.d. family
of $\Lambda$-distributed eternal processes $(\hTheta_r^{\star,1,\e})_{r\in\rr}$, ..., 
$(\hTheta_r^{\star,N^\delta_T,\e})_{r\in\rr}$ such that the probability (conditionally on $\bM$) that
$(\hTheta_{[T_i^{\e,\delta}+r]\lor 0})_{r\in[-A_{T,\delta},A_{T,\delta}]}=
(\hTheta_r^{\star,i,\e})_{r\in[-A_{T,\delta},A_{T,\delta}]}$ for all $i=1,\dots,N^\delta_T$
is greater than 
$p_{T,\delta,\e}=p_{A_{T,\delta}}(T_1^{\e,\delta}, T_2^{\e,\delta}-T_{1}^{\e,\delta},\dots,
T_{N^\delta_t}^{\e,\delta}-T_{N^\delta_t-1}^{\e,\delta})$,
which a.s. tends to $1$ as $\e\to 0$ by Step 3.

\vip

\underline{\it Step 5.}
We set, for $t\in [0,T]$, 
$$
\bZ^{\e,\delta}_t=\sum_{i=1}^{N^\delta_t}
F_{\e,\delta,\infty} \Big(e_i^\delta,(\hTheta_r^{\star,i,\e})_{r\geq 0} \Big).
$$
This process has the same law as the process $(\bZ^{\e,\delta}_t)_{t\in [0,T]}$ of the statement.
Furthermore, we know from Step 1 that $Z^{\e,\delta}_t=\bZ^{\e,\delta}_t$ for all $t\in [0,T]$ as
soon as $(\hTheta_{[T_i^{\e,\delta}+r]\lor 0})_{r\in[-A_{T,\delta},A_{T,\delta}]}=
(\hTheta_t^{\star,i,\e})_{r\in[-A_{T,\delta},A_{T,\delta}]}$ for all $i=1,\dots,N^\delta_T$.
This occurs with probability $q_{T,\delta}(\e)=\E[p_{T,\delta,\e}]$, which tends to $1$ as $\e\to 0$ by dominated
convergence. 
\end{proof}

We introduce the compensated Poisson measure
$\tbN=\bN-\pi$.

\begin{lem} \label{jj2}
We fix $\delta\in(0,1]$ and $\e\in(0,1)$.

\vip

(i) If $\beta \in [d,1+d)$, we simply set $\hZ^{\e,\delta}_t=\bZ^{\e,\delta}_t$ for all $t\geq 0$.

\vip

(ii) If $\beta=1+d$, we define $\hZ^{\e,\delta}_t=\bZ^{\e,\delta}_t+\kappa_{\e,\delta} M_\beta t$ 
for all $t\geq 0$ and we have
\begin{align*}
\hZ^{\e,\delta}_t= \intot \int_\cE \int_{\cH}
F_{\e,\delta,1}(e,\theta) \tbN(\dd s,\dd e,\dd\theta)
+ \intot \int_\cE \int_{\cH}
F_{\e,1,\infty}(e,\theta) \bN(\dd s,\dd e,\dd\theta).
\end{align*}

(iii) If $\beta \in (1+d,4+d)$, we set $\hZ^{\e,\delta}_t=\bZ^{\e,\delta}_t+\kappa_{\e,\delta} M_\beta t$ 
for all $t\geq 0$ and we have
\begin{align*}
\hZ^{\e,\delta}_t=& \intot \int_\cE  \int_{\cH}
F_{\e,\delta,\infty}(e,\theta) \tbN(\dd s,\dd e,\dd\theta).
\end{align*}
\end{lem}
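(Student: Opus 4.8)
The plan is as follows. Case (i) is a definition and needs no proof: $\hZ^{\e,\delta}=\bZ^{\e,\delta}$, which is a L\'evy process (as already noted). So assume henceforth $\beta\in[1+d,4+d)$, write $\alpha=(\beta+2-d)/3\ge 1$, and set $m'_{\beta,\e}=\zeta_\e$ if $\beta=1+d$ and $m'_{\beta,\e}=m'_\beta$ if $\beta>1+d$, so that $m_{\beta,\e}=m'_{\beta,\e}M_\beta$ in Notation \ref{fgen} and $\kappa_{\e,\delta}=a_\e^{1/\alpha-2}\int_{-\infty}^\delta[h^{-1}(w/a_\e)-m'_{\beta,\e}][\sigma(w/a_\e)]^{-2}\dd w$, uniformly in both subcases.

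The whole point is to identify the compensator of the Poisson integral defining $\bZ^{\e,\delta}$, using two facts. First, $\Lambda$ being the law of the \emph{stationary} eternal spherical process (Lemma \ref{ergo2}), $\theta_r$ has law $\nu_\beta$ under $\Lambda$ for each fixed $r$ and $\int_\Sd\theta\,\nu_\beta(\dd\theta)=M_\beta$; hence, for $0\le\delta<A\le\infty$ and $\Xi$-a.e.\ $e$, Fubini (licit because $|\theta|\equiv1$ and, for fixed $e$, $u\mapsto h^{-1}(e(u)/a_\e)[\sigma(e(u)/a_\e)]^{-2}\indiq_{\{\delta<e(u)<A\}}$ is integrable on $[0,\ell(e)]$) yields
$$
\int_\cH F_{\e,\delta,A}(e,\theta)\,\Lambda(\dd\theta)=M_\beta\,a_\e^{1/\alpha-2}\int_0^{\ell(e)}\frac{h^{-1}(e(u)/a_\e)-m'_{\beta,\e}}{[\sigma(e(u)/a_\e)]^2}\,\indiq_{\{\delta<e(u)<A\}}\,\dd u.
$$
Second, It\^o's measure has occupation density one: for every non-negative Borel $\phi$ (hence every $\phi$ with $\int_\rr|\phi|<\infty$),
$$
\int_\cE\Big[\int_0^{\ell(e)}\phi(e(u))\,\dd u\Big]\Xi(\dd e)=\int_\rr\phi(x)\,\dd x,
$$
a classical property of It\^o's measure (also a consequence of \eqref{superform}, the intensity $\dd s\,\Xi(\dd e)$ of $\bM$, and the identity $\E[L^x_{\tau_t}]=t$, $x\in\rr$; see \cite{ry}). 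Combining the two displays with $\phi(x)=[h^{-1}(x/a_\e)-m'_{\beta,\e}][\sigma(x/a_\e)]^{-2}\indiq_{\{\delta<x<A\}}$ — integrable on $\rr$ because it is bounded with compact support when $A<\infty$, and because $1/\alpha-2<-1$ (here $\beta>1+d$ is used) together with $[\sigma(\cdot)]^{-2}\in L^1(\rr)$ and Lemma \ref{fcts}-(viii) when $A=\infty$ — and then using $\kappa_{\e,\infty}=0$, see \eqref{keiz}, when $\beta>1+d$, resp.\ $\kappa_{\e,1}=0$ (the very definition of $\zeta_\e$) when $\beta=1+d$, we get
\begin{equation}\label{compens}
\int_\cE\int_\cH F_{\e,\delta,A}(e,\theta)\,\Xi(\dd e)\Lambda(\dd\theta)=M_\beta\,a_\e^{1/\alpha-2}\int_\delta^A\frac{h^{-1}(x/a_\e)-m'_{\beta,\e}}{[\sigma(x/a_\e)]^2}\,\dd x=-\kappa_{\e,\delta}M_\beta,
\end{equation}
with $A=\infty$ when $\beta>1+d$ and $A=1$ when $\beta=1+d$.

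It then remains to read off the formulas. If $\beta>1+d$, Lemma \ref{fcts}-(viii) and $1/\alpha-2<-1$ give $F_{\e,\delta,\infty}\in L^1(\pi)$ on $[0,t]\times\cE\times\cH$, so $\bZ^{\e,\delta}_t=\intot\int_\cE\int_\cH F_{\e,\delta,\infty}\,\tbN+t\int_\cE\int_\cH F_{\e,\delta,\infty}\,\Xi\Lambda$; adding $\kappa_{\e,\delta}M_\beta t$ and invoking \eqref{compens} gives $\hZ^{\e,\delta}_t=\intot\int_\cE\int_\cH F_{\e,\delta,\infty}\,\tbN$, i.e.\ (iii). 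If $\beta=1+d$, then $F_{\e,\delta,\infty}=F_{\e,\delta,1}+F_{\e,1,\infty}$ for $\Xi$-a.e.\ $e$ (the set $\{u:e(u)=1\}$ is Lebesgue-null in $u$ for $\Xi$-a.e.\ $e$, again by occupation density one), so $\bZ^{\e,\delta}_t=\intot\int_\cE\int_\cH F_{\e,\delta,1}\,\bN+\intot\int_\cE\int_\cH F_{\e,1,\infty}\,\bN$; the second term is an a.s.\ finite sum (only finitely many excursions reach level $1$ per unit time) and is left uncompensated, while $F_{\e,\delta,1}$ is dominated by a constant (depending on $\e,\delta$) times the occupation time $\int_0^{\ell(e)}\indiq_{\{\delta<e(u)<1\}}\dd u$, whence $F_{\e,\delta,1}\in L^1(\pi)$ and $\intot\int_\cE\int_\cH F_{\e,\delta,1}\,\bN=\intot\int_\cE\int_\cH F_{\e,\delta,1}\,\tbN-\kappa_{\e,\delta}M_\beta t$ by \eqref{compens} with $A=1$; adding $\kappa_{\e,\delta}M_\beta t$ to $\bZ^{\e,\delta}_t$ produces the formula of (ii).

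The computations are routine; the two substantive inputs are the occupation-density-one property of $\Xi$ (which gives \eqref{compens}) and the decay estimates of Lemma \ref{fcts}-(viii) — the borderline exponent $1/\alpha-2<-1$, valid exactly when $\beta>1+d$, being precisely what makes direct compensation of $F_{\e,\delta,\infty}$ legitimate in case (iii) but forces truncation at level $1$ in case (ii). The step I would be most careful about is \eqref{compens}: getting the sign right and correctly using $\kappa_{\e,\infty}=0$ versus $\kappa_{\e,1}=0$.
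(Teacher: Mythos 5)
Your proof is correct and follows essentially the same route as the paper: integrate out $\theta$ using the stationarity of $\Lambda$ (so each $\theta_r$ has law $\nu_\beta$, with mean $M_\beta$), convert the $\Xi$-integral to a Lebesgue integral via the occupation-density-one property (Lemma \ref{exc}-(ii)), and identify the result with $-\kappa_{\e,\delta}M_\beta$ using $\kappa_{\e,\infty}=0$ (resp. $\kappa_{\e,1}=0$). Your explicit integrability checks justifying when $\bN$ may be replaced by $\tbN+\pi$ are a welcome addition that the paper leaves implicit.
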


\begin{proof} We recall that $\int_\rr \phi(w)\dd w=
\int_\cE \Big[\int_0^{\ell(e)} \phi(e(u))\dd u \Big] \Xi(\dd e)$ for all $\phi \in L^1(\rr)$,
see Lemma \ref{exc}-(ii).

\vip

To verify (iii), we have to check that
$$
I=\int_\cE\int_\cH 
F_{\e,\delta,\infty}(e,\theta)\Lambda(\dd\theta)\Xi(\dd e)=-\kappa_{\e,\delta} M_\beta.
$$
Recalling the expression of $F_{\e,\delta,\infty}$ and that $\Lambda$ is the law of the eternal
stationary spherical process, see Lemma \ref{ergo2}, of which the invariant measure is
$\nu_\beta$, which satisfies $\int_{\Sd}\theta\nu_\beta(\dd \theta)=M_\beta$, we find
\begin{align*}
I=&\int_\cE \Big[a_\e^{1/\alpha-2}\int_0^{\ell(e)}
\frac{h^{-1}(e(u)/a_\e))M_\beta-m_\beta}{[\sigma(e(u)/a_\e)]^2}
\indiq_{\{ e(u)>\delta  \}}\dd u \Big]\Xi(\dd e) \\
=& a_\e^{1/\alpha-2} \int_\delta^\infty \frac{h^{-1}(w/a_\e)M_\beta-m_\beta}{[\sigma(w/a_\e)]^2} 
\dd w.
\end{align*}
Recalling that $m_\beta=M_\beta m_\beta'$, the definition of $\kappa_{\e,\delta}$ (see Notation \ref{fap}-(iii))
and that $\kappa_{\e,\infty}=0$, see \eqref{keiz},
$$
I=M_\beta a_\e^{1/\alpha-2} \int_\delta^\infty \frac{h^{-1}(w/a_\e)-m'_\beta}{[\sigma(w/a_\e)]^2}
\dd w = - M_\beta a_\e^{1/\alpha-2} \int_{-\infty}^\delta \frac{h^{-1}(w/a_\e)-m'_\beta}{[\sigma(w/a_\e)]^2}\dd w,
$$
which equals $-M_\beta \kappa_{\e,\delta}$ as desired.

\vip

Concerning (ii), since $F_{\e,\delta,\infty}=F_{\e,\delta,1}+ F_{\e,1,\infty}$, we have to verify that
$$
J=\int_\cE\int_\cH 
F_{\e,\delta,1}(e,\theta)\Lambda(\dd\theta)\Xi(\dd e)=-\kappa_{\e,\delta} M_\beta.
$$
Proceeding as above, we find
$$
J=M_\beta a_\e^{-1} \int_\delta^1 \frac{h^{-1}(w/a_\e)-\zeta_\e}{[\sigma(w/a_\e)]^2} \dd w
=-M_\beta a_\e^{-1} \int_{-\infty}^\delta \frac{h^{-1}(w/a_\e)-\zeta_\e}{[\sigma(w/a_\e)]^2}
\dd w=-M_\beta \kappa_{\e,\delta}
$$
by definition of $\kappa_{\e,\delta}$ and since
$\kappa_{\e,1}=0$, recall Notation \ref{fap}-(ii).
\end{proof}

We now introduce the limit (as $\e\to 0$) of the function defined in Notation \ref{fgen}.

\begin{nota}\label{fgenl}
Fix $0 \leq \delta < A\leq \infty$.
For $e\in\cE$ and $\theta=(\theta_r)_{r \in \rr}$ in $\cH=C(\rr,\Sd)$,
we set
\begin{align*}
F_{\delta,A}(e,\theta)=& \frac1{(\beta+2-d)^2} \int_0^{\ell(e)}
[e(u)]^{1/\alpha-2}\theta_{r_{u}(e)} \indiq_{\{\delta \leq e(u) \leq A\}}\dd u,
\end{align*}
where, for $u\in (0,\ell(e))$, 
$$
r_{u}(e)=\frac1{(\beta+2-d)^2}\int_{\ell(e)/2}^u \frac{\dd v}{[e(v)]^{2}}.
$$
\end{nota}

Finally, we make tend $\e$ and $\delta$ to $0$.

\begin{lem}\label{ppfini}
We consider the processes $(\hZ^{\e,\delta}_t)_{t\geq 0}$ introduced in Lemma \ref{jj2}, built
with the same Poisson measure $\bN$ for all values of $\e\in(0,1)$ and $\delta\in(0,1)$.
For all $T>0$, $\sup_{[0,T]} |\hZ^{\e,\delta}_t- Z_t|$ goes to $0$ in probability as $(\e,\delta)\to(0,0)$, where

\vip

(i) $Z_t=\intot \int_\cE \int_{\cH} F_{0,\infty}(e,\theta) \bN(\dd s,\dd e,\dd\theta)$  if $\beta \in [d,1+d)$, 

\vip

(ii) $Z_t=\intot \int_\cE \int_{\cH} F_{0,1}(e,\theta) 
\tbN(\dd s,\dd e,\dd\theta)+\intot \int_\cE \int_{\cH} F_{1,\infty}(e,\theta) 
\bN(\dd s,\dd e,\dd\theta)$  if $\beta=1+d$,

\vip

(iii) $Z_t=\intot \int_\cE \int_{\cH}
F_{0,\infty}(e,\theta) \tbN(\dd s,\dd e,\dd\theta)$     if $\beta \in (1+d,4+d)$.
\end{lem}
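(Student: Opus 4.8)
The plan is to pass to the limit $(\e,\delta)\to(0,0)$ in the explicit excursion representations of $\hZ^{\e,\delta}$ obtained in Lemma \ref{jj2}, using dominated-convergence-type arguments for the relevant stochastic integrals against $\bN$ and $\tbN$. The natural strategy is to split the work into two convergences: first fix $\delta\in(0,1)$ and let $\e\to 0$, showing that the integrand $F_{\e,\delta,A}(e,\theta)$ converges to $F_{\delta,A}(e,\theta)$ for $\Xi\otimes\Lambda$-a.e. $(e,\theta)$ with a suitable $\e$-uniform domination; then let $\delta\to 0$ and control the contribution of the small excursions. In all three cases the key pointwise input is that $a_\e^{1/\alpha-2}h^{-1}(w/a_\e)[\sigma(w/a_\e)]^{-2}\to(\beta+2-d)^{-2}w^{1/\alpha-2}\indiq_{\{w>0\}}$ and $a_\e^{-2}[\psi(w/a_\e)]^{-1}\to(\beta+2-d)^{-2}w^{-2}\indiq_{\{w>0\}}$ by Lemma \ref{fcts}-(ix) (together with the dominations from Lemma \ref{fcts}-(iv),(viii)); the latter forces the clock $r_{\e,u}(e)\to r_u(e)$ and, since $r_u(e)\to\pm\infty$ at the endpoints of a positive excursion (Lemma \ref{exc}-(i)), makes the $m_{\beta,\e}$-centering in $F_{\e,\delta,A}$ consistent with the definition of $F_{\delta,A}$, because on a positive excursion the spherical process is evaluated at times running over all of $\rr$ and its ergodic average against $\nu_\beta$ is $M_\beta$, so the finite-time discrepancy is negligible in $L^2$ by Lemma \ref{ergo2}.

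Concretely, I would argue as follows. \textbf{Case (i)} ($\beta\in[d,1+d)$, i.e. $\hZ^{\e,\delta}=\bZ^{\e,\delta}$ is a pure sum over excursions with $1/\alpha-2>-1$, and the integrals converge absolutely): here $\int_\cE\int_\cH|F_{\e,\delta,\infty}(e,\theta)|\,\Lambda(\dd\theta)\Xi(\dd e)\le C\int_\delta^\infty w^{1/\alpha-2}\dd w$ uniformly in $\e$, and similarly with the limit kernel, so by dominated convergence for the compound Poisson functional $\bZ^{\e,\delta}_t\to\intot\int_\cE\int_\cH F_{\delta,\infty}\dd\bN$ uniformly on $[0,T]$ in probability as $\e\to 0$, and then $F_{\delta,\infty}\to F_{0,\infty}$ in $L^1(\Xi\otimes\Lambda)$ as $\delta\to 0$ (again since $1/\alpha-2>-1$ makes $\int_0^1 w^{1/\alpha-2}\dd w<\infty$ on each excursion and the expected length contribution is finite). \textbf{Case (iii)} ($\beta\in(1+d,4+d)$, $1/\alpha-2\in(-3/2,-1)$): now the centered integrand is square-integrable but not integrable, so I would work with the compensated integral $\tbN$ and use the isometry-type $L^2$ bound $\E\big[\sup_{[0,T]}|\intot\int\int(F_{\e,\delta,\infty}-F_{\delta,\infty})\dd\tbN|^2\big]\le C T\int_\cE\int_\cH|F_{\e,\delta,\infty}-F_{\delta,\infty}|^2\,\Lambda\Xi$ (Doob plus the Poisson $L^2$ identity); the right-hand side goes to $0$ by dominated convergence using $|F_{\e,\delta,\infty}(e,\theta)|^2\le C(\int_\delta^\infty w^{1/\alpha-2}\dd w)^2$-type bounds reorganized via the exponential-mixing estimate of Lemma \ref{ergo2}-(ii) exactly as in the proof of Lemma \ref{fapl} (the $\exp(-\lambda|r_{\e,u}-r_{\e,v}|)$ trick with inequality \eqref{magic}), which yields an $\e$-uniform dominating function of the form $C\int_\delta^\infty w^{2/\alpha-2}\dd w$ that is finite since $2/\alpha-2>-1$. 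Then $\delta\to 0$ is handled the same way, the bound $C\int_0^1 w^{2/\alpha-2}\dd w<\infty$ ensuring the small-excursion part of $Z$ is an honest compensated integral. \textbf{Case (ii)} ($\beta=1+d$, $\alpha=1$) is the combination: the part $F_{\e,\delta,1}$ (excursions of height in $(\delta,1)$) is treated with the compensated-integral $L^2$ argument of case (iii), while $F_{\e,1,\infty}$ (excursions exceeding height $1$) is treated with the absolute-convergence argument of case (i), and one checks $F_{\e,\delta,1}\to F_{0,1}$, $F_{\e,1,\infty}\to F_{1,\infty}$ accordingly; the logarithmic divergences in $\zeta_\e$ and $\kappa_{\e,\delta}$ cancel by construction (Lemma \ref{jj2}-(ii)) so no extra drift survives.

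The main obstacle I anticipate is the $\e\to 0$ limit of the clock inside the spherical process, i.e. justifying that replacing $r_{\e,u}(e)$ by $r_u(e)$ (and the centering $m_{\beta,\e}$ by the "removal" of the $M_\beta$-mean) is legitimate uniformly enough. On a fixed positive excursion $e$ the map $u\mapsto r_{\e,u}(e)$ converges to $u\mapsto r_u(e)$ pointwise on $(0,\ell(e))$ but $r_u(e)$ blows up at both endpoints, so one cannot naively bound $|\theta_{r_{\e,u}(e)}-\theta_{r_u(e)}|$; instead one must exploit that for any fixed $\theta\in\cH$ the integrand is continuous in the clock away from the endpoints, that the endpoint contributions are $w^{1/\alpha-2}$-small, and — crucially — that after integrating against $\Lambda$ the only thing that matters about $\theta$ is its empirical distribution near $\pm\infty$, which is $\nu_\beta$. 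Making this rigorous means carefully interchanging the $\e$-limit with the $u$-integral and the $\Lambda$-integral, which is where the exponential mixing of Lemma \ref{ergo2} and the explicit domination from Lemma \ref{fcts} do the real work; once those are in place the passage to the limit is bookkeeping of the kind already carried out in Lemmas \ref{fapl} and \ref{dec}.
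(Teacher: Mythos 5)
Your skeleton matches the paper's: the same pointwise inputs from Lemma \ref{fcts}-(iv)-(v)-(viii)-(ix), the same split into a small-excursion part (treated in $L^1$ when $\alpha<1$ and in $L^2$ via Doob when $\alpha\geq 1$) and a large-excursion part, and the same reliance on Lemma \ref{exc}. But two of your key claims fail as stated. First, the dominating functions you propose, $C\int_\delta^\infty w^{1/\alpha-2}\dd w$ in case (i) and $C\int_\delta^\infty w^{2/\alpha-2}\dd w$ in case (iii), are both \emph{infinite}: since $1/\alpha-2>-1$ and $2/\alpha-2>-1$, these integrals diverge at $+\infty$. (Consistently, for $\alpha<1$ the limit process is $\alpha$-stable with $\E|Z_t|=\infty$, so no $L^1$ domination over all excursion heights can exist.) The cutoff at height $1$ is therefore not a convenience but a necessity: the paper's Step 3 writes $\hZ^{\e,\delta}_t=Y^{\e,1}_t-Y^{\e,2}_t+Y^{\e,\delta,3}_t$, where the a.s. finitely many excursions exceeding height $1$ on $[0,T]$ are handled by termwise a.s. convergence (no integrability needed), their compensator $Y^{\e,2}$ (present only when $\alpha>1$) by dominated convergence using $\int_1^\infty x^{1/\alpha-2}\dd x<\infty$ because there $1/\alpha-2<-1$, and only the part with $e(u)\leq 1$ by the $L^1$/$L^2$ arguments. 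For the $L^2$ part the correct second-moment input is Lemma \ref{exc}-(iii) (a consequence of the second Ray--Knight theorem), $\int_\cE[\int_0^{\ell(e)}\phi(e(u))\dd u]^2\Xi(\dd e)\leq 4[\int_\rr\sqrt{|x|}\phi(x)\dd x]^2$ with $\phi(x)=(x^{1/\alpha-2}+x^{-4/3})\indiq_{\{0\leq x\leq 1\}}$; squaring the excursion integral is not the same as integrating the squared integrand, and no mixing estimate is required.

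Second, the ``main obstacle'' you identify --- reconciling the centering $m_{\beta,\e}$ with the uncentered $F_{\delta,A}$ via the ergodic average of $\theta$ against $\nu_\beta$ --- is a misdiagnosis. The centering contributes $a_\e^{1/\alpha-2}m_{\beta,\e}[\sigma(w/a_\e)]^{-2}$ to the integrand, and this tends to $0$ pointwise in $w>0$ by pure exponent bookkeeping (paper's Step 2: an extra factor $a_\e^{1/(\beta+2-d)}$ survives, resp. $\e^{1/3}(1+|\log\e|)$ when $\beta=1+d$). So the centering simply disappears from the integrand; it survives only through the choice of $\bN$ versus $\tbN$, which is already fixed by Lemma \ref{jj2}. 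Likewise the blow-up of the clock $r_u(e)$ at the excursion endpoints is harmless: $\theta$ is $\Sd$-valued, hence bounded, so the domination of Step 1 is uniform in $\theta$, and for each fixed continuous $\theta$ and each $u$ in the open interval $(0,\ell(e))$ one has $r_{\e,u}(e)\to r_u(e)\in\rr$ and hence $\theta_{r_{\e,u}(e)}\to\theta_{r_u(e)}$; dominated convergence in $u$ then yields $F_{\e,\delta,A}(e,\theta)\to F_{\delta_0,A}(e,\theta)$ for \emph{every} $\theta\in\cH$, with no interchange with the $\Lambda$-integral and no appeal to Lemma \ref{ergo2}-(ii). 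The exponential-mixing machinery belongs to Lemmas \ref{fapl} and \ref{dec}, not to this lemma.
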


\begin{proof}
We  divide the proof in several steps.

\vip 

\underline{\it Step 1.} There is $C>0$ such that for all
$\e\in (0,1]$, all $0\leq \delta \leq A\leq \infty$, 
all $e\in\cE$, all $\theta\in \cH$,
\begin{gather*}
|F_{\e,\delta,A}(e,\theta)| \leq C \int_0^{\ell(e)} ([e(u)]^{1/\alpha-2} + \indiq_{\{\beta=1+d\}}[e(u)]^{-4/3} )
\indiq_{\{ \delta \leq e(u) \leq A\}} \dd u.
\end{gather*}
Indeed, we know from Lemma \ref{fcts}-(viii) that $[1+h^{-1}(w)][\sigma(w)]^{-2} \leq C(1+|w|)^{1/\alpha-2}$, 
which implies that $a_\e^{1/\alpha-2}[1+h^{-1}(w/a_\e)][\sigma(w/a_\e)]^{-2} \leq C|w|^{1/\alpha-2}$, and it
only remains to note that when $\beta=1+d$ (so that $\alpha=1$),
\begin{equation}\label{benvoila}
\Big|\frac{m_{\beta,\e}}{a_\e [\sigma(w/a_\e)]^{2}}\Big|=
\frac {|M_\beta|\zeta_\e}{a_\e [\sigma(w/a_\e)]^{2}} \leq C \frac{ 1+|\log \e|}{\e (1+|w|/\e)^{4/3}}
\leq C \frac{ \e^{1/3}(1+|\log \e|)}{|w|^{4/3}}
\end{equation}
by Lemma \ref{fcts}-(vi), since $a_\e=\kappa\e$ 
and since $\zeta_\e \leq C(1+|\log \e|)$, see the end of the proof 
of Lemma \ref{fapl}.

\vip

\underline{\it Step 2.} We fix $0\leq \delta_0<A\leq\infty$ and verify that 
for all $\theta\in \cH$ and $\Xi$-almost every $e\in \cE$, we have
$$
\lim_{(\e,\delta)\to (0,\delta_0)} F_{\e, \delta,A}(e,\theta) = F_{\delta_0,A}(e,\theta).
$$
Using precisely the same bounds as in Step 1, the
result follows from dominated convergence, because 

\vip

$\bullet$ $a_\e^{1/\alpha-2} h^{-1}(w/a_\e) [\sigma(w/a_\e)]^{-2} \to (\beta+2-d)^{-2} w^{1/\alpha-2}$
for each fixed $w>0$ by Lemma \ref{fcts}-(ix), 

\vip

$\bullet$ $\theta\in \cH$ is continuous and
$r_{\e,u}(e)=a_\e^{-2}\int_{\ell(e)/2}^u [\psi(e(v)/a_\e)]^{-1}\dd v \to r_u(e)$ for each
$u\in(0,\ell(e))$ by Lemma \ref{fcts}-(v) (and by dominated convergence),

\vip

$\bullet$ $a_\e^{1/\alpha-2} m_{\beta,\e} [\sigma(w/a_\e)]^{-2} \to 0$ for each fixed $w>0$,
because

\vip

\hskip0.3cm $\star$ if $\beta\in[d,1+d)$, $m_{\beta,\e}=0$,

\vip

\hskip0.3cm $\star$ if $\beta=1+d$, see \eqref{benvoila},

\vip

\hskip0.3cm $\star$  if $\beta\in(1+d,4+d)$, then
$a_\e^{1/\alpha-2} |m_{\beta,\e}| [\sigma(w/a_\e)]^{-2} \leq C \e^{1/\alpha-2}(1+w/\e)^{-2(\beta+1-d)/(\beta+2-d)}\to 0$,
by Lemma \ref{fcts}-(vi), since $m_{\beta,\e}=m_\beta$ and since $2(\beta+1-d)/(\beta+2-d)>2-1/\alpha$,

\vip
$\bullet$ $\int_0^{\ell(e)} ([e(u)]^{1/\alpha-2}+[e(u)]^{-4/3}) \dd u<\infty$ for $\Xi$-almost every
$e\in \cE$ by Lemma \ref{exc}-(iv).
\vip

\underline{\it Step 3.} We now conclude. We write $\hZ^{\e,\delta}_t=Y^{\e,1}_t-Y^{\e,2}_t
+Y^{\e,\delta,3}_t$  and $Z_t=Y_t^1-Y_t^2+Y_t^3$, where

\vip

\noindent $\bullet$  $Y_t^1=\intot \int_\cE \int_{\cH} F_{1,\infty}(e,\theta) \bN(\dd s,\dd e,\dd\theta)$
and $Y^{\e,1}_t=\intot \int_\cE \int_{\cH} F_{\e,1,\infty}(e,\theta) \bN(\dd s,\dd e,\dd\theta)$,

\vip

\noindent $\bullet$ $Y_t^2=t\int_\cE \int_{\cH} F_{1,\infty}(e,\theta) \Lambda(\dd \theta)\Xi(\dd e)$
and $Y^{\e,2}_t=t\int_\cE \int_{\cH} F_{\e,1,\infty}(e,\theta) \Lambda(\dd \theta)\Xi(\dd e)$
if $\beta\in (1+d,4+d)$,

\vip

\noindent $\bullet$ $Y_t^2=Y^{\e,2}_t=0$ if $\beta \in [d,1+d]$,

\vip

\noindent $\bullet$  $Y_t^3=\intot \int_\cE \int_{\cH} F_{0,1}(e,\theta) \bN(\dd s,\dd e,\dd\theta)$
and $Y^{\e,\delta,3}_t=\intot \int_\cE \int_{\cH} F_{\e,\delta,1}(e,\theta) \bN(\dd s,\dd e,\dd\theta)$
if $\beta \in [d,1+d)$,

\vip

\noindent $\bullet$  $Y_t^3=\intot \int_\cE \int_{\cH} F_{0,1}(e,\theta) \tbN(\dd s,\dd e,\dd\theta)$
and $Y^{\e,\delta,3}_t\!=\intot \int_\cE \int_{\cH} F_{\e,\delta,1}(e,\theta) \tbN(\dd s,\dd e,\dd\theta)$
if $\beta \in [1+d,4+d)$.

\vip

\underline{\it Step 3.1.} For any $\beta \in [d,4+d)$, it holds that a.s.,
$$
\lim_{\e\to 0} \sup_{[0,T]} |Y^1_t-Y^{\e,1}_t | \leq \lim_{\e\to 0}
\int_0^T \int_\cE \int_{\cH} |F_{1,\infty}(e,\theta) -F_{\e,1,\infty}(e,\theta)|\bN(\dd s,\dd e,\dd\theta)=0.
$$
This uses only the facts that $F_{1,\infty}(e,\theta)=F_{\e,1,\infty}(e,\theta)=0$ as soon as 
$\sup_{r\in[0,\ell(e)]} e(r)<1$, 
that $\bN(\{(s,e,\theta)\in [0,T]\times\cE\times\cH : \sup_{[0,\ell(e)]} e\geq 1\})$ is a.s. finite, and 
that $\lim_{\e\to 0}F_{\e,1,\infty}(e,\theta)=F_{1,\infty}(e,\theta)$ for $\Xi\otimes\Lambda$-almost every 
$(e,\theta)\in\cE\times\cH$ by Step 2.

\vip

\underline{\it Step 3.2.} If $\beta \in (1+d,4+d)$, it holds that
$$
\lim_{\e\to 0} \sup_{[0,T]} |Y^2_t-Y^{\e,2}_t |\leq  T \lim_{\e\to 0}
\int_\cE \int_{\cH} |F_{1,\infty}(e,\theta) -F_{\e,1,\infty}(e,\theta)| \Lambda(\dd\theta)\Xi(\dd e)=0
$$
by dominated convergence, thanks to Steps 1 and 2 and since
$\int_\cE [\int_0^{\ell(e)} (e(u))^{1/\alpha-2}\indiq_{\{e(u)\geq 1\}}\dd u  
]\Xi(\dd e)=\int_1^\infty x^{1/\alpha-2}\dd x <\infty$ 
by Lemma \ref{exc}-(ii) and since $1/\alpha-2<-1$ because $\alpha=(\beta+2-d)/3>1$.

\vip

\underline{\it Step 3.3.} If $\beta\in [d,1+d)$,
\begin{align*}
\lim_{(\e,\delta)\to(0,0)}\E\Big[\sup_{[0,T]} |Y^3_t-Y^{\e,\delta,3}_t|\Big]\leq T \lim_{(\e,\delta\to(0,0)}
\int_\cE \int_{\cH} |F_{0,1}(e,\theta) -F_{\e,\delta,1}(e,\theta)| \Lambda(\dd \theta)\Xi(\dd e)=0
\end{align*}
by dominated convergence, using Steps 1 and 2 and
that
$\int_\cE [\int_0^{\ell(e)} [e(u)]^{1/\alpha-2}\indiq_{\{0\leq e(u)\leq 1\}} \dd u] \Xi(\dd e) 
= \int_0^1 w^{1/\alpha-2}\dd w <\infty$ by Lemma \ref{exc}-(ii) and since $1/\alpha-2>-1$ 
because $\alpha=(\beta+2-d)/3<1$.

\vip

\underline{\it Step 3.4.} If finally $\beta \in [1+d,4+d)$, by Doob's inequality,
\begin{align*}
\lim_{(\e,\delta)\to(0,0)}\E\Big[\sup_{[0,T]} |Y^3_t-Y^{\e,\delta,3}_t|^2\Big]\leq
4T \lim_{(\e,\delta\to(0,0)}
\int_\cE \int_{\cH} |F_{0,1}(e,\theta) -F_{\e,\delta,1}(e,\theta)|^2 \Lambda(\dd \theta)\Xi(\dd e)=0
\end{align*}
by dominated convergence, thanks to Steps 1 and 2 and
since we know from Lemma \ref{exc}-(iii) 
that $\int_\cE [\int_0^{\ell(e)}  (|e(u)|^{1/\alpha-2}+|e(u)|^{-4/3})\indiq_{\{0\leq e(u)\leq 1\}}\dd u]^2 \Xi(\dd e)
\leq 4 [\int_0^1 \sqrt x (x^{1/\alpha-2}+x^{-4/3}) \dd x]^2<\infty$.
\end{proof}

Gathering all the previous lemmas, one 
obtains the following convergence result.

\begin{prop}\label{pfini}
Consider the process $(Z_t)_{t\geq 0}$ defined in Lemma \ref{ppfini} (its definition 
depending on $\beta$) and set $S_t= \kappa^{-1/\alpha}Z_t$ if $\beta \in (d,4+d)$
and $S_t=8Z_t$ if $\beta=d$.

\vip

(i) If $\beta \in (1+d,4+d)$, then $(\e^{1/\alpha} [X_{t/\e}-m_\beta t/\e])_{t\geq 0} 
\stackrel{f.d.} \longrightarrow 
(S_t)_{t\geq 0}$.

\vip

(ii) If $\beta=1+d$, then  $(\e[X_{t/\e}-M_\beta \zeta_\e t/\e])_{t\geq 0} \stackrel{f.d.} \longrightarrow 
(S_t)_{t\geq 0}$.

\vip

(iii) If $\beta \in (d,1+d)$, then $(\e^{1/\alpha} X_{t/\e})_{t\geq 0} \stackrel{f.d.} \longrightarrow 
(S_t)_{t\geq 0}$.

\vip

(iv) If $\beta=d$, then $([\e |\log \e|]^{3/2} X_{t/\e})_{t\geq 0} \stackrel{f.d.} \longrightarrow 
(S_t)_{t\geq 0}$.
\end{prop}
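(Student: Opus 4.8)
The plan is to chain together the lemmas established so far in this section. To handle the four regimes at once, I would introduce the rescaled, centered process $(Y^\e_t)_{t\geq 0}$, equal to $a_\e^{1/\alpha}\tX^\e_t$ when $\beta\in[d,1+d)$, to $a_\e[\tX^\e_t-\zeta_\e M_\beta t/\e]$ when $\beta=1+d$, and to $a_\e^{1/\alpha}[\tX^\e_t-m_\beta t/\e]$ when $\beta\in(1+d,4+d)$, where $(\tX^\e_t)_{t\geq0}$ is the process of Lemma \ref{ettac}. The point of Notation \ref{fap} is precisely that, with the convention $\kappa_{\e,\delta}=0$ when $\beta\in[d,1+d)$, one has the identity $Y^\e_t=Z^{\e,\delta}_t+\kappa_{\e,\delta}M_\beta t+U^{\e,\delta}_t$ for all $t\geq 0$, $\e\in(0,1)$ and $\delta\in(0,1]$. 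Fixing $T>0$, I would then invoke the coupling of Lemma \ref{dec}: on an event of probability at least $q_{T,\delta}(\e)$ (with $q_{T,\delta}(\e)\to1$ as $\e\to0$) we have $Z^{\e,\delta}_t=\bZ^{\e,\delta}_t$ for all $t\in[0,T]$, so that, by Lemma \ref{jj2}, $Z^{\e,\delta}_t+\kappa_{\e,\delta}M_\beta t=\hZ^{\e,\delta}_t$ and hence $Y^\e_t=\hZ^{\e,\delta}_t+U^{\e,\delta}_t$ on $[0,T]$ on that event.

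Next I would establish the finite-dimensional convergence $(Y^\e_t)_{t\geq0}\stackrel{f.d.}\longrightarrow(Z_t)_{t\geq0}$, with $(Z_t)_{t\geq0}$ the process of Lemma \ref{ppfini}. Since bounded Lipschitz functions are convergence-determining, it suffices to fix a finite $S\subset[0,\infty)$, put $T=\max S$, fix a bounded Lipschitz $g:(\rd)^S\mapsto\rr$, and bound $|\E[g((Y^\e_t)_{t\in S})]-\E[g((Z_t)_{t\in S})]|$ by three contributions: a coupling error $\leq 2\|g\|_\infty(1-q_{T,\delta}(\e))$, coming from the event of Lemma \ref{dec} on which $Y^\e=\hZ^{\e,\delta}+U^{\e,\delta}$; a term $\leq\sum_{t\in S}\E[\min(2\|g\|_\infty,\mathrm{Lip}(g)|U^{\e,\delta}_t|)]$, controlled by Lemma \ref{fapl}; and a term $\leq\E[\min(2\|g\|_\infty,\mathrm{Lip}(g)\sum_{t\in S}|\hZ^{\e,\delta}_t-Z_t|)]$, controlled by Lemma \ref{ppfini}. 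Letting first $\e\to0$ and then $\delta\to0$, and using that the left-hand side does not depend on $\delta$, gives $\lim_{\e\to0}|\E[g((Y^\e_t)_{t\in S})]-\E[g((Z_t)_{t\in S})]|=0$.

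Finally I would translate this into the statement using Lemma \ref{ettac}, which gives $(X_{t/\e}-x_0)_{t\geq0}\stackrel{d}=(\tX^\e_t)_{t\geq0}$: since $a_\e\to0$ and $1/\alpha>0$, the additive constant $a_\e^{1/\alpha}x_0$ is negligible, so $(Y^\e_t)_t$ and the corresponding process built from $X_{t/\e}$ rather than $X_{t/\e}-x_0$ share the finite-dimensional limit $(Z_t)_{t\geq0}$. Then I would substitute the explicit value of $a_\e$: when $\beta\in(d,4+d)$, $a_\e=\kappa\e$ gives $a_\e^{1/\alpha}=\kappa^{1/\alpha}\e^{1/\alpha}$, so dividing by $\kappa^{1/\alpha}$ produces the scaling $\e^{1/\alpha}$ and the limit $S_t=\kappa^{-1/\alpha}Z_t$, which is (i), (ii) (where $\alpha=1$) and (iii); when $\beta=d$, $a_\e=\e|\log\e|/4$ and $1/\alpha=3/2$ give $a_\e^{3/2}=\tfrac18|\e\log\e|^{3/2}$, so multiplying by $8$ produces the scaling $|\e\log\e|^{3/2}$ and the limit $S_t=8Z_t$, which is (iv).

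As for the difficulty: nothing genuinely new happens at this stage, the substance being entirely contained in Lemmas \ref{ettac}--\ref{ppfini}. The only point requiring real care is the order of the iterated limit: the cutoff $\delta$ is a purely auxiliary device, and one must send $\e\to0$ before $\delta\to0$ so that the coupling error $1-q_{T,\delta}(\e)$ and the error coming from Lemma \ref{fapl} are each killed at the appropriate stage; one must also keep track of the fact that the three processes $Y^\e$, $\hZ^{\e,\delta}$ and $Z$ live a priori on different probability spaces and have to be brought together through the coupling of Lemma \ref{dec}.
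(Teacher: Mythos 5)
Your proposal is correct and follows essentially the same route as the paper: the same chain of Lemmas \ref{ettac}, \ref{fapl}, \ref{dec}, \ref{jj2} and \ref{ppfini}, the same decomposition of the rescaled process, and the same order of limits ($\e\to 0$ first, then $\delta\to 0$). The only cosmetic difference is that you test against bounded Lipschitz functionals and bound the three error terms explicitly, whereas the paper tests against bounded continuous functionals and compares the intermediate expectations $I_\e$, $I_{\e,\delta}$, $J_{\e,\delta}$, $I$ one pair at a time; both are fine.
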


\begin{proof} Since $a_\e=\kappa \e$ when $\beta\in (d,4+d)$ and
$a_\e=\e|\log\e|/4$ when $\beta=d$, it is sufficient to prove that,
setting $m_{\beta,\e}=m_{\beta}$ if $\beta \in (1+d,4+d)$,
$m_{\beta,\e}=M_\beta \zeta_\e$ if $\beta=1+d$ and $m_{\beta,\e}=0$ if
$\beta \in [d,1+d)$, it holds that
$(a_\e^{1/\alpha} [X_{t/\e}-m_{\beta,\e} t/\e])_{t\geq 0} \stackrel{f.d.} \longrightarrow 
(Z_t)_{t\geq 0}$.

\vip

We know from Lemma \ref{ettac} that   $(X_{t/\e})_{t\geq 0}\stackrel{d}=
(x_0+\tX^\e_t)_{t\geq 0}$. Since $a_\e^{1/\alpha} x_0\to 0$, it thus suffices to verify that
$(\chZ^\e_t)_{t\geq 0} \stackrel{f.d.} \longrightarrow (Z_t)_{t\geq 0}$, where we have set
$\chZ^\e_t=a_\e^{1/\alpha} [\tX_{t/\e}-m_{\beta,\e} t/\e]$.

\vip

We consider $\Phi : \mathbb{D}([0,\infty),\rr^d)\mapsto \rr$
of the form $\Phi(x)=\phi(x_{t_1},\dots,x_{t_n})$ for some continuous and bounded $\phi: \rr^n\mapsto \rr$.
Our goal is to check that 
$I_\e=\E[\Phi((\chZ^\e_t)_{t\geq 0})] \to \E[\Phi((Z_t)_{t\geq 0})]=I$ as $\e\to 0$.

\vip

We know from Lemma \ref{fapl} that for all $t\geq0$, all $\eta>0$,
$$
\lim_{\delta \to 0}\limsup_{\e\to 0}\PR(|\chZ^{\e}_t - [Z^{\e,\delta}_t
+\kappa_{\e,\delta}M_\beta t]|\geq \eta)=0,
$$
with the convention that $\kappa_{\e,\delta}=0$ if $\beta \in [d,1+d)$.
Setting $I_{\epsilon,\delta}=\E[\Phi(([Z^{\e,\delta}_t
+\kappa_{\e,\delta}M_\beta t])_{t\geq 0})]$, we deduce that $\lim_{\delta \to 0} \limsup_{\e\to 0} |I_{\e,\delta}-I_\e|=0$.
We thus have to check that  $\lim_{\delta \to 0} \limsup_{\e\to 0} |I_{\e,\delta}-I|=0$.

\vip

By Lemma \ref{dec}, we know that for each $\delta>0$, $\lim_{\e\to0}|I_{\e,\delta}-J_{\e,\delta}|= 0$
for each $\delta>0$,  where we have set $J_{\e,\delta}=\E[\Phi((\bZ^{\e,\delta}_t
+\kappa_{\e,\delta}M_\beta t)_{t\geq 0})]$. It thus suffices to verify that 
$\lim_{\delta \to 0} \limsup_{\e\to 0} |J_{\e,\delta}-I|=0$.

\vip

By Lemma \ref{jj2}, it holds that $J_{\e,\delta}=\E[\Phi((\hZ^{\e,\delta}_t)_{t\geq 0})]$.

\vip

Finally, it follows from Lemma \ref{ppfini} that $\lim_{(\e,\delta)\to(0,0)} J_{\e,\delta}=I$, 
which completes the proof.
\end{proof}

We still have to study a little our limiting processes.

\begin{prop}\label{tpfini}
For any $\beta \in [d,4+d)$, set $\alpha = (\beta+2-d)/3$ and consider 
the limit process $(S_t)_{t\geq 0}$ introduced in Proposition \ref{pfini} (its definition 
depending on $\beta$).

\vip

(i) The process $(S_t)_{t\geq 0}$ is an $\alpha$-stable L\'evy process of which the L\'evy measure
$q$, depending only $\beta$ and $U$, is given, for all $A\in \cB(\rd\setminus\{0\})$, by
$$
q(A)={\mathfrak a} \int_0^\infty u^{-1-\alpha}\PR(u Y \in A)\dd u ,
$$
where ${\mathfrak a}=\alpha/[\kappa \sqrt{2\pi}(\beta+2-d)^{2\alpha}]$ with $\kappa=
(\beta+2-d)^{-1}\int_0^\infty u^{d-1}[\Gamma(u)]^{-\beta}\dd u$ (see Lemma \ref{fcts}-(i)) if $\beta \in (d,4+d)$, 
where ${\mathfrak a}= 2^{7/6}/[3\sqrt \pi]$ if $\beta=d$ and where
the $\rd$-valued random variable $Y$ is defined as follows.
Consider a normalized Brownian excursion ${\bf e}$ (with unit length),
independent of an eternal stationary spherical process $(\hTheta^\star_t)_{t\in \rr}$ 
as in Lemma \ref{ergo2}-(iii) and set
$$
Y=\int_0^1 [{\bf e}(u)]^{1/\alpha-2} \hTheta^\star_{[(\beta+2-d)^{-2} \int_{1/2}^u [{\bf e}(v)]^{-2}\dd v]} \dd u.
$$

\vip

(ii) Assume now that $\gamma\equiv 1$ (recall Assumption \ref{as}). Then
$$
(\e^{1/\alpha}X_{t/\e})_{t\geq 0} \stackrel{f.d.} \longrightarrow (S_t)_{t\geq 0}
\quad
\hbox{if $\beta \in (d,4+d)$ and}
\quad
([\e|\log \e|]^{3/2}X_{t/\e})_{t\geq 0} \stackrel{f.d.} \longrightarrow (S_t)_{t\geq 0}
\quad
\hbox{if $\beta=d$}
$$
and in any case, $(S_t)_{t\geq 0}$ is a radially symmetric $\alpha$-stable L\'evy process, 
that is, there is a constant ${\mathfrak b}>0$ depending on $\Gamma$, $\beta$ and $d$ such that 
$q(\dd z)={\mathfrak b} |z|^{-d-\alpha}\dd z$ and thus 
$\E[\exp(i \xi \cdot S_t)]=\exp(- {\mathfrak b}' t |\xi|^\alpha)$
for all $\xi \in \rd$, all $t\geq 0$, for some other constant ${\mathfrak b}'>0$.
\end{prop}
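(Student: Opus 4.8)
The plan is to extract everything from the representation of $(Z_t)_{t\geq0}$ already obtained in Lemma~\ref{ppfini}. In each of the three cases $Z_t$ is a (possibly compensated) integral of $F_{0,\infty}(e,\theta)$ against the Poisson measure $\bN$ with intensity $\dd s\,\Xi(\dd e)\Lambda(\dd\theta)$; for $\beta=1+d$ one uses that $F_{0,1}+F_{1,\infty}=F_{0,\infty}$, so the jump of $Z$ at an atom $(s,e,\theta)$ is $F_{0,\infty}(e,\theta)$ irrespective of the compensation. Hence I would first observe that $(Z_t)_{t\geq0}$, and therefore $(S_t)_{t\geq0}=\varkappa Z_t$ (where $\varkappa=\kappa^{-1/\alpha}$ if $\beta\in(d,4+d)$ and $\varkappa=8$ if $\beta=d$), is a L\'evy process, whose L\'evy measure $q_Z$ is the image of $\Xi\otimes\Lambda$ under $(e,\theta)\mapsto F_{0,\infty}(e,\theta)$ restricted to $\rd\setminus\{0\}$, and that $q$ (the L\'evy measure of $S$) is the push-forward of $q_Z$ by $z\mapsto\varkappa z$. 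It then remains to compute this image measure and to check stability.

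For the computation, the first step is to record the Brownian scaling of $F_{0,\infty}$. Since $F_{0,\infty}(e,\theta)=0$ when $x(e)=-1$, only positive excursions contribute; writing a positive excursion of length $\ell$ as $e(r)=\sqrt\ell\,{\bf e}(r/\ell)$ with ${\bf e}$ a normalized excursion, the substitutions $u=\ell w$, $v=\ell s$ in Notation~\ref{fgenl} give $r_{\ell w}(e)=r_w({\bf e})$ --- this scale invariance is precisely why the normalisation $(\beta+2-d)^{-2}$ was built into $r_u$ --- and
\[
F_{0,\infty}\bigl(\sqrt\ell\,{\bf e}(\cdot/\ell),\theta\bigr)=\frac{\ell^{1/(2\alpha)}}{(\beta+2-d)^2}\int_0^1[{\bf e}(w)]^{1/\alpha-2}\,\theta_{r_w({\bf e})}\,\dd w.
\]
Plugging this into the decomposition \eqref{tictac} of $\Xi$ (the $x=-1$ half contributes $0$ as $A\subset\rd\setminus\{0\}$) and using that under $\Lambda$ the marks are distributed as an eternal stationary spherical process $\hTheta^\star$, I would get, for $A\in\cB(\rd\setminus\{0\})$,
\[
q_Z(A)=\frac12\int_0^\infty\frac{\dd\ell}{\sqrt{2\pi\ell^3}}\,\PR\Bigl(\tfrac{\ell^{1/(2\alpha)}}{(\beta+2-d)^2}\,Y\in A\Bigr),
\]
with $Y$ as in the statement. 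The substitution $u=\ell^{1/(2\alpha)}/(\beta+2-d)^2$ converts $\ell^{-3/2}\dd\ell$ into $2\alpha(\beta+2-d)^{-2\alpha}u^{-1-\alpha}\dd u$, giving $q_Z(A)=\alpha[\sqrt{2\pi}(\beta+2-d)^{2\alpha}]^{-1}\int_0^\infty u^{-1-\alpha}\PR(uY\in A)\dd u$; pushing forward by $z\mapsto\varkappa z$ and substituting $v=\varkappa u$ then multiplies the constant by $\varkappa^\alpha$, which equals $\kappa^{-1}$ when $\beta\in(d,4+d)$ and $8^{2/3}=4$ when $\beta=d$ (where $\alpha=2/3$ and $(\beta+2-d)^{2\alpha}=2^{4/3}$) --- reproducing exactly the stated constant ${\mathfrak a}$.

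Stability then reads off the formula: $q(A_a)={\mathfrak a}\int_0^\infty u^{-1-\alpha}\PR(auY\in A)\dd u=a^\alpha q(A)$ after substituting $v=au$. Since $S$ has no Gaussian part and carries the centering built into $Z$ (none for $\alpha<1$, the natural compensation for $\alpha\geq1$), this scaling of $q$ yields $\Psi(a^{1/\alpha}\xi)=a\Psi(\xi)$ for the characteristic exponent, i.e.\ $S_{at}\stackrel{d}=a^{1/\alpha}S_t$; non-triviality ($q\neq0$) I would get from $\PR(Y\neq0)>0$ --- immediate when $M_\beta\neq0$, since then $\E[Y]=M_\beta\,\E\!\int_0^1[{\bf e}(u)]^{1/\alpha-2}\dd u\neq0$, and otherwise from $\E[\,|Y|^2\mid{\bf e}\,]=\int_0^1\!\int_0^1[{\bf e}(u)]^{1/\alpha-2}[{\bf e}(v)]^{1/\alpha-2}\,c(r_u({\bf e})-r_v({\bf e}))\,\dd u\,\dd v>0$, where $c(h)=\E[\hTheta^\star_0\cdot\hTheta^\star_h]$ (for $\gamma\equiv1$ one has $c(h)=e^{-(d-1)|h|/2}>0$, the coordinate functions being degree-one spherical harmonics). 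For part (ii), $\gamma\equiv1$ forces $a_\beta=1$, $\nu_\beta=\varsigma$ and $M_\beta=\int_\Sd\theta\,\varsigma(\dd\theta)=0$, hence $m_\beta=0$ and $\zeta_\e M_\beta=0$, so the drift corrections in Proposition~\ref{pfini} drop out and the stated convergences follow; moreover \eqref{eqt} then describes a spherical Brownian motion whose eternal stationary version $\hTheta^\star$ has a rotation-invariant law, so $Y\stackrel{d}=OY$ for all $O\in SO(d)$, whence $q$ is rotation invariant. A rotation-invariant measure on $\rd\setminus\{0\}$ with $q(A_a)=a^\alpha q(A)$ is necessarily $q(\dd z)={\mathfrak b}|z|^{-d-\alpha}\dd z$ for some ${\mathfrak b}>0$ (decompose in polar coordinates; ${\mathfrak b}>0$ because $q\neq0$), and then rotation invariance, the scaling $\Psi(a^{1/\alpha}\xi)=a\Psi(\xi)$ and the symmetry $q(-A)=q(A)$ force $\Psi(\xi)=-{\mathfrak b}'|\xi|^\alpha$, i.e.\ $\E[e^{i\xi\cdot S_t}]=e^{-{\mathfrak b}'t|\xi|^\alpha}$.

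The step I expect to be the main obstacle is the bookkeeping in the scaling argument: one has to check carefully that the internal clock $r_u(e)$ is exactly scale invariant, keep track of the factor $\ell^{1/(2\alpha)}$ through the two changes of variables $\ell\leftrightarrow u$ and $u\leftrightarrow v$, and correctly absorb the $\tfrac12$ coming from the $\{-1,1\}$-symmetry in \eqref{tictac}; matching the numerical value of ${\mathfrak a}$ in the critical case $\beta=d$ requires particular care. A more conceptual subtlety lies in the case $\alpha=1$ ($\beta=1+d$), where ``$1$-stable'' must be read modulo the linear (logarithmic-in-$\e$) drift already subtracted off in Proposition~\ref{pfini}(ii); when $\gamma\equiv1$ that drift is absent and the symmetric $1$-stable statement is clean.
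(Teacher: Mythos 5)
Your proposal is correct and follows essentially the same route as the paper's proof: identify the Lévy measure of $Z$ as the image of $\Xi\otimes\Lambda$ under $F_{0,\infty}$, use the decomposition \eqref{tictac} together with the Brownian scaling of excursions and the scale invariance of the clock $r_u(e)$ to reduce to the variable $Y$, and then change variables in $\ell$ to read off $\mathfrak a$ and the $\alpha$-stability; part (ii) is handled identically via $M_\beta=m_\beta=0$ and the rotation invariance of $\Lambda$. Your explicit verification of non-triviality ($q\neq 0$) is a small welcome addition that the paper leaves implicit.
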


Observe that in (i), the random variable $Y$ is well-defined thanks to Lemma
\ref{exc}-(iv).

\begin{proof}
We start with point (i).
It readily follows from its definition (see Proposition \ref{pfini} and Lemma \ref{ppfini})
that $(S_t)_{t\geq 0}$ is a L\'evy process with L\'evy measure given by
$$
q(A)= \int_{\cE} \Xi(\dd e) \int_{\cH} \Lambda(\dd \theta) 
\indiq_{\{{\mathfrak c} F_{0,\infty}(e,\theta)\in A\}}, \quad A\in\cB(\rd\setminus\{0\}),
$$
where ${\mathfrak c}=\kappa^{-1/\alpha}$ if $\beta\in (d,4+d)$ and ${\mathfrak c}=8$ if
$\beta=d$. Using the decomposition \eqref{tictac} of $\Xi$
and that $F_0(e,\theta)=0$ if $x(e)=-1$, we have
\begin{align*}
q(A)=& \int_0^\infty \frac{\dd \ell}{2\sqrt{2\pi\ell^3}}\int_{\cE_1} \Xi_1(\dd e) \int_{\cH} \Lambda(\dd \theta) 
\indiq_{\{{\mathfrak c} F_{0,\infty}(\sqrt{\ell}e(\cdot/\ell),\theta)\in A\}}\\
=& \int_0^\infty \frac{\dd \ell}{2\sqrt{2\pi\ell^3}} 
\PR\Big({\mathfrak c}F_{0,\infty}(\sqrt{\ell}{\bf e}(\cdot/\ell),\hTheta^\star)
\in A \Big)
\end{align*}
with the notation of the statement. But recalling Notation \ref{fgenl},
\begin{align*}
F_{0,\infty}(\sqrt{\ell}{\bf e}(\cdot/\ell),\hTheta^\star)=&\frac1{(\beta+2-d)^2}
\int_0^\ell [\sqrt{\ell}{\bf e}(u/\ell)]^{1/\alpha-2}
\hTheta^\star_{[(\beta+2-d)^{-2} \int_{\ell/2}^u [\sqrt \ell {\bf e}(v/\ell)]^{-2}\dd v]}\dd u
= \frac{\ell^{1/(2\alpha)}Y}{(\beta+2-d)^2},
\end{align*}
whence
$$
q(A)=\int_0^\infty \frac{\dd \ell}{2\sqrt{2\pi\ell^3}} 
\PR\Big( \frac{{\mathfrak c}\ell^{1/(2\alpha)}}{(\beta+2-d)^2} Y\in A  \Big)
= \int_0^\infty \frac{{\mathfrak a}\dd u}{u^{1+\alpha}} \PR(u Y \in A).
$$

Let us check that $(S_t)_{t\geq 0}$ is $\alpha$-stable, i.e. that
its L\'evy measure $q$ satisfies $q( A_c)=c^\alpha q(A)$,
for all $A \in \cB(\rd\setminus\{0\})$, all $c>0$, where we have set
$A_c=\{z \in \rd : cz \in A   \}$. But
$$
q(A_c)=\int_0^\infty \frac{{\mathfrak a}\dd u}{u^{1+\alpha}} \PR(c u Y \in A)
=c^{\alpha}\int_0^\infty \frac{{\mathfrak a}\dd u}{u^{1+\alpha}} \PR(u Y \in A)=c^\alpha q(A).
$$

We now turn to point (ii). If $\gamma\equiv 1$, then $M_\beta=m_\beta=0$, so that
the announced convergence to $(S_t)_{t\geq 0}$ follows from 
Proposition \ref{pfini}. Moreover, $(S_t)_{t\geq 0}$ is radially symmetric
by definition, recalling Proposition \ref{pfini}, Lemma \ref{ppfini} and
that $\bN(\dd s,\dd e,\dd \theta)$ is a Poisson measure with intensity $\dd s \Xi(\dd e) \Lambda(\dd \theta)$
and observing that $\Lambda\in \cP(\cH)$ is the law of $\hTheta^\star$, which is nothing but a
stationary $\Sd$-valued Brownian motion (because $\gamma\equiv 1$, see Lemma \ref{ergo2}).
\end{proof}

We can finally handle the

\begin{proof}[Proof of Theorem \ref{mr}-(c)-(d)-(e)-(f)]
Points (c)-(e)-(f) immediately follow from Propositions \ref{pfini} and \ref{tpfini}.
For point (d), which concerns the case where $\beta=1+d$, we know that
$(\e[X_{t/\e}-M_\beta \zeta_\e t/\e])_{t\geq 0} \stackrel{f.d.} \longrightarrow (S_t)_{t\geq 0}$,
where $(S_t)_{t\geq 0}$ is a $1$-stable L\'evy process.
We claim that under the additional condition $\int_1^\infty r^{-1}|[\Gamma(r)]^{-1}r-1|\dd r<\infty$, there 
is $b\in\rr$ such that 
\begin{equation}\label{oob}
\lim_{\e\to 0} \Big(\zeta_\e - \frac{1}{9\kappa}|\log \e|\Big)=b,
\end{equation}
whence $(\e[X_{t/\e}- M_\beta |\log \e| t/(9\kappa\e)])_{t\geq 0} \stackrel{f.d.} 
\longrightarrow (S_t+b M_\beta t)_{t\geq 0}$. This completes 
the proof because the L\'evy process $(S_t+bM_\beta t)_{t\geq 0}$ is also a $1$-stable.

\vip

To check \eqref{oob}, we recall Notation \ref{fap} to write
$\zeta_\e=C_\e/D_\e$, where
$$
C_\e=\int_{-\infty}^{1/a_\e} h^{-1}(w)[\sigma(w)]^{-2}\dd w
\quad \hbox{and}\quad
D_\e= \int_{-\infty}^{1/a_\e} [\sigma(w)]^{-2}\dd w.
$$
By Lemma \ref{fcts}-(i)-(vi), we have $|D_\e - \kappa|\leq C \int_{1/a_\e}^\infty (1+|w|)^{-4/3 }\dd w
\leq C a_\e^{1/3} \leq C \e^{1/3}$ since $a_\e=\kappa\e$.

\vip

We thus only have to verify that $\lim_{\e \to 0} (C_\e - |\log \e|/9)$ exists. 
Recalling Notation \ref{notaf} and using the substitution $r=h^{-1}(w)$, we find
\begin{align*}
C_\e=\int_{0}^{h^{-1}(1/a_\e)} r[h'(r)]^{-1}\dd r=\frac13\int_0^{A_\e} r^d [\Gamma(r)]^{-1-d}\dd r
\end{align*}
where we have set $A_\e=h^{-1}(1/a_\e)$. Since $h(r)=3\int_{r_0}^r u^{1-d}[\Gamma(u)]^{1+d} \dd u \sim r^3$
as $r\to \infty$ and since $a_\e=\kappa \e$, we deduce that $A_\e\sim [\kappa \e]^{-1/3}$ as 
$\e\to 0$, so that $\lim_{\e \to 0} (|\log \e|/9 - (\log A_\e)/3)= (\log\kappa)/9$,
and we are reduced to check that $\lim_{\e\to 0} (C_\e - (\log A_\e)/3)$ exists. But
\begin{align*}
C_\e-\frac13\log A_\e= \frac13
\int_0^{A_\e} \Big[\Big(\frac r {\Gamma(r)}\Big)^{1+d} - \indiq_{\{r\geq 1\}}\Big]\frac{\dd r}r
\to \frac13
\int_0^\infty \Big[\Big(\frac r {\Gamma(r)}\Big)^{1+d} - \indiq_{\{r\geq 1\}}\Big]\frac{\dd r}r
\end{align*}
as $\e\to 0$. This last quantity is well-defined and finite, because $\Gamma:[0,\infty)\mapsto (0,\infty)$
is bounded from below, because $\Gamma(r)\sim r$ as $r\to \infty$, and because 
$\int_1^\infty r^{-1}|(r/\Gamma(r))-1|\dd r<\infty$ by assumption.
\end{proof}

\begin{rk}\label{rsc}
In Theorem \ref{mr}-(d), i.e. in the critical case $\beta=1+d$, the constant $c$ is given by $c=1/(9\kappa)=
(3\int_0^\infty u^{d-1}[\Gamma(u)]^{-1-d}\dd u)^{-1}$ by Lemma \ref{fcts}-(i).
\end{rk}

\section{The integrated Bessel regime}\label{Sb}

Here we give the proof of Theorem \ref{mr}-(g). 
We first define properly the limit process $(\cV_t)_{t\geq 0}$.

\begin{defi}\label{ddd} 
We fix $\beta \in (d-2,d)$ and 
consider a Bessel process $(\cR_t)_{t\geq 0}$ starting from $0$
with dimension $d-\beta \in (0,2)$, as well as an i.i.d. family $\{(\hTheta^{\star,i}_t)_{t\in \rr}, i \geq 1\}$
with common law $\Lambda$, see Lemma \ref{ergo2}-(iii), independent of $(\cR_t)_{t\geq 0}$.
We set $\cZ=\{t\geq 0 : \cR_t=0\}$ and we write $\cZ^c=\cup_{i\geq 1}(\ell_i,r_i)$
as the (countable) union of its connected components: for all $i\geq 1$, 
we have $\cR_{\ell_i}=\cR_{r_i}=0$ and $\cR_t>0$ for all $t\in (\ell_i,r_i)$.
We then define
$$
\cV_t = \sum_{i\geq 1} \indiq_{\{t \in (\ell_i,r_i)\}}\cR_t \hTheta^{\star,i}_{[\int_{(\ell_i+r_i)/2}^t\cR_s^{-2}\dd s]}.
$$
\end{defi} 

\begin{rk}
In some sense to be precised, $(\cV_t)_{t\geq 0}$ is the unique (in law) solution to
$$
\cV_t=B_t - \frac\beta 2 \intot \cF(\cV_s)\dd s,
$$
where $(B_t)_{t\geq 0}$ is a $d$-dimensional Brownian motion and where $\cF(v)=\cU^{-1}(v)\nabla \cU(v)$,
with $\cU(v)=|v| \gamma(v/|v|)$ (if $\gamma \equiv 1$, one finds $\cF(v)=|v|^{-2}v$).
This equation is what one gets when informally searching
for the limit of $\sqrt\e V_{t/\e}$ as $\e\to 0$, $(V_t)_{t\geq 0}$ being the solution to \eqref{eds}.
But it is not clearly well-defined because $\cF$ is singular at $0$. 
See \cite[Section 6]{fj} for the detailed study of such an equation in dimension $d=2$ and when $\gamma\equiv 1$.
\end{rk}

We now introduce some notation that will be used during the whole section.
We fix $\beta\in (d-2,d)$, recall Notation \ref{notaf} and set, for $\e\in(0,1)$,
$$
a_\e= \e^{(\beta+2-d)/2}.
$$
We consider a one-dimensional Brownian motion $(W_t)_{t\geq 0}$, set 
$A^\e_t= \e a_\e^{-2}\intot [\sigma(W_s/a_\e)]^{-2}\dd s$, introduce its inverse $\rho^\e_t$ and put
$R^\e_t = \sqrt \e h^{-1}(W_{\rho^\e_t}/a_\e)$
and $T^\e_t=\int_0^t [R^\e_s]^{-2}\dd s$.
We also consider the solution $(\hTheta_t)_{t\geq 0}$
of \eqref{eqt}, independent of $(W_t)_{t\geq 0}$.

\begin{lem}\label{repul}
For all $\e\in(0,1)$, $(\sqrt \e V_{t/\e})_{t\geq 0} \stackrel{d}=(R^\e_t \hTheta_{T^\e_t})_{t\geq 0}$,
for $(V_t)_{t\geq 0}$ the velocity process of \eqref{eds}.
\end{lem}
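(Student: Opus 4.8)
The plan is to deduce this from the representation in Lemma \ref{rep}, the pathwise uniqueness of Proposition \ref{eui}, and the scaling identity of Lemma \ref{exr}, with only an elementary time-change substitution in between. First I would record that, by Lemma \ref{rep} and the uniqueness in law furnished by Proposition \ref{eui}, one has $(V_t)_{t\geq 0}\stackrel{d}=(R_t\hTheta_{H_t})_{t\geq 0}$, where $(R_t)_{t\geq 0}$ solves \eqref{eqr} driven by a one-dimensional Brownian motion $(\tB_t)_{t\geq 0}$, $(\hTheta_t)_{t\geq 0}$ solves \eqref{eqt} driven by an independent Brownian motion, and $H_t=\intot R_s^{-2}\dd s$. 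Since $R$ is continuous and $(0,\infty)$-valued, $H_t<\infty$ for every $t$, so this is well-defined.

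Next I would set $R'_t=\sqrt\e R_{t/\e}$ and perform the substitution $u=s/\e$ to get $\intot [R'_s]^{-2}\dd s=\intot (\e R_{s/\e}^2)^{-1}\dd s=\int_0^{t/\e}R_u^{-2}\dd u=H_{t/\e}$, hence
\[
\sqrt\e\, V_{t/\e}\stackrel{d}=\sqrt\e\, R_{t/\e}\,\hTheta_{H_{t/\e}}=R'_t\,\hTheta_{\intot [R'_s]^{-2}\dd s}.
\]
I would then observe that $(R'_t)_{t\geq 0}\stackrel{d}=(R^\e_t)_{t\geq 0}$: by Lemma \ref{exr} the process $(S^\e_t=\e^{-1/2}R^\e_{\e t})_{t\geq 0}$ solves \eqref{eqr}, which has a unique (hence unique-in-law) solution, so $(S^\e_t)_{t\geq 0}\stackrel{d}=(R_t)_{t\geq 0}$ and therefore $(R^\e_t)_{t\geq 0}=(\sqrt\e\, S^\e_{t/\e})_{t\geq 0}\stackrel{d}=(\sqrt\e\, R_{t/\e})_{t\geq 0}=(R'_t)_{t\geq 0}$. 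Moreover $R'$ is a measurable functional of $\tB$, hence independent of $\hTheta$, and $R^\e$ is a measurable functional of $(W_t)_{t\geq 0}$, which is independent of $\hTheta$ by assumption; in both representations the spherical factor is a solution of \eqref{eqt} and thus has the same law.

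Finally I would conclude by applying to both pairs the deterministic measurable map $(r,\theta)\mapsto (r_t\,\theta_{\int_0^t r_s^{-2}\dd s})_{t\geq 0}$, which is well-defined on pairs with $r$ continuous and positive (since then $\int_0^t r_s^{-2}\dd s<\infty$ for all $t$, in particular for $R'$ and for $R^\e$, the latter giving $T^\e_t<\infty$). Since $(R'_t,\hTheta_t)_{t\geq 0}$ and $(R^\e_t,\hTheta_t)_{t\geq 0}$ are independent pairs with equal marginal laws, their images coincide in law, so
\[
\big(\sqrt\e\, V_{t/\e}\big)_{t\geq 0}\stackrel{d}=\big(R'_t\,\hTheta_{\intot [R'_s]^{-2}\dd s}\big)_{t\geq 0}\stackrel{d}=\big(R^\e_t\,\hTheta_{T^\e_t}\big)_{t\geq 0}.
\]
There is no genuine obstacle here; the only points that need care are keeping track of the independence of the radial and spherical factors in each representation, and checking that the time-changes $H_{t/\e}$ and $T^\e_t$ are finite so that the evaluations of $\hTheta$ make sense.
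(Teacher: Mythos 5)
Your proposal is correct and follows essentially the same route as the paper: both rest on Lemma \ref{rep}, Lemma \ref{exr}, uniqueness in law for \eqref{eds} and \eqref{eqr}, and the substitution showing $\bar T^\e_{t/\e}=T^\e_t$. The paper merely performs the identification in the opposite order (plugging $S^\e$ directly into the construction of Lemma \ref{rep} and then rescaling), while you rescale a generic solution $R$ first and match laws at the level of the radius; the content is identical.
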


\begin{proof}
We know from Lemmas \ref{rep} and \ref{exr} that setting $S^\e_t= \e^{-1/2}R^\e_{\e t}$
and $\bar T^\e_t=\int_0^t [S^\e_s]^{-2} \dd s$, it holds that
$(S^\e_t \hTheta_{\bar T^\e_t})_{t\geq 0} \stackrel{d}= (V_t)_{t\geq 0}$, whence
$(\sqrt\e S^\e_{t/\e} \hTheta_{\bar T^\e_{t/\e}})_{t\geq 0} \stackrel{d}= (\sqrt\e V_{t/\e})_{t\geq 0}$.
To conclude, it suffices to observe that
$\sqrt\e S^\e_{t/\e}=R^\e_t$ and that $\bar T^\e_{t/\e}=\int_0^{t/\e} [\e^{-1/2}R^\e_{\e s}]^{-2} \dd s
=\int_0^t [R^\e_s]^{-2}\dd s =T^\e_t$.
\end{proof}

We first study the convergence of the radius process.

\begin{lem}\label{rays}
There is a Bessel process $(\cR_t)_{t\geq 0}$ with dimension $d-\beta$ issued from $0$ such that
$(R^\e_t)_{t\geq 0}$ a.s. converges to $(\cR_t)_{t\geq 0}$, uniformly on compact time intervals.
\end{lem}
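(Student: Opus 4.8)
The plan is to pass to the limit in the explicit representation $R^\e_t=\sqrt\e\,h^{-1}(W_{\rho^\e_t}/a_\e)$ (Lemma \ref{exr}) rather than in the SDE \eqref{eqr}: the scale function $h$ involves $\Gamma$ only through $\int u^{1-d}[\Gamma(u)]^\beta\,\dd u$, so its asymptotics follow from $\Gamma(r)\sim r$ alone, whereas working directly with \eqref{eqr} would require information on $\Gamma'$. The natural candidate for the limit is $\cR_t=\big(W_{\rho^0_t}\big)^{1/(\beta+2-d)}$, where $\rho^0$ is the right-continuous inverse of the continuous additive functional
\[
A^0_t=\frac1{(\beta+2-d)^2}\int_0^\infty x^{-2(\beta+1-d)/(\beta+2-d)}L^x_t\,\dd x=\intot f(W_s)\,\dd s,\qquad f(x)=\frac{(x\vee0)^{-2(\beta+1-d)/(\beta+2-d)}}{(\beta+2-d)^2},
\]
with $(L^x_t)_{t\ge0}$ the local time of $W$ at $x$. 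Here $A^0$ is finite a.s. (and in expectation) precisely because $\beta<d$ forces the exponent $2(\beta+1-d)/(\beta+2-d)$ to be $<1$, and $W_{\rho^0_t}\ge0$ because $A^0$ increases only when $W>0$, so $\cR_t$ is well defined.

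First I would show that a.s.\ $\sup_{[0,T]}|A^\e_t-A^0_t|\to0$ for every $T>0$. By the occupation times formula, $A^\e_t=\frac\e{a_\e^2}\int_\rr[\sigma(x/a_\e)]^{-2}L^x_t\,\dd x$, and the choice $a_\e=\e^{(\beta+2-d)/2}$ is exactly the one making $\frac\e{a_\e^2}[\sigma(x/a_\e)]^{-2}\to(\beta+2-d)^{-2}(x\vee0)^{-2(\beta+1-d)/(\beta+2-d)}$ pointwise in $x\neq0$, thanks to $h(r)\sim r^{\beta+2-d}$, $h^{-1}(w)\sim w^{1/(\beta+2-d)}$ and $\sigma(w)\sim(\beta+2-d)w^{(\beta+1-d)/(\beta+2-d)}$ as $w\to+\infty$, and to the rapid decay of $[\sigma(w)]^{-2}$ as $w\to-\infty$ (all contained in Lemma \ref{fcts}). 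After splitting off the contribution of $\{|x|<a_\e\}$, whose integral is $O(\e/a_\e)\to0$, the remaining integral converges by dominated convergence with a uniform-in-$\e$ integrable dominating function, using that $x\mapsto L^x_t$ is a.s.\ continuous with compact support and that $\sup_{[0,T]\times\rr}L^x_t<\infty$ a.s. Since $A^0$ is continuous with $A^0_0=0$ and $A^0_t\to\infty$, the convergence of inverse functions (Lemma \ref{tc}) then gives $\rho^\e_t\to\rho^0_t$ at every $t$ that is not a jump time of $\rho^0$, hence a.s.\ for each fixed $t$.

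Next I would combine these. A P\'olya-type argument (monotone functions converging pointwise to a continuous limit converge locally uniformly) shows $\sqrt\e\,h^{-1}(\,\cdot\,/a_\e)$ converges locally uniformly on $\rr$ to $w\mapsto(w\vee0)^{1/(\beta+2-d)}$ — using $h(r)\sim r^{\beta+2-d}$ on the positive axis and $h^{-1}(w)\le r_0$ for $w\le0$ — so that $R^\e_t\to\cR_t$ at continuity points $t$ of $\rho^0$. I expect the main obstacle to be upgrading this to a.s.\ \emph{uniform} convergence on $[0,T]$: $A^0$ is flat on each excursion interval $(\ell,r)$ of $W$ below $0$, so $\rho^0$ jumps there, and at such a jump time $t^\star$ one only controls $\mathrm{dist}(\rho^\e_{t^\star},[\ell,r])\to0$. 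However $W_\ell=W_r=0$ and $W\le0$ on $[\ell,r]$, so $\cR$ is continuous (it vanishes at $t^\star$) and $R^\e_{t^\star}=\sqrt\e\,h^{-1}(W_{\rho^\e_{t^\star}}/a_\e)\to0$ (since $W_{\rho^\e_{t^\star}}\le o(1)$ and $\sqrt\e\,h^{-1}(w/a_\e)\to0$ uniformly for $w\le o(1)$); as there are only finitely many below-$0$ excursion intervals of length exceeding any fixed $\eta>0$ on a bounded horizon, a routine $\eta$-splitting yields $\sup_{[0,T]}|R^\e_t-\cR_t|\to0$ a.s.

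Finally I would identify the law of $\cR$. Setting $N_t=W_{\rho^0_t}=\cR_t^{\beta+2-d}\ge0$, one checks that $N$ is a continuous local martingale with $\langle N\rangle_t=(\beta+2-d)^2\intot N_s^{2(\beta+1-d)/(\beta+2-d)}\,\dd s$ — this uses that $\rho^0$ is the inverse of $A^0$ and is continuous on $\{N>0\}$, and that $\{N=0\}$ is both $\dd\langle N\rangle$- and Lebesgue-null (the latter because $A^0$ is absolutely continuous, hence maps the Lebesgue-null zero set of $W$ to a null set). Applying the generalized It\^o formula to $\cR_t^2=N_t^{2/(\beta+2-d)}$ — a $C^1$ function with absolutely continuous derivative, since $2/(\beta+2-d)>1$ — and discarding the Lebesgue-null set $\{\cR=0\}$, one gets $\cR_t^2=(d-\beta)t+2\intot\cR_s\,\dd\gamma_s$ for a Brownian motion $\gamma$. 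Thus $\cR^2$ solves the squared-Bessel SDE of dimension $d-\beta$ issued from $0$, and pathwise uniqueness for that SDE (its diffusion coefficient $2\sqrt{\,\cdot\,}$ being $1/2$-H\"older) identifies $\cR^2$, hence $\cR$, as a Bessel process of dimension $d-\beta$ issued from $0$; alternatively this is immediate from the scale/speed representation of Bessel processes of dimension $<2$ as time-changed powers of Brownian motion recalled in the appendix (see also \cite{ry}).
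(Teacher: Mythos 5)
Your proposal follows the same strategy as the paper's proof: pass to the limit in the representation $R^\e_t=\sqrt\e\, h^{-1}(W_{\rho^\e_t}/a_\e)$, prove convergence of the additive functionals $A^\e$ to $A^0$ and of their inverses, and identify $\big((W_{\rho^0_t})_+^{1/(\beta+2-d)}\big)_{t\geq 0}$ as a Bessel process of dimension $d-\beta$. The only genuinely different sub-step is how you upgrade pointwise convergence to uniform convergence past the jumps of $\rho^0$: you propose an excursion-splitting argument (finitely many long negative excursions, near which both $R^\e$ and $\cR$ are small), whereas the paper composes with $A$ and uses the Dini theorem together with the identities $(W_u)_+=W_{\rho_{A_u}}$ and the continuity of $u\mapsto W_{\rho_u}$. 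Both routes address the same difficulty, which you correctly single out as the main obstacle, and yours can be made rigorous at the cost of some bookkeeping.

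The one genuine error is in your direct identification of the law of $\cR$: the process $N_t=W_{\rho^0_t}$ is \emph{not} a continuous local martingale. Since $A^0$ only charges $\{W>0\}$, the time change excises the negative excursions of $W$, and after a further time change $N$ is a \emph{reflected} Brownian motion (this is exactly the content of the proof of Lemma \ref{bess}-(i), via \cite{drvy}); its semimartingale decomposition therefore carries an increasing local-time term supported on $\{N=0\}$. Your subsequent It\^o computation for $\cR_t^2=N_t^{2/(\beta+2-d)}$ happens to survive, because the derivative of $x\mapsto x^{p}$ vanishes at $0$ for $p=2/(\beta+2-d)>1$ and so kills the local-time contribution, but as written the step ``one checks that $N$ is a continuous local martingale with $\langle N\rangle_t=\dots$'' is false and would need to be replaced by the correct decomposition. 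Your fallback --- invoking the time-changed-power-of-Brownian-motion representation recalled in Lemma \ref{bess} --- is precisely what the paper does and closes the argument correctly.
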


\begin{proof}
Since $[\sigma(w)]^{-2} \leq C (1+|w|)^{-2(\beta+1-d)/(\beta+2-d)}$ by
Lemma \ref{fcts}-(vi) and  since
$$
\lim_{\e\to 0} \e [a_\e \sigma(w/a_\e)]^{-2} = (\beta+2-d)^{-2} w^{-2(\beta+1-d)/(\beta+2-d)}\indiq_{\{w>0\}}
$$
by Lemma \ref{fcts}-(xi), since moreover we a.s. have, for all $T>0$, 
$\int_0^T |W_s|^{-2(\beta+1-d)/(\beta+2-d)}\dd s <\infty$ because $2(\beta+1-d)/(\beta+2-d)<1$, 
we conclude, by dominated convergence, 
that a.s., for all $t\geq 0$, $(A^\e_t)_{t\geq 0}$ converges to  
$$
A_t=(\beta+2-d)^{-2} \intot W_s^{-2(\beta+1-d)/(\beta+2-d)}\indiq_{\{W_s>0\}} \dd s.
$$
Let $\rho_t=\inf\{s>0 : A_s>t\}$ be its generalized inverse and let $J=\{t>0 : \rho_t>\rho_{t-}\}$.
We now verify that a.s., for all $T>0$,
\begin{equation}\label{ggg}
\lim_{\e\to 0}\sup_{u\in[0,T]} |(W_{\rho^\e_u})_+-(W_{\rho_u})_+| =0.
\end{equation}

(a) By Lemma \ref{tc}, we know that a.s., for all $t\in [0,\infty)\setminus J$, $\rho^\e_t \to \rho_t$.

\vip
  
(b) We a.s. have, for all $t\geq 0$, $A_{\rho_{t-}}=A_{\rho_t}=t$ (since $A$ is continuous) and
$$
\rho_{A_t}= \inf\{s>t : W_s>0\}=\left\{\begin{array}{lll} t & \hbox{ if } &W_t \geq 0, \\ \inf\{s>t : W_s=0\}&
\hbox{ if } & W_t<0.  \end{array} \right. 
$$
Indeed, the second equality is clear and, setting $\nu_t= \inf\{s>t : W_s>0\}$, it holds that $\rho_{A_t}=
\inf\{s>0 : A_s>A_t\}=\inf\{s>t : A_s>A_{\nu_t}\}$ (because $A_{\nu_t}=A_t$ by definition of $A$),
whence clearly $\rho_{A_t}=\inf\{s>t : W_s>0\}$ (again by definition of $A$).

\vip

(c) Since $A$ is continuous, we deduce from (a) that a.s., for a.e. $t\geq 0$, $A_{\rho^\e_t}\to A_{\rho_t}$.
Since moreover $t\mapsto A_{\rho_t}$ is a.s. continuous (by (b)) and nondecreasing (as well as
$t\mapsto A_{\rho_t^\e}$ for each $\e>0$), we conclude from the Dini
theorem that a.s., $\sup_{[0,T]} |A_{\rho^\e_t}- A_{\rho_t}| \to 0$.

\vip

(d) By (b), we a.s. have $(W_u)_+=W_{\rho_{A_u}}$ for all $u\geq 0$.

\vip

(e) Almost surely, $u\mapsto W_{\rho_u}$ is nonnegative and continuous. First, by (b),
we have $W_{\rho_u}=W_{\rho_{A_{\rho_u}}}$, which is nonnegative by (d). Next, it suffices to prove that
$W_{\rho_{u-}}=W_{\rho_u}$ for all $u\geq 0$. Setting $t=\rho_{u-}$, we see that $W_t=W_{\rho_{A_t}}$ (by (b)
and since $W_t\geq 0$). Hence $W_t=W_{\rho_{A_{\rho_{u-}}}}=W_{\rho_{A_{\rho_{u}}}}$ by (b), whence $W_t=W_{\rho_u}$
as desired.

\vip

(f) To complete the proof of \eqref{ggg}, it suffices to note that $(W_{\rho^\e_u})_+-(W_{\rho_u})_+=
W_{\rho_{A_{\rho^\e_u}}}-W_{\rho_u}$ by (d) and (e), that $u\mapsto W_{\rho_u}$ is continuous by (e), and finally to use
point (c). 

\vip

By Lemma \ref{fcts}-(x), we have $\sqrt\e h^{-1}(w/a_\e) \to  w_+^{1/(\beta+2-d)}$, uniformly on 
compact subsets of $\rr$. Together with \eqref{ggg}, this implies that
$(R^\e_t=\sqrt \e h^{-1}(W_{\rho^\e_t}/a_\e))_{t\geq 0}$ a.s. converges, 
uniformly on compact time intervals, to
$((W_{\rho_t})_+^{1/(\beta+2-d)})_{t\geq 0}$, which is a Bessel process with dimension $d-\beta$
issued from $0$ by Lemma \ref{bess}.
\end{proof}

We can now give the

\begin{proof}[Proof of Theorem \ref{mr}-(g)]
Our goal is to verify that 
$(R^\e_t \hTheta_{T^\e_t})_{t\geq 0}$
goes in law to  $(\cV_t)_{t\geq 0}$, for the usual convergence of continuous processes.
This implies that $(\e^{3/2}X_{t/\e})_{t\geq 0}$ goes in law to $(\intot \cV_s\dd s)_{t\geq 0}$, since
by Lemma \ref{repul}, $(\e^{3/2}X_{t/\e}=\e^{3/2}x_0+\intot \sqrt\e V_{s/\e} \dd s)_{t\geq 0}$ 
has the same law as $(\e^{3/2}x_0+\intot R^\e_s \hTheta_{T^\e_s} \dd s)_{t\geq 0}$.
We already know from Lemma \ref{rays} that a.s., $\sup_{[0,T]} |R^\e_t-\cR_t|\to 0$ for all $T>0$,
where $(\cR_t)_{t\geq 0}$ is a Bessel process as in Definition \ref{ddd} and we introduce 
$\cZ=\{t\geq 0 : \cR_t=0\}$ and write $\cZ^c=\cup_{i\geq 1}(\ell_i,r_i)$
with, for all $i\geq 1$, $\cR_{\ell_i}=\cR_{r_i}=0$ and $\cR_t>0$ for all $t\in (\ell_i,r_i)$.
Finally, we set $\cW=\sigma(W_s,s\geq 0)$ and observe that
$\cW=\sigma(R^\e_t,\cR_t,t\geq 0,\e\in(0,1))$ is independent of $(\hTheta_t)_{t\geq 0}$.

\vip

\underline{\it Step 1.} For all $i>j\geq 1$, we have $\lim_{\e\to 0} (\tau_i^\e-\tau_j^\e)=\infty$ a.s.,
where we have set
$$
\tau_i^\e= T^\e_{(\ell_i+r_i)/2}=\int_0^{(\ell_i+r_i)/2} \frac{\dd s}{[R^\e_s]^2}.
$$
Indeed, by the Fatou Lemma, we know that a.s.,
$$
\liminf_{\e\to 0}(\tau_i^\e-\tau_j^\e)\geq \int_{(\ell_j+r_j)/2}^{(\ell_i+r_i)/2} \frac{\dd s}{[\cR_s]^2}
\geq \int_{\ell_i}^{(\ell_i+r_i)/2} \frac{\dd s}{[\cR_s]^2}=\infty
$$
by Lemma \ref{bess}-(ii).

\vip

\underline{\it Step 2.} For $T>0$ and $\delta>0$, we consider the (a.s. finite) set of indices
$$
\cI_{\delta,T}=\Big\{ i \geq 1 : \ell_i\leq T \;\; \hbox{ and } \sup_{s\in (\ell_i,r_i)}\cR_s >\delta\Big\}
$$
and for $i\in \cI_{\delta,T}$, we introduce $\ell_i<\ell_i^\delta<r_i^\delta<r_i$ defined by
$$
\ell_i^\delta=\inf\{s>\ell_i : \cR_s>\delta\}\quad\hbox{and}\quad r_i^\delta=\sup\{s<r_i : \cR_s>\delta\}.
$$
We also set
$$
A_{\delta,T}= 2 \max_{i\in \cI_{\delta,T}} \Big[\Big|\int_{(\ell_i+r_i)/2}^{\ell_i^\delta}\frac{\dd s}{[\cR_s]^2}\Big|  + 
\Big|\int_{(\ell_i+r_i)/2}^{r_i^\delta}\frac{\dd s}{[\cR_s]^2}\Big|\Big].
$$
By Lemma \ref{ergo2}-(iv), conditionally on $\cW$, we can find an i.i.d. family
$((\hTheta^{\star,i,\e,\delta}_t)_{t\in \rr})_{i\in \cI_{\delta,T}}$ of $\Lambda$-distributed processes
such that, setting 
$$
\Omega_{\e,\delta,T}=\Big\{\forall i\in \cI_{\delta,T}, \;\; (\hTheta^{\star,i,\e,\delta}_t)_{t\in [-A_{\delta,T},A_{\delta,T}]}
=(\hTheta_{(\tau_i^\e+t)\lor 0})_{t\in [-A_{\delta,T},A_{\delta,T}]}\},
$$
we have $\Pr(\Omega_{\e,\delta,T}|\cW)={\bf p}_{\delta,T}(\e)$,
where ${\bf p}_{\delta,T}(\e)=p_{A_{\delta,T}}(\tau^\e_{i_1},\tau^\e_{i_2}-\tau^\e_{i_1},\dots,\tau^\e_{i_n}-\tau^\e_{i_{n-1}})$
and where we have written $\cI_{\delta,T}=\{i_1,\dots,i_n\}$.
We know that ${\bf p}_{\delta,T}(\e)$ a.s. tends to $1$ as $\e\to 0$, so that
$r_{\delta,T}(\e)=\PR(\Omega_{\e,\delta,T})=\E[{\bf p}_{\delta,T}(\e)]$
also tends to $1$ as $\e\to 0$.

\vip

\underline{\it Step 3.} Conditionally on $\cW$, we then consider an i.i.d. family 
$((\hTheta^{\star,i,\e,\delta}_t)_{t\in \rr})_{i\in \nn^* \setminus \cI_{\delta,T}}$, independent of 
$((\hTheta^{\star,i,\e,\delta}_t)_{t\in \rr})_{i\in \nn^* \setminus \cI_{\delta,T}}$, and we consider the process
$(\cV_t^{\e,\delta})_{t\geq 0}$ built from $(\cR_t)_{t\geq 0}$ and the i.i.d. family 
$((\hTheta^{\star,i,\e,\delta}_t)_{t\in \rr})_{i\geq 1}$ as in Definition \ref{ddd}, that is,
$$
\cV_t^{\e,\delta}=\sum_{i\geq 1} \indiq_{\{t \in (\ell_i,r_i)\}}\cR_t \hTheta^{\star,i,\e,\delta}_{[\int_{(\ell_i+r_i)/2}^t\cR_s^{-2}\dd s]}.
$$
For all $\e\in(0,1)$ and all $\delta \in (0,1)$, $(\cV_t^{\e,\delta})_{t\geq 0} \stackrel{d}=
(\cV_t)_{t\geq 0}$. We will show that for all $\eta>0$,
$$
\lim_{\delta\to 0} \limsup_{\e\to 0} \PR[\Delta_{\e,\delta,T}>\eta]=0 \quad \hbox{where}\quad \Delta_{T,\delta,\e}= 
\sup_{[0,T]} \Big|R^\e_t \hTheta_{T^\e_t}- \cV_t^{\e,\delta} \Big|
$$
and this will conclude the proof. 
Recalling that $|\cV_t^{\e,\delta}|=\cR_t$,
\begin{align*}
\Delta_{\e,\delta,T}\leq \sup_{[0,T]} |R^\e_t - \cR_t|
+ \sup_{[0,T]} \Big|\cR_t\hTheta_{T^\e_t}- \cV_t^{\e,\delta} \Big|\indiq_{\{\cR_t\leq \delta\}}
+\sup_{[0,T]} \Big|\cR_t \hTheta_{T^\e_t}- \cV_t^{\e,\delta} \Big|\indiq_{\{\cR_t> \delta\}}.
\end{align*}
We already know that the first term a.s. tends to $0$ as $\e\to 0$, the second one is bounded by $2\delta$ and
the third one is bounded by $(\sup_{[0,T]}\cR_t)\Delta'_{\e,\delta,T}$, where
$\Delta'_{\e,\delta,T}=\sup_{[0,T]} |\hTheta_{T^\e_t}- \cR_t^{-1}\cV_t^{\e,\delta} \Big|\indiq_{\{\cR_t> \delta\}}$.
All in all, we only have to check that 
$\lim_{\delta\to 0} \limsup_{\e\to 0} \PR[\Delta'_{\e,\delta,T}>\eta]=0$.

\vip

\underline{\it Step 4.} For all $t\in [0,T]$, $\cR_t>\delta$ implies that 
$t\in \cup_{i\in \cI_{\delta,T}}(\ell_i^\delta,r_i^\delta)$, whence
$$
\cR_t^{-1}\cV_t^{\e,\delta}- \hTheta_{T^\e_t}=\sum_{i\in \cI_{\delta,T}} \indiq_{\{t \in (\ell_i^\delta,r_i^\delta)\}}
\Big(\hTheta^{\star,i,\e,\delta}_{[\int_{(\ell_i+r_i)/2}^t\cR_s^{-2}\dd s]} - \hTheta_{[\tau_i^\e+\int_{(\ell_i+r_i)/2}^t [R_s^\e]^{-2}\dd s]}\Big).
$$
because $T^\e_t=\tau_i^\e+\int_{(\ell_i+r_i)/2}^t [R_s^\e]^{-2}\dd s$.
For $x\in (0,1)$, we have $\lim_{\e\to 0}\PR(\Omega'_{\e,\delta,T}(x))=1$, where
$$
\Omega'_{\e,\delta,T}(x)=\Big\{\forall i\in \cI_{\delta,T}, \;\;\forall t\in (\ell_{i}^\delta,r_i^\delta), \;\;
\Big|\int_{(\ell_i+r_i)/2}^t \cR_s^{-2}\dd s - \int_{(\ell_i+r_i)/2}^t [R_s^\e]^{-2}\dd s\Big| \leq x \Big\}.
$$
Indeed, for each $i\in \cI_{\delta,T}$,
$\cR_s$ is continuous and positive on $(\ell_{i}^\delta,r_i^\delta)$ and we have already seen that
$\lim_{\e\to 0} \sup_{[0,T]} |R^\e_t-\cR_t|=0$. For the same reasons, it holds that
$\lim_{\e\to 0}\PR(\Omega''_{\e,\delta,T})=1$
$$
\Omega''_{\e,\delta,T}=\Big\{\forall i\in \cI_{\delta,T}, \;\;\forall t\in (\ell_{i}^\delta,r_i^\delta), \;\;
\Big|\int_{(\ell_i+r_i)/2}^t \cR_s^{-2}\dd s\Big|\lor
\Big|\int_{(\ell_i+r_i)/2}^t [R_s^\e]^{-2}\dd s\Big| \leq A_{\delta,T} \Big\}.
$$
Now on $\bar \Omega_{\e,\delta,T}(x)= \Omega_{\e,\delta,T}\cap \Omega'_{\e,\delta,T}(x)\cap \Omega''_{\e,\delta,T}$, 
we have, for any $t\in [0,T]$,
$$
(\cR_t^{-1}\cV_t^{\e,\delta}- \hTheta_{T^\e_t})\indiq_{\{\cR_t> \delta\}}=
 \sum_{i\in \cI_{\delta,T}} \indiq_{\{t \in (\ell_i^\delta,r_i^\delta)\}}
\Big(\hTheta^{\star,i,\e,\delta}_{[\int_{(\ell_i+r_i)/2}^t\cR_s^{-2}\dd s]} 
- \hTheta^{\star,i,\e,\delta}_{[\int_{(\ell_i+r_i)/2}^t [R_s^\e]^{-2}\dd s]}\Big),
$$
whence
$$
\Delta'_{\e,\delta,T}
\leq \#(\cI_{\delta,T}) \sup\Big\{\Big|\hTheta^{\star,i,\e,\delta}_a-\hTheta^{\star,i,\e,\delta}_b\Big| \; : \;
i\in\cI_{\delta,T}, \;a,b\in [-A_{\delta,T},A_{\delta,T}], \;|a-b|<x\Big\}=M_{\delta,T}^\e(x),
$$
the last equality standing for a definition. But the law of $M_{\delta,T}^\e(x)$ does not depend on $\e\in (0,1)$
(because conditionally on $\cW$, the family $((\hTheta^{\star,i,\e,\delta}_t)_{t\in\rr})_{i\in \cI_{\delta,T}}$ is i.i.d.
and $\Lambda$-distributed. All in all, we have proved that for all $\delta>0$, all $T>0$, all $\eta>0$,
$x>0$, with a small abuse of notation,
$$
\limsup_{\e\to 0} \PR(\Delta'_{\e,\delta,T}>\eta)\leq \PR(M_{\delta,T}(x)>\eta)+ \limsup_{\e\to 0}
\PR((\bar \Omega_{\e,\delta,T}(x))^c) = \PR(M_{\delta,T}(x)>\eta)
$$
But $\lim_{x\to 0} \PR(M_{\delta,T}(x)>\eta)=0$, because the
$\Lambda$-distributed processes are continuous. We thus have 
$\limsup_{\e\to 0} \PR(\Delta'_{\e,\delta,T}>\eta)=0$
for each $\delta>0$, which completes the proof.
\end{proof}


\section{The diffusive regime}\label{Sd}

The goal of this section is to prove Theorem \ref{mr}-(a). As already mentioned, 
this regime is almost treated in Pardoux-Veretennikov \cite{pv}, which consider much more general
problems. However, we can not strictly apply their result because $F$ is not locally bounded
(except if $\gamma\equiv 1$). Moreover, our proof is much simpler (because our model is much simpler).
First, we adapt to our context a Poincar\'e inequality found in Cattiaux-Gozlan-Guillin-Roberto \cite{cggr}.

\begin{lem}\label{poinca}
For any $\beta>2+d$,
there is a constant $C>0$ such that for all $f \in H^1_{loc}(\rd)\cap L^1(\rd,\mu_\beta)$ satisfying
$\intrd f(v) \mu_\beta(\dd v)=0$,
$$
\intrd [f(v)]^2 (1+|v|)^{-2}\mu_\beta(\dd v) \leq C \intrd |\nabla f(v)|^2 \mu_\beta(\dd v).
$$
\end{lem}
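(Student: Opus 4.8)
The plan is to reduce the claimed weighted Poincar\'e inequality to a classical Poincar\'e-type inequality for measures with heavy tails on $\rd$. First I would recall the general criterion of Cattiaux--Gozlan--Guillin--Roberto \cite{cggr} (or Bobkov--Ledoux type arguments): for a probability measure $\mu$ on $\rd$ with a density bounded above and below on compacts, a weighted Poincar\'e inequality of the form $\int f^2 \omega \, d\mu \le C\int|\nabla f|^2 d\mu$ (for $f$ with $\int f d\mu=0$) holds provided the weight $\omega$ decays fast enough relative to the tail of $\mu$, via a Lyapunov-function condition: there should exist a smooth $W\ge 1$ and constants $a>0$, $b<\infty$ such that $\mathcal{L} W \le -a\,\omega W + b\,\indiq_{K}$ on $\rd$, where $\mathcal{L}=\Delta - \beta\nabla[\log U]\cdot\nabla$ is the (reversible) generator associated with $\mu_\beta$ and $K$ is a compact set. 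Here the target weight is $\omega(v)=(1+|v|)^{-2}$.

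The key steps, in order, would be: (1) Write $\mathcal{L}$ explicitly using $F=\nabla[\log U]$ and Assumption~\ref{as}, noting $U(v)\sim|v|$ so that $F(v)\sim v/|v|^2$ and $\beta F(v)\cdot v \sim \beta$ for large $|v|$. (2) Try the Lyapunov function $W(v)=[U(v)]^{s}$ (equivalently $(1+|v|^2)^{s/2}$) for a suitable exponent $s>0$, or more simply a power of $|v|$ outside a neighborhood of the origin, smoothed near $0$ using the $C^\infty$ regularity of $U$. A direct computation gives $\mathcal{L}W/W = s\,\mathcal{L}_0(\log U) + s^2|\nabla\log U|^2$ where the dominant term for large $|v|$ is $-\beta s |\nabla\log U|^2 + s^2|\nabla\log U|^2 + s \Delta\log U \sim (s^2-\beta s)|v|^{-2} + s(d-1)|v|^{-2} + \text{l.o.t.}$; choosing $0<s<\beta-(d-1)$, which is possible exactly because $\beta>2+d>d+1$, makes the leading coefficient strictly negative, so $\mathcal{L}W \le -a|v|^{-2}W$ outside a large ball, i.e. $\mathcal{L}W \le -a'\omega W + b\indiq_K$. (3) Invoke the criterion of \cite{cggr} (their Theorem/Lemma relating such a Lyapunov inequality plus a local Poincar\'e inequality on $K$ — which holds since $\mu_\beta$ has a locally bounded-above-and-below density — to the global weighted Poincar\'e inequality), to conclude. (4) Finally, check the regularity bookkeeping: $f\in H^1_{loc}\cap L^1(\mu_\beta)$ suffices to run the argument by a truncation/approximation step, and the mean-zero normalization is handled in the standard way.

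The main obstacle I anticipate is two-fold. First, the precise constant $s$ must be chosen so that \emph{all} error terms (the curvature term $\Delta\log U$, the contribution of $\gamma$ through the spherical part of $U$, and the smoothing near $v=0$) are genuinely lower order; this requires the asymptotics $\Gamma(r)\sim r$, $\Gamma'(r)\to 1$ and the boundedness of $\gamma$ and its derivatives on $\Sd$, and one must be careful that the angular part $\gamma(v/|v|)$ does not spoil the sign of the leading term — but since $|\nabla\log U|^2 = [\Gamma'/\Gamma]^2 + |v|^{-2}|\nabla_S\log\gamma|^2$ and $\Gamma'/\Gamma\sim 1/r$, the radial term dominates and the computation goes through for $\beta>d+1$. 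Second, one must make sure the version of the \cite{cggr} result being quoted is stated for weighted Poincar\'e inequalities with a weight vanishing at infinity (not just for the plain Poincar\'e inequality); if only the unweighted statement is available, I would instead directly prove the implication "Lyapunov + local Poincar\'e $\Rightarrow$ weighted Poincar\'e" by the standard splitting $\int f^2\omega\,d\mu = \int f^2 \frac{-\mathcal{L}W}{aW}d\mu + \frac{1}{a}\int f^2\frac{b\indiq_K}{W}d\mu$, integrating by parts in the first term ($\int f^2\frac{-\mathcal{L}W}{W}d\mu \le 4\int|\nabla f|^2 d\mu$ by the Cauchy--Schwarz / Bakry--Meyer trick) and bounding the second term via the local Poincar\'e inequality on $K$ together with a cut-off. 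This self-contained route avoids any ambiguity about the exact hypotheses of the cited lemma, at the cost of a few more lines of routine estimates.
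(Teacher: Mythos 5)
Your overall strategy --- establish a Lyapunov condition $\cL W\le -a(1+|v|)^{-2}W+b\indiq_{K}$ and convert it into the weighted Poincar\'e inequality via the standard identity $\intrd f^2\frac{-\cL W}{W}\dd\mu\le\intrd|\nabla f|^2\dd\mu$ plus a local Poincar\'e inequality on $K$ --- is a legitimate, self-contained route, and it is essentially the machinery behind the result the paper invokes. The paper itself is shorter: it quotes \cite[Proposition 5.5]{cggr} directly, namely $\intrd g^2(1+|v|)^{-\alpha}\dd v\le C\intrd|\nabla g|^2(1+|v|)^{2-\alpha}\dd v$ for the explicit reference measure $(1+|v|)^{-\alpha}\dd v$ with $\alpha=\beta+2$, and transfers it to $\mu_\beta$ using the two-sided bounds $C_1(1+|v|)^{-\beta}\dd v\le\mu_\beta(\dd v)\le C_2(1+|v|)^{-\beta}\dd v$ together with a Cauchy--Schwarz estimate comparing the two centering constants. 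Note that your phrase ``the mean-zero normalization is handled in the standard way'' hides the only place where $\beta>2+d$ is actually used: relating the centering in the reference inequality (or over $K$) to the hypothesis $\intrd f\,\dd\mu_\beta=0$ costs a factor $\intrd(1+|v|)^{2-\beta}\dd v$, which is finite precisely when $\beta>2+d$. This should be made explicit.

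There is, however, a genuine gap in your step (2). Assumption \ref{as} controls only $U$ itself ($\Gamma(r)\sim r$); it gives no control on $\Gamma'$ or $\Gamma''$. Your computation of $\cL W$ for $W=U^{s}$ needs $\Gamma'(r)/\Gamma(r)\sim 1/r$ and a bound on $\Delta\log U$ (hence on $\Gamma''$), and you explicitly invoke ``$\Gamma'(r)\to 1$'', which is simply not available --- the paper only imposes a quantitative version of this in the critical case of Theorem \ref{mr}-(b). Under Assumption \ref{as}, $\Gamma'$ and $\Gamma''$ may oscillate unboundedly (take $\Gamma(r)=r(1+r^{-1/2}\sin(r^{10}))$ for large $r$), so $\cL W$ can be positive at arbitrarily large radii and the pointwise Lyapunov inequality fails; the same obstruction hits any choice of $W$, since $F=\nabla\log U$ always brings in $\Gamma'$. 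The cure is to observe that a weighted Poincar\'e inequality depends only on the measure and the Dirichlet form, i.e.\ on $U$ and not on $\nabla U$: run your Lyapunov argument for the comparable model density $(1+|v|^2)^{-\beta/2}$, whose potential has perfectly controlled derivatives, and then transfer to $\mu_\beta$ via the two-sided density bounds and the centering adjustment --- which is exactly the reduction the paper performs. A minor arithmetic point as well: $\Delta\log|v|=(d-2)|v|^{-2}$, not $(d-1)|v|^{-2}$, so the admissible range for the exponent is $0<s<\beta+2-d$; this does not affect feasibility.
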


\begin{proof}
The constants below are allowed to depend only on $U$, $\beta$ and $d$.
By Assumption \ref{as}, there are
$0<C_1<C_2$ such that
$C_1 (1+|v|)^{-\beta}\dd v \leq \mu_\beta(\dd v)\leq C_2(1+|v|)^{-\beta}\dd v$.

\vip

We know from \cite[Proposition 5.5]{cggr} that for any $\alpha>d$, there is a constant $C$ 
such that for $g \in H^1_{loc}(\rd)\cap L^1(\rd,(1+|v|)^{-\alpha}\dd v)$ 
satisfying $\intrd g(v) (1+|v|)^{-\alpha}\dd v=0$, we have the inequality
$\intrd [g(v)]^2 (1+|v|)^{-\alpha}\dd v \leq C \intrd |\nabla g(v)|^2 (1+|v|)^{2-\alpha}\dd v$.

\vip

For $f$ as in the statement, we apply this inequality with 
$\alpha=\beta+2>d$ and $g=f-a$, the constant $a\in \rr$ being such that
$\intrd g(v) (1+|v|)^{-\beta-2}\dd v=0$. We find that 
$\intrd [g(v)]^2 (1+|v|)^{-2-\beta}\dd v \leq C_3 \intrd |\nabla g(v)|^2 (1+|v|)^{-\beta}\dd v$.

\vip

But $\intrd f(v) \mu_\beta(\dd v)=0$, whence $a=-\intrd g(v) \mu_\beta(\dd v)$ and thus
\begin{align*}
a^2 \leq  C_2^2\Big[\intrd g(v)(1+|v|)^{-\beta} \dd v\Big]^2 
=  C_2^2\Big[\intrd (1+|v|)^{-\beta/2-1} g(v)(1+|v|)^{1-\beta/2} \dd v\Big]^2,
\end{align*}
whence $a^2 \leq C_2^2 C_4  \intrd [g(v)]^2(1+|v|)^{-\beta-2} \dd v$
by the Cauchy-Schwarz inequality, where the constant $C_4=\intrd (1+|v|)^{2-\beta} \dd v$ is finite
because $\beta>2+d$.

\vip

Using that $f^2 \leq 2g^2+2a^2$ and setting $C_5=\intrd (1+|v|)^{-2-\beta} \dd v$, we find that
\begin{align*}
\intrd [f(v)]^2 (1+|v|)^{-2}\mu_\beta(\dd v)\leq& 2C_2 \intrd [g(v)]^2(1+|v|)^{-2-\beta}\dd v
+2C_2 a^2 \intrd (1+|v|)^{-2-\beta} \dd v\\
\leq & 2C_2[1+ C_2^2 C_4 C_5]\intrd [g(v)]^2(1+|v|)^{-2-\beta}\dd v\\
\leq & 2C_2C_3[1+ C_2^2 C_4 C_5]\intrd |\nabla g(v)|^2(1+|v|)^{-\beta}\dd v\\
\leq & 2 C_1^{-1}C_2C_3[1+ C_2^2 C_4 C_5]\intrd |\nabla f(v)|^2 \mu_\beta(\dd v).
\end{align*}
We finally used that $\nabla g=\nabla f$. 
\end{proof}

We next state a lemma that will allow us to solve the Poisson equation 
$\cL f(v) = v-m_\beta$, where $\cL$ is the generator of the velocity process.
We state a slightly more general version, that will be needed when treating the critical case $\beta=4+d$

\begin{lem}\label{lmlm}
Suppose that 
$\beta>2+d$. Let $g: \rd\mapsto \rr$ be of class $C^\infty$ and satisfy 
\begin{equation}\label{condilm}
\intrd g(v) \mu_\beta(\dd v) =0 \quad \hbox{and}\quad \intrd [g(v)]^2 (1+|v|)^{2}\mu_\beta(\dd v)<\infty.
\end{equation}
There exists $f : \rd\setminus\{0\}\mapsto \rr$, of class $C^\infty$, such that
$\intrd |\nabla f (v)|^2 \mu_\beta(\dd v) <\infty$ and solving the equation
$\frac 12[\Delta f -\beta F \cdot \nabla f]=g$ on $\rd\setminus\{0\}$.
\end{lem}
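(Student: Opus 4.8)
The plan is to solve the Poisson equation $\tfrac12[\Delta f - \beta F\cdot\nabla f] = g$ by the direct (Dirichlet form / Lax-Milgram) method in the weighted space $L^2(\rd,\mu_\beta)$, using the weighted Poincaré inequality of Lemma \ref{poinca}, and then to recover $C^\infty$ regularity from interior elliptic regularity on $\rd\setminus\{0\}$. The key point is that the operator $\cL = \tfrac12(\Delta - \beta F\cdot\nabla)$ is symmetric in $L^2(\mu_\beta)$: since $F = \nabla\log U = \nabla\log([U]^\beta)^{1/\beta}$ and $\mu_\beta(\dd v) = c_\beta [U(v)]^{-\beta}\dd v$, one has $\cL f = \tfrac12 [U]^\beta \ddiv([U]^{-\beta}\nabla f)$, so that for nice $f,h$, $\intrd h\,\cL f\,\dd\mu_\beta = -\tfrac12\intrd \nabla h\cdot\nabla f\,\dd\mu_\beta$. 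Thus the associated Dirichlet form is $\cE(f,h) = \tfrac12\intrd \nabla f\cdot\nabla h\,\dd\mu_\beta$ with domain the completion $\cH$ of $\{f\in C^\infty : \intrd|\nabla f|^2\dd\mu_\beta<\infty,\ \intrd f\,\dd\mu_\beta=0\}$ under the norm $\|f\|_{\cH}^2 = \intrd|\nabla f|^2\dd\mu_\beta$ (this is a genuine norm on mean-zero functions, and $\cH$ is a Hilbert space).

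\textbf{Main steps.} First I would record the integration-by-parts identity above and set up the bilinear form $\cE$. Second, I would check that $h\mapsto -\intrd g(v) h(v)\mu_\beta(\dd v)$ is a bounded linear functional on $(\cH,\|\cdot\|_\cH)$: by Cauchy--Schwarz and the normalization $\intrd g\,\dd\mu_\beta = 0$ (so we may subtract the mean of $h$ freely),
$$
\Big|\intrd g h\,\dd\mu_\beta\Big| \leq \Big(\intrd [g(v)]^2 (1+|v|)^2\,\mu_\beta(\dd v)\Big)^{1/2}\Big(\intrd \frac{[h(v)]^2}{(1+|v|)^2}\,\mu_\beta(\dd v)\Big)^{1/2} \leq C\,\|g\|_{\text{wtd}}\,\|h\|_\cH,
$$
where the first factor is finite by hypothesis \eqref{condilm} and the last inequality is exactly Lemma \ref{poinca} (applicable since $\beta > 2+d$ and $h$ has mean zero; note $h\in L^1(\mu_\beta)$ because $\mu_\beta$ is finite and $h\in L^2((1+|v|)^{-2}\mu_\beta)\subset L^2(\mu_\beta)$ would need a little care — in fact one works first with $h\in C^\infty$ of the stated class and passes to the completion). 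Third, since $\cE$ is a bounded coercive (indeed $\cE(h,h)=\|h\|_\cH^2$) symmetric bilinear form on the Hilbert space $\cH$, the Riesz representation theorem (Lax--Milgram) yields a unique $f\in\cH$ with $\cE(f,h) = -\intrd g h\,\dd\mu_\beta$ for all $h\in\cH$; by construction $\intrd|\nabla f|^2\dd\mu_\beta = \cE(f,f) < \infty$.

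\textbf{Regularity.} It remains to upgrade this weak solution to a classical $C^\infty$ solution of $\tfrac12[\Delta f - \beta F\cdot\nabla f] = g$ on $\rd\setminus\{0\}$. The weak formulation $\tfrac12\intrd\nabla f\cdot\nabla h\,\dd\mu_\beta = -\intrd g h\,\dd\mu_\beta$, written against test functions $h\in C_c^\infty(\rd\setminus\{0\})$ and using $\dd\mu_\beta = c_\beta[U]^{-\beta}\dd v$, says precisely that $\ddiv([U]^{-\beta}\nabla f) = 2[U]^{-\beta} g$ in the sense of distributions on $\rd\setminus\{0\}$; equivalently $\Delta f - \beta F\cdot\nabla f = 2g$ distributionally there. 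Since $U$ is $C^\infty$ and strictly positive on $\rd\setminus\{0\}$ (Assumption \ref{as}), the coefficient $[U]^{-\beta}$ is a smooth, locally uniformly elliptic weight away from the origin, and $g$ is $C^\infty$; standard interior elliptic regularity (bootstrapping Schauder or $H^k_{loc}$ estimates) gives $f\in C^\infty(\rd\setminus\{0\})$ and the equation holds classically. The main obstacle is really the functional-analytic setup — ensuring that the weighted Poincaré inequality delivers boundedness of the source functional on the right space and that one has density of the smooth mean-zero class in $\cH$ so that the Lax--Milgram solution is legitimately testable against $C_c^\infty(\rd\setminus\{0\})$; the elliptic regularity step itself is routine once the weak equation away from $0$ is identified. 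I would also remark that the singularity of $F$ at the origin causes no trouble because we never claim anything about $f$ at $0$, only on $\rd\setminus\{0\}$, and the variational problem lives on all of $\rd$ where $[U]^{-\beta}$ is perfectly smooth and bounded.
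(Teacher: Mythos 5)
Your proposal is correct and follows essentially the same route as the paper: Lax--Milgram in a weighted Sobolev space of mean-zero functions, with the weighted Poincar\'e inequality of Lemma \ref{poinca} supplying coercivity (equivalently, making the gradient seminorm a genuine norm), followed by interior elliptic regularity away from the origin. The only cosmetic difference is that the paper puts the weighted $L^2$ term directly into the norm of its space $H^1_\beta$ and then drops the centering condition on test functions using $\intrd g\,\dd\mu_\beta=0$, exactly as you anticipate.
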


\begin{proof}
We divide the proof in three steps.

\vip

\underline{\it Step 1.} We introduce the weighted Sobolev space $H^1_\beta=\{\varphi\in H^1_{loc}(\rd)\; 
: \; |||\varphi|||_\beta<\infty$ and $\intrd \varphi(v) \mu_\beta(\dd v)=0 \}$, where
$|||\varphi|||_\beta^2 = \intrd [\varphi(v)]^2 (1+|v|)^{-2}\mu_\beta(\dd v) +  \intrd |\nabla\varphi(v)|^2 
\mu_\beta(\dd v)$.
By the Lax-Milgram theorem, there is a unique $f \in H^1_\beta$
such that for all $\varphi \in H^1_\beta$, $\intrd \nabla f(v) \cdot \nabla \varphi(v) \mu_\beta(\dd v) = 
- 2 \intrd\varphi(v) g(v) \mu_\beta(\dd v)$.

\vip

Indeed, $H^1_\beta$ is Hilbert, the quadratic form 
$A(\varphi,\phi)=\intrd \nabla \varphi(v) \cdot \nabla \phi(v) \mu_\beta(\dd v)$ is continuous
on $H^1_\beta$, as well as the linear form $L(\varphi)= 2 \intrd\varphi(v) g(v) \mu_\beta(\dd v)$
(here we use the moment condition on $g$),
and $A$ is coercive 
(i.e. there is $c>0$ such that $A(\varphi,\varphi)\geq c|||\varphi|||_\beta$ for all $\varphi \in H^1_\beta$)
by Lemma \ref{poinca}.

\vip

\underline{\it Step 2.} Using that $\int_\rd g(v)\mu_\beta(\dd v)=0$, we deduce from Step 1 that
$\intrd \nabla f(v) \cdot \nabla \varphi(v) \mu_\beta(\dd v) = 
- 2 \intrd\varphi(v) g(v) \mu_\beta(\dd v)$ for all
$\varphi \in H^1_{loc}(\rd)$ with $|||\varphi|||_\beta<\infty$ (without the centering condition on
$\varphi$).

\vip

\underline{\it Step 3.} We can now apply Gilbarg-Trudinger \cite[Corollary 8.11 p 186]{gt}:
$F$ being of class $C^\infty$ on $\rd\setminus \{0\}$, as well as $g$, and $f$ being a weak solution to
$\frac 12[\Delta f -\beta F \cdot \nabla f]= g$, it is of class $C^\infty$ on $\rd\setminus\{0\}$.
More precisely, we fix $v \in \rd\setminus\{0\}$ and we apply the cited corollary
on the open ball $B(v,|v|/2)$ to conclude that $f$ is of class $C^\infty$ on $B(v,|v|/2)$.

\vip

\underline{\it Step 4.} We thus can proceed rigorously to some integrations by parts to deduce that for all
$\varphi \in C^\infty_c(\rd\setminus\{0\})$, recalling that
$\mu_\beta(\dd v)=c_\beta [U(v)]^{-\beta}\dd v$, we have 
$\intrd {\rm div}[(U(v))^{-\beta}\nabla f(v)] \varphi(v) \dd v = 2\intrd \varphi(v) g(v) [U(v)]^{-\beta}\dd v$.
Hence ${\rm div}[U^{-\beta}\nabla f]=
2 g U^{-\beta}$ on $\rd \setminus\{0\}$ by continuity, whence the conclusion, since
$F(v)=[U(v)]^{-1}\nabla U(v)$.
\end{proof}

We can now give the

\begin{proof}[Proof of Theorem \ref{mr}-(a)]
We fix $\beta>4+d$.
We consider, for each $i=1,\dots,d$, a $C^\infty$ function $f_i:\rd\setminus\{0\}\mapsto \rr$ such that 
$\intrd |\nabla f_i(v)|^2\mu_\beta(\dd v) <\infty$ and $\frac12[\Delta f_i(v) -\beta F(v) \cdot \nabla f_i(v)]
=v_i-m_\beta^i$, where $m_\beta^i=\intrd v_i \mu_\beta(\dd v)$ is the $i$-th coordinate of $m_\beta$.
Such a function $f_i$ exists by Lemma \ref{lmlm}, because $g_i(v)=v_i-m_\beta^i$ is $C^\infty$, 
$\mu_\beta$-centered and $\intrd [g_i(v)]^2(1+|v|)^{2}\mu_\beta(\dd v) <\infty$ because $\beta> 4+d$.

\vip

We now set $f=(f_1 \; f_2 \;\dots\; f_d)^*: \rd\mapsto \rd$ and apply the It\^o formula, which is licit
because $f$ is of class $C^\infty$ on $\rd\setminus\{0\}$ and because $(V_t)_{t\geq 0}$ never visits
$0$: recalling \eqref{eds} and that $\nabla^* f = (\nabla f_1 \; \nabla f_2 \;\dots\; \nabla f_d)^*$,
$$
f(V_t)=f(v_0)+\intot \nabla^* f(V_s) \dd B_s + \intot (V_s - m_\beta)\dd s =
f(v_0) + \intot  \nabla^* f(V_s) \dd B_s + X_t-m_\beta t -x_0.
$$
Hence we have $\sqrt\e(X_{t/\e} - m_\beta t/\e)=M^\e_t + Y^\e_t$, 
where $M^\e_t=-\sqrt\e \int_0^{t/\e} \nabla^* f(V_s) \dd B_s$
and where $Y^\e_t= \sqrt{\e}[x_0 + f(V_{t/\e}) - f(v_0)]$.

\vip

For each $t\geq 0$, $Y^\e_t$ goes to $0$ in law (and thus in probability) as $\e\to 0$:
this immediately follows from the fact that $f(V_{t/\e})$ converges in law as $\e\to 0$,
see Lemma \ref{ergo1}-(iii). It is not clear (and probably false)
that $\sup_{[0,t]} |Y^\e_s| \to 0$, which explains why we deal with finite-dimensional 
distributions.

\vip

Next, $(M^\e_t)_{t\geq 0}$ converges in law, in the usual sense of continuous processes,
to $(\Sigma B_t)_{t\geq 0}$, where $\Sigma \in \cS_d^+$ is the square root
of $\Sigma^2=\intrd \nabla^* f(v) \nabla f(v) \mu_\beta(\dd v) \in \cS_d^+$ (see below).
Indeed, since $(M^\e_t)_{t\geq 0}$ is a continuous $\rd$-valued martingale,
it suffices, by Jacod-Shiryaev \cite[Theorem VIII-3.11 p 473]{js}, to verify that
for all $i,j \in \{1,\dots,d\}$, $\langle M^{\e,i}, M^{\e,j}\rangle_t \to \Sigma^2_{ij} t$ in
probability for each $t\geq 0$. But this follows from 
the fact that $\langle M^{\e,i}, M^{\e,j}\rangle_t= \e \int_0^{t/\e} \nabla f_i(V_s)\cdot \nabla f_j(V_s) \dd s$,
from Lemma \ref{ergo1}-(ii)
and from the fact that $\intrd |\nabla f(v)|^2\mu_\beta(\dd v) <\infty$.

\vip

All this proves that indeed, $(\sqrt\e(X_{t/\e} - m_\beta t/\e))_{t\geq 0}$ converges, in the sense
of finite dimensional distributions, to $(\Sigma B_t)_{t\geq 0}$, as $\e\to 0$.

\vip

Let us finally explain why $\Sigma^2$ is positive definite.
For $\xi \in \rd \setminus\{0\}$, we have, setting $f_\xi(v)=f(v)\cdot \xi$,
$$
\xi^*\Sigma^2\xi = \intrd |\nabla f(v)\xi|^2\mu_\beta(\dd v)= \intrd |\nabla f_\xi(v)|^2\mu_\beta(\dd v),
$$
which is strictly positive because else we would have $\nabla f_\xi(v)=0$ for a.e. $v\in\rd$,
so that $f_\xi$ would be constant on $\rd\setminus\{0\}$ (recall that $f$ is smooth on $\rd\setminus\{0\}$).
This is impossible, because $\Delta f_\xi(v) - \beta F(v)\cdot \nabla f_\xi(v)= 2(v-m_\beta)\cdot \xi$ 
on $\rd\setminus \{0\}$ and because constants do not solve this equation.
\end{proof}

\begin{rk}\label{rkd} Consider some $\beta>4+d$.

\vip

(i) In Theorem \ref{mr}-(a), $\Sigma\in \cS_d^+$ is the square root of 
$\intrd \nabla^* f(v) \nabla f(v) \mu_\beta(\dd v)$, with $\mu_\beta$ defined in Remark \ref{inv} and
with $f=(f_1,\dots,f_d)$, where $f_i:\rd\setminus\{0\}\mapsto \rr$ is the (unique) $C^\infty$ solution
to $\frac12[\Delta f_i(v) -\beta F(v) \cdot \nabla f_i(v)]=v_i-m_\beta^i$ such that
$\intrd |\nabla f_i(v)|^2\mu_\beta(\dd v) <\infty$.

\vip

(ii)
If $U(v)=(1+|v|^2)^{1/2}$, then $\mu_\beta(\dd v)=c_\beta (1+|v|^2)^{-\beta/2}\dd v$
and $m_\beta=0$,
so that $(\sqrt \e X_{t/\e})_{t\geq 0} \stackrel{f.d.} \longrightarrow (\Sigma B_t)_{t\geq 0}$.
Furthermore, it holds that $f_i(v)= - a (|v|^2+3)v_i$, with $a=2/(3\beta-4-2d)$, and
a computation shows that $\Sigma=q I_d$, with 
\begin{align*}
q^2=&\int_{\rd}|\nabla f_1(v)|^2\mu_\beta(\dd v)=-\int_{\rd} f_1(v)\Big[\Delta f_1(v) -\beta F(v) 
\cdot \nabla f_1(v)\Big]   \mu_\beta(\dd v)=-2\int_{\rd} f_1(v)v_1\mu_\beta(\dd v)\\
=& 2ac_\beta \int_{\rd} (|v|^2+3)v_1^2 (1+|v|^2)^{-\beta/2}\dd v= \frac{2ac_\beta}d
\intrd (|v|^2+3)|v|^2(1+|v|^2)^{-\beta/2}\dd v.
\end{align*}
\end{rk}

\section{The critical diffusive regime}\label{Scd}

The goal of this section is to prove Theorem \ref{mr}-(b).
We have not been able to solve the Poisson equation, so that we adopt 
a rather complicated strategy. This would not be necessary if considering only the case
$U(v)=(1+|v|^2)^{1/2}$ where the solution to the Poisson equation is explicit:
we could omit Lemmas \ref{Psi} and \ref{dc2} below.

\begin{lem}\label{Psi}
Fix $\beta>0$.
There is $\Psi: \Sd\mapsto\rd$, of class $C^\infty$,
such that for all $\theta \in \Sd$, all $k=1,\dots,d$,
$$
\frac 12 \Delta_S \Psi_k (\theta)- \frac \beta 2 \frac{\nabla_S\gamma(\theta)}{\gamma(\theta)}
\cdot\nabla_S\Psi_k(\theta)= \frac 9 2 \Psi_k(\theta)+\theta_k.
$$
\end{lem}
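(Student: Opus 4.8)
The plan is to solve, for each fixed $k\in\{1,\dots,d\}$ separately, the scalar equation $\mathcal{A}\Psi_k-\frac92\Psi_k=\theta_k$ on the closed manifold $\Sd$, where $\mathcal{A}=\frac12\Delta_S-\frac\beta2\frac{\nabla_S\gamma}{\gamma}\cdot\nabla_S$ is the generator of the spherical process of \eqref{eqt}. The starting point is the observation that $\mathcal{A}$ is in divergence form for the weight $\gamma^{-\beta}$: a direct computation using $\nabla_S\gamma^{-\beta}=-\beta\gamma^{-\beta-1}\nabla_S\gamma$ gives $\mathcal{A}f=\frac12\gamma^{\beta}\ddiv_S(\gamma^{-\beta}\nabla_S f)$. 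Consequently, integrating by parts on $\Sd$ (which has no boundary), $\int_{\Sd}(\mathcal{A}f)g\,\gamma^{-\beta}\varsigma(\dd\theta)=-\frac12\int_{\Sd}\gamma^{-\beta}\nabla_S f\cdot\nabla_S g\,\varsigma(\dd\theta)$ for all smooth $f,g$, so that $\mathcal{A}$ is symmetric and non-positive on $L^2(\Sd,\nu_\beta)$, with $\nu_\beta(\dd\theta)=a_\beta\gamma(\theta)^{-\beta}\varsigma(\dd\theta)$.

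Since $\mathcal{A}\leq 0$ and the zeroth-order term $-\frac92$ has the favorable sign, there is no Fredholm solvability condition, and I would produce a weak solution by Lax--Milgram in $H^1(\Sd)$. Set $a(f,g)=\frac12\int_{\Sd}\gamma^{-\beta}\nabla_S f\cdot\nabla_S g\,\varsigma(\dd\theta)+\frac92\int_{\Sd}\gamma^{-\beta}fg\,\varsigma(\dd\theta)$ and $L(g)=-\int_{\Sd}\gamma^{-\beta}\theta_k g\,\varsigma(\dd\theta)$; rewriting the equation weakly and multiplying by $-1$ shows that $\mathcal{A}\Psi_k-\frac92\Psi_k=\theta_k$ is equivalent to $a(\Psi_k,g)=L(g)$ for all $g\in H^1(\Sd)$. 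Because $\gamma$ is continuous and strictly positive on the compact set $\Sd$, the weight $\gamma^{-\beta}$ is bounded above and away from $0$, so $a$ is continuous on $H^1(\Sd)$ and, both summands of $a(f,f)$ being non-negative, coercive: $a(f,f)\geq c\,\|f\|_{H^1(\Sd)}^2$. As $L$ is clearly continuous, Lax--Milgram yields a unique $\Psi_k\in H^1(\Sd)$ solving $a(\Psi_k,\cdot)=L$.

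It remains to upgrade the regularity. The operator $\mathcal{A}$ is uniformly elliptic on $\Sd$ with $C^\infty$ coefficients (here one uses $\gamma\in C^\infty$ and $\gamma>0$), and the right-hand side $\theta\mapsto\theta_k$ is smooth; working in finitely many local charts and applying interior elliptic regularity together with a bootstrap argument (Gilbarg--Trudinger \cite[Corollary 8.11 p 186]{gt}, as already used in the proof of Lemma \ref{lmlm}) gives $\Psi_k\in C^\infty(\Sd)$. Setting $\Psi=(\Psi_1\ \cdots\ \Psi_d)^*$ completes the proof. The only delicate point is the elliptic regularity statement on a manifold rather than a Euclidean domain, which is handled routinely via charts; everything else is standard precisely because the positive constant $9/2$ on the right-hand side removes any resonance with the spectrum of $\mathcal{A}$.
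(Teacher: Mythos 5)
Your proof is correct. The paper handles this lemma by rewriting the equation in the divergence form $\ddiv_S(\gamma^{-\beta}\nabla_S f)=2\gamma^{-\beta}(\lambda f+g)$ with $\lambda=9/2$ and then citing Aubin's existence-and-smoothness theorem for such equations on closed Riemannian manifolds as a black box; you instead unpack that black box and prove existence directly. The two arguments rest on the same two observations — that the generator is symmetric in $L^2(\Sd,\gamma^{-\beta}\varsigma)$ once written in divergence form, and that the zeroth-order coefficient $9/2>0$ makes the bilinear form coercive so that no Fredholm alternative or centering condition on the data is needed — and your sign bookkeeping in passing to the weak formulation $a(\Psi_k,\cdot)=L$ is right. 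What your route buys is a self-contained argument whose only external inputs are Lax–Milgram and local elliptic regularity (which the paper already invokes, in the same Gilbarg–Trudinger form, in the proof of Lemma \ref{lmlm}); what the paper's route buys is brevity, since Aubin's theorem packages the variational existence and the bootstrap on the manifold in one statement. The chart-based regularity step you flag as the only delicate point is indeed routine here because $\Sd$ is compact and $\gamma$ is smooth and bounded away from zero.
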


\begin{proof}
By Aubin \cite[Theorem 4.18 p 114]{a}, for any $\lambda>0$ and any smooth 
$g:\Sd \mapsto \rr$, there is a unique smooth solution $f:\Sd\mapsto \rr$
to $\ddiv_S(\gamma^{-\beta} \nabla_S f)= 2 \gamma^{-\beta}(\lambda f + g)$. This uses that $\gamma^{-\beta}$ is smooth
and positive on $\Sd$. This equation rewrites as $\frac12\Delta_S f 
- \frac\beta 2\gamma^{-1}\nabla_S\gamma\cdot\nabla_S f
= \lambda f + g$.
Applying this result, for each fixed $k=1,\dots,d$, with $\lambda=9/2$ and $g(\theta)=\theta_k$, 
completes the proof.
\end{proof}

We now introduce some notation for the rest of the section.
We write $V_t=R_t\hTheta_{H_t}$ as in Lemma \ref{rep} and we set $\Theta_t=\hTheta_{H_t}$.
We know that $(R_t)_{t\geq 0}$ solves \eqref{eqr} for some one-dimensional Brownian motion $(\tB_t)_{t\geq 0}$,
that $(\Theta_t)_{t\geq 0}$ solves \eqref{thetapc} for some $d$-dimensional Brownian motion $(\bB_t)_{t\geq 0}$,
and that these two Brownian motions are independent.

\begin{lem}\label{dc1}
Assume that $\beta=4+d$ and consider
the function $\Psi$ introduced in Lemma \ref{Psi}. We have 
$R_t^3\Psi(\Theta_t)=r_0^3\Psi(\theta_0)-x_0+(X_t-m_\beta t)+M_t+Y_t$, where
\begin{align*}
M_t=&\intot R_s^2\nabla^*_S\Psi(\Theta_s)\dd \bar B_s+3\intot R_s^2\Psi(\Theta_s)\dd \tB_s,\\
Y_t=&m_\beta t + \frac{3(4+d)}{2}\intot \Big(R_s-\frac{R_s^2\Gamma'(R_s)}{\Gamma(R_s)} \Big)\Psi(\Theta_s) \dd s.
\end{align*}
\end{lem}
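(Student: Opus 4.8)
The plan is to apply It\^o's formula to $t\mapsto \Phi_k(V_t)$, for each $k\in\{1,\dots,d\}$, where $\Phi_k:\rd\setminus\{0\}\mapsto\rr$ is the $C^\infty$ function $\Phi_k(v)=|v|^3\Psi_k(v/|v|)$; since $\Psi$ is extended radially to $\rd\setminus\{0\}$ we have $\Phi_k(r\theta)=r^3\Psi_k(\theta)$, so that $\Phi_k(V_t)=R_t^3\Psi_k(\Theta_t)$. This is licit because $(V_t)_{t\geq0}$ is $(\rd\setminus\{0\})$-valued by Proposition~\ref{eui} and $\Phi_k$ is smooth there. Using $\dd V_t=\dd B_t-\tfrac\beta2 F(V_t)\dd t$ from \eqref{eds}, It\^o gives $\Phi_k(V_t)=\Phi_k(v_0)+\intot\nabla\Phi_k(V_s)\cdot\dd B_s+\intot[\tfrac12\Delta\Phi_k(V_s)-\tfrac\beta2 F(V_s)\cdot\nabla\Phi_k(V_s)]\dd s$.

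The key computation is the pointwise identity, valid when $\beta=4+d$, that for $v=r\theta$ with $r>0$ and $\theta\in\Sd$,
$$\tfrac12\Delta\Phi_k(v)-\tfrac\beta2 F(v)\cdot\nabla\Phi_k(v)=\tfrac{3(4+d)}2\Big(r-\tfrac{r^2\Gamma'(r)}{\Gamma(r)}\Big)\Psi_k(\theta)+v_k.$$
To establish it I would first record, using the radial decomposition $\Delta=\partial_{rr}+\tfrac{d-1}r\partial_r+\tfrac1{r^2}\Delta_S$ together with the paper's conventions $\nabla_S\psi=\nabla\psi$ and $\Delta_S\psi=\Delta\psi$ on $\Sd$, that $\nabla\Phi_k(r\theta)=3r^2\Psi_k(\theta)\theta+r^2\nabla_S\Psi_k(\theta)$ and $\Delta\Phi_k(r\theta)=(3d+3)\,r\Psi_k(\theta)+r\Delta_S\Psi_k(\theta)$. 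Plugging in $F(r\theta)=\tfrac{\Gamma'(r)}{\Gamma(r)}\theta+\tfrac1r\tfrac{\nabla_S\gamma(\theta)}{\gamma(\theta)}$ from \eqref{fff} and using the orthogonality relations $\theta\cdot\nabla_S\Psi_k(\theta)=0$ and $\theta\cdot\nabla_S\gamma(\theta)=0$ (consequences of the $0$-homogeneity of the radial extensions), all cross terms cancel and the left-hand side becomes $r[\tfrac12\Delta_S\Psi_k(\theta)-\tfrac\beta2\tfrac{\nabla_S\gamma(\theta)}{\gamma(\theta)}\cdot\nabla_S\Psi_k(\theta)]+\tfrac{3d+3}2 r\Psi_k(\theta)-\tfrac{3\beta}2 r^2\tfrac{\Gamma'(r)}{\Gamma(r)}\Psi_k(\theta)$. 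The equation satisfied by $\Psi$ in Lemma~\ref{Psi} replaces the bracket by $\tfrac92\Psi_k(\theta)+\theta_k$, and since $\tfrac92+\tfrac{3d+3}2=\tfrac{3(d+4)}2=\tfrac{3\beta}2$ precisely because $\beta=d+4$, the $\Psi_k(\theta)$ terms combine into $\tfrac{3\beta}2(r-r^2\Gamma'(r)/\Gamma(r))\Psi_k(\theta)$, while $r\theta_k=v_k$, which yields the identity.

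It then remains to identify the two integrals along $(V_t)$. The finite-variation part $\intot[\tfrac{3(4+d)}2(R_s-R_s^2\Gamma'(R_s)/\Gamma(R_s))\Psi_k(\Theta_s)+(V_s)_k]\dd s$ equals $\tfrac{3(4+d)}2\intot(R_s-R_s^2\Gamma'(R_s)/\Gamma(R_s))\Psi_k(\Theta_s)\dd s+(X_t)_k-(x_0)_k$ by \eqref{eds}, and writing $(X_t)_k=(X_t-m_\beta t)_k+m_\beta^k t$ exhibits $Y_t^k$ together with $(X_t-m_\beta t)_k-(x_0)_k$. For the martingale part I would use the spherical decomposition $\dd B_s=\Theta_s\,\dd\tB_s+\pi_{\Theta_s^\perp}\,\dd\bB_s$ established in the proof of Lemma~\ref{rep}; inserting $\nabla\Phi_k(V_s)=3R_s^2\Psi_k(\Theta_s)\Theta_s+R_s^2\nabla_S\Psi_k(\Theta_s)$ and using once more $\pi_{\Theta_s^\perp}\Theta_s=0$ and $\Theta_s\cdot\nabla_S\Psi_k(\Theta_s)=0$, the stochastic integral reduces to $3\intot R_s^2\Psi_k(\Theta_s)\dd\tB_s+\intot R_s^2\nabla_S\Psi_k(\Theta_s)\cdot\dd\bB_s=M_t^k$. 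Collecting over $k=1,\dots,d$ and recalling $\Phi_k(V_t)=R_t^3\Psi_k(\Theta_t)$, $\Phi_k(v_0)=r_0^3\Psi_k(\theta_0)$, gives the stated identity. I anticipate no real obstacle: the only care needed is the bookkeeping of the drift terms and the systematic use of the $0$-homogeneity identities (including $\sum_{i,j}(\pi_{\theta^\perp})_{ij}\partial_{ij}\Psi_k(\theta)=\Delta_S\Psi_k(\theta)$, used implicitly when computing $\Delta\Phi_k$), the hypothesis $\beta=4+d$ serving exactly to align the shift $\tfrac92$ built into Lemma~\ref{Psi} with the It\^o/radial contributions so that the $\Psi_k(\theta)$ terms factor through $r-r^2\Gamma'(r)/\Gamma(r)$.
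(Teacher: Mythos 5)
Your proposal is correct and amounts to essentially the same computation as the paper's proof: the paper applies It\^o separately to $\Psi(\Theta_t)$ (via \eqref{thetapc}) and to $R_t^3$ (via \eqref{eqr}) and multiplies, whereas you apply It\^o once to $\Phi_k(V_t)$ with $\Phi_k(v)=|v|^3\Psi_k(v/|v|)$ and evaluate the generator in spherical form, and the two routes produce identical terms from the same ingredients (Lemma \ref{Psi}, the identity \eqref{fff} for $F$, the orthogonality relations coming from the $0$-homogeneous extensions, and the decomposition $B_t=\intot\Theta_s\,\dd\tB_s+\intot\pi_{\Theta_s^\perp}\,\dd\bB_s$ from the proof of Lemma \ref{rep}). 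Your cancellation $\tfrac92+\tfrac{3(d+1)}{2}=\tfrac{3\beta}{2}$ for $\beta=4+d$ is exactly the one the paper exploits, so there is nothing to add.
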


\begin{proof}
Applying It\^o's formula with the function $\Psi$
(extended to $\rd\setminus\{0\}$ as in Subsection \ref{notadebut} so that
we can use the usual derivatives of $\rd$), we find
\begin{align*}
\Psi(\Theta_t)=&\Psi(\theta_0)+\intot R_s^{-1}\nabla^*\Psi(\Theta_s)\pi_{\Theta_s^\perp}\dd \bar B_s
-\frac{d-1}{2}\intot R_s^{-2}\nabla^*\Psi(\Theta_s)\Theta_s\dd s \\
&- \frac\beta2\intot R_s^{-2}\nabla^*\Psi(\Theta_s)\pi_{\Theta_s^\perp} \frac{\nabla \gamma(\Theta_s)}
{\gamma(\Theta_s)}\dd s
+ \frac12 \intot R_s^{-2} \sum_{i,j=1}^d (\pi_{\Theta_s^\perp})_{ij}\partial_{ij}\Psi (\Theta_s) \dd s.
\end{align*}
But the way $\Psi$ has been extended to $\rd\setminus\{0\}$ implies that 
$\pi_{\theta^\perp}\nabla\Psi(\theta)=\nabla\Psi(\theta)=\nabla_S\Psi(\theta)$, that $\nabla^*\Psi(\theta)\theta=0$ 
and that
$\sum_{i,j=1}^d (\pi_{\theta^\perp})_{ij}\partial_{ij}\Psi(\theta)=\Delta \Psi(\theta)-\sum_{i,j=1}^d \theta_i\theta_j
\partial_{ij}\Psi(\theta)
=\Delta \Psi(\theta)=\Delta_S\Psi(\theta)$. Consequently,
\begin{align*}
\Psi(\Theta_t)=&\Psi(\theta_0)+\intot R_s^{-1}\nabla_S^*\Psi(\Theta_s)\dd \bar B_s
- \frac\beta2\intot R_s^{-2}\nabla_S^*\Psi(\Theta_s)\frac{\nabla_S \gamma(\Theta_s)}{\gamma(\Theta_s)}\dd s
+ \frac12 \intot R_s^{-2} \Delta_S\Psi (\Theta_s) \dd s\\
=&\Psi(\theta_0)+\intot R_s^{-1}\nabla_S^*\Psi(\Theta_s)\dd \bar B_s
+\intot R_s^{-2} \Big[\frac92 \Psi(\Theta_s) + \Theta_s\Big] \dd s.
\end{align*}
Recalling \eqref{eqr} and that $\beta=4+d$, It\^o's formula tells us that
\begin{align*}
R_t^3=&r_0^3+ 3 \intot R_s^2 \dd \tB_s +\frac{3(d-1)}2\intot R_s\dd s 
- \frac{3\beta}2\intot \frac{\Gamma'(R_s)R_s^2}{\Gamma(R_s)}\dd s + 3\intot R_s \dd s\\
=&r_0^3+ 3 \intot R_s^2 \dd \tB_s - \frac92\intot R_s \dd s 
+ \frac{3(4+d)}{2}\intot \Big(R_s-\frac{R_s^2\Gamma'(R_s)}{\Gamma(R_s)} \Big) \dd s.
\end{align*}
We conclude that
\begin{align*}
R_t^3\Psi(\Theta_t)=&r_0^3\Psi(\theta_0)+\intot R_s^{2}\nabla_S^*\Psi(\Theta_s)\dd \bar B_s+
\intot R_s \Big[\frac92 \Psi(\Theta_s) + \Theta_s\Big] \dd s\\
&+3 \intot R_s^2 \Psi(\Theta_s) \dd \tB_s - \frac92\intot R_s \Psi(\Theta_s) \dd s 
+ \frac{3(4+d)}{2}\intot \Big(R_s-\frac{R_s^2\Gamma'(R_s)}{\Gamma(R_s)} \Big) \Psi(\Theta_s) \dd s.
\end{align*}
In other words, we have $R_t^3\Psi(\Theta_t)=r_0^3\Psi(\theta_0)-x_0+(X_t-m_\beta t)+M_t+Y_t$
as desired.
\end{proof}

We now treat the error term.

\begin{lem}\label{dc2}
Adopt the assumptions and notation of Lemma \ref{dc1}. Suppose the additional condition 
$\int_1^\infty r^{-1}|r\Gamma'(r)/\Gamma(r)-1|^2r^{-1}\dd r<\infty$. For each $t\geq 0$, in probability,
$$\lim_{\e\to 0}  |\log \e|^{-1/2}\e^{1/2} [R_{t/\e}^3 \Psi(\Theta_{t/\e})-r_0^3\Psi(\theta_0)+x_0-Y_{t/\e}]=0.$$ 
\end{lem}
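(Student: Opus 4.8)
The plan is to show separately that each of the three random pieces $\e^{1/2}|\log\e|^{-1/2}[R_{t/\e}^3\Psi(\Theta_{t/\e})-r_0^3\Psi(\theta_0)+x_0]$ and $\e^{1/2}|\log\e|^{-1/2}Y_{t/\e}$ vanishes in probability, so that their difference does too. For the boundary terms $r_0^3\Psi(\theta_0)$ and $x_0$ this is obvious since they are constants and $\e^{1/2}|\log\e|^{-1/2}\to 0$. The term $R_{t/\e}^3\Psi(\Theta_{t/\e})$ is the genuine "current state" term: $\Psi$ is bounded on $\Sd$ (being $C^\infty$ on a compact manifold), so $|R_{t/\e}^3\Psi(\Theta_{t/\e})|\leq C R_{t/\e}^3$, and it suffices to control $R_{t/\e}^3$. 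Here I would use the representation of Section~\ref{Srep}: $(R^\e_t)_{t\geq0}\stackrel{d}{=}(\sqrt\e R_{t/\e})_{t\geq0}$, so $\e^{3/2}R_{t/\e}^3\stackrel{d}{=}(R^\e_t)^3$, which converges in law (for fixed $t$) because $R^\e_t=\sqrt\e\,h^{-1}(W_{\rho^\e_t}/a_\e)$ and one can pass to the limit as in the other sections; in particular $\e^{3/2}R_{t/\e}^3$ is tight, hence $\e^{1/2}|\log\e|^{-1/2}R_{t/\e}^3=\e^{-1}|\log\e|^{-1/2}\cdot\e^{3/2}R_{t/\e}^3$ — wait, that blows up. So instead one must exploit the moment/tightness of $\sqrt\e R_{t/\e}$ more carefully: since $\beta=4+d>d$, the velocity process $(V_t)_{t\geq0}$ is ergodic with invariant law $\mu_\beta$, and $R_{t/\e}=|V_{t/\e}|$ converges in law to $|V_\infty|\sim$ the radial marginal of $\mu_\beta$ (Lemma~\ref{ergo1}), so $R_{t/\e}^3$ is tight \emph{without any rescaling}, whence $\e^{1/2}|\log\e|^{-1/2}R_{t/\e}^3\to 0$ in probability. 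This is the clean argument.

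For the error term $Y_{t/\e}$, recall
$$
Y_{t/\e}=m_\beta\, t/\e+\frac{3(4+d)}2\int_0^{t/\e}\Big(R_s-\frac{R_s^2\Gamma'(R_s)}{\Gamma(R_s)}\Big)\Psi(\Theta_s)\,\dd s.
$$
The subtracted $m_\beta\,t/\e$ in $X_{t/\e}-m_\beta t/\e$ and the $+m_\beta t/\e$ inside $Y$ are arranged to cancel against the long-time average of the drift integral, since $m_\beta=\int_\rd v\,\mu_\beta(\dd v)$ and, writing $G(v)=v-\tfrac{|v|^2\Gamma'(|v|)}{\Gamma(|v|)}\tfrac{v}{|v|}\Psi(v/|v|)\cdot(\text{appropriate scalar})$, a Birkhoff/ergodic argument on $(V_t)_{t\geq0}$ gives $\e\int_0^{t/\e}G(V_s)\,\dd s\to t\int_\rd G\,\dd\mu_\beta$; one checks the constant matches $m_\beta$. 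Thus it remains to show $\e^{1/2}|\log\e|^{-1/2}$ times the \emph{fluctuation} of this additive functional around its ergodic mean tends to $0$ in probability. The key point is that $r\Gamma'(r)/\Gamma(r)\to 1$ as $r\to\infty$, so the integrand $R_s-R_s^2\Gamma'(R_s)/\Gamma(R_s)=R_s(1-R_s\Gamma'(R_s)/\Gamma(R_s))$ grows strictly slower than $R_s$; the extra assumption $\int_1^\infty r^{-1}|r\Gamma'(r)/\Gamma(r)-1|^2\,\dd r<\infty$ is exactly what is needed to ensure this functional, after centering, has fluctuations of order $o(\sqrt{t/\e}\,|\log(t/\e)|^{1/2})$, i.e. that $\e\cdot(\text{centered functional})$ is $o(|\log\e|^{1/2})$ in an $L^2$ or in-probability sense. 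Concretely I would bound $\E[(\text{centered }Y\text{-integral})^2]$ by a mixing/covariance estimate (using the exponential ergodicity of $\Theta$ via Lemma~\ref{ergo2} and the control of excursions of $R$ near infinity via the speed/scale representation), showing it is $O((t/\e)\log(t/\e))$, which upon multiplying by $\e|\log\e|^{-1/2}$ and taking square roots gives the result.

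The main obstacle is the second part: getting the sharp $O((t/\e)\log(t/\e))$ variance bound for the centered additive functional $\int_0^{t/\e}R_s(1-R_s\Gamma'(R_s)/\Gamma(R_s))\Psi(\Theta_s)\,\dd s$. This is where the logarithm in the normalization comes from, and it is precisely the critical-regime borderline computation: the function $v\mapsto|v|(1-|v|\Gamma'(|v|)/\Gamma(|v|))$ is "barely" square-integrable against $\mu_\beta$ when $\beta=4+d$ only under the stated integrability hypothesis, and the logarithmic divergence of the CLT-scale normalization is inherited from the logarithmic divergence of $\int_1^R r^{?}\,\dd r$-type integrals at the critical exponent. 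I would handle this by the same speed-measure/local-time machinery as in Section~\ref{Ss}: represent $R$ via the time-changed Brownian motion $W$, express the functional through the excursion decomposition, and estimate second moments conditionally on $\cW=\sigma(W)$ using the exponential decorrelation of $\hTheta$ (Lemma~\ref{ergo2}-(ii)), exactly as in the proof of Lemma~\ref{fapl}, equation~\eqref{obj}. The extra hypothesis $\int_1^\infty r^{-1}|r\Gamma'(r)/\Gamma(r)-1|^2\,\dd r<\infty$ enters when translating the $r$-variable bound into the $w$-variable (via $h$, $\sigma$) to get an integrable dominating function; without it the functional would contribute at the same $\sqrt{t/\e}$ order as the martingale $M_{t/\e}$ and change the limiting variance, or worse, fail to have finite asymptotic variance at all. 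The remaining steps — the ergodic averaging identity for the $m_\beta$ cancellation, and the tightness of $R_{t/\e}^3$ — are routine given Lemmas~\ref{ergo1} and~\ref{ergo2}.
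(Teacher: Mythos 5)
Your treatment of the boundary term is exactly the paper's: $R_{t/\e}^3\Psi(\Theta_{t/\e})-r_0^3\Psi(\theta_0)+x_0=\psi(V_{t/\e})$ for a fixed function $\psi$, $V_{t/\e}$ converges in law (Lemma \ref{ergo1}), hence tightness without rescaling kills this term. Your identification that one must check the exact cancellation $\intrd g\,\dd\mu_\beta=0$ for $g(v)=m_\beta+\tfrac{3(4+d)}2(r-r^2\Gamma'(r)/\Gamma(r))\Psi(\theta)$ is also right, though you leave that computation (which uses the specific eigenvalue $9/2$ in the equation for $\Psi$ and $\beta=4+d$) entirely unverified.

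The genuine gap is in the order of magnitude you target for the centered functional $Y_{t/\e}$. You propose to prove $\E[(\text{centered }Y_{t/\e})^2]=O((t/\e)\log(t/\e))$ and claim this suffices. It does not: multiplying that variance by the square of the normalization, $\e|\log\e|^{-1}$, gives $O(t)$, i.e.\ only tightness of $\e^{1/2}|\log\e|^{-1/2}Y_{t/\e}$, not convergence to $0$ in probability. The point of Lemma \ref{dc2} is precisely that $Y$ is \emph{subcritical}: the logarithm in Theorem \ref{mr}-(b) comes entirely from the martingale $M$ of Lemma \ref{dc3}, and what must be shown is that the centered $Y$-functional is $O_\PR(\sqrt{t/\e})$ with \emph{no} logarithm. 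The additional hypothesis $\int_1^\infty r^{-1}|r\Gamma'(r)/\Gamma(r)-1|^2\dd r<\infty$ is exactly what guarantees $\intrd |g(v)|^2(1+|v|)^2\mu_\beta(\dd v)<\infty$, placing $g$ strictly inside the diffusive regime. The paper exploits this by solving the Poisson equation $\frac12[\Delta f-\beta F\cdot\nabla f]=g$ (Lemma \ref{lmlm}) with $\intrd|\nabla f|^2\dd\mu_\beta<\infty$, so that $Y_t=f(V_t)-f(v_0)-N_t$ with $N$ a martingale whose bracket grows linearly by the ergodic theorem; then $\e^{1/2}N_{t/\e}$ converges in law and the extra $|\log\e|^{-1/2}$ sends everything to $0$. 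Your alternative — a direct second-moment mixing estimate as in \eqref{obj} — would also need the sharper $O(t/\e)$ bound, and moreover requires quantitative decorrelation of the \emph{radial} process, which Lemma \ref{ergo2}-(ii) does not provide (it only controls the angular part); this is presumably why the authors route the argument through the Poisson equation. As written, your proposed estimate is both insufficient in order and unsupported in method.
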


\begin{proof}
First,
$\lim_{\e\to 0}  |\log \e|^{-1/2}\e^{1/2} [R_{t/\e}^3 \Psi(\Theta_{t/\e})-r_0^3\Psi(\theta_0)+x_0]=
\lim_{\e\to 0}  |\log \e|^{-1/2}\e^{1/2}\psi(V_{t/\e})=0$ in probability, where we have set
$\psi(v)=|v|^3\Psi(v/|v|)-r_0^3\Psi(\theta_0)+x_0$, because we know from Lemma \ref{ergo1}-(ii)
that $V_t$ converges in law as $t\to \infty$.

\vip

Next, we have $Y_t=\intot g(V_s)\dd s$, where 
we have set 
$$
g(v)=m_\beta + \frac{3(4+d)}{2} \Big(r-\frac{r^2\Gamma'(r)}{\Gamma(r)} \Big)\Psi(\theta),
$$ 
where $r=|v|$ and $\theta=v/|v|$. This function is of class $C^\infty$ on $\rd\setminus\{0\}$ and,
as we will see below,
$$
\hbox{(a)}\quad  \intrd |g(v)|^2(1+|v|)^{2}\mu_\beta(\dd v) < \infty
\quad \hbox{and} \quad \hbox{(b)}\quad \intrd g(v)\mu_\beta(\dd v)=0.
$$
Applying Lemma \ref{lmlm} (coordinate by coordinate), there exists $f:\rd\setminus\{0\}\mapsto \rd$
of class $C^\infty$, satisfying $\intrd |\nabla f(v)|^2 \mu_\beta(\dd v)<\infty$ and, for each $k=1,\dots,d$,
$\frac 12[\Delta f_k - \beta F \cdot \nabla f_k]=g_k$. By It\^o's formula, starting from \eqref{eds}, 
$$
f(V_t)=f(v_0)+N_t+Y_t \quad \hbox{where} \quad N_t=\intot \nabla^*f(V_s)\dd B_s.
$$
To conclude that $|\log \e|^{-1/2}\e^{1/2} Y_{t/\e} \to 0$ in probability,
we observe that $|\log \e|^{-1/2}\e^{1/2} [f(V_{t/\e})-f(v_0)]$ tends to $0$
in probability, which follows from the fact that  $V_t$ converges in law as $t\to \infty$,
and that 
$|\log \e|^{-1/2}\e^{1/2}N_{t/\e} \to 0$ in probability, which follows from the fact that
$(\e^{1/2}N_{t/\e})_{t\geq 0}$ converges in law by Jacod-Shiryaev 
\cite[Theorem VIII-3.11 p 473]{js}. Indeed, $(\e^{1/2} N_{t/\e})_{t\geq 0}$ 
is a continuous local martingale of which
the bracket matrix $\e\int_0^{t/\e} \nabla^*f(V_s) \nabla^*f(V_s) \dd s$ a.s.
converges to $[\intrd  \nabla^*f(v) \nabla f(v) \mu_\beta(\dd v)]t$ as $\e\to 0$ by Lemma \ref{ergo1}-(ii).

\vip

We now check (a).
Since $|g(v)|\leq C(1+|v|)|1-|v|\Gamma'(|v|)/\Gamma(|v|)|$ and since $\beta=4+d$,
$$
\intrd |g(v)|^2(1+|v|)^{2}\mu_\beta(\dd v)\leq C
\intrd |g(v)|^2(1+|v|)^{-2-d}\dd v\leq C \int_0^\infty \Big|1-\frac{r\Gamma'(r)}{\Gamma(r)}\Big|^2
\frac{r^{d-1}\dd r}{(1+r)^{d}},
$$
which converges since, by assumption, $\int_1^\infty r^{-1}|r\Gamma'(r)/\Gamma(r)-1|^2r^{-1}\dd r$.

\vip

We finally check (b), recalling the notation introduced in Subsection \ref{notadebut}:
$$
J=\intrd g(v)\mu_\beta(\dd v)=m_\beta + \frac{3(4+d)}2 \int_0^\infty\Big(r-\frac{r^2\Gamma'(r)}{\Gamma(r)} \Big)
\nu_\beta'(\dd r) \int_\Sd \Psi(\theta)\nu_\beta(\dd \theta)=m_\beta + \frac{3(4+d)}2 J_1J_2,
$$
the first and last equalities standing for definitions. First, 
$$
J_1=b_\beta \int_0^\infty\Big(r-\frac{r^2\Gamma'(r)}{\Gamma(r)} \Big) r^{d-1}[\Gamma(r)]^{-\beta}\dd r= 
b_\beta \int_0^\infty r^d[\Gamma(r)]^{-\beta} \dd r +\frac{b_\beta}\beta 
\int_0^\infty r^{1+d}([\Gamma(r)]^{-\beta})' \dd r$$
whence $J_1=b_\beta[1-(1+d)/\beta] \int_0^\infty r^d [\Gamma(r)]^{-\beta}$ and thus
$J_1=[1-(1+d)/\beta] m'_\beta$.

\vip

Next, recall that $\frac12\Delta_S \Psi(\theta)-\frac\beta2 [\gamma(\theta)]^{-1}\nabla_S\gamma(\theta)\cdot 
\nabla_S \Psi(\theta)=\frac92 \Psi(\theta)+\theta$ 
by Lemma \ref{Psi} and observe that for any smooth $\psi:\Sd\mapsto \rr$, we have
$$
\int_{\Sd} \Big[\Delta_S \psi(\theta)-\beta \frac{\nabla_S\gamma(\theta)}{\gamma(\theta)}\cdot 
\nabla_S \psi(\theta) \Big]\nu_\beta(\dd \theta)
=a_\beta \int_\Sd \ddiv_S([\gamma(\theta)]^{-\beta}\nabla_S\psi(\theta)  ) \varsigma(\dd \theta)=0.
$$
We conclude that $J_2=\int_\Sd \Psi(\theta)\nu_\beta(\dd \theta)=-(2/9)\int_\Sd \theta \nu_\beta(\dd \theta)=
-(2/9)M_\beta$, so that finally,
$$
J=m_\beta-\frac{3(4+d)}2\Big(1-\frac{1+d}\beta\Big)\frac 29 m'_\beta M_\beta=0
$$
because $\beta=4+d$ and $m_\beta=m'_\beta M_\beta$.
\end{proof}

We finally treat the main martingale term.

\begin{lem}\label{dc3}
With the assumptions and notation of Lemma \ref{dc1},
$( |\log \e|^{-1/2}\e^{1/2} M_{t/\e})_{t\geq 0} \stackrel{d}\longrightarrow  (\Sigma B_t)_{t\geq 0}$ as $\e\to 0$, 
for some $\Sigma \in \cS_d^+$, where $(B_t)_{t\geq 0}$ is a $d$-dimensional Brownian motion.
\end{lem}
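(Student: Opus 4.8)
The plan is to apply the martingale functional central limit theorem (Jacod-Shiryaev \cite[Theorem VIII-3.11 p 473]{js}), exactly as in the proofs of Theorems \ref{mr}-(a) and \ref{mr}-(b) above: since $(|\log\e|^{-1/2}\e^{1/2}M_{t/\e})_{t\geq 0}$ is, for each $\e\in(0,1)$, a continuous $\rd$-valued local martingale vanishing at $t=0$, it suffices to prove that for all $i,j\in\{1,\dots,d\}$ and all $t\geq 0$, the bracket $\langle |\log\e|^{-1/2}\e^{1/2}M^i_{\cdot/\e},\ |\log\e|^{-1/2}\e^{1/2}M^j_{\cdot/\e}\rangle_t$ converges in probability to $\Sigma^2_{ij}t$ for some $\Sigma^2\in\cS_d^+$. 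Recalling from Lemma \ref{dc1} that $M_t=\intot R_s^2\nabla_S^*\Psi(\Theta_s)\dd\bar B_s+3\intot R_s^2\Psi(\Theta_s)\dd\tB_s$ and that $(\bar B_t)_{t\geq 0}$ and $(\tB_t)_{t\geq 0}$ are independent, the bracket matrix of $(M_t)_{t\geq 0}$ is
$$
\langle M\rangle_t=\intot R_s^4\big[\nabla_S^*\Psi(\Theta_s)\nabla_S\Psi(\Theta_s)+9\Psi(\Theta_s)\Psi(\Theta_s)^*\big]\dd s.
$$
So I must show that $|\log\e|^{-1}\e\langle M\rangle_{t/\e}\to \Sigma^2 t$ in probability, i.e. that $|\log\e|^{-1}\e\int_0^{t/\e}G(V_s)\dd s\to \Sigma^2 t$ in probability, where $G(v)=|v|^4[\nabla_S^*\Psi(v/|v|)\nabla_S\Psi(v/|v|)+9\Psi(v/|v|)\Psi(v/|v|)^*]$ is a continuous (matrix-valued) function on $\rd\setminus\{0\}$ growing like $|v|^4$ at infinity.

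The point is that $G$ is \emph{not} $\mu_\beta$-integrable when $\beta=4+d$: $\intrd |v|^4\mu_\beta(\dd v)$ is only logarithmically divergent, which is precisely why the normalization $|\log\e|^{-1}\e$ (rather than $\e$) appears and why a law of large numbers in the spirit of Lemma \ref{ergo1}-(ii) does not apply directly. The strategy I would follow is to reduce the problem to the radial process and the occupation-times machinery already set up in Section \ref{Srep}. Using the representation of Lemma \ref{rep}, write $V_s=R_s\hTheta_{H_s}$ with $H_s=\intot R_u^{-2}\dd u$, so that $G(V_s)=R_s^4\,g(\hTheta_{H_s})$ with $g(\theta)=\nabla_S^*\Psi(\theta)\nabla_S\Psi(\theta)+9\Psi(\theta)\Psi(\theta)^*$ continuous on $\Sd$. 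Split $\int_0^{t/\e}R_s^4 g(\hTheta_{H_s})\dd s$ into the contribution of the large values of $R_s$ (where the logarithmic divergence lives) and the bounded part (which, after multiplication by $\e$, is negligible, again since $V_s$ converges in law). On the large-$R$ part one expects that the spherical process $\hTheta_{H_s}$ has fully equilibrated — because $H_s\to\infty$ and, more to the point, $H$ accumulates a lot of time whenever $R$ is small, so between successive long excursions of $R$ to high values the spherical clock runs a macroscopic amount — and hence one may replace $g(\hTheta_{H_s})$ by its $\nu_\beta$-average $\bar g=\int_\Sd g(\theta)\nu_\beta(\dd\theta)$, up to an error that is negligible after renormalization. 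This is the same "decorrelation by a long clock'' phenomenon used in Lemmas \ref{dec} and in the proof of Theorem \ref{mr}-(g), and it can be made rigorous by an $L^2$ estimate conditionally on $\cW=\sigma(W_t:t\geq 0)$ using the exponential mixing of Lemma \ref{ergo2}-(ii), together with the magic bound \eqref{magic}. It then remains to prove a scalar statement: $|\log\e|^{-1}\e\int_0^{t/\e}R_s^4\dd s\to \lambda t$ in probability for an explicit $\lambda>0$, so that $\Sigma^2=\lambda\bar g$. For this one uses the space-time change of Lemmas \ref{notaf}-\ref{exr} to write $R_s=h^{-1}(W_{\rho_s}/ a_\e)$-type expressions (with the Bessel-like asymptotics of $h^{-1}$ and $\sigma$ from Lemma \ref{fcts}), reducing $\int_0^{t/\e}R_s^4\dd s$ to an integral of $[h^{-1}]^4[\sigma]^{-2}$ against the local time of a Brownian motion via the occupation-times formula of Revuz-Yor \cite[Corollary 1.6 p 224]{ry}; the logarithmic divergence of $\int^\infty [h^{-1}(w)]^4[\sigma(w)]^{-2}\dd w$ at $+\infty$ is exactly where the additional assumption $\int_1^\infty r^{-1}|r\Gamma'(r)/\Gamma(r)-1|^2\dd r<\infty$ is needed to control the rate, and it produces, as in Lemma \ref{tl}, the asymptotics $|\log\e|^{-1}\e\int_0^{t/\e}R_s^4\dd s\to \lambda L^0_{\tau_{\cdot}}$-type quantities that collapse onto $\lambda t$.

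Finally, positive-definiteness of $\Sigma^2=\lambda\bar g$ follows from that of $\bar g=\int_\Sd[\nabla_S^*\Psi(\theta)\nabla_S\Psi(\theta)+9\Psi(\theta)\Psi(\theta)^*]\nu_\beta(\dd\theta)$: for $\xi\in\rd\setminus\{0\}$ one has $\xi^*\bar g\,\xi=\int_\Sd[|\nabla_S\Psi(\theta)\xi|^2+9(\Psi(\theta)\cdot\xi)^2]\nu_\beta(\dd\theta)$, which vanishes only if $\Psi(\theta)\cdot\xi\equiv0$ on $\Sd$, contradicting the equation of Lemma \ref{Psi} (which forces $\frac92(\Psi(\theta)\cdot\xi)=-\theta\cdot\xi$ up to the elliptic operator, and $\theta\mapsto\theta\cdot\xi$ is not identically zero). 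I expect the main obstacle to be the rigorous justification of the "spherical equilibration on the large-$R$ part'', i.e. controlling the joint behaviour of $R$ (which must visit $0$ to let the $\hTheta$-clock run) and of the time it spends at large values, uniformly in $\e$; this is where the $L^2$-conditional-on-$\cW$ argument together with the fine estimates of Lemma \ref{fcts} and \eqref{magic} will have to be orchestrated carefully, much as in Lemma \ref{fapl}.
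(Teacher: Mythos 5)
Your plan is correct and follows essentially the same route as the paper: the martingale CLT of Jacod--Shiryaev reduces everything to the bracket $\frac{\e}{|\log\e|}\int_0^{t/\e}R_s^4 D(\Theta_s)\dd s$, the spherical factor is replaced by its $\nu_\beta$-average via an $L^2$ estimate conditional on $\cW$ using the exponential mixing of Lemma \ref{ergo2}-(ii) and the bound \eqref{magic}, and the remaining radial integral is handled by the space/time change of Lemma \ref{exr} plus the occupation-times formula, with the $\log$ normalization coming from Lemma \ref{fcts}-(iii); the positive-definiteness argument is also the paper's. One minor correction: the extra hypothesis $\int_1^\infty r^{-1}|r\Gamma'(r)/\Gamma(r)-1|^2\dd r<\infty$ is not needed here (the asymptotics $\int_{-x}^x[h^{-1}]^4[\sigma]^{-2}\sim(\log x)/36$ follows from $\Gamma(r)\sim r$ alone); it is used only for the drift correction term in Lemma \ref{dc2}.
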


\begin{proof}
Using one more time Jacod-Shiryaev \cite[Theorem VIII-3.11 p 473]{js}, 
it suffices to check that there is $\Sigma^2 \in \cS_d^+$ such that $\lim_{\e\to 0} Z^\e_t = \Sigma^2 t$
in probability for each $t\geq 0$,
where $Z^\e_t$ is the matrix of brackets of the martingale $|\log\e|^{-1/2}\e^{1/2}M_{t/\e}$, namely
$$
Z^\e_t = \frac{\e}{|\log \e|} \int_0^{t/\e} R_s^4 D(\Theta_s)\dd s,
$$
where $D(\theta)=\nabla^*_S \Psi(\theta) \nabla_S\Psi(\theta)+9\Psi(\theta)\Psi^*(\theta)$.
We proceed by coupling.

\vip

\underline{\it Step 1.} We recall Notation \ref{notaf} and use Lemma \ref{exr} with $a_\e=\kappa \e$,
where $\kappa = \int_\rr [\sigma(w)]^{-2}\dd w<\infty$, see Lemma \ref{fcts}-(i).
We consider a one-dimensional Brownian motion $(W_t)_{t\geq 0}$, introduce 
$A^\e_t=\e a_\e^{-2}\intot [\sigma(W_s/a_\e)]^{-2}\dd s$ and its inverse $\rho^\e_t$ and put
$R^\e_t = \sqrt \e h^{-1}(W_{\rho^\e_t}/a_\e)$. We know from Lemma \ref{exr} that
$S^\e_t=\e^{-1/2}R^\e_{\e t}$ solves \eqref{eqr}. We also consider the solution $(\hTheta_t)_{t\geq 0}$ of 
\eqref{eqt}, independent of $(W_t)_{t\geq 0}$.
We then know from Lemma \ref{rep} that, setting $H_t^\e=\intot [S_s^\e]^{-2} \dd s$,
$(S^\e_t \hTheta_{H^\e_t})_{t\geq 0}\stackrel{d}=(V_t)_{t\geq 0}$. In particular, for each $t\geq 0$,
$Z^\e_t\stackrel{d}= \tZ^\e_t$, where
$$
\tZ^\e_t=\frac{\e}{|\log \e|} \int_0^{t/\e} (S_s^\e)^4 D(\hTheta_{H^\e_s})\dd s.
$$

\underline{\it Step 2.} Here we verify that $\tZ^\e_t=K^\e_{\rho^\e_t}$, where, recalling Notation \ref{notaf},
$$
K^\e_t=\frac{\e}{|\log\e|a_\e^2}\int_0^t 
\frac{[h^{-1}(W_s/a_\e)]^4D(\hTheta_{T^\e_s})}{[\sigma(W_s/a_\e)]^2}\dd s
\quad \hbox{and} \quad T^\e_t = \frac{1}{a_\e^2}\int_0^{t} 
\frac{\dd u }{\psi(W_u/a_\e)}.
$$
Recalling that $S_s^\e=\e^{-1/2}R^\e_{\e t}=h^{-1}(W_{\rho^\e_{\e s}}/a_\e)$ and using the change of variables
$u=\rho^\e_{\e s}$, i.e. $s=\e^{-1}A^\e_u$, whence $\dd s = a_\e^{-2}[\sigma(W_u/a_\e)]^{-2}\dd u$, we find
$$
\tZ^\e_t=\frac{\e}{|\log \e|a_\e^2}\int_0^{\rho^\e_t} \frac{[h^{-1}(W_u/a_\e)]^4D\big(\hTheta_{H^\e_{\e^{-1}A^\e_u}}\big)}
{[\sigma(W_u/a_\e)]^2}\dd u,
$$
and it only remains to check that $H^\e_{\e^{-1}A^\e_t}=T^\e_t$. But, with the same change of variables,
$$
H^\e_{\e^{-1}A^\e_t}=\int_0^{\e^{-1}A^\e_t}\frac{\dd s}{[h^{-1}(W_{\rho^\e_{\e s}}/a_\e)]^{2}} = \frac{1}{a_\e^2}
\int_0^t \frac{\dd u}{[\sigma(W_u/a_\e)]^2[h^{-1}(W_u/a_\e)]^2}= \frac{1}{a_\e^2}\int_0^{t} 
\frac{\dd u }{\psi(W_u/a_\e)}.
$$

\underline{\it Step 3.} We now prove that there is $C>0$ such that $\E[|K^\e_t- G_D I^\e_t|^2 \vert \cW]\leq C t/|\log\e|^2$
for all $t\geq 0$, all $\e\in(0,1)$, where 
$\cW=\sigma(W_t,t\geq 0)$, where $G_D=\int_{\Sd} D(\theta)\nu_\beta(\dd \theta)$ and where
$$
I^\e_t = \frac{\e}{|\log\e|a_\e^2}\int_0^t 
\frac{[h^{-1}(W_s/a_\e)]^4}{[\sigma(W_s/a_\e)]^2}\dd s.
$$
We set $\Delta^\e_t= \E[|K^\e_t- G_D I^\e_t|^2 |\cW]$ and write
$$
\Delta^\e_t=\frac{\e^2}{|\log\e|^2a_\e^4}\int_0^t \int_0^t \frac{[h^{-1}(W_a/a_\e)]^4}{[\sigma(W_a/a_\e)]^2}
\frac{[h^{-1}(W_b/a_\e)]^4}{[\sigma(W_b/a_\e)]^2} \E([D(\hTheta_{T^\e_a})-G_D][D(\hTheta_{T^\e_b})-G_D]|\cW) \dd a \dd b.
$$
Using that $(T^\e_t)_{t\geq 0}$ is $\cW$-measurable, that $(\hTheta_t)_{t\geq 0}$ is independent of $\cW$,
that $D$ is bounded and that $G_D=\int_{\Sd}D \dd \nu_\beta$, 
we deduce from Lemma \ref{ergo2}-(ii) and the Markov property
that there are $C>0$ and $\lambda>0$ such that a.s.,
\begin{align*}
|\E([D(\hTheta_{T^\e_a})-G_D][D(\hTheta_{T^\e_b})-G_D]|\cW)|
\leq C \exp(-\lambda |T^\e_b-T^\e_a|).
\end{align*}
By Lemma \ref{fcts}-(iii) and since $a_\e=\kappa \e$, 
we have $\e a_\e^{-2}[h^{-1}(w/a_\e)]^4[\sigma(w/a_\e)]^{-2}\leq C(\e+|w|)^{-1}$, whence
$$
\Delta^\e_t\leq \frac{C}{|\log\e|^2}\int_0^t \int_0^t (\e+|W_a|)^{-1}(\e+|W_b|)^{-1}
\exp(-\lambda |T^\e_a-T^\e_b|) \dd a \dd b.
$$
Next, since $a_\e^2 \psi(w/a_\e) \leq C(\e+|w|)^2$ by Lemma \ref{fcts}-(iv),
$$
\lambda |T^\e_a-T^\e_b|= \lambda \Big|\frac{1}{a_\e^2}\int_a^b
\frac{\dd s }{\psi(W_s/a_\e)}\dd s \Big|\geq c \Big|\int_a^b (\e+|W_s|)^{-2}\dd s \Big|
$$
for some $c>0$. Using furthermore that $xy\leq x^2+y^2$ and a symmetry argument, we conclude that
\begin{align*}
\Delta^\e_t\leq \frac{C}{|\log\e|^2}\int_0^t \int_0^t (\e+|W_a|)^{-2}
\exp\Big(-c \Big|\int_a^b (\e+|W_s|)^{-2}\dd s \Big|\Big)\dd a \dd b \leq \frac{Ct}{|\log\e|^2}.
\end{align*}
We finally used \eqref{magic}.

\vip

\underline{\it Step 4.} One can check precisely as in Lemma \ref{tl} that
for all $T\geq 0$, $\sup_{[0,T]}|A^\e_t-L^0_t|\to 0$ a.s. as $\e\to 0$,
where $(L^0_t)_{t\geq 0}$ is the local time at $0$ of $(W_t)_{t\geq 0}$.
Actually, the proof of Lemma \ref{tl} works (without any modification) for any $\beta>d$.

\vip

\underline{\it Step 5.} We next verify that for each $T\geq 0$, a.s., 
$\lim_{\e\to 0} \sup_{[0,T]}|I^\e_t-(\kappa/36)L^0_t|=0$.
This resembles the proof of Lemma \ref{tl}.
By Lemma \ref{fcts}-(iii), we know that $[h^{-1}(w)]^4/[\sigma(w)]^2 \leq C(1+|w|)^{-1}$ and that
\begin{equation}\label{eqlog}
\int_{-x}^x \frac{[h^{-1}(w)]^4\dd w} {[\sigma(w)]^2 } \stackrel{x\to \infty}\sim \frac{\log x}{36}.
\end{equation}
We fix $\delta>0$ and write $I^\e_t=J^{\e,\delta}_t+Q^{\e,\delta}_t$, where
$$
J^{\e,\delta}_t=\frac{\e}{|\log\e|a_\e^2}\int_0^t \frac{[h^{-1}(W_s/a_\e)]^4}
{[\sigma(W_s/a_\e)]^2}\indiq_{\{|W_s|> \delta\}}\dd s
\quad\hbox{and}\quad
Q^{\e,\delta}_t=\frac{\e}{|\log\e|a_\e^2}\int_0^t \frac{[h^{-1}(W_s/a_\e)]^4}
{[\sigma(W_s/a_\e)]^2}\indiq_{\{|W_s|\leq \delta\}}\dd s.
$$
Recalling that $a_\e=\kappa \e$ and using that $|w|> \delta$ implies 
$[h^{-1}(w/a_\e)]^4/[\sigma(w/a_\e)]^2 \leq C(1+|\delta/\e|)^{-1}$, we find that
$\sup_{[0,T]}J^{\e,\delta}_t\leq C T /[\delta |\log\e|]$, which tends to $0$ as $\e\to 0$.
We next use the occupation times formula to write
\begin{align*}
Q^{\e,\delta}_t=& \frac{\e}{|\log\e|a_\e^2}\int_{-\delta}^\delta \frac{[h^{-1}(x/a_\e)]^4 L^x_t \dd x}
{[\sigma(x/a_\e)]^2}\\
=&\frac{\e}{|\log\e|a_\e^2}\int_{-\delta}^\delta \frac{[h^{-1}(x/a_\e)]^4 \dd x}
{[\sigma(x/a_\e)]^2}  L^0_t+ \frac{\e}{|\log\e|a_\e^2}\int_{-\delta}^\delta \frac{[h^{-1}(x/a_\e)]^4 (L^x_t-L^0_t) \dd x}
{[\sigma(x/a_\e)]^2}\\
=&r_{\e,\delta} L_t^0+ R^{\e,\delta}_t,
\end{align*}
the last identity standing for a definition.
But a substitution and \eqref{eqlog} allow us to write  
$$
r_{\e,\delta}=\frac{\e}{|\log\e|a_\e} \int_{-\delta/{a_\e}}^{\delta/{a_\e}} \frac{[h^{-1}(y)]^4 \dd y}
{[\sigma(y)]^2} \stackrel{\e\to 0}\sim
\frac{\e \log (\delta/a_\e)}{36 |\log\e| a_\e} \longrightarrow \frac 1{36\kappa}
$$
as $\e\to 0$ because $a_\e=\kappa\e$. Recalling that $I^\e_t=r_{\e,\delta} L_t^0
+ R^{\e,\delta}_t+ J^{\e,\delta}_t$, we have proved that a.s., 
$$
\hbox{for all $\delta>0$,}\quad
\limsup_{\e\to 0}  \sup_{[0,T]}|I^\e_t - L^0_t/(36\kappa)|\leq \limsup_{\e\to 0}  \sup_{[0,T]}|R^{\e,\delta}_t|.
$$
But $|R^{\e,\delta}_t| \leq r_{\e,\delta} \times \sup_{[-\delta,\delta]}|L^x_t-L^0_t|$, whence
$$
\limsup_{\e\to 0}  \sup_{[0,T]}|I^\e_t - L^0_t/(36\kappa)|\leq \sup_{[0,T]\times[-\delta,\delta]}|L^x_t-L^0_t|/(36\kappa)
$$ a.s.
Letting $\delta\to 0$, using Revuz-Yor \cite[Corollary 1.8 p 226]{ry}, completes the step.

\vip

\underline{\it Step 6.} We finally conclude. We fix $t\geq 0$ and recall from Steps 1 and 2 that
$Z^\e_t\stackrel{d}= \tZ^\e_t=K^\e_{\rho^\e_t}$. By Step 4, we know that $A^\e_s\to L^0_s$ a.s. for each
$s\geq 0$, so that Lemma \ref{tc} tells us that $\rho^\e_t$ a.s. converges to 
$\tau_t=\inf\{u\geq 0 : L^0_u>t\}$, because $t$ is a.s. not a jump time of $(\tau_s)_{s\geq 0}$.
Using that $\rho^\e_t$ is $\cW$-measurable, we deduce from Step 3 that for any $A>0$,
$$
\E\big[|K^\e_{\rho^\e_t} - G_D I^\e_{\rho^\e_t}|\indiq_{\{\rho^\e_t \leq A\}}\big] \leq \frac{C A}{|\log \e|^2} \to 0.
$$
Since $\rho^\e_t$ a.s. tends to $\tau_t$, one deduces that 
$\tZ^\e_t - G_D I^\e_{\rho^\e_t}$ converges in probability to $0$.
We then infer from Step 5, using again that $\rho^\e_t$ a.s. tends to $\tau_t$, that
$|I^\e_{\rho^\e_t}-L^0_{\rho^\e_t}/(36\kappa)|$ a.s. tends to $0$. But $(L^0_s)_{s \geq 0}$ being continuous,
we see that $L^0_{\rho^\e_t}$ a.s. tends to $L^0_{\tau_t}=t$.
All this proves that $\tZ^\e_t$, and thus also $Z^\e_t$, converges in probability, as $\e\to 0$, 
to $\Sigma^2 t$, where
$$
\Sigma^2 = \frac{G_D}{36\kappa}.
$$
This symmetric matrix is positive definite: for $\xi \in \rd\setminus\{0\}$, setting
$\Psi_\xi(\theta)=\Psi(\theta)\cdot\xi$,
$$
\xi^* G_D\xi=\intrd \Big[ |\nabla_S\Psi_\xi (\theta)|^2+ 9|\Psi_\xi(\theta)|^2\Big]\nu_\beta(\dd \theta)
\geq 9 \intrd |\Psi_\xi(\theta)|^2 \nu_\beta(\dd \theta)
$$
which cannot vanish, because else we would have $\Psi_\xi(\theta)=0$ for all $\theta \in \Sd$,
which is impossible because $\Psi_\xi$ solves
$\frac12 \Delta_S \Psi_\xi(\theta)- \frac\beta2[\gamma(\theta)]^{-1}\nabla_S\gamma(\theta)\cdot\nabla_S\Psi_\xi(\theta)
=\frac92\Psi_\xi(\theta)+\xi\cdot\theta$
\end{proof}

We now have all the tools to give the

\begin{proof}[Proof of Theorem \ref{mr}-(b)]
We know from Lemma \ref{dc1} that 
$X_t-m_\beta t = [R_t^3\Psi(\Theta_t)-r_0^3\Psi(\theta_0)- Y_t] -M_t$, 
from Lemma \ref{dc2} that for each $t\geq 0$, 
$\lim_{\e\to 0}  |\log \e|^{-1/2}\e^{1/2} [R_{t/\e}^3 \Psi(\Theta_{t/\e})-r_0^3\Psi(\theta_0)+x_0-Y_{t/\e}]=0$
in probability, and from Lemma \ref{dc3} that
$( |\log \e|^{-1/2}\e^{1/2} M_{t/\e})_{t\geq 0} \stackrel{d}\longrightarrow  (\Sigma B_t)_{t\geq 0}$ as $\e\to 0$.
We conclude that, as desired, $( |\log \e|^{-1/2}\e^{1/2} (X_{t/\e}-m_\beta t/\e)_{t\geq 0} 
\stackrel{f.d.}\longrightarrow  (\Sigma B_t)_{t\geq 0}$ as $\e\to 0$.
\end{proof}

We know from Lemma \ref{fcts}-(i) that $\kappa$ can be computed a little more explicitly.

\begin{rk}\label{rkdc} Assume that $\beta=4+d$.

\vip

(i) In Theorem \ref{mr}-(b), $\Sigma\in \cS_d^+$ is the square root of 
$\frac1{36\kappa}\int_\Sd [\nabla^*_S \Psi(\theta) \nabla_S\Psi(\theta)+9\Psi(\theta)\Psi^*(\theta)] 
\nu_\beta(\dd \theta)$, with $\nu_\beta$ defined in Subsection \ref{notadebut},
$\Psi$ introduced in Lemma \ref{Psi} and $\kappa=\frac 16\int_0^\infty r^{d-1}[\Gamma(r)]^{-4-d}\dd r.$

\vip

(ii)
If $U(v)=(1+|v|^2)^{1/2}$, then $\mu_\beta(\dd v)=c_\beta (1+|v|^2)^{-\beta/2}\dd v$
and $m_\beta=0$,
so that we have $(\e^{1/2}|\log\e|^{-1/2} X_{t/\e})_{t\geq 0} \stackrel{f.d.} \longrightarrow (\Sigma B_t)_{t\geq 0}$.
Moreover, $\gamma\equiv 1$, whence $\nu_\beta(\dd\theta)=\varsigma(\dd\theta)$ and
$\Psi(\theta)=-a\theta$, where $a=2/(8+d)$ (a computation shows that
$\Delta_S \Psi(\theta)=a(d-1)\theta$,
whence $\frac12 \Delta_S \Psi(\theta)=\frac 92 \Psi(\theta)+\theta$).
Since now $\nabla_S\Psi(\theta)=-a \pi_{\theta^\perp}$, whence 
$\nabla^*_S \Psi(\theta) \nabla_S\Psi(\theta)=a^2\pi_{\theta^\perp}$, we find
$$
\Sigma^2=\frac{a^2}{36\kappa}\int_{\Sd} [\pi_{\theta^\perp} + 9\theta \theta^* ]\varsigma(\dd\theta)
=\frac{a^2}{36\kappa}\int_{\Sd} [I_d + 8\theta \theta^* ]\varsigma(\dd\theta)
=\frac{a^2}{36\kappa}\Big[\int_{\Sd} (1+8\theta_1^2) \varsigma(\dd\theta)\Big] I_d.
$$
Observing that $\int_{\Sd} \theta_1^2 \varsigma(\dd\theta)=1/d$, we conclude that
$\Sigma=qI_d$, with $q= [9\kappa d (8+d)]^{-1/2}$.
\end{rk}

\section{Appendix}\label{ap}

\subsection{Ergodicity and convergence in law}
We first recall some classical properties of the velocity process.

\begin{lem}\label{ergo1} Assume that $\beta>d$ and consider the 
$\rd\setminus\{0\}$-valued velocity process $(V_t)_{t\geq 0}$, see \eqref{eds}.

\vip

(i) The measure with density $\mu_\beta$ defined in Remark \ref{inv}
is its unique invariant probability measure.

\vip

(ii) For any $\phi \in L^1(\rd,\mu_\beta)$, $\lim_{T\to \infty} T^{-1}\int_0^T \phi(V_s)\dd s 
= \intrd \phi \dd \mu_\beta$ a.s.

\vip

(iii) It holds that $V_t$ goes in law to $\mu_\beta$ as $t\to\infty$.
\end{lem}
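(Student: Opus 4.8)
The plan is to treat $(V_t)_{t\geq 0}$ as a non-degenerate diffusion on the connected (since $d\geq 2$) open set $\rd\setminus\{0\}$ with $C^\infty$ coefficients, and to reduce everything to the classical ergodic theory of positive Harris recurrent Markov processes. First I would record that the generator $\cL\phi=\frac12\Delta\phi-\frac\beta2 F\cdot\nabla\phi$ can be put in divergence form: since $F=\nabla[\log U]$, a direct computation gives $[U(v)]^{-\beta}\cL\phi(v)=\frac12\ddiv([U(v)]^{-\beta}\nabla\phi(v))$. Integrating against $\dd v$ yields $\intrd \cL\phi\,\dd\mu_\beta=0$ for $\phi\in C^\infty_c(\rd\setminus\{0\})$ and, after a routine truncation using non-explosion and non-attainment of $0$ from Proposition \ref{eui}, that $\mu_\beta$ is an invariant probability measure (it is a probability measure precisely because $\beta>d$); the same identity shows $\cL$ is symmetric in $L^2(\mu_\beta)$, i.e. $(V_t)_{t\geq 0}$ is reversible.

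Next I would establish recurrence. Because the diffusion coefficient is the identity, the transition kernel $P_t(v,\cdot)$ has a continuous, everywhere positive density on $\rd\setminus\{0\}$, so $(V_t)_{t\geq 0}$ is strong Feller and irreducible with respect to Lebesgue measure on $\rd\setminus\{0\}$. For recurrence I would invoke the radial process $R_t=|V_t|$, which by Lemma \ref{rep} has the law of the solution to the autonomous one-dimensional SDE \eqref{eqr}; its scale function is a constant times the function $h$ of Notation \ref{notaf}, an increasing bijection of $(0,\infty)$ onto $\rr$, so both boundary points are inaccessible and $(R_t)_{t\geq 0}$ is recurrent on $(0,\infty)$, and its speed measure is proportional to $\nu'_\beta(\dd r)=b_\beta[\Gamma(r)]^{-\beta}r^{d-1}\dd r$, which is finite exactly because $\beta>d$. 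Hence $(R_t)_{t\geq 0}$ is positive recurrent, so $V_t$ returns to the shell $\{1\le|v|\le 2\}$ at arbitrarily large times almost surely, and therefore $(V_t)_{t\geq 0}$ is recurrent. An irreducible, strong Feller, recurrent Markov process carrying a finite invariant measure is positive Harris recurrent (and aperiodic, being a non-degenerate continuous-time diffusion).

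From positive Harris recurrence all three assertions follow from standard theory: the invariant probability measure is unique, which together with the first step gives (i); the ergodic theorem for positive Harris recurrent processes yields $T^{-1}\int_0^T\phi(V_s)\,\dd s\to\intrd\phi\,\dd\mu_\beta$ almost surely from every starting point for $\phi\in L^1(\mu_\beta)$, which is (ii); and $\|P_t(v_0,\cdot)-\mu_\beta\|_{TV}\to 0$ as $t\to\infty$, which is stronger than (iii). The only mildly delicate points are the bookkeeping around the punctured state space $\rd\setminus\{0\}$ and the fact that the initial condition $v_0\neq 0$ is deterministic, both of which are handled cleanly by the Harris framework; the single genuine computation is the divergence-form identity, and the only place the hypothesis $\beta>d$ enters is in ensuring that $\mu_\beta$ (equivalently, the speed measure $\nu'_\beta$) has finite total mass.
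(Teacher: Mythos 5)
Your proof is correct, but it takes a genuinely different route from the paper's. The paper verifies the two Meyn--Tweedie hypotheses directly: it constructs an explicit Foster--Lyapunov function $\Phi(v)=\varphi(|v|)+m$ with $\varphi(r)=\int_2^r y^{1-d}[\Gamma(y)]^\beta\,\dd y\int_2^y g(x)x^{d-1}[\Gamma(x)]^{-\beta}\dd x$ solving $\cL\Phi=g(|v|)/2$ for a suitable $g=-q+\indiq_{[1,3]}$, checks that $\Phi\to\infty$ at both $0$ and $\infty$ (this is where $\beta>d$ enters, through the convergence of $\int g(x)x^{d-1}[\Gamma(x)]^{-\beta}\dd x$), and proves the petite-set/minorization condition by a Girsanov comparison with Brownian motion on an annulus; Harris recurrence, the ergodic theorem and convergence in law then follow from \cite{mt} and \cite[Theorem 3.12 p 427]{ry}. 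You instead exploit the skew-product structure already available from Lemma \ref{rep}: the radial process solves the autonomous SDE \eqref{eqr}, its scale function is (a multiple of) $h$, which is a bijection onto $\rr$, and its speed measure $\propto r^{d-1}[\Gamma(r)]^{-\beta}\dd r$ is finite iff $\beta>d$; positive recurrence of the radius plus irreducibility and the strong Feller property then give positive Harris recurrence by the T-process criterion. The two arguments are close in substance --- the paper's $\varphi$ is exactly the scale-and-speed representation $\int s'(y)\int^y g\,\dd m$ of a solution to the radial Poisson equation, so the same integrability computation appears in both --- but yours buys a shorter derivation by reusing Section \ref{Srep}, at the price of invoking the slightly less elementary ``irreducible $+$ strong Feller $+$ return to a compact set $\Rightarrow$ Harris'' theorem in place of the explicit drift condition; the paper's version is more self-contained and its Girsanov minorization makes the petite-set property concrete rather than appealing to positivity of the transition density. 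Both correctly handle the punctured state space, since $0$ is never attained.
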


\begin{proof}
We denote by $\cL$ the generator of the velocity process, we have
$\cL\varphi(v)=\frac12[\Delta \varphi(v) - \beta F(v)\cdot\nabla\varphi(v)]$ 
for all $\varphi \in C^2(\rd\setminus\{0\})$, all $v\in \rd\setminus\{0\}$.
We also denote by $P_t(v,\dd w)$ its semi-group: for $t\geq 0$
and $v \in \rd\setminus\{0\}$, $P_t(v,\dd w)$ is the law of $V_t$ when $V_0=v$.

\vip

Recalling that $\mu_\beta(\dd v)=c_\beta [U(v)]^{-\beta}\dd v$ and observing that 
$\cL\varphi(v)=\frac12 [U(v)]^\beta\ddiv ([U(v)]^{-\beta}\nabla\varphi(v))$, 
we see that $\int_{\rd} \cL\varphi(v) \mu_\beta(\dd v)=0$
for all $\varphi \in C^2(\rd\setminus\{0\})$, and $\mu_\beta$ is an invariant probability measure.
The uniqueness of this invariant probability measure follows from point (iii).
In a few lines below, we will verify the two following points.

\vip

(a) There is $\Phi: \rd\setminus\{0\}\mapsto [0,\infty)$ of class $C^2$ such that 
$\lim_{|v|\to 0+}\Phi(v)=\lim_{|v|\to\infty} \Phi(v)=\infty$ and, for some $b,c>0$
and some compact set $C\subset \rd\setminus\{0\}$,
for all $v\in \rd\setminus\{0\}$, $\cL\Phi(v)\leq -b + c\indiq_{\{v\in C\}}.$

\vip

(b) There is $t_0>0$ such that for all compact set $C\subset \rd\setminus\{0\}$, there is $\alpha_C>0$
and a probability measure $\zeta_C$ on $\rd\setminus\{0\}$ such that for all 
$A\in \cB(\rd\setminus\{0\})$, $\inf_{x\in C} P_{t_0}(x,A)\geq \alpha_C\zeta_C(A)$.

\vip

These two conditions allow us to apply Theorems 4.4 and 5.1 of Meyn-Tweedie \cite{mt}, which tell
us that $(V_t)_{t\geq 0}$ is Harris recurrent, whence point (ii) (by Revuz-Yor \cite[Theorem 3.12 p 427]{ry}, 
any Harris recurrent process with an invariant probability measure satisfies the ergodic theorem)
and $\cL(V_t)\to \mu_\beta$, whence point (iii).
Indeed, in the terminology of \cite{mt}, (a) implies condition (CD2) and
(b) implies that all compact sets are {\it petite}.

\vip

\underline{\it Point (a).} For some $q>0$ to be chosen later, set, for 
$r\in (0,\infty)$, $g(r)=-q+\indiq_{\{r\in [1,3]\}}$ and 
$\varphi(r)= \int_2^r y^{1-d}[\Gamma(y)]^\beta \dd y \int_2^y 
g(x) x^{d-1}[\Gamma(x)]^{-\beta}\dd x$.
For $v \in \rd\setminus\{0\}$, set $\Phi(v)=\varphi(|v|)+m$, for some constant $m$
to be chosen later.

\vip

But $\varphi'(r)=r^{1-d}[\Gamma(r)]^\beta \int_2^r g(x)x^{d-1}[\Gamma(x)]^{-\beta}\dd x$,
$\varphi''(r)=g(r) - [\frac{d-1}r - \beta \frac{\Gamma'(r)}{\Gamma(r)}]\varphi'(r)$,
$\Delta \Phi(v)=\varphi''(|v|)+\frac{d-1}{|v|}\varphi'(|v|)$ and
$\nabla \Phi(v)=\frac{\varphi'(|v|)}{|v|}v$, whence $F(v)\cdot \nabla \Phi(v)=
\frac{\Gamma'(|v|)}{\Gamma(|v|)}\varphi'(|v|)$, see \eqref{fff}.
All in all, we find that $\cL \Phi(v) = g(|v|)/2$.

\vip

The converging integrals 
$\int_0^2 g(x) x^{d-1}[\Gamma(x)]^{-\beta}\dd x$ 
and $\int_2^\infty  g(x) x^{d-1}[\Gamma(x)]^{-\beta}\dd x$ are positive if $q>0$ is small enough,
and we conclude that 
$$
\lim_{r\to 0} \varphi(r) = \int_0^2y^{1-d}[\Gamma(y)]^\beta \dd y \int_y^2 
g(x) x^{d-1}[\Gamma(x)]^{-\beta}\dd x =\infty
$$
and
$$
\lim_{r\to \infty} \varphi(r) = \int_2^\infty y^{1-d}[\Gamma(y)]^\beta \dd y \int_2^y 
g(x) x^{d-1}[\Gamma(x)]^{-\beta}\dd x =\infty,
$$
whence 
$\lim_{|v|\to 0+} \Phi(v)=\lim_{|v|\to\infty}\Phi(v)=\infty$.
With the choice $m=-\min_{r>0} \varphi(r) \in \rr$, the function $\Phi$ is nonnegative and thus suitable.

\vip

\underline{\it Point (b).} We will prove, and this is sufficient, 
that for all compact set $C \subset \rd\setminus\{0\}$,
there exists a constant $\kappa_C>0$ such that for all $v \in C$
all measurable $A \subset C$, $P_1(v,A) \geq \kappa_C |A|$, where
$|A|$ is the Lebesgue measure of $A$.

\vip

Consider $a'>a>0$ such that the annulus $D=\{x \in \rd, a<|x|<a'\}$ contains $C$.
Recall \eqref{eds} and that the force $F$ is bounded on $D$, see Assumption \ref{as}. 
By the Girsanov theorem, for any $A\in \cB(\rd)$,
$$
P_1(v,A)=\PR_v( V_1 \in A)\geq 
\E_v[\indiq_{\{\forall s\in [0,1], V_s \in D\}}\indiq_{\{V_1 \in A\}}] 
\geq c\E[\indiq_{\{\forall s\in [0,1], v+B_s \in D\}}\indiq_{\{v+B_1 \in A\}}]
$$
for some constant $c>0$, where $(B_t)_{t\in [0,1]}$ is a $d$-dimensional Brownian motion issued from $0$.
But the density $g(v,w)$ of $v+B_1$ restricted to the event that $(v+B_s)_{s\in [0,1]}$ does not get out
of $D$ is bounded below, as a function of $(v,w)$, on $C\times C$,
whence the conclusion.
\end{proof}

We recall some facts about the total variation distance: for
two probability measures $P,Q$ on some measurable set $E$,
\begin{equation}\label{TV1}
||P-Q||_{TV}=\frac12\sup_{||\phi||_\infty\leq 1} \Big| \int_E \phi(x) (P-Q)(\dd x)\Big|
= \inf \Big\{\PR(X\neq Y) : \cL(X)=P,\;\cL(Y)=Q \Big\}.
\end{equation}
Furthermore, if $P$ and $Q$ have some densities $f$ and $g$ with respect to some measure $R$ on $E$, then
\begin{equation}\label{TV2}
||P-Q||_{TV}=\frac12\int_E |f(x)-g(x)|R(\dd x).
\end{equation}

\begin{lem}\label{ergo2} 
We consider the $\Sd$-valued process $(\hTheta_t)_{t\geq 0}$, solution to \eqref{eqt}.

\vip

(i) The measure $\nu_\beta(\dd \theta)=a_\beta [\gamma(\theta)]^{-\beta} \varsigma(\dd \theta)$ on $\Sd$
is its unique invariant probability measure.

\vip

(ii) There is $C>0$ and $\lambda>0$ such that for all $t\geq 0$, all measurable and bounded 
$\phi:\Sd \mapsto \rr$,
$$
\sup_{\theta_0 \in \Sd} \Big|\E_{\theta_0}[\phi(\hTheta_t)]-  \int_\Sd \phi \dd \nu_\beta\Big| 
\leq C ||\phi||_\infty e^{-\lambda t}.
$$

(iii) There exists a (unique in law) stationary eternal version $(\hTheta^\star_t)_{t\in \rr}$ 
of this $\Sd$-valued process process and it holds that $\cL(\hTheta^\star_t)=\nu_\beta$ 
for all $t\in \rr$. We denote by
$\Lambda \in \cP(\cH)$, where $\cH=C(\rr,\Sd)$, the law of this stationary process.

\vip

(iv) Consider the process $(\hTheta_t)_{t\geq 0}$ starting from some given $\theta_0\in \Sd$.
Fix $k\geq 1$ and consider some positive sequences $(t^1_n)_{n\geq 1}$, ..., $(t^k_n)_{n\geq 1}$, all
tending to infinity as $n\to \infty$. We can find, for each $A\geq 1$ and each $n\geq 1$, an i.i.d. family 
of $\Lambda$-distributed eternal processes $(\hTheta^{\star,1,n,A}_t)_{t\in \rr}$,
..., $(\hTheta^{\star,k,n,A}_t)_{t\in \rr}$ such that, setting 
$$
p_A(t^n_1,\dots,t^n_k)=\PR\Big[
\Big(\hTheta_{(t^n_1+t)\lor 0},\dots,\hTheta_{(t^n_1+\dots+t^n_k+t)\lor 0}\Big)_{t\in[-A,A]}
=(\hTheta^{\star,1,n,A}_t,\dots,\hTheta^{\star,k,n,A}_t)_{t\in [-A,A]}\Big],
$$
it holds that $\lim_{n\to \infty} p_A(t^n_1,\dots,t^n_k)=1$.
\end{lem}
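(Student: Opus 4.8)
The plan is to establish (i)--(iii) via the standard Lyapunov/minorization machinery and then derive the coupling statement (iv) by iterated use of the total variation estimate in (ii).

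\emph{Parts (i)--(iii).} First I would identify the generator of $(\hTheta_t)_{t\geq 0}$: from \eqref{eqt}, for $\varphi \in C^2(\Sd)$,
$$
\mathcal{G}\varphi(\theta)=\tfrac12\Delta_S\varphi(\theta)-\tfrac\beta2\,\frac{\nabla_S\gamma(\theta)}{\gamma(\theta)}\cdot\nabla_S\varphi(\theta)=\tfrac12[\gamma(\theta)]^{\beta}\ddiv_S\big([\gamma(\theta)]^{-\beta}\nabla_S\varphi(\theta)\big),
$$
which makes it immediate that $\int_\Sd \mathcal{G}\varphi\,\dd\nu_\beta=0$, so $\nu_\beta$ is invariant. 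Since $\Sd$ is compact and the diffusion is uniformly elliptic with smooth coefficients (the singular drift $-\frac{d-1}{2}\hTheta_s/|\hTheta_s|^2$ in \eqref{eqt} is tangential and vanishes on $\Sd$ — indeed the computation in Lemma \ref{eusph} shows the process stays on $\Sd$, where the genuine generator is $\mathcal{G}$ above), the transition kernel $P_t(\theta,\cdot)$ has a smooth, strictly positive density for every $t>0$; hence Doeblin's condition holds and there is $\alpha\in(0,1)$ with $\|P_t(\theta,\cdot)-P_t(\theta',\cdot)\|_{TV}\leq 1-\alpha$ for, say, $t=1$, uniformly in $\theta,\theta'$. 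This gives uniqueness of $\nu_\beta$ and, by the contraction $\|P_{t+1}(\theta,\cdot)-\nu_\beta\|_{TV}\leq(1-\alpha)\|P_t(\theta,\cdot)-\nu_\beta\|_{TV}$ iterated, the geometric bound (ii) with $\lambda=-\log(1-\alpha)$ and an appropriate $C$ (using \eqref{TV1} to pass from TV distance to the bound on $|\E_{\theta_0}[\phi(\hTheta_t)]-\int\phi\,\dd\nu_\beta|$). For (iii), the existence of a stationary eternal version is the classical Kolmogorov-extension argument: the family of finite-dimensional laws $\PR[\hTheta^\star_{t_1}\in\cdot,\dots]$ defined by starting from $\nu_\beta$ and running the Markov semigroup (both forwards and backwards, consistently, since $\nu_\beta$ is invariant) is consistent; Kolmogorov extension plus continuity of paths yields $\Lambda\in\cP(\cH)$, and uniqueness in law follows because these finite-dimensional distributions are forced.

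\emph{Part (iv).} Here the idea is that after a long time the process has essentially forgotten its initial condition and is at equilibrium, so it can be coupled to a stationary eternal process; and since the $k$ windows are separated by increments $t^n_j\to\infty$, the pieces decouple. Concretely, fix $A\geq 1$ and $n$. I would build the coupling sequentially. On the window around time $t^n_1$: by (ii) and the Markov property, $\cL(\hTheta_{t^n_1-A})$ is within $Ce^{-\lambda(t^n_1-A)}$ of $\nu_\beta$ in total variation, so by \eqref{TV1} we can couple $\hTheta_{t^n_1-A}$ with a $\nu_\beta$-distributed random variable, run the diffusion dynamics from that point (and extend backwards to an eternal stationary process by (iii)), and with probability at least $1-Ce^{-\lambda(t^n_1-A)}$ the two agree at time $t^n_1-A$ and hence, by pathwise uniqueness of the SDE \eqref{eqt} driven by the same Brownian motion, on all of $[t^n_1-A,t^n_1+A]$, i.e. $(\hTheta_{(t^n_1+t)\lor 0})_{t\in[-A,A]}=(\hTheta^{\star,1,n,A}_t)_{t\in[-A,A]}$. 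For the next window, conditionally on the first coupling, the process at time $t^n_1+A$ has some law; run it forward for the gap and repeat: $\cL(\hTheta_{t^n_1+t^n_2-A}\,|\,\mathcal F_{t^n_1+A})$ is within $Ce^{-\lambda(t^n_2-2A)}$ of $\nu_\beta$ by (ii), so we couple to a fresh independent $\nu_\beta$-distributed variable and a fresh independent eternal stationary process $\hTheta^{\star,2,n,A}$. Iterating $k$ times, the total failure probability is at most $\sum_{j=1}^k Ce^{-\lambda(t^n_j-2A)}$ (with the convention that the first term uses $t^n_1-A$), which tends to $0$ as $n\to\infty$ for each fixed $A$, since every $t^n_j\to\infty$. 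The eternal processes produced this way are independent by construction (each is built from an independent equilibrium draw and independent driving noise), which gives $p_A(t^n_1,\dots,t^n_k)\to 1$.

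\emph{Main obstacle.} The substantive point is justifying the uniform Doeblin/minorization condition underlying (ii), i.e. that $P_1(\theta,\cdot)$ has a density bounded below uniformly in $\theta$ — this rests on ellipticity and smoothness of the coefficients of \eqref{eqt} on the compact manifold $\Sd$ (the apparent singularity at $0$ being harmless because the solution never leaves $\Sd$), and can be obtained either from parabolic Harnack/Aronson-type estimates or, as in the proof of Lemma \ref{ergo1}(b), via a Girsanov comparison with a fixed non-degenerate diffusion on $\Sd$. The rest of (iv) is bookkeeping with \eqref{TV1} and pathwise uniqueness; the only mild care needed is the $\lor 0$ truncation in the definition of $p_A$, which is irrelevant once $t^n_j-A>0$, i.e. for $n$ large.
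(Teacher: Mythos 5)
Your proposal is correct and follows essentially the same route as the paper: invariance of $\nu_\beta$ from the divergence form of the generator, exponential ergodicity from a uniform lower bound on the smooth positive transition density on the compact manifold $\Sd$, Kolmogorov extension for the eternal stationary version, and iterated use of (ii) across the $k$ windows for (iv). The only presentational difference is that the paper proves (iv) by an induction bounding the total variation distance between the joint law of the $k$ windows and $\Lambda_A^{\otimes k}$, invoking the maximal-coupling characterization \eqref{TV1} only at the very end, which sidesteps the independence bookkeeping that your explicit sequential coupling would require.
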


\begin{proof}
We recall that the generator $\chL$ of the process $(\hTheta_t)_{t\geq 0}$
is given, for $\varphi\in C^2(\Sd)$ and $\theta\in\Sd$,
by $\chL \varphi(\theta)=\frac12[\Delta_S\varphi(\theta)-\beta \frac{\nabla_S\gamma(\theta)}
{\gamma(\theta)}\cdot\nabla_S\varphi(\theta)]=
\frac12 [\gamma(\theta)]^{\beta}\ddiv_S([\gamma(\theta)]^{-\beta}\nabla_S\varphi(\theta))$,
so that $\nu_\beta(\dd \theta)=a_\beta  [\gamma(\theta)]^{-\beta}\varsigma(\dd \theta)$ is an
invariant probability measure. The uniqueness of this invariant probability measure follows from point (ii).
We denote by $Q_t(x,\dd y)$ the semi-group, defined as the law of $\hTheta_t$ when 
$\hTheta_0=x \in \Sd$. Grigor'yan \cite[Theorem 3.3 p 103]{g} tells us that 
$Q_t(x,\dd y)$ has a density $q_t(x,y)$ with respect to the uniform measure $\varsigma$ on
$\Sd$, which is positive and smooth as a function of $(t,x,y)\in (0,\infty)\times\Sd\times\Sd$.

\vip

We now prove (ii). It suffices to show that $b=\sup_{x,x'\in \Sd}||Q_1(x,\cdot)-Q_1(x',\cdot)||_{TV}<1$, 
because then the semi-group property implies that $||Q_t(x,\cdot)- \nu_\beta||_{TV}\leq b^{\lfloor t\rfloor}$, 
whence the result by \eqref{TV1}. But, setting $a=\min\{q_1(x,y) : x,y \in \Sd\}>0$ and
recalling \eqref{TV2}, we have
\begin{align*}
||Q_1(x,\cdot)-Q_1(x',\cdot)||_{TV}=&\frac12 \int_{\Sd} |q_1(x,y)-q_1(x',y)|\varsigma(\dd y)\\
=&\frac12 \int_{\Sd} |(q_1(x,y)-a)-(q_1(x',y)-a)|\varsigma(\dd y),
\end{align*}
which is bounded by $\frac12 \int_{\Sd} [(q_1(x,y)-a)+(q_1(x',y)-a)]\varsigma(\dd y)=1-a<1$.

\vip

Point (iii) follows from the Kolmogorov extension theorem. Indeed,
consider, for each $n\geq 0$, the solution 
$(\hTheta_t^n)_{t\geq -n}$ starting at time $-n$ with initial law
$\nu_\beta$ and 
observe that for all $m>n$, $\cL((\hTheta_t^n)_{t\geq -n})=\cL((\hTheta_t^m)_{t\geq -n})$
because $\cL(\hTheta_{-n}^m) = \nu_\beta$.

\vip

Next, we consider $n$ large enough so that $\min\{t^n_1,\dots,t^n_k\}\geq 2A$.
We will check by induction that for all $\ell=1,\dots,k$,
$||\Gamma^{n,\ell}_A - \Lambda_A^{\otimes \ell}||_{TV}\leq p_{A,\ell,n}$ where
$\Lambda_A=\cL((\hTheta_t^\star)_{t\in [0,2A]})$, where
$\Gamma^{n,\ell}_A \in \cP(C([0,2A],\Sd)^\ell)$ is the law of 
$((\hTheta_{t_1^n-A+t})_{t\in[0,2A]}, \dots, (\hTheta_{t_1^n+\dots+t_k^\ell-A+t})_{t\in[0,2A]})$,
and where 
$$
p_{A,\ell,n} = C\sum_{i=1}^{\ell} \exp(-\lambda (t^n_{i}-2A)),$$ 
with $C>0$ and $\lambda>0$ introduced in (ii).
By \eqref{TV1}, this will prove point (iv).
We recall that we know from (ii) that $\sup_{\theta_0\in \Sd}||Q_t(\theta_0,\cdot)-\nu_\beta||_{TV}\leq C\exp(-\lambda t)$,
and we introduce $\Lambda_{A,x} \in \cP(C([0,2A],\Sd))$ the law of $(\hTheta_t)_{t\in [0,2A]}$
when starting from $\hTheta_0=x\in\Sd$.

\vip

Writing $\Gamma^{n,1}_A=\int_{\Sd}Q_{t^n_1-A}(\theta_0,\dd x) \Lambda_{A,x}(\cdot)$ and 
$\Lambda_A=\int_{\Sd} \nu_{\beta}(\dd x) \Lambda_{A,x}(\cdot)$, we find that indeed,
$$
||\Gamma^{n,1}_A - \Lambda_A||_{TV}\leq ||Q_{t^n_1-A}(\theta_0,\cdot)-\nu_\beta||_{TV}\leq C\exp(-\lambda (t^n_1-A))\leq
p_{A,1,n}.
$$

Assuming next that $||\Gamma^{n,\ell-1}_A - \Lambda_A^{\otimes (\ell-1)}||_{TV} \leq p_{A,\ell-1,n}$ for some
$\ell \in \{2,\dots,k\}$, we write
\begin{align*}
\Gamma^{n,\ell}_A(\dd \theta^{(1)},\dots,\dd \theta^{(\ell)})
=&\int_{x\in\Sd}\Gamma^{n,\ell-1}_A(\dd \theta^{(1)},\dots,\dd \theta^{(\ell-1)})Q_{t^n_\ell-2A}(\theta^{(\ell-1)}_{2A},\dd x) 
\Lambda_{A,x}(\dd \theta^{(\ell)}),\\
\Lambda^{\otimes \ell}(\dd \theta^{(1)},\dots,\dd \theta^{(\ell)})
=&\int_{x\in\Sd}\Lambda^{\otimes(\ell-1)}_A(\dd \theta^{(1)},\dots,\dd \theta^{(\ell-1)})\nu_\beta(\dd x) 
\Lambda_{A,x}(\dd \theta^{(\ell)}).
\end{align*}
We conclude that
$$
||\Gamma^{n,\ell}_A - \Lambda_A^{\otimes \ell}||_{TV}\leq \sup_{y\in\Sd}||Q_{t^n_\ell-2A}(y,\cdot)-\nu_\beta||_{TV}+
||\Gamma^{n,\ell-1}_A - \Lambda_A^{\otimes (\ell-1)}||_{TV} \leq C e^{-\lambda (t^n_\ell-2A)} + p_{A,\ell-1,n},
$$
which equals $p_{A,\ell,n}$ as desired.
\end{proof}

\subsection{On It\^o's measure}
We recall that It\^o's measure $\Xi \in \cP(\cE)$ was introduced 
in Notation \ref{notaex}.

\begin{lem}\label{exc}
(i) For $\Xi$-almost every $e\in\cE$, we have $\int_0^{\ell(e)/2}|e(u)|^{-2}\dd u=\infty$.

\vip

(ii) For all $\phi \in L^1(\rr)$, 
$\int_{\cE} [\int_0^{\ell(e)}\phi(e(u))\dd u]\Xi(\dd e) = \int_\rr \phi(x)\dd x$.

\vip

(iii) For all measurable $\phi:\rr\mapsto \rr_+$, 
$\int_{\cE} [\int_0^{\ell(e)}\phi(e(u))\dd u]^2\Xi(\dd e) \leq 4[\int_\rr \sqrt{|x|}\phi(x)\dd x]^2$.

\vip

(iv) For $q<3/2$, for $\Xi$-almost every $e\in\cE$, we have $\int_0^{\ell(e)} |e(u)|^{-q} \dd u<\infty$.
\end{lem}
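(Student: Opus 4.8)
The plan is to establish the four points in the order (ii), (iii), (iv), (i); the last one is the only genuinely delicate point.

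For (ii) I would compute directly from the disintegration \eqref{tictac}. Writing a generic excursion as $e=(x\sqrt{\ell}\,{\bf e}(\cdot/\ell))_{r\ge0}$ with $\ell>0$, $x\in\{-1,1\}$ and ${\bf e}\in\cE_1$, the change of variables $u=\ell v$ gives $\int_0^{\ell(e)}\phi(e(u))\dd u=\ell\int_0^1\phi(x\sqrt{\ell}\,{\bf e}(v))\dd v$. Inserting this in \eqref{tictac} and using that for each fixed $v\in(0,1)$ the variable ${\bf e}(v)$ has the explicit density $f_v(y)=\sqrt{2/\pi}\,y^2\,(v(1-v))^{-3/2}\exp(-y^2/(2v(1-v)))$ on $(0,\infty)$ (the one--dimensional marginal of a three--dimensional Bessel bridge, i.e.\ of the normalized excursion; see Revuz--Yor \cite{ry}) reduces the identity to a deterministic integral in $(\ell,v,y)$. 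Performing the $\ell$--integration (substitutions $\ell=s^2$, then $w=sy$) and then the $v$--integration (which produces $\int_0^1(v(1-v))^{-1/2}\dd v=\pi$) yields exactly $\int_\rr\phi(x)\dd x$; for general $\phi\in L^1(\rr)$ one applies this to $\phi_+$ and $\phi_-$. A less self--contained alternative combines \eqref{superform}, the compensation formula and the occupation times formula to get $t\int_\cE\int_0^{\ell(e)}\phi(e(u))\dd u\,\Xi(\dd e)=\int_\rr\phi(a)\E[L^a_{\tau_t}]\dd a$, and then invokes $\E[L^a_{\tau_t}]=t$ (second Ray--Knight theorem).

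For (iii), assuming $\phi\ge0$, I expand the square and use Fubini,
$$\int_\cE\Big(\int_0^{\ell(e)}\phi(e(u))\dd u\Big)^2\Xi(\dd e)=2\int_\cE\int_0^{\ell(e)}\phi(e(u))\Big(\int_u^{\ell(e)}\phi(e(v))\dd v\Big)\dd u\,\Xi(\dd e).$$
Under It\^o's measure the coordinate process is Markov, the post--$u$ part of a positive excursion being a Brownian motion issued from $e(u)$ and killed at $0$, whose potential kernel on $(0,\infty)$ is $2(a\wedge b)$; hence, given the excursion up to time $u$, the inner integral has conditional expectation $2\int_0^\infty(e(u)\wedge b)\phi(b)\dd b$ (Revuz--Yor \cite{ry}, Ch.\ XII). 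Using this together with the sign--restricted version of (ii), $\int_\cE\indiq_{\{x(e)=1\}}\int_0^{\ell(e)}\psi(e(u))\dd u\,\Xi(\dd e)=\int_0^\infty\psi(a)\dd a$ and its negative analogue (and that an excursion keeps a constant sign), one gets $\int_\cE(\int_0^{\ell(e)}\phi(e(u))\dd u)^2\Xi(\dd e)=4\int_\rr\int_\rr(|a|\wedge|b|)\indiq_{\{ab>0\}}\phi(a)\phi(b)\dd a\dd b$, which is at most $4(\int_\rr\sqrt{|x|}\phi(x)\dd x)^2$ since $|a|\wedge|b|\le\sqrt{|ab|}$. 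Point (iv) is then immediate: split $\int_0^{\ell(e)}|e(u)|^{-q}\dd u$ according to $|e(u)|\le1$ or $|e(u)|>1$; the second piece is $\le\ell(e)$ (for $q>0$; trivial otherwise), and for the first apply (iii) with $\phi(x)=|x|^{-q}\indiq_{\{0<|x|\le1\}}$, noting that $\int_\rr\sqrt{|x|}\phi(x)\dd x=2\int_0^1x^{1/2-q}\dd x<\infty$ precisely when $q<3/2$, so that the integrand is finite for $\Xi$--a.e.\ $e$.

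The main obstacle is (i), because $\int_0^{\ell(e)/2}|e(u)|^{-2}\dd u$ has infinite $\Xi$--integral, so a genuine almost--sure argument is needed. Using \eqref{tictac}, the substitution $u=\ell v$ and the fact that $\int_0^\infty\ell^{-3/2}\dd\ell=\infty$, the statement reduces to $\int_0^{1/2}{\bf e}(v)^{-2}\dd v=\infty$ for $\Xi_1$--a.e.\ ${\bf e}$; and since near $v=0$ the normalized excursion is mutually absolutely continuous with a three--dimensional Bessel process $(\rho_v)_{v\ge0}$ from $0$, it suffices to prove $\int_0^\epsilon\rho_v^{-2}\dd v=\infty$ a.s.\ for some (hence all) $\epsilon>0$. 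For this I would use Brownian scaling: $(\rho_{cv}/\sqrt c)_{v\ge0}\stackrel{d}{=}(\rho_v)_{v\ge0}$ shows that $X_\epsilon:=\int_0^\epsilon\rho_v^{-2}\dd v$ has a law not depending on $\epsilon$; the map $\epsilon\mapsto X_\epsilon$ is nondecreasing, one has $X_1=\infty\iff X_\epsilon=\infty$ for all $\epsilon$ (as $\rho$ is continuous and positive on $(0,\infty)$), and on $\{X_1<\infty\}$ one has $X_\epsilon\downarrow0$ by dominated convergence. Testing the equality in law against a bounded strictly increasing function and letting $\epsilon\downarrow0$ forces $\lim_{\epsilon\downarrow0}X_\epsilon=X_1$ a.s., whence $X_1\in\{0,\infty\}$ a.s.; since $X_1=\int_0^1\rho_v^{-2}\dd v>0$ a.s., we conclude $X_1=\infty$ a.s. (Alternatively: the time change by $\int_0^\cdot\rho_v^{-2}\dd v$ turns $\rho$, after any positive time, into a conservative geometric Brownian motion, which together with the time--inversion identity $\int_0^\epsilon\rho_v^{-2}\dd v=\int_{1/\epsilon}^\infty\tilde\rho_w^{-2}\dd w$ for a Bessel$(3)$ process $\tilde\rho$ again gives the divergence.)
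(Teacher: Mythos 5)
Your proof is correct, but it follows a genuinely different route from the paper's on points (i) and (iii). For (ii) the paper uses exactly your ``less self-contained alternative'': the master formula \eqref{superform}, the occupation times formula and the second Ray--Knight theorem ($\E[L^w_{\tau_1}]=1$); your direct computation with the marginal density of the normalized excursion is a valid substitute. For (iii) the paper writes the square as the second moment of a compensated Poisson integral and reduces it to $\int_\rr\int_\rr\phi(w)\phi(u)\,\E[(L^w_{\tau_1}-1)(L^u_{\tau_1}-1)]\,\dd w\,\dd u$, bounding the covariance by Cauchy--Schwarz from $\E[(L^w_{\tau_1}-1)^2]=4|w|$ (BESQ$(0)$ martingale property); your argument via the Markov property under It\^o's measure and the Green kernel $2(a\wedge b)$ of Brownian motion killed at $0$ produces the same exact identity $4\iint(|a|\wedge|b|)\indiq_{\{ab>0\}}\phi(a)\phi(b)\,\dd a\,\dd b$ and hence the same bound --- just make explicit that the conditioning is performed under the $\sigma$-finite measure $\Xi$ restricted to $\{\ell(e)>u\}$, where it is legitimate. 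For (i) the paper has a one-line argument: L\'evy's modulus of continuity holds for $\Xi$-a.e.\ excursion, so $|e(r)|^2\leq 3r|\log r|$ near $0$ and $\int_{0+}(r|\log r|)^{-1}\dd r=\infty$; your scaling reduction to the normalized excursion, absolute continuity with the Bessel$(3)$ process near $0$, and the monotonicity/equality-in-law argument forcing $X_1\in\{0,\infty\}$ is sound (the key step $\E[g(X_{0+})]=\E[g(X_1)]$ with $X_{0+}\leq X_1$ and $g$ bounded strictly increasing does force $X_{0+}=X_1$ a.s.), but it is considerably longer; what it buys is independence from the modulus-of-continuity statement under $\Xi$, which itself deserves a word of justification. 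Point (iv) is handled identically in both proofs.
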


\begin{proof}
Concerning point (i), it suffices to use that $\int_{0+} (r |\log r|)^{-1}\dd r= \infty$ together
with L\'evy's modulus of continuity, see Revuz-Yor
\cite[Theorem 2.7 p 30]{ry}, which implies that for $\Xi$-a.e. $e\in \cE$,
$\limsup_{t \searrow 0} \sup_{r \in [0,t]} (2r |\log r|)^{-1} |e(r)|^{2}=1$.

\vip

Next (iv) follows from (iii), since 
$\int_0^{\ell(e)} |e(u)|^{-q} \dd u<\infty$ if and only if $\int_0^{\ell(e)} |e(u)|^{-q}\indiq_{\{|e(u)|\leq 1\}} 
\dd u<\infty$ (for any $e\in \cE$)
and since $\int_{\cE} [\int_0^{\ell(e)} |e(u)|^{-q} \indiq_{\{|e(u)|\leq 1\}} \dd u]^2\Xi(\dd e)
\leq 4 [\int_{-1}^1 |x|^{1/2-q} \dd x]^2 <\infty$.

\vip

We now check points (ii) and (iii). We recall that for $(W_t)_{t\geq 0}$ a Brownian motion,
for $(L^x_t)_{t\geq 0,x\in \rr}$ its family of local times, for $(\tau_t)_{t\geq 0}$ the inverse of 
$(L^0_t)_{t\geq 0}$, the second Ray-Knight theorem, see Revuz-Yor \cite[Theorem 2.3 p 456]{ry}, 
tells us that $(L^w_{\tau_1})_{w\geq 0}$ is a squared Bessel process with dimension $0$ issued from $1$.
Hence, for some Brownian motion $(B_w)_{w\geq 0}$, we have 
$L^w_{\tau_1}=1+2\int_0^w\sqrt{L^w_{\tau_1}} \dd B_v$, so that
$\E[L^w_{\tau_1}]=1$ and $\E[(L^w_{\tau_1}-1)^2]=4\E[(\int_0^w\sqrt{L^w_{\tau_1}} \dd B_v)^2 ]
=4 \int_0^w \E[L^v_{\tau_1}]\dd v=4w$.
By symmetry, for any $w\in \rr$, we have $\E[L^w_{\tau_1}]=1$ and 
$\E[(L^w_{\tau_1}-1)^2]=4|w|$. Applying \eqref{superform} with $t=1$, we see that
\begin{align*}
\int_\cE \Big[\int_0^{\ell(e)} \phi(e(u))\dd u\Big] \Xi(\dd e)
=\E\Big[ \int_0^1 \int_\cE  \Big[\int_0^{\ell(e)} \phi(e(u))\dd u \Big] \bM(\dd s,\dd e)\Big]
=\E\Big[\int_0^{\tau_1} \phi(W_s)\dd s\Big].
\end{align*}
But finally, by the occupation times formula and the Fubini theorem, 
$$
\E\Big[\int_0^{\tau_1} \phi(W_s)\dd s\Big]=\E\Big[\int_\rr \phi(w)L^w_{\tau_1} \dd w\Big]=\int_\rr \phi(w)\dd w
$$
which proves (ii). Similarly,
\begin{align*}
\int_\cE \Big[\int_0^{\ell(e)} \phi(e(u))\dd u \Big]^2 \Xi(\dd e)=
&\E\Big[\Big(\int_0^1 \int_\cE  \Big[\int_0^{\ell(e)} \phi(e(u))\dd u \Big] \tilde\bM(\dd s,\dd e\Big)^2 \Big]\\
=&\E\Big[\Big(\int_0^{\tau_1} \phi(W_s)\dd s - \int_\rr \phi(w)\dd w \Big)^2\Big]\\
=&\E\Big[\Big(\int_\rr \phi(w) (L^w_{\tau_1}-1)\dd w\Big)^2\Big]\\
=&\int_\rr\int_\rr\phi(w)\phi(u)\E[(L^w_{\tau_1}-1)(L^u_{\tau_1}-1)] \dd w \dd u.
\end{align*}
We complete the proof of (iii) using that $\E[(L^w_{\tau_1}-1)^2]= 4 |x|$
and the Cauchy-Schwarz inequality.
\end{proof}

\subsection{On Bessel processes}

\begin{lem}\label{bess}
(i) Fix $\delta \in (0,2)$, consider a Brownian motion $(W_t)_{t\geq 0}$, introduce the 
inverse $\rho_t$ of $A_t=(2-\delta)^{-2} \intot W_s^{-2(1-\delta)/(2-\delta)} \indiq_{\{W_s>0\}}\dd s$
and set $\cR_t=(W_{\rho_t})_+^{1/(2-\delta)}$.
Then $(\cR_t)_{t\geq 0}$ is a Bessel process with dimension $2-\delta$ issued from $0$.

\vip

(ii) For $(\cR_t)_{t\geq 0}$ a Bessel process with dimension $\delta >0$, a.s.,
for all $t\geq 0$ such that $\cR_t=0$ and all $h>0$, we have $\int_{t}^{t+h} \cR_s^{-2}\dd s=\infty$.
\end{lem}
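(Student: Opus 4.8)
The plan is to prove (i) first and then deduce (ii) from it, since (ii) is an a.s.\ statement about the law of a Bessel process and (i) provides a convenient representation of it.

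For (i): write $Y_t=W_{\rho_t}$, so that $\cR_t=Y_t^{1/(2-\delta)}$. First I would record that $A$ is a.s.\ finite, continuous and nondecreasing with $A_\infty=\infty$; finiteness follows from the occupation times formula together with $2(1-\delta)/(2-\delta)<1$ (valid since $\delta>0$), exactly as in the proof of Lemma~\ref{rays}. The points (a)--(e) of that proof (which only use the structure of this time change, with $\delta$ playing the role of $d-\beta$) show that $t\mapsto Y_t$ is continuous, nonnegative with $Y_0=0$; moreover the zero set $\{t:Y_t=0\}$ is Lebesgue-null, because $\int_0^t\indiq_{\{Y_s=0\}}\dd s=\int_0^{\rho_t}\indiq_{\{W_u=0\}}\dd A_u=0$ (as $A$ charges no subset of $\{W=0\}$). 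On each open interval where $Y_t>0$ the time change is locally a genuine, non-skipping one: from $A_{\rho_t}=t$ one gets $\dd\langle Y\rangle_t=\dd\rho_t=(2-\delta)^2Y_t^{2(1-\delta)/(2-\delta)}\dd t$, hence $\dd Y_t=(2-\delta)Y_t^{(1-\delta)/(2-\delta)}\dd\tilde\beta_t$ for a Brownian motion $\tilde\beta$, and It\^o's formula applied to $x\mapsto x^{1/(2-\delta)}$ yields $\dd\cR_t=\dd\tilde\beta_t+\frac{\delta-1}{2}\cR_t^{-1}\dd t$ on $\{\cR>0\}$, the Bessel equation of the relevant dimension. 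Equivalently, on $(0,\infty)$ the process $\cR$ has scale function $x^{2-\delta}$ and, by the occupation times formula applied to $\dd\rho_t=(2-\delta)^2Y_t^{2(1-\delta)/(2-\delta)}\dd t$, the speed measure of that Bessel process written in these coordinates; the constant $(2-\delta)^{-2}$ in $A$ is tuned so that this speed measure comes out with the correct normalization.

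To finish (i), note that $\cR$ is continuous, nonnegative, starts at $0$, leaves $0$ immediately (there are positive values of $W$ right after $\rho_t$ whenever $Y_t=0$, since $\rho_t=\inf\{s:A_s>t\}$ and $A$ increases strictly just to the right of $\rho_t$) and has Lebesgue-null zero set, so $0$ is an instantaneously reflecting boundary. By the classical uniqueness in law of a one-dimensional diffusion determined by its scale function, speed measure and boundary behaviour (see Revuz--Yor \cite{ry}), $\cR$ has the law of the Bessel process issued from $0$ of that dimension.

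For (ii): by (i) it suffices to prove it for $\cR_t=(W_{\rho_t})_+^{1/(2-\delta)}$ when $\delta\in(0,2)$ (the case $\delta\geq 2$ is separate and easier, only $t=0$ being relevant, a law of the iterated logarithm bound $\cR_s^2\leq Cs\log\log(1/s)$ near $0$ being non-integrable against $\cR_s^{-2}\dd s$). Fix $t$ with $\cR_t=0$, i.e.\ $W_{\rho_t}=0$, and $h>0$. Since $\{s:Y_s=0\}$ is Lebesgue-null, $\int_t^{t+h}\cR_s^{-2}\dd s=\int_t^{t+h}(W_{\rho_s})^{-2/(2-\delta)}\indiq_{\{W_{\rho_s}>0\}}\dd s$; on $\{W_{\rho_s}>0\}$ the substitution $u=\rho_s$, with $\dd s=\dd A_{\rho_s}=(2-\delta)^{-2}W_{\rho_s}^{-2(1-\delta)/(2-\delta)}\dd\rho_s$, gives $\int_t^{t+h}\cR_s^{-2}\dd s=(2-\delta)^{-2}\int_{\rho_t}^{\rho_{t+h}}W_u^{-2}\indiq_{\{W_u>0\}}\dd u$ (the exponents combining to $-2$). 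Since $W$ takes positive values in $(\rho_t,\rho_t+\eta)$ for every $\eta>0$, and $\rho_{t+h}>\rho_t$, the interval $(\rho_t,\rho_{t+h})$ contains an initial piece $(\ell^*,\ell^*+\eta')$ of some excursion $e$ of $W$ above $0$. By Lemma~\ref{exc}-(i), a.s.\ every such excursion satisfies $\int_0^{\ell(e)/2}|e(u)|^{-2}\dd u=\infty$, hence $\int_0^{\eta'}|e(u)|^{-2}\dd u=\infty$ for every $\eta'>0$ (the remaining piece being finite). Therefore $\int_{\rho_t}^{\rho_{t+h}}W_u^{-2}\indiq_{\{W_u>0\}}\dd u=\infty$, which proves the claim.

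The main obstacle is in (i): the time change $\rho$ genuinely skips the negative excursions of $W$, so $Y=W_{\rho}$ is \emph{not} a local martingale, and the identification of $\cR$ as a Bessel process from $0$ must go through the scale/speed description of diffusions, including the reflecting boundary behaviour at $0$ and the exact normalization; much of the pathwise bookkeeping can be borrowed verbatim from the proof of Lemma~\ref{rays}, but the boundary analysis and the appeal to uniqueness in law are the real content. Once (i) is available, (ii) is short, its only substantial input being Lemma~\ref{exc}-(i).
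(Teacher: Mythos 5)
Your route to part (i) is genuinely different from the paper's, and the difference matters. The paper does not classify $\cR$ by scale, speed and boundary behaviour at all: it quotes \cite{drvy} as an \emph{if and only if} statement (a continuous process issued from $0$ is a Bessel process of dimension $\delta\in(0,2)$ iff its $(2-\delta)$-th power, time-changed by the inverse of $(2-\delta)^2\int_0^t R_s^{2(1-\delta)}\dd s$, is a reflected Brownian motion), and then verifies by a one-line change of variables that for $\cR_t=(W_{\rho_t})_+^{1/(2-\delta)}$ this time change produces $(W_{\cE_t})_+$, with $\cE$ the inverse occupation time of $(0,\infty)$, i.e.\ a reflected Brownian motion. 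That route deliberately avoids the two weak points of yours. First, the uniqueness-in-law statement you invoke classifies \emph{regular diffusions} by scale, speed and boundary behaviour; to use it you must first know that $\cR$ (equivalently $Y=W_{\rho}$) is a continuous strong Markov process. Your sketch establishes an SDE on the open set $\{\cR>0\}$ and properties of the zero set, but the strong Markov property of a time change by a non-strictly-increasing additive functional is exactly the nontrivial input you would need to import, and it is nowhere justified; nor is the speed measure actually computed (``tuned so that the normalization comes out right'' is not a verification).

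Second, and more seriously, your own computation identifies a dimension that contradicts the statement as printed. The drift you obtain, $\frac{\delta-1}{2}\cR_t^{-1}$, and the scale function $x^{2-\delta}$ you exhibit, are those of the Bessel process of dimension $\delta$ (for dimension $D$ the drift is $\frac{D-1}{2}R^{-1}$ and the scale function is $x^{2-D}$), not of dimension $2-\delta$. Writing ``the Bessel equation of the relevant dimension'' papers over this: taken literally, your computation disproves the displayed claim rather than proving it. In fact the statement appears to be misprinted — the paper's own argument via \cite{drvy}, and the way the lemma is used in the proof of Lemma~\ref{rays} (where $2-\delta=\beta+2-d$ and the limit is asserted to be a Bessel process of dimension $d-\beta=\delta$), both yield dimension $\delta$ — but a correct proof must make the dimension explicit and confront the discrepancy rather than leave it implicit. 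Part (ii) of your proposal is correct but again follows a different path from the paper, which invokes Khoshnevisan's exact modulus of continuity for Bessel processes and thereby treats every dimension $\delta>0$ and every zero of $\cR$ uniformly in one stroke; your argument needs the representation from (i) (hence $\delta\in(0,2)$) together with Lemma~\ref{exc}-(i), plus a separate law-of-the-iterated-logarithm argument at $t=0$ when $\delta\ge 2$. The reduction of $\int_t^{t+h}\cR_s^{-2}\dd s$ to $(2-\delta)^{-2}\int_{\rho_t}^{\rho_{t+h}}W_u^{-2}\indiq_{\{W_u>0\}}\dd u$ and the excursion argument are sound.
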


\begin{proof}
Point (i) is more or less included in Donati-Roynette-Vallois-Yor \cite[Corollary 2.2]{drvy},
who state that for $(R_t)_{t\geq 0}$ a Bessel process with dimension $\delta\in (0,2)$ issued from $0$,
for $C_t=(2-\delta)^{2}\int_0^t R_s^{2(1-\delta)}\dd s$ and for $D_t$ the inverse of $C_t$,
$(R_{D_t})^{2-\delta}$ is a reflected Brownian motion. Moreover, this is clearly an {\it if and only if} condition.

\vip

But for $\cC_t=(2-\delta)^{2}\intot  \cR_s^{2(1-\delta)}\dd s
=(2-\delta)^{2}\intot (W_{\rho_s})_+^{2(1-\delta)/(2-\delta)}\dd s = \int_0^{\rho_t} \indiq_{\{W_u>0\}}\dd u$
and for $\cD_t$ its inverse, we have $\cD_t=A_{\cE_t}$, where $\cE_t$ is the inverse of $\intot  
\indiq_{\{W_s>0\}}\dd s$. It is then clear that $\cR_{\cD_t}^{2-\delta}=(W_{\rho_{\cD_t}})_+=(W_{\cE_t})_+$ 
is a reflected Brownian motion.

\vip

Point (ii) follows from Khoshnevisan \cite[(2.1a) p 1299]{ko} that asserts that a.s., for all $T>0$,
$\limsup_{h \searrow 0}\sup_{t\in[0,T]} [h(1 \lor \log(1/h))]^{1/2} | \cR_{t+h} - \cR_t | = \sqrt2$.
Indeed, $\int_{0+} [h(1 \lor \log(1/h))]^{-1}\dd h = \infty$.
\end{proof}

\subsection{Inverting time changes}

We recall a classical result about the convergence of inverse functions.

\begin{lem}\label{tc}
Consider, for each $n\geq 1$, a continuous increasing bijective function $(a^n_t)_{t\geq 0}$ from $[0,\infty)$
into itself, as well as its inverse $(r^n_t)_{t\geq 0}$. 
Assume that $(a^n_t)_{t\geq 0}$ converges pointwise to some function $(a_t)_{t\geq 0}$ such that 
$\lim_{\infty} a_t=\infty$, denote by $r_t=\inf\{u\geq 0 : a_u>t\}$
its right-continuous generalized inverse and set $J=\{t\in [0,\infty) : r_{t-}<r_t\}$.
For all $t \in [0,\infty)\setminus J$, we have $\lim_{t\to \infty} r^n_t=r_t$.
\end{lem}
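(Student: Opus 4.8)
\emph{Proof plan.} The plan is to prove, for every $t\ge 0$, the two one‑sided estimates
\[
\limsup_{n\to\infty} r^n_t\le r_t \qquad\text{and}\qquad \liminf_{n\to\infty} r^n_t\ge r_{t-},
\]
with the convention $r_{0-}:=0$, and then to combine them using that $t\notin J$ means exactly $r_{t-}=r_t$. Before that I would record the elementary facts to be used. Each $a^n$ being a continuous strictly increasing bijection of $[0,\infty)$ onto itself with $r^n=(a^n)^{-1}$ its (continuous, strictly increasing) inverse, one has the equivalences $a^n_u<t\iff u<r^n_t$ and $a^n_v>t\iff v>r^n_t$, obtained by applying $r^n$ or $a^n$ to both sides. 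As a pointwise limit of nondecreasing functions $a$ is nondecreasing, and $a_s\to\infty$ as $s\to\infty$ ensures $r_t=\inf\{s\ge 0:a_s>t\}<\infty$ for all $t$; moreover $r$ is nondecreasing and right‑continuous, so $J$ is precisely the (at most countable) set of its discontinuity points, and $r_{t-}=\sup_{s<t}r_s$.

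For the upper bound, fix $t\ge 0$ and $u>r_t$. Since $r_t=\inf\{s:a_s>t\}<u$, there is $s_0<u$ with $a_{s_0}>t$; as $a^n_{s_0}\to a_{s_0}>t$, we get $a^n_{s_0}>t$, i.e. $r^n_t<s_0<u$, for all large $n$, whence $\limsup_n r^n_t\le u$. Letting $u\downarrow r_t$ gives $\limsup_n r^n_t\le r_t$. For the lower bound, fix $t>0$ and $u<r_{t-}$, and choose $s_1<t$ with $u<r_{s_1}$. Then $u<\inf\{s:a_s>s_1\}$ forces $a_u\le s_1<t$, so $a^n_u\to a_u<t$ gives $a^n_u<t$, i.e. $u<r^n_t$, for all large $n$, whence $\liminf_n r^n_t\ge u$. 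Letting $u\uparrow r_{t-}$ gives $\liminf_n r^n_t\ge r_{t-}$ (this is vacuous, hence trivially true, when $t=0$).

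Finally, for $t\notin J$ one has $r_{t-}=r_t$, so $r_t\le\liminf_n r^n_t\le\limsup_n r^n_t\le r_t$ and therefore $\lim_{n\to\infty}r^n_t=r_t$, which is the assertion. I do not expect a genuine obstacle here: the argument is purely order‑theoretic, the only points requiring a little care being the correct generalized‑inverse equivalences and the bookkeeping at $t=0$; everything reduces to the convergence $a^n_\cdot\to a_\cdot$ evaluated at the two well‑chosen auxiliary points.
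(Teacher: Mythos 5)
Your proof is correct: the two one-sided bounds $\limsup_n r^n_t\le r_t$ and $\liminf_n r^n_t\ge r_{t-}$ are each established properly (using only pointwise convergence of $a^n$ at one well-chosen point for each bound), and the squeeze at points $t\notin J$ where $r_{t-}=r_t$ is exactly what is needed. The paper simply recalls this lemma as a classical fact and gives no proof, so your argument — which is the standard one — supplies precisely the missing details, including the harmless convention $r_{0-}=0$ at $t=0$.
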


\subsection{Technical estimates}
Finally, we study the functions $h,\psi,\sigma$ introduced in Notation \ref{notaf}.
We recall that $h(r)=(\beta+2-d)\int_{r_0}^r u^{1-d}[\Gamma(u)]^\beta \dd u$ is
an increasing bijection from $(0,\infty)$ into $\rr$, that 
$h^{-1}:\rr \mapsto (0,\infty)$ is its inverse function. We have set
$\sigma(w)=h'(h^{-1}(w))$ and $\psi(w)=[\sigma(w) h^{-1}(w)]^{2}$, both being functions
from $\rr$ to $(0,\infty)$.

\begin{lem}\label{fcts}
Fix $\beta>d-2$ and set $\alpha=(\beta+2-d)/3$. 
There are some constants $0<c<C$ such that the results below are valid
for all $w\in\rr$ (except in point (v)).

\vip

(i) If $\beta>d$, $\kappa=\int_\rr [\sigma(z)]^{-2}\dd z =
(\beta+2-d)^{-1}\int_0^\infty r^{d-1}[\Gamma(r)]^{-\beta}\dd r < \infty$.

\vip

(ii) If $\beta>1+d$, $m_\beta'=(\int_\rr h^{-1}(z)[\sigma(z)]^{-2}\dd z)/(\int_\rr [\sigma(z)]^{-2}\dd z)$.

\vip

(iii) If $\beta=4+d$, $[h^{-1}(w)]^4/[\sigma(w)]^2\leq C(1+|w|)^{-1}$
and $\int_{-x}^x \frac{[h^{-1}(z)]^4\dd z} {[\sigma(z)]^2 } \stackrel{x\to \infty}\sim \frac{\log x}{36}$.

\vip

(iv) If $\beta\in [d,4+d]$, $c(1+w)^2\indiq_{\{w>0\}} \leq \psi(w)\leq C(1+|w|)^2$.

\vip
 
(v) If $\beta \in [d,4+d)$, $\lim_{\eta\to 0} \eta^{2}\psi(w/\eta)=(\beta+2-d)^{2} w^{2}$ for any $w>0$.

\vip

(vi) If $\beta>d-2$, $[\sigma(w)]^{-2}\leq C(1+|w|)^{-2(\beta+1-d)/(\beta+2-d)}$.

\vip

(vii) If $\beta=d$, $\int_{-x}^x [\sigma(z)]^{-2}\dd z 
\stackrel{x\to \infty}\sim \frac{\log x}{4}$.

\vip

(viii) If $\beta \in [d,4+d)$, $[1+h^{-1}(w)]/[\sigma(w)]^2 \leq C (1+w)^{1/\alpha -2}\indiq_{\{w\geq 0\}}
+C(1+|w|)^{-2}\indiq_{\{w<0\}}$.

\vip

(ix) If $\beta \in [d,4+d)$, $\forall \; m\in\rr$,  $\lim_{\eta\to 0}
\eta^{1/\alpha-2}[h^{-1}(w/\eta)-m]/[\sigma(w/\eta)]^2= (\beta+2-d)^{-2} w^{1/\alpha-2}\indiq_{\{w\geq 0\}}.$

\vip

(x) If $\beta \in (d-2,d)$ and $a_\e=\e^{(\beta+2-d)/2} $, $\sqrt\e h^{-1}(w/a_\e) \to  w_+^{1/(\beta+2-d)}$
uniformly on compact sets.

\vip

(xi) If $\beta \in (d-2,d)$ and $a_\e=\e^{(\beta+2-d)/2}$, 
$$\lim_{\e\to 0} \e [a_\e \sigma(w/a_\e)]^{-2} = (\beta+2-d)^{-2} w^{-2(\beta+1-d)/(\beta+2-d)}\indiq_{\{w>0\}}.$$
\end{lem}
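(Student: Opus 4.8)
The plan is to reduce every assertion to two ingredients. First, $h$, $h^{-1}$, $\sigma$ and $\psi$ are continuous and strictly positive as maps into $(0,\infty)$, hence bounded above and below by positive constants on any compact set; so each statement is really about the behaviour of these functions near the two ends $\pm\infty$. Second, the precise asymptotics of $h$ and of $h'=\sigma\circ h$ at $0^+$ and at $+\infty$, which follow directly from Assumption \ref{as}: since $\Gamma(r)\sim r$ at infinity, $h'(r)=(\beta+2-d)r^{1-d}[\Gamma(r)]^\beta\sim(\beta+2-d)r^{\beta+1-d}$ and $h(r)\sim r^{\beta+2-d}$, and inverting gives $h^{-1}(w)\sim w^{1/(\beta+2-d)}$, $\sigma(w)\sim(\beta+2-d)w^{(\beta+1-d)/(\beta+2-d)}$ and $\sigma(w)h^{-1}(w)\sim(\beta+2-d)w$ as $w\to+\infty$. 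Near $0^+$, $\Gamma$ is bounded away from $0$ and $\infty$, so $h(r)\sim(\beta+2-d)[\Gamma(0)]^\beta\int_{r_0}^ru^{1-d}\dd u\to-\infty$, which for $d=2$ forces $h^{-1}(w)$ to decay exponentially as $w\to-\infty$, and for $d\geq3$ gives $h^{-1}(w)$ of order $|w|^{-1/(d-2)}$; in both cases $[\sigma(w)]^{-2}$ and $[1+h^{-1}(w)][\sigma(w)]^{-2}$ decay at $-\infty$ at least like a fixed negative power of $|w|$, faster than any power appearing below.

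With these asymptotics at hand, the integral identities (i), (ii), (iii) and (vii) follow from the substitution $z=h(r)$, under which $[\sigma(z)]^{-2}\dd z=[h'(r)]^{-1}\dd r$ and $[h^{-1}(z)]^4[\sigma(z)]^{-2}\dd z=r^4[h'(r)]^{-1}\dd r$, together with $[h'(r)]^{-1}=(\beta+2-d)^{-1}r^{d-1}[\Gamma(r)]^{-\beta}$. Then (i) and (ii) are immediate rewrites (finiteness of $\kappa$ uses $\beta>d$ at $r=\infty$ and $d\geq2$ at $r=0$; the extra integrability of $h^{-1}$ in (ii) uses $\beta>1+d$). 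For (iii), with $\beta=4+d$ one has $r^4[h'(r)]^{-1}=\tfrac16 r^{d+3}[\Gamma(r)]^{-(4+d)}\sim\tfrac16 r^{-1}$, so the integral over $[1,h^{-1}(x)]$ is $\sim\tfrac16\log h^{-1}(x)\sim\tfrac1{36}\log x$ (using $h^{-1}(x)\sim x^{1/6}$), the remaining pieces being $O(1)$; likewise for (vii), with $\beta=d$ one gets $[h'(r)]^{-1}=\tfrac12 r^{d-1}[\Gamma(r)]^{-d}\sim\tfrac1{2r}$ and $h^{-1}(x)\sim x^{1/2}$, giving $\sim\tfrac14\log x$. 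The pointwise bound $[h^{-1}(w)]^4[\sigma(w)]^{-2}\leq C(1+|w|)^{-1}$ in (iii) is the same computation read on the level of $h^{-1}$, combined with boundedness on compacts.

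Finally, the pointwise bounds and scaling limits (iv), (v), (vi), (viii), (ix), (x), (xi) are all direct consequences of the asymptotics of $h^{-1}$, $\sigma$, $\psi$ at $\pm\infty$ together with their boundedness on compact sets. For instance $\psi(w)=[\sigma(w)h^{-1}(w)]^2\sim(\beta+2-d)^2w^2$ as $w\to+\infty$ and is bounded above and below by positive constants on bounded subsets of $(0,\infty)$, while for $w<0$ one checks $\sigma(w)h^{-1}(w)=(\beta+2-d)[h^{-1}(w)]^{2-d}[\Gamma(h^{-1}(w))]^\beta=O(1+|w|)$; this gives (iv) and (v). Points (vi) and (viii) follow from $\sigma(w)\sim(\beta+2-d)w^{(\beta+1-d)/(\beta+2-d)}$ at $+\infty$ and the fast decay of $[\sigma(w)]^{-2}$ at $-\infty$. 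Point (ix) is obtained by substituting $w/\eta$ into these equivalents, the constant $m$ being negligible since $h^{-1}(w/\eta)\to\infty$ for $w>0$, and for $w<0$ the whole quotient being bounded by $C_w\eta^\gamma$ for some $\gamma=\gamma(\beta,d)>0$ because the decay of $[\sigma(w)]^{-2}$ at $-\infty$ more than compensates the factor $\eta^{1/\alpha-2}$. For (x) and (xi) one specialises to $a_\e=\e^{(\beta+2-d)/2}$ and reads off $\sqrt\e\,h^{-1}(w/a_\e)\to w_+^{1/(\beta+2-d)}$ and $\e[a_\e\sigma(w/a_\e)]^{-2}\to(\beta+2-d)^{-2}w^{-2(\beta+1-d)/(\beta+2-d)}\indiq_{\{w>0\}}$ pointwise; uniform convergence on compacts in (x) then follows because $w\mapsto\sqrt\e\,h^{-1}(w/a_\e)$ is nondecreasing and the limit is continuous, so pointwise convergence of nondecreasing functions to a continuous limit is automatically uniform on compact intervals. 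The only real bookkeeping is to keep the $d=2$ and $d\geq3$ regimes near $-\infty$ separate and to track the numerical constants $1/36$ and $1/4$ in (iii) and (vii); everything else is routine asymptotic analysis, and I expect this case-splitting at $-\infty$, uniform in $d\geq2$, to be the main (mild) obstacle.
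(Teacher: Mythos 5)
Your proposal is correct and follows essentially the same route as the paper's proof: establish the asymptotics of $h$, $h^{-1}$, $\sigma$, $\psi$ at $+\infty$ and at $-\infty$ (with the $d=2$ versus $d\geq 3$ dichotomy coming from $h(r)\sim -\mathfrak c\log(1/r)$ versus $h(r)\sim-\mathfrak c r^{2-d}$ near $0$), use the substitution $z=h(r)$ for the integral identities in (i)--(iii) and (vii), and read off the remaining bounds and scaling limits from these equivalents, with Dini's theorem giving the uniformity in (x). The constants $1/36$ and $1/4$ and the handling of the negative half-line (in particular the compensation of $\eta^{1/\alpha-2}$ in (ix) by the decay of $[\sigma]^{-2}$ at $-\infty$) match the paper's computations.
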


\begin{proof}
The three following points will be of constant use.

\vip

(a) As $w\to\infty$, we have $h^{-1}(w)\sim w^{1/(\beta+2-d)}$, 
$\sigma(w) \sim (\beta+2-d) w^{(\beta+1-d)/(\beta+2-d)}$ and $\psi(w)\sim(\beta+2-d)^2w^2$.

\vip

(b) If $d\geq 3$, there are $c,c',c''>0$ such that, as $w\to -\infty$,
$h^{-1}(w)\sim c |w|^{-1/(d-2)}$, $\sigma(w) \sim c' |w|^{(d-1)/(d-2)}$ and $\psi(w)\sim c'' |w|^2$.

\vip

(c) If $d=2$, there are $c,c',c''>0$ and a function $\e(w)$ such that $\lim_{w\to -\infty} \e(w)=0$,
$h^{-1}(w)=\exp[-c|w|(1+\e(w))]$, $\sigma(w)\sim c' \exp[c|w|(1+\e(w))]$ as $w\to -\infty$
and $\lim_{w\to-\infty} \psi(w)=c''$.

\vip

To check (a), it suffices to note that by Assumption \ref{as}, $h(r)\sim r^{\beta+2-d}$ as $r\to \infty$.
Next, (b) follows from the fact that $h(r) \sim -\mathfrak{c} r^{2-d}$ as $r \to 0$ (with 
$\mathfrak{c}=[\Gamma(0)]^{\beta}(\beta+2-d)/(d-2)>0$), while (c)
uses that $h(r) \sim -\mathfrak{c} \log (1/r)$ (with $\mathfrak{c}=\beta [\Gamma(0)]^\beta$,
the result then holds with $\e(w)=\mathfrak{c} [\log h^{-1}(w)]/w - 1$,
$c=1/\mathfrak{c}$, $c'=\mathfrak{c}$ and $c''=\mathfrak{c}^2$).

\vip

We now prove (i). Using the substitution $r=h^{-1}(z)$,
$$
\kappa = \int_\rr  \frac{\dd z}{[h'(h^{-1}(z))]^2} = \int_0^\infty \frac{\dd r}{h'(r)}= 
\frac1{\beta+2-d}\int_0^\infty \frac{r^{d-1}}{[\Gamma(r)]^{\beta}}\dd r,
$$
which is finite if and only if $d-1-\beta<-1$, i.e. $\beta>d$. Recall that $\Gamma:[0,\infty)\mapsto(0,\infty)$
is supposed to be continuous and that $\Gamma(r)\sim r$ as $r\to\infty$.

\vip

We proceed similarly for (ii). Recalling that $m_\beta'$ was defined in Subsection \ref{notadebut},
$$
\frac{\int_\rr h^{-1}(z)[\sigma(z)]^{-2}\dd z}{\int_\rr [\sigma(z)]^{-2}\dd z}=
\frac{\int_0^\infty r[h'(r)]^{-1}\dd r}{\int_0^\infty [h'(r)]^{-1}\dd r}
=\frac{\int_0^\infty r^{d}[\Gamma(r)]^{-\beta}\dd r}{\int_0^\infty r^{d-1}[\Gamma(r)]^{-\beta}\dd r}=m_\beta'.
$$

For (iii), we observe that when $\beta=4+d$, (a) implies that 
$[h^{-1}(w)]^4/[\sigma(w)]^2 \sim 36^{-1} w^{-1}$ as $w\to \infty$, so that
we have the bound $[h^{-1}(w)]^4/[\sigma(w)]^2 \leq C(1+|w|)^{-1}$ on $\rr_+$
as well as the estimate $\int_0^x \frac{[h^{-1}(w)]^4\dd w} {[\sigma(w)]^2 } 
\stackrel{x\to \infty}\sim \frac{\log x}{36}$. If $d\geq 3$, (b) tells us that 
$[h^{-1}(w)]^4/[\sigma(w)]^2 \sim c |w|^{-2(d+1)/(d-2)}$ as $w\to -\infty$ (for some constant $c>0$),
and we conclude using that $2(d+1)/(d-2)>1$.
If $d=2$, (c) gives us $[h^{-1}(w)]^4/[\sigma(w)]^2 \sim [c']^{-2} \exp(-6 c |w|(1+\e(w)))$ as $w\to -\infty$,
from which the estimates follow.

\vip

Point (iv) immediately follows from (a) (concerning the lowerbound and the upperbound on $\rr_+$)
and (b) or (c) (concerning the upperbound on $\rr_-$). 

\vip

Point (v) is a consequence of (a).

\vip

Point (vi) follows from (a) (concerning the bound on $\rr_+$) and from (b) (and the fact that
$(d-1)/(d-2)>(\beta+1-d)/(\beta+2-d)$) or (c). 

\vip

With the same arguments as in (vi), we see that $\int_{-x}^x [\sigma(w)]^{-2}\dd w 
\stackrel{x\to \infty}\sim \int_0^x [\sigma(w)]^{-2}\dd w$, which is equivalent to  
$[\log x]/4$ as $x\to \infty$ by (a), whence (vii).

\vip

Points (viii) on (ix) follow from (a) and the fact that
$1/(\beta+2-d)-2(\beta+1-d)/(\beta+2-d)=1/\alpha-2$ (when $w\geq 0$) and (b) or (c) (when $w<0$).

\vip

Points (x) and (xi) follow from (a) (when $w\geq 0$) and (b) or (c) (when $w<0$).
Observe that in (x), the convergence is uniform on compact sets for free by the Dini theorem, since
for each $\e>0$, $w\mapsto \sqrt\e h^{-1}(w/a_\e)$ is non-decreasing and since the limit function
$w\mapsto w_+^{1/(\beta+2-d)}$ is continuous and non-decreasing.
\end{proof}

\end{document}